\newcommand{\cyan}{\textcolor{cyan}}
\newcommand{\red}{\textcolor{red}}
\numberwithin{equation}{section} \newtheorem{theorem}{Theorem}[section]
\newtheorem{lemma}[theorem]{Lemma}
\newtheorem{proposition}[theorem]{Proposition}
\newtheorem{corollary}[theorem]{Corollary}
 \theoremstyle{definition}
\newtheorem{definition}[theorem]{Definition}
\newtheorem{notation}[theorem]{Notation}
\newtheorem{remark}[theorem]{Remark}
\newcommand{\numberfield}[1]{\mathbb{#1}}
\newcommand{\R}{\numberfield{R}} 
\newcommand{\Z}{\numberfield{Z}} 
\newcommand{\N}{\numberfield{N}} 
\newcommand{\CC}{\numberfield{C}} 
\newcommand{\T}{{\numberfield{T}}} 
\newcommand{\D}{{\mathrm d}} 
\newcommand{\dist}{{\mathrm{dist}}} 
\newcommand{\Vol}{{\mathrm{Vol}}} 
\newcommand{\tr}{{\mathrm{tr}}} 
\newcommand{\hei}{\mathfrak{h}^g} 
\newcommand{\Hei}{{\mathsf H}_{\text{\it red}}^{g}} 
\newcommand{\Torus}{{\mathsf T}} 
\newcommand{\Heis}{{\mathsf H}^{g}} 
\newcommand{\Lattice}{{{\mathsf \Gamma}}} 
\newcommand{\Latt}{{{\mathsf \Lambda}}} 
\newcommand{\Subgroup}{{\mathsf P}} 
\newcommand{\bigAuto}{\operatorname{Aut}_0(\Heis)} 
\newcommand{\Auto}{\Symp_{2g}(\R)} 
\newcommand{\Mod}{\mathfrak M_g} 
\newcommand{\GMod}{\Symp_{2g}(\Z)} 
\newcommand{\Birkhoff}{\mathcal P} 
\newcommand{\step}{\tau} 
\newcommand{\radius}{T} 
\newcommand{\Cartan}{\mathsf{A} } 
\newcommand{\cartan}{\frak{a} } 
\def\G{\mathsf{G}} 
\newcommand{\M}{\mathsf{M}} 
\newcommand{\Center}{\mathsf{Z}} 
\newcommand{\SL}{\mathrm{SL}} 
\newcommand{\GL}{\mathrm{GL}} 
\newcommand{\Uni}{\mathsf K } 
\newcommand{\Symp}{\mathrm{Sp}} 
\newcommand{\symp}{\mathfrak{sp}} 
\newcommand{\SO}{\mathrm{SO} } 
\newcommand{\subLap}{{H}} 
\newcommand{\Lap}{{\Delta}} 
\newcommand{\Int}{\mathcal{I}} 
\newcommand{\Prim}{\mathcal{P}} 
\newcommand{\Egauss}{\mathcal{E}} 
\newcommand{\Khom}{\mathcal{K}} 
\newcommand{\Sym}{\mathrm{Sym} } 
\newcommand{\Siegel}{\mathfrak H} 
\newcommand{\fq}{\mathcal{Q}} 
\newcommand{\fl}{\ell} 
\newcommand{\Sobolev}{W} 
\newcommand{\Sob}{\mathcal K} 
\newcommand{\SB}{\mathcal S} 
\newcommand{\Sc}{\rho} 
\newcommand{\Sch}{\rho} 
\newcommand{\sch}{\rho} 
\newcommand{\Bundle}{\mathfrak W} 
\newcommand{\norm}{\interleave} 
\newcommand{\lin}{\left\langle} 
  \newcommand{\rin}{\right\rangle} 
\def\paragraph#1{\subsubsection*{{\bf{#1}}}}
\def\height{\operatorname{\daleth}}
\def\Height{\operatorname{Hgt}} \def\detY{\operatorname{hgt}}
\def\formbundle#1#2#3{A^{#1}(\mathfrak p^{#2},\Bundle^{#3})}
\def\currentbundle#1#2#3{A_{#1}(\mathfrak p^{#2},\Bundle^{#3})}
\def\closedbundle#1#2#3{Z_{#1}(\mathfrak p^{#2},\Bundle^{#3})}
\def\restbundle#1#2#3{R_{#1}(\mathfrak p^{#2},\Bundle^{#3})}
\def\class#1{[[#1]]}
\def\dcart{\widehat{\delta}}
\def\isotropic{isotropic\ }
\begin{document}

\author{Salvatore Cosentino}
\address{Centro de Matem\' atica,  Universidade do Minho, Campus de Gualtar, 4710-057 Braga,    PORTUGAL.}  \email{  scosentino@math.uminho.pt } 
      
 \author{Livio Flaminio}
\address{Unit\' e Mixte de Recherche CNRS 8524, Unit\' e de Formation et Recherche    de Math\' ematiques, Universit\' e de Lille 1, F59655 Villeneuve  d'Asq CEDEX, FRANCE. } 
\email{livio.flaminio@math.univ-lille1.fr}

\title[Equidistribution on  Heisenberg nilmanifolds]{Equidistribution for higher-rank Abelian actions on  Heisenberg nilmanifolds}

\subjclass[2010]{ 
37A17, 
37A45, 
11K36, 
11L07. 
}

\date{\today}


\maketitle

\begin{abstract}
  We prove quantitative equidistribution results for actions of
  Abelian subgroups of the $2g+1$ dimensional Heisenberg group acting
  on compact $2g+1$-dimensional homogeneous nilmanifolds.  The results
  are based on the study of the $C^\infty$-cohomology of the action of
  such groups, on tame estimates of the associated cohomological
  equations and on a renormalisation method initially applied by Forni
  to surface flows and by Forni and the second author to other
  parabolic flows.  As an application we obtain bounds for finite Theta sums
  defined by real quadratic forms in $g$ variables, generalizing the
  classical results of Hardy and Littlewood \cite{MR1555099,
    MR1555214} and the optimal result of  Fiedler, Jurkat and
  K\"orner \cite{MR0563894} to higher dimension.
\end{abstract}

\tableofcontents
\section{Introduction}

In the analysis of the time evolution of a dynamical system many problems
reduce to the study of the \emph{cohomological equation}; in the case, for
example, of a smooth vector field $X$ on a connected compact manifold $M$ this
means finding a function $u$ on~$M$ solution of the equation
\begin{equation}
  \label{eq:cohoeq}
  Xu = f\,,
\end{equation}
where $f$ is a given function on~$M$.

For a detailed discussion of the cohomological equation for flows and
tranformations in ergodic theory the reader may consult \cite{MR2008435}.

For higher dimensional Lie groups, the study of the cohomology of their actions
(or the related cohomology of lattice sub-groups) plays a fundamental r\^ole in
many works; to cite just a few, we recall R.~Zimmer's cocycle super-rigidity
theorem (\cite{MR776417}), and the numerous works of A.~Katok \emph{et al.}\ on
abelian actions (\cite{MR1336707, MR1632177, MR2726100, MR2600682, MR2726100,
  MR2798364}, \dots).

\paragraph{Cohomology in Heisenberg manifolds} In this article we study the cohomology of the action of an abelian
subgroup~$\Subgroup$ of the $(2g+1)$-dimensional Heisenberg group $\Heis$ on
the algebra of smooth functions on a homogeneous manifold
$\Heis / \Lattice$. The linearity of the problem and the fact that the
unitary dual of $\Heis$ is classical knowledge make the use of harmonic
analysis particularly suitable to our goal, as it was the case in the works of
L.~Flaminio and G.~Forni~\cite{MR2003124, MR2218767, MR2261071}.  As a
consequence, our results on the cohomology of $\Subgroup$ also apply to more
general $\Heis$-modules, those for which the action of the center of $\Heis$
has a spectral gap.

Before stating ours results let us  fix some notation.

Let $\mathsf G$ be a connected Lie group of Lie algebra $\mathfrak g$, and let $\M=
\mathsf G/\Lattice$ be a compact homogeneous space of $\mathsf G$. Then $\mathsf G$ acts by left translations on $C^\infty(\M)$
via
\begin{equation}
  \label{eq:1}
  (h. f) (m) = f(h^{-1} m), \quad h\in \mathsf G, \quad f\in C^\infty(\M).
\end{equation} 
Let $F$ be a closed $\mathsf G$-invariant subspace of $C^\infty(\M)$.
The space $F$ is a tame graded Fr\'echet space
(\cite[Def. II.1.3.2]{MR656198}) topologized by the family of
increasing Sobolev norms~$\| \cdot \|_s$, defining $L^2$ Sobolev
spaces $\Sobolev^s(\M)$.

For any connected Lie subgroup $\Subgroup< \mathsf G$ with Lie algebra
$\mathfrak p$, the action by translations of $\Subgroup$ on $\mathsf G/\Lattice$
turns $F$ into a $\mathfrak p$-module.  Therefore we may consider the
Chevalley-Eilenberg cochain complex $A^\ast (\mathfrak p ,
F):=\Lambda^*\mathfrak p' \otimes F$ of $F$-valued alternating forms on
$\mathfrak p$, endowed with the usual differential ``$\D{}$''.
By \emph{cohomology
  of the  $\mathfrak p$-module  $ F$ } we simply mean the
Lie-algebra cohomology $ H^{*} (\mathfrak p, F)$ of this cochain
complex. When $F=C^\infty(\M)$ we also refer to this cohomology as
\emph{the cohomology of the action of $\mathsf P$ on $\M$}.

A natural question that arises when we consider a Lie group or Lie
algebra cohomology with values in a topological module, is whether the
\emph{reduced} cohomology coincides with ordinary cohomology; that is
whether the spaces $B^\ast(\mathfrak p, F)$ of co-boundaries are
closed in the spaces $Z^\ast(\mathfrak p, F)$ of cocycles. Following
A. Katok ~\cite{MR1858535}, we give the following definition. 
 
\begin{definition}
  \label{def:sect1:1}
  The  $\mathfrak p$-module  $ F$  is \emph{cohomologically $C^\infty$-stable
    in degree~$k$} if the space $B ^k (\mathfrak p , F)$ of $F$-valued
  co-boundaries of degree~$k$ is closed in the $C^\infty$ topology.
\end{definition}
  
Let $Z_k (\mathfrak p , F)$ denote the space of closed currents of
dimension $k$, that is the space of all continuous linear functionals
on $ A^k (\mathfrak p , M)$ vanishing on $B ^k (\mathfrak p , F)$. By
the Hahn-Banach Theorem, $B ^k (\mathfrak p ,F)$ is a closed subspace
of $A^k (\mathfrak p , F)$ if and only if it is equal to the
intersection of the kernels of all $D\in Z_k (\mathfrak p , F)$.

We recall that a tame linear map $\phi: F_1 \to F_2$ between tame
graded Fr\'echet spaces satisfies a tame estimates of degree $r$ with
base $b$ if, denoting by $\Vert\cdot\Vert_s$ the norms defining the
grading, we have $\Vert \phi(f) \Vert_{s} \le C \Vert f \Vert_{s+r}$
for all $s\ge b$ and $f\in F_1$; the constant $C$ may depends on $s$.

The tame grading of $F$ implies that  $A^\ast (\mathfrak p , F)$ is a  tame
graded Fr\'echet cochain complex and that the differentials are  tame maps of
degree~$1$. Thus, besides $C^\infty$-stability, another question that
arises naturally is whether, for a given a co-boundary $\omega$, there
exists a primitive $\Omega$ whose norm is tamely estimated by the norm of
$\omega$.

\begin{definition}
  \label{def:sect3:2}
  We say that the $\mathfrak p$-module $ F$ is \emph{tamely
    cohomologically $C^\infty$-stable in degree~$k\ge 1$} if there exists a
  tame map $\D_{-1}\colon B ^k (\mathfrak p , F)\to A^{k-1} (\mathfrak
  p , F)$ assigning to every co-boundary $\omega\in B ^k (\mathfrak p
  , F) $ a primitive of $\omega$.
\end{definition}

A related question, which is fundamental in perturbation theory, is
whether the chochain complex $A^k (\mathfrak p , F)$ has a tame
splitting~\cite{MR656198}  (see \cite{MR2183300, katok-damianovich-parabolic}). 
Recall that a
graded Fr\'echet space $F_1$ is a tame summand of a graded Fr\'echet
space $F_2$ if there are tame maps $L:F_1\to F_2$ and $M\colon F_2\to
F_1$ such that $M\circ L$ is the identity map of $F_1$
\cite[Def. II.1.3.1]{MR656198}. In this situation we also say that the
short exact sequence $0\to F_1\to F_2\to F_2/L(F_1)\to 0$ splits
tamely.

\begin{definition}
  \label{def:sect3:3}
  We say that the $\mathfrak p$-module $ F$ has \emph{tame splitting}
  in degree $k$ if the space $B^{k}(\mathfrak p, F)$ is a tame direct
  summand of $A^{k}(\mathfrak p, F)$.
\end{definition}

Let $\Heis$ be the Heisenberg group of dimension $2g +1$. Any
compact homogeneous space $\M=\Heis/\Lattice$ is a circle bundle $p: \M
\to \Heis / ( \Lattice \, \Center (\Heis))$ over the $2g$-dimensional torus
$\Torus ^{2g} =\Heis / ( \Lattice \, \Center (\Heis))$, with fibers given by
the orbits of the center $ \Center (\Heis)$ of $\Heis$. The space of
$C^\infty$ functions on $\M$ splits as a direct sum of
$\Heis$-invariant subspace  
$\pi^* (C^\infty(\Torus ^{2g} ))$ and the $\Heis$-invariant subspace
$F_0=C^\infty_0(\M)$ formed by the smooth functions on $\M$ having zero
average on the fibers of the fibration~$p$. The following theorem is a
particular case of Theorem~\ref{thm:sect3:2} below.

\begin{definition}
  \label{def:sect1:2}
  A  connected Abelian subgroup of
  $\Heis$ without central elements  will be called \emph{an \isotropic
    subgroup of  $\Heis$}. A \emph{Legendrian} subgroup of  $\Heis$ is
  an \isotropic  subgroup of  $\Heis$ of maximal dimension~$g$.
\end{definition}
\begin{theorem}
  \label{thm:sect1:2}
  Let $\Subgroup$ be a $d$-dimensional \isotropic subgroup of
  $\Heis$  with Lie algebra $\mathfrak p$.
  The $\mathfrak p$-module $F_0$ is tamely cohomologically
  $C^\infty$-stable in all degrees. In fact, for all $k=1,\dots,d$
  there are linear maps
  \[\D{}_{-1}\colon B^k(\mathfrak p, F_0)\to
  A^{k-1}(\mathfrak p, F_0)\] associating to each $\omega\in B^k(\mathfrak
  p, F_0)$ a primitive of $\omega$ and satisfying tame estimates of degree
  $(k+1)/2+\varepsilon$ for any $\varepsilon >0$.

  We have $H^{k} (\mathfrak p, F_0)=0 $ for $k<d$; in degree $d$, we
  have that $H^{d} (\mathfrak p, F_0) $ is infinite dimensional if $d <g$ 
  or one-dimensional if
  $d=g$ (i.e. if $\mathfrak p$ is a Legendrian subspace) 
  in each irreducible
  $\mathfrak p$-sub-module of $F_0$.

  The $\mathfrak p$-module $ F_0$ has \emph{tame splitting} in all
  degrees: for $k=0, \dots, d$ and any $\varepsilon>0$, there exist a
  constant $C$ and linear maps
  \[
  M^k: A^{k}(\mathfrak p, F_0) \to B^k ( \mathfrak p, F_0)
  \]
  such that he restriction of $M^k$ to $B^k ( \mathfrak p,F_0)$ is the
  identity map and the following estimate hold
  \[
  \| M^k \omega\|_s \le C \|\omega\|_{s+w}, \quad \forall \omega \in
  A^{k}(\mathfrak p, F_0)
  \]
  where $w=(k+3)/2 + \varepsilon$, if $k<d$ and $w=d/2 + \varepsilon$ if
  $k=d$. 
\end{theorem}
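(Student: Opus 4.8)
\emph{Strategy and reduction to irreducibles.} The plan is to diagonalise the complex over the unitary dual of $\Heis$ and reduce everything, inside a single irreducible representation, to a Koszul complex of multiplication operators. The circle $\Center(\Heis)\Lattice/\Lattice$ acts on $\M$ and decomposes $F_0$ orthogonally into central-character components $F_0=\widehat{\bigoplus}_{n\in\Z\setminus\{0\}}F_0^{(n)}$, where $F_0^{(n)}$ is $\Heis$-isomorphic to a finite multiple (of multiplicity $\asymp|n|^g$, depending only on $\Lattice$) of the irreducible representation $\pi_n$ of $\Heis$. The differential $\D$ is $\Heis$-equivariant and the norms $\|\cdot\|_s$ are Hilbertian and $\Center(\Heis)$-invariant, so the cochain complex and all its norms decompose compatibly. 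It thus suffices to compute $H^\ast(\mathfrak p,\mathcal H_n^\infty)$, with $\mathcal H_n^\infty$ the smooth vectors of $\pi_n$, and to build $\D_{-1}$ and $M^k$ on each $\mathcal H_n^\infty$ with bounds \emph{uniform in $n$}; any residual sub-polynomial growth in $n$ will be absorbed at the price of $\varepsilon$ extra derivatives, using $\|f\|_{s+\varepsilon}\gtrsim|n|^{\varepsilon/2}\|f\|_s$ on $\pi_n$ (the sub-Laplacian has lowest eigenvalue $\asymp|n|$ there). Reassembling the direct sum then yields all the stated conclusions for $F_0$.

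\emph{Normal form and cohomology in $\pi_n$.} An isotropic $\mathfrak p$ extends to a Lagrangian subspace, and $\Symp(2g)$ acts transitively on such extensions; conjugating by the metaplectic operator intertwining $\pi_n$ with $\pi_n\circ A$ — which distorts the Sobolev norms by constants depending on $A$, hence on $\mathfrak p$, but not on $n$ — we may assume that in the Schr\"odinger model $\mathcal H_n^\infty=\SB(\R^g)$ the algebra $\mathfrak p$ is spanned by $Y_1,\dots,Y_d$ acting by multiplication, $\pi_n(Y_j)f=2\pi i n\,t_j f$. Since $\mathfrak p$ is abelian, $A^\ast(\mathfrak p,\mathcal H_n^\infty)$ is exactly the Chevalley--Eilenberg (Koszul) complex of the commuting tuple $(t_1,\dots,t_d)$ of multiplications on $\SB(\R^g)$. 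Multiplication by $t_1$ is injective, and by Hadamard's lemma restriction to $\{t_1=0\}$ identifies $\SB(\R^g)/t_1\SB(\R^g)\cong\SB(\R^{g-1})$; iterating shows $(t_1,\dots,t_d)$ is a regular sequence (the standard contracting homotopies being precisely the division maps $f\mapsto f/t_j$, which preserve Schwartz spaces), whence the Koszul complex is acyclic except in top degree:
\[
H^k(\mathfrak p,\mathcal H_n^\infty)=0\ \ (0\le k<d),\qquad H^d(\mathfrak p,\mathcal H_n^\infty)\cong\SB(\R^g)/(t_1,\dots,t_d)\SB(\R^g)\cong\SB(\R^{g-d}),
\]
which is infinite-dimensional when $d<g$ and one-dimensional (via $f\mapsto f(0)$) when $d=g$. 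Summing over $n$ gives the asserted structure of $H^\ast(\mathfrak p,F_0)$.

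\emph{Tame primitive.} Take $\D_{-1}$ to be the standard contracting homotopy of this Koszul complex, assembled from the elementary division maps $f\mapsto f/t_j$ (well defined on functions vanishing on $\{t_j=0\}$, which the cocycle relations guarantee) together with the coordinate-hyperplane restrictions. The tame bound follows by combining (a) one-dimensional Hardy-type inequalities controlling $\|f/t_j\|$ in Sobolev norm by $\|\partial_{t_j}f\|$; (b) the comparison $\|f\|_s^2\asymp\langle(1-\Lap)^s f,f\rangle$, with $-\Lap$ acting on $\pi_n$ as a harmonic oscillator of frequency $\asymp|n|$, so that $\pi_n(X_j)$ and $\pi_n(Y_j)$ have size $\asymp|n|^{1/2}$ times ladder operators and are dominated by $(1-\Lap)^{1/2}$; and (c) interpolation between this sharp directional estimate and the trivial one. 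Propagating the gains and losses of $(1-\Lap)$ and of $|n|$ through the $k$-fold homotopy, then summing over $n$, produces a primitive satisfying tame estimates of degree $(k+1)/2+\varepsilon$.

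\emph{Tame splitting, and the main obstacle.} For $k<d$ set $M^k=\mathrm{id}-\D_{-1}\circ\D$: since $\D\omega\in B^{k+1}$ and $Z^k=B^k$ in this range, $M^k\omega=\omega-\D_{-1}\D\omega$ lies in $B^k$, and $M^k$ restricts to the identity on $B^k$ (there $\D\omega=0$). A careful direct estimate of the composite $\D_{-1}\circ\D$ — exploiting that the divisions inside $\D_{-1}$ partly cancel the multiplications which constitute $\D$ — gives the loss $(k+3)/2+\varepsilon$. This argument breaks down at $k=d$, where $Z^d=A^d$ and $H^d\neq0$; there one uses the explicit model $A^d\cong F_0$: on each $\pi_n$, the restriction $R_n$ to the transverse subspace $\{t_1=\dots=t_d=0\}$ has kernel $(t_1,\dots,t_d)\SB(\R^g)=B^d$, and composing $R_n$ with a tame extension $E_n$ (tensoring with the ground state of the frequency-$|n|$ oscillator in $t_1,\dots,t_d$) yields an $n$-uniformly bounded projection onto a complement of $B^d$; then $M^d=\mathrm{id}-E_nR_n$, assembled over $n$, is the required projection, with loss $d/2+\varepsilon$ (the codimension-$d$ restriction costs $d/2$, the extension is uniformly bounded). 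The crux of the whole proof is this uniform-in-$n$ tame control: the division operators $f\mapsto f/t_j$ are genuinely unbounded on Schwartz space — the analogue here of a small divisor, the "divisor" being the continuous coordinate $t_j$ near $0$ — so the soft Hahn-Banach/closed-range argument (which only gives plain $C^\infty$-stability) must be replaced by the quantitative interplay of Hardy-type inequalities, $|n|$-rescaled oscillator norms, and interpolation, together with a check that the per-$n$ constants grow slowly enough to survive the summation once one pays $\varepsilon$ derivatives.
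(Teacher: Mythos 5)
Your Koszul route is the Fourier-metaplectic dual of the paper's argument, and in principle it is viable. The paper chooses a Heisenberg basis adapted to $\mathfrak p$ so that $X_1,\dots,X_d$ act by $\partial_{x_1},\dots,\partial_{x_d}$ in the Schr\"odinger model; you instead conjugate so $\mathfrak p$ acts by multiplication, so that $A^\ast(\mathfrak p,\rho^\infty)$ becomes the Koszul complex of $(t_1,\dots,t_d)$. The two pictures are exchanged by the Fourier transform, which is unitary, preserves the sub-Laplacian $|x|^2-\Delta$, and therefore preserves all the homogeneous Sobolev norms used to measure tameness. Your identification of $H^d\cong \SB(\R^{g-d})$ and of $B^d$ as $\ker R_n$ (restriction to $\{t_1=\dots=t_d=0\}$) is exactly the dual of the paper's $\ker\Int_{d,g}$ characterisation, and your $M^d=\mathrm{id}-E_n R_n$ is the dual of $M^d=\mathrm{id}-\Egauss_{d,g}\Int_{d,g}$, with the same cost $d/2+\varepsilon$ (Sobolev trace costs $1/2+\varepsilon$ per codimension, extension by the ground state costs nothing).

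Where your proposal has a genuine gap is in the justification of the exponent $(k+1)/2+\varepsilon$ for the primitive map $\D_{-1}$ when $1\le k<d$. You propose to build $\D_{-1}$ from "the standard contracting homotopy assembled from the elementary division maps $f\mapsto f/t_j$," controlled by Hardy inequalities, and then "propagate the gains and losses through the $k$-fold homotopy." But each one-variable Hardy inequality $\|f/t_j\|\lesssim\|\partial_{t_j}f\|$ costs a full sub-Laplacian order, so a naive $k$-fold division yields degree $\approx k+\varepsilon$, not $(k+1)/2+\varepsilon$. The sharper exponent requires the dimensional peeling that is central to the paper's Proposition~3.13: at each step one applies the restriction to a coordinate hyperplane (cost $1/2+\varepsilon$, dual to the paper's $\Int$) and the ground-state extension (cost $0$, dual to $\Egauss$), recursing in the lower-dimensional model, with the actual division/homotopy operator appearing only once; this is how the cost accumulates by $1/2$ per degree rather than by $1$. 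You describe this restriction-and-extension mechanism only in the $k=d$ splitting, but it is equally indispensable for the primitives in every degree $k<d$, and without it the claimed tame estimate of degree $(k+1)/2+\varepsilon$ does not follow. A minor further remark: you propose paying $\varepsilon$ extra derivatives to absorb sub-polynomial growth in $n$; in fact, with the homogeneous Sobolev norms the relevant estimates are already uniform in the central parameter once $|h|\ge h_0$, because $\rho_h(\D)=|h|^{1/2}\D$ rescales in exactly the way that the homogeneous norm $\norm\cdot\norm_{s,h}=|h|^{s/2}\norm\cdot\norm_s$ cancels (this is the content of the paper's Proposition~3.17), so no $\varepsilon$ is spent on summing over $n$.
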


Let $\Subgroup<\Heis $ be a subgroup as in the theorem above and let
$\bar\Subgroup$ be group obtained by projecting $\Subgroup$ on the
$\Heis/ \Center (\Heis)\approx\R^{2g}$. 
As before we set $\Torus ^{2g} =\Heis / ( \Lattice \, \Center (\Heis))$. The
$\Subgroup$-module $\pi^* (C^\infty(\Torus^{2g} ))$ is naturally
isomorphic to the $\bar\Subgroup$-module $C^\infty(\Torus ^{2g}
)$. It should be considered as folklore that the cohomology of the action
of a subgroup $\bar\Subgroup$ on a torus depends on the Diophantine
properties of $\bar\Subgroup$, considered as vector space.  
The Diophantine condition $\bar{\mathfrak p}\in \hbox{DC}_\tau(\bar
\Lattice)$ mentioned in the  theorem below will be precised in section
\ref{sect:3.1}.
\begin{theorem}
  \label{thm:sect1:1}
  Let\/ $\Subgroup$ be an \isotropic subgroup of\/ ${\Heis}$, let
  $\M:=\Heis/\Lattice$ be a compact homogeneous space and let
  $F:=C^\infty(\M)$.  Let $\bar \Subgroup$ be the projection of
  $\Subgroup$ into $\Heis/ \Center (\Heis)\approx \R^{2g}$, let
  $\bar{\mathfrak p}$ its Lie algebra, and let $\bar \Lattice =
  \Lattice/(\Lattice \cap \Center (\Heis))\approx \Z^{2g}$.  Then
  action of\/~$\Subgroup$ on $\M$ is tamely cohomologically
  $C^\infty$-stable and has a tame splitting in all degrees if and
  only if $\bar{\mathfrak p}\in \hbox{DC}_\tau(\bar \Lattice)$ for
  some $\tau >0$.  In this case we have
  \[
  \begin{split}
    H^{k} (\mathfrak p, F) = \Lambda^k\mathfrak p \text{ if } k < \dim
    \mathfrak p, \quad H^{k} (\mathfrak p, F) = \Lambda^k\mathfrak p
    \oplus  H^{k} (\mathfrak p, F_0) \text{ if } k = \dim \mathfrak p
  \end{split}
  \]
\end{theorem}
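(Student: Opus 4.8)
The plan is to reduce Theorem~\ref{thm:sect1:1} to the combination of Theorem~\ref{thm:sect1:2} (the case of the fiberwise-zero-average module $F_0$) and the classical theory of the cohomology of a linear action of an abelian group $\bar{\Subgroup}$ on the torus $\Torus^{2g}$ (the case of the module $\pi^*C^\infty(\Torus^{2g})$). The first step is the $\Heis$-invariant splitting $C^\infty(\M)=\pi^*C^\infty(\Torus^{2g})\oplus F_0$ recalled before the statement of Theorem~\ref{thm:sect1:2}. Since this decomposition is $\Heis$-invariant, it is in particular $\Subgroup$-invariant, so the Chevalley--Eilenberg complex splits as a direct sum of subcomplexes $A^\ast(\mathfrak p,F)=A^\ast(\mathfrak p,\pi^*C^\infty(\Torus^{2g}))\oplus A^\ast(\mathfrak p,F_0)$, and both $C^\infty$-stability, tame stability, tame splitting, and the computation of cohomology can be carried out summand by summand. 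The $F_0$ part is handed to us verbatim by Theorem~\ref{thm:sect1:2}, which gives tame primitives of degree $(k+1)/2+\varepsilon$, vanishing of $H^k(\mathfrak p,F_0)$ for $k<d=\dim\mathfrak p$, and tame splitting maps $M^k$ of degree $(k+3)/2+\varepsilon$ (or $d/2+\varepsilon$ in top degree).

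The substantive remaining work is the analysis of the $\bar{\Subgroup}$-module $C^\infty(\Torus^{2g})$. Here the second step is to observe that, because $\bar{\Subgroup}$ is a connected abelian Lie group acting on $\Torus^{2g}$ by a linear (translation) flow, its action is generated by commuting constant-coefficient vector fields $\bar X_1,\dots,\bar X_d$ on the torus, and the Chevalley--Eilenberg complex $A^\ast(\bar{\mathfrak p},C^\infty(\Torus^{2g}))$ is the Koszul complex of these commuting operators. Decomposing into Fourier modes $e^{2\pi i \langle n,\cdot\rangle}$, $n\in\bar\Lattice^*\approx\Z^{2g}$, the operator $\bar X_j$ acts on the $n$-th mode by the scalar $2\pi i\,\beta_j(n)$, where $\beta_j$ is the linear functional on $\R^{2g}$ dual to the $j$-th generator; on the zero mode $n=0$ all $\bar X_j$ vanish, contributing exactly the trivial complex $\Lambda^\ast\bar{\mathfrak p}$ with zero differential, which accounts for the $\Lambda^k\mathfrak p$ summand in the statement (note $\Lambda^k\bar{\mathfrak p}\cong\Lambda^k\mathfrak p$). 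On the nonzero modes one solves the cohomological equations explicitly: the Koszul complex of the scalars $(2\pi i\beta_j(n))_j$ is exact whenever at least one $\beta_j(n)\ne 0$, i.e. whenever $n$ is not orthogonal to $\bar{\mathfrak p}$; the remaining modes (those with $\beta_j(n)=0$ for all $j$, i.e. $n\in\bar{\mathfrak p}^\perp\cap\bar\Lattice^*$) are only the zero mode precisely when $\bar{\mathfrak p}$ is \emph{totally irrational}, and more generally contribute a finite-rank piece. The Diophantine condition $\bar{\mathfrak p}\in\mathrm{DC}_\tau(\bar\Lattice)$ (to be defined in Section~\ref{sect:3.1}) is exactly the hypothesis that $\|\beta(n)\|\ge c\|n\|^{-\tau}$ for some $\beta$ in the span and all nonzero $n\in\bar\Lattice^*$, together with the injectivity forcing $\bar{\mathfrak p}^\perp\cap\bar\Lattice^*=\{0\}$; this is what lets one bound the inverse of the Koszul differential on each nonzero mode by $C\|n\|^\tau$, and hence—after summing a geometric-type series against the Sobolev weights $(1+\|n\|^2)^{s}$—obtain a tame estimate of degree $\tau+\varepsilon$ (the loss $\varepsilon$ absorbing the polynomial factor from the number of lattice points). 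Converting the primitive map into a tame splitting map $M^k$ on all of $A^k$ is then the standard homotopy construction: on exact modes $M^k$ is zero (or the projection onto coboundaries via $\D\circ\D_{-1}$), on the zero/resonant modes it is the identity or a finite-rank projector, and the same Diophantine bounds give tameness.

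For the converse direction (``only if''), the third step is to show that failure of the Diophantine condition obstructs tame $C^\infty$-stability: if there is a sequence of nonzero lattice points $n_k$ with $\|\beta(n_k)\|$ decaying faster than any polynomial in $\|n_k\|^{-1}$ for every $\beta$ in $\bar{\mathfrak p}$, then one builds a coboundary (in degree $1$, say, $\omega=\sum_k c_k\, e^{2\pi i\langle n_k,\cdot\rangle}\,\D t_1$ with suitably chosen coefficients) that is a genuine $C^\infty$ function but whose unique smooth primitive fails to lie in any Sobolev space, or, if one only wants to break \emph{tameness}, whose primitive has norms growing faster than any fixed Sobolev-degree shift of $\|\omega\|$; similarly if $\bar{\mathfrak p}^\perp\cap\bar\Lattice^*\ne\{0\}$ then $H^1$ already contains an infinite-dimensional obstruction space of invariant distributions and $B^1$ is not closed, contradicting even plain $C^\infty$-stability. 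Finally one assembles the cohomology statement: the torus summand contributes $\Lambda^k\bar{\mathfrak p}$ in every degree $k\le d$ (from the zero Fourier mode) under the Diophantine hypothesis, and the $F_0$ summand contributes $0$ for $k<d$ and $H^k(\mathfrak p,F_0)$ for $k=d$ by Theorem~\ref{thm:sect1:2}, yielding the displayed formula; the degrees in the tame estimates are the maximum of $\tau+\varepsilon$ (torus part) and the degrees from Theorem~\ref{thm:sect1:2} (the $F_0$ part). I expect the main obstacle to be the bookkeeping in the torus estimates: carefully turning the pointwise bound $\|(\text{Koszul inverse on mode }n)\|\le C\|n\|^\tau$ into a genuine \emph{tame} estimate on the Fréchet grading—keeping the Sobolev-degree loss down to $\tau+\varepsilon$ rather than something larger, and making the homotopy operator $M^k$ simultaneously a left inverse on coboundaries and tame on all cochains—together with making the Diophantine condition $\mathrm{DC}_\tau(\bar\Lattice)$ of Section~\ref{sect:3.1} do exactly this job in all degrees of the Koszul complex at once, rather than just in degree one.
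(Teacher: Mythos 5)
Your proposal is correct and follows exactly the route the paper takes: split $C^\infty(\M)=p^*C^\infty(\Torus^{2g})\oplus F_0$ into $\Heis$-invariant summands, dispose of the $F_0$ part by Theorem~\ref{thm:sect1:2} (equivalently Theorem~\ref{thm:sect3:2}), and handle the torus part by the folklore Theorem~\ref{thm:sect3:1}, whose proof is precisely the mode-by-mode inversion of the Koszul/Chevalley--Eilenberg differential that you sketch. The only minor imprecision is that the tame degree on the torus part is exactly $\tau$ (the Fourier modes are orthogonal, so there is no ``sum over lattice points'' to pay for), not $\tau+\varepsilon$; and, as in the paper, the ``only if'' direction ultimately defers to the one-dimensional Liouville construction.
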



\paragraph{Equidistribution of isotropic subgroups on Heisenberg manifolds.}

In their fundamental 1914 paper~\cite{MR1555099} Hardy and Littlewood
introduced a renormalization formula to study the exponential sums
$\sum_{n=0}^N e(n^2x/2 +\xi n)$, usually called \emph{finite theta
  sums}, where $N\in \N$ and $e(t) :=\exp(2\pi i t)$. Their algorithm
provided optimal bounds for these sums when $x$ is of bounded type.

Since then, Hardy and Littlewood's renormalization method has been
applied or improved by several authors obtaining finer estimates on 
finite theta sums (Berry and Goldberg~\cite{MR928946}, Coutsias and
Kazarinoff~\cite{MR1443869}, Fedotov and
Klopp \cite{2009arXiv0909.3079F}).  Optimal estimates have obtained by
Fiedler, Jurkat and K\"orner~\cite{MR0563894}. Differently from the
previously quoted authors, who relied heavily on the continued
fractions properties of the real number $x$, Fiedler, Jurkat and
K\"orner's method was based on an approximation of $x$ by rational with
denominators bounded by $4N$.

In this paper we consider the $g$-dimensional generalization, the  finite
theta sums
\begin{equation}
  \label{def_theta_sums}
\sum_{ n \in \Z ^g \cap [0, N]^g} e \left( \fq [n] + \fl(n)  \right)
\end{equation}
where $\mathcal Q [x] := x^\top \fq x$ is the quadratic form defined 
by a symmetric $g\times g$ real matrix $\fq$, and $\fl (x) := \fl^\top x$   is the linear form 
defined by a vector $\fl \in \R^g$.  
In the spirit of Flaminio and Forni~\cite{MR2218767}, our method consists
into reducing the sum~\eqref{def_theta_sums} to a Birkhoff sum along the an
orbit (depending on $\fl$) of some subgroup (depending on $\fq$) of a
standard ($2g+1$)-dimensional Heisenberg nilmanifold and then using a
more general quantitative equidistribution result of some Abelian
group action on standard Heisenberg nilmanifolds.

The occurrence of Heisenberg nilmanifolds is not a surprise: in fact
the connection between the Heisenberg group and the theta series is
well known and very much exploited~\cite{MR0414785, MR0466409,
  MR487050, MR2218767, MR2352717, MR2307769}.

Let $\M = \Heis / \Lattice$ be the standard Heisenberg nilmanifold
(see Section~\ref{sec:heis-group-sieg} for details on the definitions
and notations). Let $(X_1, \dots, X_g,\Xi_1,\dots, \Xi_g ,T)$ be a
fixed  rational basis of $\hei = \mathrm{Lie}  (\Heis)$ satisfying
the canonical commutation relations. Then the symplectic group $ \Symp
_{2g} (\R)$ acts on $\Heis$ by automorphisms\footnote{by acting on the
  left on the components of elements of $\hei$ in the given
  basis.}. For $1\leq d \leq d$, let $\Subgroup ^d$ be the subgroup generated by $(X_1,
\dots, X_d)$ and, for any $\alpha \in \Auto$, set $X_i^\alpha := \alpha
^{-1} (X_i)$, $1\leq i \leq d$. We define a parametrization of the
subgroup $\alpha^{-1} ( \Subgroup ^d)$ according to
\[
\Subgroup_x^{d,\alpha} := \exp (x_1 X_1^\alpha+\dots+x_d
X_d^\alpha), \qquad x=(x_1,\dots,x_d) \in \R^d.
\]
Given a Jordan region $U \subset \R^d$ and a point $m \in \M$, we
define a $d$-dimensional $\mathfrak p$-current
$\Birkhoff_{U}^{d,\alpha}m$ by
\begin{equation}
  \label{eq:sect1:1}
\lin \Birkhoff^{d,\alpha}_{U}m, \omega \rin := \int _{U} f (
\Subgroup^{d,\alpha}_x m ) \, \D x
\end{equation}
for any
degree $d$ $\mathfrak p$-form $\omega = f\, \D X_1^\alpha \wedge
\dots \wedge \D X_d^\alpha$, with $f \in C_0^\infty (\M)$ (here
$C_0^\infty (\M)$ denote the space of smooth functions with zero
average along the fibers of the central fibration of $\M$).

It is well-known that the Diophantine properties of a real number may
be formulated in terms of the speed of excursion, into the cusp of the
modular surface, of a geodesic ray having that number as limit point on
the boundary of hyperbolic space. This observation allows us to define
the Diophantine properties of the subgroup $\Subgroup^{d,\alpha}$ in
terms of bounds on the \emph{height} of the projection, in the Siegel
modular variety $\Sigma_g=\Uni_g\backslash \Auto/\GMod$, of the orbit
of $\alpha$ under the action of some one-parameter semi-group of the
Cartan subgroup of $\Symp _{2g} (\R)$ (here $\Uni_g$ denotes the
maximal compact subgroup of $\Auto$). 
We refer to
Section~\ref{subsection:Diophantine} for the definition of height
function. 

Let $\exp t\dcart(d)$ be the Cartan subgroup of $\Symp _{2g}
(\R)$ defined by $\exp (t\dcart(d)) X_i= e^t X_i$, for $i=1,\dots, d$
and $\exp (t\dcart(d)) X_i= X_i$, for $i=d+1,\dots, g$.  Roughly, the
definition~\ref{def:Diophantine_properties} states that
$\alpha\in \Auto$
satisfies a $\dcart(d)$-Diophantine condition of type $\sigma$,
if the height
of the projection of $\exp (-t\dcart(d))\alpha$ in the Siegel modular
variety $\Sigma_g$ is bounded by $e^{2td(1-\sigma)}$; if,
for any $\varepsilon>0 $, the height considered above is bounded by
$e^{2td\varepsilon}$, then we say that $\alpha\in \Auto$ satisfies a
$\dcart(d)$-Roth condition; finally we say that  $\alpha$ is of bounded
type if the height  of $\exp (-\dcart ) \alpha$,  
stays bounded as $\dcart$ ranges in a positive cone $\cartan ^+ $ in  the
Cartan algebra  of diagonal symplectic matrices (see Def.~\ref{def:Diophantine_properties}).

As the height function is defined on the Siegel modular variety
$\Sigma_g$, the Diophantine properties of $\alpha$ depend only on its
class $[\alpha]$ in the quotient space $\Mod= \Auto/\GMod$.

The definitions above agree with the usual definitions in the $g=1$
case. Several authors (Lagarias \cite{MR662052}, Dani \cite{MR794799},
Kleinbock and Margulis \cite{MR1719827},
Chevallier~\cite{zbMATH06315600}) proposed, in different contexts,
various generalizations of the $g=1$ case: we postpone to
Remark~\ref{rem:Diophantine} the discussion of these
generalizations.
  
We may now state our main equi-distribution result.

\begin{theorem} \label{equidistribution_intro} Let $\Subgroup ^d <
  \Heis$ be an \isotropic subgroup of dimension $d\leq g$.  Set
  $Q(T)=[0,T]^d$. For any $s>\tfrac{1}{4}d (d+11)+g+1/2$ and any
  $\varepsilon >0$ there exists a constant $C=C(\Subgroup,\alpha,s,g,
  \varepsilon)>0$ such that, for all $\radius \gg 1$ and all test
  $\mathfrak p$-forms $\omega \in \Lambda^d\mathfrak p\otimes
  \Sobolev^s_0 (M)$,

  \begin{itemize}
   \item there exists a full measure set  $\Omega_g (w_d)\subset \Mod$ such that if $[\alpha] \in \Omega _g(w_d)$  then 
     \[
    \left| \lin \Birkhoff^{g,\alpha}_{Q(\radius)}m, \omega \rin \right| \leq
    C \, ( \log \radius) ^{d+1/(2g+2)+\varepsilon} \, \radius ^{d/2} \, \| \omega \| _{s}
    \]
 \item if $[\alpha ] \in \Mod$ satisfies a $\dcart(d)$-Diophantine  condition of exponent $\sigma>0$ then
     \[ \left| \lin \Birkhoff^{d,\alpha}_{Q(\radius)}m, \omega \rin \right| \leq
    C \,  \radius ^{d(1-\sigma'/2)} \, \| \omega \| _{s} \, ,
    \]
for all $\sigma ' < \sigma$; 
  \item if $[\alpha ] \in \Mod$ satisfies a $\dcart(d)$-Roth condition, then
    \[ \left| \lin \Birkhoff^{d,\alpha}_{Q(\radius)}m, \omega \rin \right| \leq
    C \, \radius ^{d/2+\varepsilon} \, \| \omega \| _{s} \, ,
    \]
  \item if $[ \alpha ]  \in \Mod$ is of bounded type, then
    \[
    \left| \lin \Birkhoff^{d,\alpha}_{Q(\radius)}m, \omega \rin \right| \leq
    C \, \radius ^{d/2} \, \| \omega \| _{s}
    \]
     \end{itemize}
\end{theorem}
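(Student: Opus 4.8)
The plan is to combine the cohomological results of Theorem~\ref{thm:sect1:2} with the renormalisation dynamics supplied by the diagonal Cartan subgroup $\exp(t\dcart(d))<\Symp_{2g}(\R)$, following the renormalisation scheme of Forni and Flaminio--Forni recalled in the introduction. First I would reduce to a test form concentrated in a single central frequency: decompose the completed direct sum $\Sobolev^s_0(\M)=\bigoplus_{n\neq 0}\mathcal H_n$ into the isotypic components of the central circle action, on each of which $\Heis$ acts as a multiple of the Schrödinger representation with parameter of order $1/n$ and the \isotropic subgroup $\Subgroup^d$ acts by the corresponding restriction. The pairing $\lin\Birkhoff^{d,\alpha}_{Q(T)}m,\omega\rin$ then splits as a sum over $n$, and — provided $s$ is large enough — the tail is controlled by $\|\omega\|_s$, so it suffices to estimate each summand with the correct dependence on $n$ and~$T$.

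Second, on a fixed $\mathcal H_n$ I would run the cohomological decomposition furnished by (the proof of) Theorem~\ref{thm:sect1:2}: write the restriction of $\omega$ as $\omega=\D\eta+\omega'$, where $\eta\in A^{d-1}(\mathfrak p,\mathcal H_n)$ and $\omega'$ represents the class in $H^{d}(\mathfrak p,\mathcal H_n)$ (one-dimensional in the Legendrian case $d=g$, and described in each irreducible summand in general), both obeying tame estimates in terms of $\|\omega\|_s$. For the exact part, Stokes' formula $\lin\Birkhoff^{d,\alpha}_{Q(T)}m,\D\eta\rin=\lin\partial\Birkhoff^{d,\alpha}_{Q(T)}m,\eta\rin$ expresses the pairing through the $2d$ faces of the box $Q(T)=[0,T]^d$, each of which is a $(d-1)$-dimensional Birkhoff current based at a translate of $m$, to which the same estimate applies by induction on~$d$; the base case $d=0$ is the Sobolev embedding $\Sobolev^s_0(\M)\hookrightarrow C^0(\M)$, which forces $s>g+1/2$. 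One must keep the constants uniform in the base point and reabsorb the translation by $T$ along box edges using the homogeneity of $\M$.

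Third — the heart of the matter — I would estimate the remaining invariant-distribution term $\lin\Birkhoff^{d,\alpha}_{Q(T)}m,\omega'\rin$ by renormalisation. The automorphism $\beta_t=\exp(t\dcart(d))$ rescales the generators $X_i^\alpha$ of $\Subgroup^{d,\alpha}$ and carries $\M=\Heis/\Lattice$ onto the nilmanifold $\M_t=\Heis/\beta_t\Lattice$; choosing $t\asymp\log T$ shrinks $Q(T)$ to unit size, at the cost of replacing $\M$ by $\M_t$ and $\alpha$ by $\exp(-t\dcart(d))\alpha$ up to $\GMod$. On the unit box the pairing is bounded by the primitive operator of Theorem~\ref{thm:sect1:2} on $\M_t$ together with the size of the invariant distribution, and the estimate is then un-renormalised: the growth of the relevant Sobolev norms along the orbit $\exp(-t\dcart(d))\alpha$ is governed, uniformly in $t$, by the height of its projection in the Siegel modular variety $\Sigma_g=\Uni_g\backslash\Auto/\GMod$ — precisely the quantity controlled in Definition~\ref{def:Diophantine_properties}. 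Iterating over a stopping-time sequence $t_1<t_2<\cdots$ along which the box shrinks by bounded factors produces a renormalisation cocycle whose growth is $T^{d/2}$ times a correction fixed by the accumulated heights: bounded type gives the clean $T^{d/2}$; a $\dcart(d)$-Diophantine condition of exponent $\sigma$ gives $T^{d(1-\sigma'/2)}$ for every $\sigma'<\sigma$; a $\dcart(d)$-Roth condition gives $T^{d/2+\varepsilon}$; and for $[\alpha]$ in a full-measure subset $\Omega_g(w_d)\subset\Mod$ a Borel--Cantelli/Khinchin argument on $\Sigma_g$ bounds the heights by powers of $\log T$, yielding the factor $(\log T)^{d+1/(2g+2)+\varepsilon}$ (the $d$ coming from the $d$ nested boundary inductions, the $1/(2g+2)$ from the optimal logarithmic Diophantine exponent in rank~$g$).

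The main obstacle, as I see it, is the uniform comparison of Sobolev norms across the degenerating family $\{\M_t\}$: as $\beta_t\Lattice$ drifts toward the cusp of $\Sigma_g$ the injectivity radius of $\M_t$ collapses, and one must show that the tame estimates of Theorem~\ref{thm:sect1:2}, together with the size of the invariant distributions on $\mathcal H_n$, degrade at most polynomially in the height — this is what converts ``excursion into the cusp'' into the quantitative loss in $T$. A secondary difficulty, present only when $d<g$, is that $H^{d}(\mathfrak p,F_0)$ is infinite-dimensional, so the main term is an $n$-dependent family of distributions rather than a single one, and the renormalised estimates must be summed over the central frequencies while keeping the total finite. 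This last bookkeeping is what pins down the exponent $s>\tfrac14 d(d+11)+g+1/2$: it is the sum of the Sobolev-embedding loss $g+1/2$, the cumulative tame losses $\sum_{k=1}^{d}(k+1)/2$ and $\sum_{k=1}^{d}(k+3)/2$ coming from the primitives and the splitting maps of Theorem~\ref{thm:sect1:2}, and an extra $O(d)$ loss incurred by the translations in the $d$-fold boundary induction and by the renormalisation estimates.
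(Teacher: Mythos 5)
Your sketch identifies the correct overall scheme (renormalise along $\exp(t\dcart(d))$, induct on $d$ through the faces of $Q(T)$, convert excursion into the Siegel cusp into a quantitative loss via the height/best-Sobolev-constant, then plug in the Diophantine condition or the Kleinbock--Margulis logarithm law), and this is indeed the route the paper takes in Section~5. However, there is a genuine gap at the centre of your third paragraph: you assert that the renormalisation cocycle has ``growth $T^{d/2}$'' but never explain \emph{why} the exponent is $d/2$ rather than the trivial $d$. This is the heart of the proof. Rescaling $Q(T)$ to unit size gains a factor $T^{-d}$ in the current; the only way the naive $T^d$ is cut down to $T^{d/2}$ is by the exact scaling law for \emph{closed} currents under the Cartan flow, namely Proposition~\ref{Lyap_renorm_inv_dist_bis}: $\big\|r_1^{t_1}\dots r_d^{t_d}[\alpha,\mathcal D]\big\|_{-s} = e^{-(t_1+\dots+t_d)/2}\big\|[\alpha,\mathcal D]\big\|_{-s}$ for $\mathcal D$ closed, proved via the explicit metaplectic intertwiner~\eqref{intertwiner}. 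Without this Lyapunov-exponent computation your argument produces no improvement over the trivial volume bound, and it cannot be obtained by merely ``bounding the primitive operator on $\M_t$ together with the size of the invariant distribution''.

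A second, related problem is the order in which you decompose. You propose to split the \emph{form} once as $\omega=\D\eta+\omega'$ and then renormalise the invariant piece $\omega'$; but the Sobolev norm $\|\omega'\|_{s,r^{-t}\alpha}$ of a \emph{fixed} form blows up under renormalisation at a rate involving $s$ (the conjugated sub-Laplacian sees the fixed Gaussian as a badly squeezed state), which is far worse than $e^{dt/2}$. The paper instead splits the Birkhoff \emph{current} orthogonally, in the Hilbert bundle $A_d(\mathfrak p^d,\Bundle^{-s})$ over $\Mod$ whose inner product varies with the base point, into a closed part $\mathcal Z^{-s}$ and a remainder $\mathcal R^{-s}$; the closed part is contracted by the flow by Proposition~\ref{Lyap_renorm_inv_dist_bis}, and the remainder is controlled by $\partial\Birkhoff$ via Lemma~\ref{rest_estimate}. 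Because the orthogonal projection $\mathcal Z^{-s}_{\alpha}$ is itself $\alpha$-dependent, one must quantify how it moves along the flow -- Lemma~\ref{small_projection} bounds this by $C|\tau t|$ with a loss of two derivatives -- and feed this into the Gronwall-type recursion of Lemma~\ref{recursive_inequality}. This explains the recursion $s_d=s_{d-1}+(d+1)/2+2$ behind $s_d=\tfrac14 d(d+11)+g+1/2$ (your bookkeeping, which charges both primitive and splitting-map losses at each stage, overcounts). Finally, two small slips: the Schr\"odinger parameters occurring in $L^2_0(\M)$ are $h\in 2\pi\Z\setminus\{0\}$, hence \emph{large}, not of order $1/n$; and $H^{d}(\mathfrak p,\mathcal H_n)$ is not one-dimensional even when $d=g$, since $\mathcal H_n$ is a sum of $|n|^g$ copies of the irreducible Schr\"odinger module -- one-dimensionality holds per irreducible, as stated in Theorem~\ref{thm:sect1:2}.
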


The exponent of the logarithmic factor in the first case is certainly
not optimal.  When $d=1$, a more precise result is stated in
Proposition \ref{result_one_dim_cor} which coincides with the optimal
classical result for $d=g=1$ (Fiedler, Jurkat and
K\"orner~\cite{MR0563894}).

The method of proof is, to our knowledge, the first generalization of
the methods of renormalization of Forni
(\cite{MR1888794}) and of Flaminio and Forni
(\cite{MR2218767, MR2261071}) to actions of higher dimensional
Lie groups. 

A limitation of the inductive scheme that we adopted is that we are
limited to consider averages on cubes $Q(T)$ (the generalization to
pluri-rectangles is however feasible, but more cumbersome to
state). For more general regions, growing by homotheties, we can
obtain weak estimates where the power $T^{d/2}$ is replaced by
$T^{d-1}$. However, N.~Shah's ideas \cite{shah2009} suggest that
equi-distributions estimates as strong as those stated above are valid
for general regions with smooth boundary.


The application to $g$-dimensional finite theta sums
\eqref{def_theta_sums} is the following corollary of Theorem
\ref{pretheta_sums}.

\begin{corollary}\label{theta_sums}
  Let $\fq [x]= x^\top \fq x $ be the quadratic form defined by  the symmetric $g \times g$ real matrix $\fq$, 
  let $\alpha = \left( \begin{smallmatrix} I & 0 \\ \fq & I \end{smallmatrix}
  \right) \in \Symp_{2g}(\R)$, and let $\fl (x)= \fl ^\top x $ be the linear form defined by  $\fl \in \R^g$. Set
  \[
  \Theta (\fq, \fl ; N ) := N^{-g/2}\sum_{ n \in \Z ^g \cap [0, N]^g} e\left(  \fq [n] + \fl(n) \right)  \, . 
  \]
  \begin{itemize}
  \item There exists a full measure set $\Omega_g \subset \Mod$ such
    that if $[\alpha] \in \Omega _g$ and $\varepsilon >0$ then
    \[
    \Theta (\fq , \fl ; N )= \mathcal O \left( (\log N
      )^{g+1/(2g+2)+\varepsilon }  \right)
    \]
  \item If $[\alpha] \in \Mod$ satisfies a $\dcart(g)$-Roth condition, then
    for any $\varepsilon >0$.
    \[
    \Theta (\fq , \fl ; N )  = \mathcal O \left( N^{\varepsilon}
    \right)
    \]
  \item If $[\alpha] \in \Mod$ is of bounded type, then
    \[
    \Theta (\fq , \fl  ; N ) =\mathcal O \left(1\right)
    \]
  \end{itemize}
\end{corollary}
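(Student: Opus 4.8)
The plan is to recognise the normalised theta sum $\Theta(\fq,\fl;N)$ as a Birkhoff integral of a \emph{fixed} smooth form along a Legendrian orbit on the standard Heisenberg nilmanifold $\M=\Heis/\Lattice$, and then to apply Theorem~\ref{equidistribution_intro} with $d=g$. As a preliminary I would record the elementary fact that $\alpha=\left(\begin{smallmatrix}I&0\\\fq&I\end{smallmatrix}\right)$ does belong to $\Symp_{2g}(\R)$ --- this is exactly where the symmetry of $\fq$ is used --- so that $\Subgroup^{g,\alpha}:=\alpha^{-1}(\Subgroup^{g})$ is a $g$-dimensional \isotropic subgroup of $\Heis$, in fact a Legendrian one, with Lie algebra spanned by $X_1^\alpha,\dots,X_g^\alpha$. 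Hence the hypotheses of Theorem~\ref{equidistribution_intro} are available for this subgroup and $d=g$.

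The substantive step is the reduction, which is the content of Theorem~\ref{pretheta_sums}: it produces a fixed ``theta form'' $\omega=f\,\D X_1^\alpha\wedge\cdots\wedge\D X_g^\alpha$ with $f\in C_0^\infty(\M)$ (so $\omega\in\Lambda^g\mathfrak p\otimes\Sobolev^s_0(\M)$ for every $s$, in particular for $s>\tfrac{1}{4}g(g+11)+g+\tfrac{1}{2}$), together with a base point $m\in\M$ into which the dependence on the linear form $\fl$ can be absorbed, such that
\[
\Theta(\fq,\fl;N)=N^{-g/2}\,\lin\Birkhoff^{g,\alpha}_{Q(N)}m,\ \omega\rin\ +\ \mathcal E(N),\qquad Q(N)=[0,N]^{g},
\]
with $\mathcal E(N)$ a remainder of lower order than every main term appearing below. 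Here the dependence on $\fq$ is entirely inside $\alpha$, and $\omega$ is smooth and independent of $N$ and $\fl$, so $\|\omega\|_{s}$ is a finite constant. I expect this step to be the real obstacle: one must choose $f$ so that its restriction to the Legendrian orbit through $m$ reproduces a smoothing of the array of spikes located at the integer points of $[0,N]^{g}$ (the $g$-dimensional analogue of the ``theta vector'' of Flaminio and Forni~\cite{MR2218767}), estimate the resulting discrepancy between the discrete sum and the continuous integral, and verify that the function so obtained really lies in $C^\infty_0(\M)$ with Sobolev norms independent of $N$. Everything else is bookkeeping.

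Granting the reduction, I would finish by inserting the estimates of Theorem~\ref{equidistribution_intro}, taken with $d=g$ and $\radius=N$, into the displayed identity and dividing by $N^{g/2}$; for $N=\mathcal O(1)$ the asserted bounds are trivial after enlarging the constants, so one may assume $N\gg1$. Writing $\Omega_g:=\Omega_g(w_g)$, if $[\alpha]\in\Omega_g$ the first bullet gives $|\lin\Birkhoff^{g,\alpha}_{Q(N)}m,\omega\rin|\le C(\log N)^{g+1/(2g+2)+\varepsilon}N^{g/2}\|\omega\|_{s}$, whence $\Theta(\fq,\fl;N)=\mathcal O((\log N)^{g+1/(2g+2)+\varepsilon})$. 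If $[\alpha]$ satisfies a $\dcart(g)$-Roth condition the corresponding bullet gives $C N^{g/2+\varepsilon}\|\omega\|_{s}$, whence $\Theta(\fq,\fl;N)=\mathcal O(N^{\varepsilon})$. If $[\alpha]$ is of bounded type the last bullet gives $C N^{g/2}\|\omega\|_{s}$, whence $\Theta(\fq,\fl;N)=\mathcal O(1)$. In each case $\mathcal E(N)$ is absorbed into the main term. Since being of Roth type, of bounded type, or satisfying the Diophantine conditions are, by definition, properties of the class $[\alpha]\in\Mod$, these are precisely the stated hypotheses on $\fq$, and the Corollary follows.
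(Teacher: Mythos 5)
Your overall architecture matches the paper's: reduce $\Theta(\fq,\fl;N)$ to a Birkhoff integral of a fixed smooth form along the Legendrian orbit via Theorem~\ref{pretheta_sums}, feed it into Theorem~\ref{equidistribution_intro} with $d=g$ and $T=N$, then divide by $N^{g/2}$. Those final bookkeeping steps are exactly right, including the observation that the Diophantine conditions are conditions on $[\alpha]\in\Mod$.

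Where your description diverges from the paper is in the nature of the reduction. You picture a form $f$ that \emph{smoothes} the array of Dirac spikes on the Legendrian orbit, an approximate identity $\Theta(\fq,\fl;N)=N^{-g/2}\lin\Birkhoff^{g,\alpha}_{Q(N)}m,\omega\rin+\mathcal E(N)$, and you flag the expected difficulty of bounding the discrepancy $\mathcal E(N)$. In the paper there is no $\mathcal E(N)$: the reduction is an exact identity. Working in the polarized model $\Heispol$, the paper uses the first-return map \eqref{first_return_map} to write elements of $\M$ in normal form $\exp(x\cdot X^\alpha)\cdot(0,\xi,t)\,\Lattice_{\mathrm{red}}$, and builds $\tilde\phi$ as a lattice periodisation of the product $\psi(x)\varphi(t)$ of a compactly supported bump $\psi$ on $\R^g$ (of unit mass, supported in a ball of radius $\varepsilon<\delta<1/2$) and a zero-mean test function $\varphi$ on the circle fibre. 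Because $\psi$ has support smaller than the lattice spacing, the integral $\int_U\psi(y+x+n)\,\D y$ over $U=[-\delta,N+\delta]^g$ (with $x=0$) is \emph{exactly} the indicator of $n$ lying in the relevant integer cube, and \eqref{Birkhoff_sums_theta} states the Birkhoff integral equals the finite theta sum on the nose. The linear form $\fl$ and the constant phase are absorbed into the base point $(\xi,t)$, $\fq$ into $\alpha$, and the function $\tilde\phi$ is entirely $N$-, $\fl$- and $t$-independent, so its Sobolev norms are fixed. Taking $\varphi(t)=e^{4\pi it}$ (which has zero mean on $\R/\tfrac12\Z$, so $\tilde\phi\in C_0^\infty(\M)$) then yields the theta sum. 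In short, the obstacle you anticipated --- estimating a discrepancy between discrete sum and continuous average --- does not arise, and the verification that $\tilde\phi\in C^\infty_0(\M)$ with $N$-independent norms is immediate from the construction rather than something to be checked after the fact.
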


The Diophantine conditions in terms of the symmetric matrix  $\fq$ 
are written and discussed in remark \ref{rem:Diophantine}. 

As we mentioned above, dynamical methods have already been used to
study the sums $\Theta (\fq, \fl ; N)$. G\" otze and
Gordin~\cite{MR1979719},  generalizing \cite{MR1682276}, show that
some smoothings of $\Theta (\fq , \fl ; N )$ have a limit distribution. 
See also Marklof~\cite{MR1691543, zbMATH02052017}.

Geometrical methods, similar to ours, to estimate finite theta sums
are also used by Griffin and Marklof~\cite{zbMATH06376860} and
Cellarosi and Marklof~\cite{2015arXiv150107661C}. They focus on the
the distributions of these sums as $\fq$ and $\fl$ are uniformly
distributed in the $g=1$ case. As they are only interested in theta
sums, they may consider a single irreducible representation $\rho$ of
the Heisenberg group and a single intertwining operator between $\rho$
and $L^2(\M)$. The other more technical difference is that as
$\fq$ and $\fl$ vary, it is more convenient to generalize the ergodic
sums~\eqref{eq:sect1:1} to the case when $\omega$ is  transverse current.

Estimates of theta sums are also crucial in the paper of G\"otze and
Margulis~\cite{2010arXiv1004.5123G}, which focuses on the finer
aspects of the ``quantitative Oppenheim conjecture''. There is
question of estimating the error terms when counting the number of
integer lattice points of given size for which an indefinite
irrational quadratic form takes values in a given interval. This is
clearly a subtler problem  than the one considered here.


\paragraph{Article organization.} In Section 2, we
introduce the necessary background on the Heisenberg and symplectic
groups. In section 3 we prove the results about the cohomology of
\isotropic subgroups of the Heisenberg groups. Section 4 deals with
the relation between Diophantine properties and dynamics on the Siegel
modular variety. Finally in section 5 we prove the main
equidistribution result and the applications to finite theta sums.

\smallskip
Applications to the rigidity problem of higher-rank Abelian
actions on  Heisenberg nilmanifolds, as a consequence of the tame
estimates for these actions, will be the subject of further works.

\smallskip
\noindent \textit{Acknowledgements. } This work was partially done
while L.~Flaminio visited the Isaac Newton Institute in Cambridge,
UK. He wishes to thank the Institute and the organisers of the
programme \emph{Interactions between Dynamics of Group Actions and
  Number Theory} for their hospitality. L.~Flaminio was supported in
part by the Labex~CEMPI (ANR-11-LABX-07).  S. Cosentino was partially
supported by CMAT - Centro de Matem\' atica da Universidade do Minho,
financed by the Strategic Project PEst-OE/MAT/UI0013/2014.


\section{Heisenberg group and Siegel symplectic geometry}
\label{sec:heis-group-sieg}
\subsection{The Heisenberg group and the Schr{\"o}dinger representation}\label{ssec:heis-group-schr}

\paragraph{The Heisenberg group and Lie algebra.}
\label{par:heis_not}
Let $\omega$ denote the \emph{canonical symplectic form } on $\R^{2g} \approx
\R^{g}\times \R^g $, i.e.\ the non-degenerate alternate bilinear form $\omega
((x,\xi), (x',\xi')) = \xi \cdot x' - \xi' \cdot x$, where we use the notations
$(x,\xi ) \in \R^g \times \R^g $ and $\xi \cdot x := \xi_1x_1+\dots + \xi_g
x_g$.  The \emph{Heisenberg group} over $\R^g$ (or the {\em real
  $(2g+1)$-dimensional Heisenberg group}) is the set $\Heis = \R^{g} \times
\R^g \times \R $ equipped with the product law
\begin{equation}
  \label{eq:sect2:1}
  (x, \xi , t) \cdot (x', \xi ', t') = 
  (x+x', \xi+\xi ', t+t' + \tfrac{1}{2} \omega ((x,\xi), (x',\xi')) ) 
\end{equation}
It is a central extension of $\R^{2g}$ by $\R$, as we have an exact sequence
\begin{equation*}
  \label{eq:sect2:2}
  0 \rightarrow \Center (\Heis)\rightarrow \Heis
  \rightarrow \R^{2g} \rightarrow 0 \, ,
\end{equation*}
with $\Center (\Heis)=\{ (0,0, t) \} \approx \R $.

The Lie algebra of $\Heis$ is the vector space $\hei =\R^g \times \R^g \times
\R$ equipped with the commutator
\[ [ (q, p ,t) , (q', p ' , t') ] = (0, 0, p\cdot q' - p ' \cdot q) \, .
\]
Let $T=(0,0,1) \in Z(\hei)$.  If $(X_i)$ is a basis of
$\R^g$, and $(\Xi_i)$ the symplectic dual basis, we obtain a basis
$(X_i,\Xi_j,T)$ of $\hei$ satifying the \emph{canonical commutation relations}:
\begin{equation}
  \label{eq:sect2:3}
  [X_i,X_j]=0,\quad [\Xi_i,\Xi_j]= 0,\quad [\Xi_i,X_j]= \delta_{ij} T, 
  \qquad 1\le i,j\le g.
\end{equation}

A basis $(X_i,\Xi_j,T)$ of~$\hei$ satisfying the relations~(\ref{eq:sect2:3})
will be called a \emph{Heisenberg basis of $\hei$}. The Heisenberg basis
$(X_i^0,\Xi_j^0,T)$ where $X_i^0$ and $\Xi_j^0$ are the standard bases of
$\R^g$, will be called the \emph{standard Heisenberg basis}.

Given a Lagrangian subspace of $\mathfrak l\subset\R^g \times (\R^g)'$, there
exists a Heisenberg basis $(X_i,\Xi_j,T)$ such that $(X_i)$ spans $\mathfrak
l$; in this case the span $\mathfrak l'=\lin\Xi_j\rin$ is also Lagrangian and
we say that the basis $(X_i,\Xi_j,T)$ is \emph{adapted to the splitting
  $\mathfrak l\times \mathfrak l'\times Z(\hei) $} of $\hei$.

\paragraph{Standard lattices and quotients.}


The set $\Lattice := \Z^g \times \Z^g \times \frac{1}{2} \Z$ is a discrete and
co-compact subgroup of the Heisenberg group $\Heis$, which we shall call the
\emph{standard lattice} of $\Heis$.  The quotient
\[
\M := \Heis/ \Lattice
\]
is a smooth manifold that will be called the \emph{standard Heisenberg
  nilmanifold}.  The natural projection map
\begin{equation}
  \label{circle_fibration}
  p\colon \M \to  \Heis/ (\Lattice \Center (\Heis))\approx  ( \Heis/
  Z(\Heis))/(\Lattice/\Lattice \cap \Center (\Heis))
\end{equation}
maps $\M$ onto a $2g$-dimensional torus $\T ^{2g} := \R^{2g} / \Z^{2g}$.  All
lattices of $\Heis$ were described by Tolimieri in \cite{MR487050}.
Henceforth we will limit ourselves to consider only a standard Heisenberg
nilmanifold, our results extending trivially to the general case. Observe that
$\exp T$ is the element of $\Center (\Heis)$ generating $\Lattice\cap \Center(\Heis)$.

\paragraph{Unitary $\Heis$-modules and Schr{\"o}dinger representation.}

The \emph{Schr\"odinger representation} is a unitary representation of $\Sc :
\Heis \to U (L^2(\R^g))$ of the Heisenberg group into the group of unitary
operators on $L^2(\R^g)$; it is explicitly given by
\[ (\Sc (x, \xi , t) \varphi )(y) = e^{i t - i \xi \cdot y - \tfrac{1}{2} i \xi
  \cdot x} \varphi (y+x), \quad ( \varphi \in L^2 (\R^g),\enspace(x, \xi , t)
\in \Heis ).
\]
(see \cite{MR983366}). Composing the Schr\"odinger representation with the
automorphism $ (x,\xi,t) \mapsto (|h|^{1/2}x, \epsilon |h|^{1/2} \xi, h t)$ of
$\Heis$, where $h \neq 0 $ and $\epsilon = \operatorname{sign}(h)=\pm1$, we
obtain the \emph{Schr\"odinger representation with parameters $h$}: for all
$\varphi \in L^2 (\R^g)$
\begin{equation}
  \label{eq:sect2:5}
  (\Sch_h (x, \xi , t) \varphi )(y) = 
  e^{i h t - i \epsilon |\hbar|^{1/2} \xi \cdot y - \tfrac{1}{2}i h \xi \cdot x}
  \varphi (y+ |\hbar|^{1/2}x). 
\end{equation}

According to the Stone-von Neumann theorem \cite{MR0030532}, the unitary
irreducible representations $\pi : \Heis\rightarrow U ( \mathcal H)$ of the
Heisenberg group on a Hilbert space $\mathcal H$ are
\begin{itemize}
\item either trivial on the center; then they are equivalent to a
  one-dimensional representation of the quotient group $\Center (\Heis) \backslash
  \Heis$, i.e.\ equivalent to a character of $\R^{2g}$
\item or infinite dimensional and unitarily equivalent to a Schr\"odinger
  representation with some parameter $h \neq 0$.
\end{itemize}

\paragraph{Infinitesimal Schr{\"o}dinger representation.}
The space of smooth vectors of the Schr{\"o}dinger representation $\Sch _h :
\Heis \to U (L^2(\R^g))$ is the space $\SB(\R^g) \subset L^2(\R^g)$ of
Schwartz functions (\cite{MR0209834}). By differentiating the Schr{\"o}dinger
representation $\Sch _h$ we obtain a representation of the Lie algebra $\hei$
on $\SB(\R^g)$ by essentially skew-adjoint operators on $L^2(\R^g)$; this
representation is called the \emph{infinitesimal Schr{\"o}dinger representation
  with parameter $h$}. With an obvious abuse of notation, we denote it by same
symbol $\Sch _h$; the action of $X\in \hei$ on a function $f$ will be denoted
$\Sch _h(X)f$ or $X.f$ when no ambiguity can arise.  Differentiating the
formulas~(\ref{eq:sect2:5}) we see that, for all $k=1,2,\dots , g$, we have
\begin{equation}
  \label{eq:sect2:6} \sch_h
  (X_k) =|h|^{1/2}\frac{\partial}{\partial x_k}, \qquad \sch_h (\Xi
  _k) =- i  \epsilon|h|^{1/2}\ x_k, \qquad \sch_h (T) = i h,
\end{equation}
where $(x_i)$ are the coordinates in $\R^g$ relative to the basis $(X_i)$ and
$\epsilon= \operatorname{sign}(h)$.  More generally, by the  Stone-von Neumann
theorem quoted above, given any Heisenberg basis $(X_i, \Xi_j,T)$ of $\hei$ the
formula above defines via the exponential maps a Schr{\"o}dinger representation
$\rho_h$ with parameter~$h$ on $L^2 (\R^g)$ such that:
\begin{gather*}
  \label{eq:sect2:4}
  \rho_h(e^{x_1X_1 + \dots + x_gX_g  }) f(y)=f(y +|\hbar|^{1/2}x),\\
  \rho_h(e^{\xi_1\Xi_1 + \dots + \xi_g\Xi_g }) f(y)=e^{-i\epsilon|h|^{1/2}
    \xi\cdot y }f(y), \quad \rho_h(e^{t T }) f(y)=e^{ith }f(y).
\end{gather*}

\subsection{Siegel symplectic geometry}

\paragraph{Symplectic group and moduli space.} 
Let $\Symp_{2g}(\R)$ be the group of symplectic automorphisms of the standard
symplectic space $(\R^{2g}, \omega)$. The group of automorphisms of $\Heis$
that are trivial on the center is the semi-direct product $\bigAuto =
\Symp_{2g}(\R) \ltimes \R^{2g}$ of the symplectic group with the group of inner
automorphisms $\Heis/ \Center (\Heis )\approx \R^{2g}$.

The group of automorphisms of $\Heis$ acts simply transitively on the set of
Heisenberg bases, hence we may identify the set of Heisenberg bases of $\hei$
with the group of automorphisms of $\Heis$. However since we are interested in
the action of subgroups defined in terms of a choice of a Heisenberg basis and
since the dynamical properties of such action are invariant under inner
automorphisms, we may restrict our attention to bases which are obtained
applying an automorphisms $\alpha \in \Symp_{2g}(\R)$ to the standard
Heisenberg basis.

Explicitly, the symplectic matrix written in block form $\alpha =
\left( \begin{smallmatrix} A &B \\ C & D \end{smallmatrix} \right) \in
\Symp_{2g}(\R)$, with the $g \times g $ real matrices $A,B,C$ and $D$
satisfying $ C^t A = \, A ^t C$, $ A^t D - \, C^t B= 1$ and $ D^t B= \, B^t D$,
acts as the automorphism
\[
(x, \xi , t) \mapsto \alpha (x,\xi , t) :=(Ax+ B \xi, C x + D \xi , t ) \, .
\]

\paragraph{Siegel symplectic geometry.}
The stabilizer of the standard lattice $ \Lattice< \Heis$ inside $\Auto$ is
exactly the group $\GMod$.  We call {\em moduli space} of the standard
Heisenberg manifold the quotient $\Mod = \Auto/\GMod $. 
We may regard $\Auto$ as the
\emph{deformation (or Teichm\"uller) space} of the standard Heisenberg manifold
$\M = \Heis/\Lattice$ and $\Mod$ as the moduli space of the standard
nilmanifold, in analogy with the $2$-torus case.

The \emph{Siegel
  modular variety}, the moduli space of principally polarized abelian varieties
of dimension $g$, is the double coset space  $\Sigma _g := \Uni _g\backslash\Auto/\GMod $, 
where   $\Uni _g$ is the maximal compact
subgroup  $\Symp_{2g}(\R) \cap \SO _{2g}(\R)$ of $\Symp_{2g}(\R) $, isomorphic to the
unitary group $U _g(\CC)$.
Thus, $\Mod$ fibers over $\Sigma _g$ with compact fibers~$ \Uni _g$. 

The quotient space  $\Uni _g\backslash\Auto/{\pm \mathbf 1_{2g}}$ may be identified to
Siegel upper half-space in the following way. Recall that
the {\em Siegel upper half-space} of degree/genus $g$ \cite{MR0164063} is the
complex manifold
\[
\Siegel _g := \{ Z \in \Sym _g (\CC) \, | \, \Im(Z) >0 \} \, 
\]
of symmetric complex $g \times g$ matrices $Z=X+iY$ with positive definite
symmetric imaginary part $\Im(Z)=Y$ and arbitrary (symmetric) real part $X$.

The symplectic group $\Symp_{2g} (\R)$ acts on the Siegel upper
half-space~$\Siegel _g $ as generalized M\" obius transformations. The left
action of the block matrix $ \alpha = \left(\begin{smallmatrix} A & B \\ C & D
    \\ \end{smallmatrix} \right) \in \Symp_{2g}(\R) $ is defined as
\begin{equation}
  \label{eq:sect2:7}
  Z \mapsto   \alpha (Z)   := (A  Z+B)(CZ+D)^{-1} \, .
\end{equation}
This action leaves invariant the Riemannian metric  $ds^2 = \tr
(dZ\, Y^{-1} d\overline{Z} \, Y^{-1} ) $.

As the the kernel of this action is given by $\pm \mathbf 1_{2g}$ 
and the stabilizer
of the point $i:= i \mathbf 1_{g} \in \Siegel _g$ coincides with $\Uni
_g$,  the map 
\[
\alpha \in \Auto \mapsto \alpha^{-1}(i)\in \Siegel _g
\]
induces an identification $ \Uni _g\backslash \Symp_{2g}(\R)/{\pm
  \mathbf 1_{2g}} \approx \Siegel _g $ and consequently an
identification of the Siegel
modular variety $\Sigma _g\approx \GMod\backslash \Siegel _g$. 

\begin{notation}
For  $\alpha\in\Auto $ we denote by  $[\alpha] := \alpha \, \Symp_{2g}
(\Z) $ its   projection on the moduli space $\Mod$.  We denote by  $\class{\alpha} :=
 \Uni _g\, \alpha \,\Symp_{2g}(\Z) $  
the projection of $\alpha$ to the Siegel
modular variety $\Sigma _g$. We remark that under the previous
identification $[[\alpha]]$ coincides with the point $\GMod
\, \alpha^{-1}(i)\in \GMod\backslash \Siegel _g $.
\end{notation}


\section{Cohomology with values in $\Heis$-modules}
\label{sec:cohom-with-valu-1}

Here we discuss the cohomology of the action of a subgroup $\Subgroup \subset
\Heis$ on a Fr{\'e}chet $\Heis$-module $F$, that is to say the Lie algebra
cohomology of $\mathfrak p=\operatorname{Lie}(\Subgroup)$ with values in the
$\Heis$-module $F$.  We assume that $\Subgroup$ is a connected Abelian Lie
subgroup of $\Heis$ contained in a Legendrian subgroup~$\mathsf L$.

The modules interesting for us are, in particular, those arising from the
regular representation of $\Heis$ on the space $C^\infty(\M)$ of smooth
functions on a (standard) nilmanifold $\M := {\Heis}\!/\Lattice$.
As mentioned in the introduction, the fact that $\Heis$ acts on $M$ by left
translations, implies that the space $F= C^\infty(\M)$ is a $\mathfrak
p$-module: in fact for all $V \in \mathfrak p$ and $f \in F$ one defines (cf.\
formula \eqref{eq:1})
\[
(V .f )(m) = \left.\frac{\D}{\D{}t}f (\exp (-t V). m ) \right|_{t=0}, \quad (m
\in M) \, .
\]
As $\Subgroup$ is an Abelian group, the differential on the cochain complex
$A^\ast (\mathfrak p , F)=\Lambda^*\mathfrak p\otimes F$ of $F$-valued
alternating forms on $\mathfrak p$ is given, in degree~$k$, by the usual
formula
\[
\D{}\omega(V_0, \dots, V_k)= \sum_{j=0}^k (-1)^{j} \, V_j.\omega(V_0, \dots,
\widehat{V_j}, \dots, V_k) \, .
\]
\begin{notation}
  When $F$ is the space of $C^\infty$-vectors of a representation $\pi$ of
  $\Heis$ we may denote the complex $A^\ast (\mathfrak p , F)$ also by the
  symbol $A^\ast (\mathfrak p , \pi^\infty)$.
\end{notation}

In order to study the cohomology of the complex $A^\ast (\mathfrak p ,
C^\infty(\M))$, it is convenient to observe that the projection $p$ of $M$ onto
the quotient torus~$\T^{2g}$ (see~\eqref{circle_fibration}) yields a
$\Heis$-invariant decomposition of all the interesting function spaces on $M$
into functions with zero average along the fibers of $p$ --- we denote such
function spaces with a suffix $0$ --- and functions that are constant along
such fibers; these latter functions can be thought of as pull-back of functions
defined on the quotient torus~$\T$; hence we write, for example,
\begin{equation}
  \label{eq:sect3:1}
  C^\infty(\M) =  C_0^\infty(\M) \oplus p^* (C^\infty(\T))
  \approx  C_0^\infty(\M) \oplus C^\infty(\T),
\end{equation}
and we have similar decompositions for $L^2(\M)$ and --- when a suitable
Laplacian is used to define them --- for the $L^2$-Sobolev spa\-ces~$W^s(\M)$.

If we denote by $\bar {\Subgroup}$ the projection of $\Subgroup$ into $\T^{2g}$
and by $\bar {\mathfrak p}$ its Lie algebra, we obtain that we may split the
complex ${A}^\ast (\mathfrak p , C^\infty(\M))$ into the sum of ${A}^\ast
(\mathfrak p , C_0^\infty(\M))$ and ${A}^\ast (\mathfrak p , p^*
(C^\infty(\T^{2g})))\approx {A}^\ast (\bar {\mathfrak p} , C^\infty(\T^{2g}))
$. The action of $\bar {\Subgroup}$ on $\T^{2g}$ being linear, the computation
of the cohomology of this latter complex is elementary and folklore when $\dim
\bar {\Subgroup} =1$. For lack of references we review it in the next
section~\ref{sect:3.1} for any $\dim \bar {\Subgroup}$. In
section~\ref{sect:3.2} we shall consider the cohomology of ${C}^\ast (\mathfrak
p , C_0^\infty(\M))$.

\begin{remark}
  \label{rem:sect3:1}
  To define the norm of the Hilbert Sobolev spaces $\Sobolev^s(\M)$, we fix a
  basis $(V_i)$ of the Lie algebra $\hei$, set $ \Lap= - \sum V_i^{2} $ and
  define $\| f\|_s^2 = \lin f , (1 + \Delta)^s f\rin $ where $ \lin \cdot ,
  \cdot \rin $ is the ordinary $L^2$ Hermitean product. This has the advantage
  that for any Hilbert sum decomposition $L^2(\M)= \bigoplus_i H_i$ of $L^2(\M)$
  into closed $\Heis$-invariant subspaces we also have a Hilbert sum
  decomposition $W^s(\M)= \bigoplus_i W^s(H_i)$ of $\Sobolev^s(\M)$ into closed
  $\Heis$-invariant subspaces $\Sobolev^s(H_i):=\Sobolev^s(\M)\cap H_i$.
\end{remark}

\paragraph{Currents.}
Let $F$ be any tame Fr\'echet $\hei$-module, graded by increasing norms $(\|
\cdot \|_s)_{s\ge 0}$, defining Banach spaces $W^s \subset F$.

The space of continuous linear functionals on $A ^k (\mathfrak p , F)= \Lambda
^k \mathfrak p \otimes F$ will be called \emph{the space of currents of
  dimension $k$} and will be denoted $A _ k (\mathfrak p , F') $ where~$F'$ is
the strong dual of $F$; the notation is justified by the fact that the natual
pairing $(\Lambda _k \mathfrak p, \Lambda ^k \mathfrak p)$ between $k$
vectors and $k$-forms allows us to write
$A _ k (\mathfrak p , F') \approx \Lambda ^k \mathfrak p \otimes F'$.  Endowed
with the strong topology,  $A _ k (\mathfrak p , F') $ is the inductive limit of
the spaces $\Lambda ^k \mathfrak p \otimes (W^s)'$.
  
The \emph{boundary operators} $\partial: A _ k (\mathfrak p , F') \to A _ {k-1}
(\mathfrak p , F') $ are, as usual, the adjoint of the differentials $\D$, hence
they are defined by $\lin \partial T , \omega \rin = \lin T, \D \omega \rin$.  A
\emph{closed} current~$T$ is one such that $\partial T = 0$.  We denote by $Z _
k (\mathfrak p,F')$ the space of closed currents of dimension $k$ and by $Z _ k
(\mathfrak p,(W^s)')$ the space of closed currents with coefficients
in~$(W^s)'$.

\subsection{Cohomology of a linear $\R^d$ action on a torus}
\label{sect:3.1}
Let $\Latt$ be a lattice subgroup of $\R^\ell$ and let $\R^\ell$ acts on the
torus $\Torus^\ell =\R^\ell/\Latt$ by translations. We consider the
restriction of this action to a subgroup $\mathsf Q< \R^\ell$ isomorphic to
$\R^d$, with Lie algebra  $\mathfrak q $. Then the Fr\'echet space $C^\infty(\Torus^\ell )$ is a
$\mathfrak q$-module. In this section we consider the cohomology of
the associated complex $A^\ast (\mathfrak q,C^\infty(\Torus ^\ell
))$.

Let  $\Latt^\perp = \{ \lambda \in (\R^\ell)' \, | \, \lambda \cdot n =0 \, \, \forall n \in \Latt \} $ denotes the dual lattice of
$\Latt$.
We say that \emph{the subspace $\mathfrak q $ satisfies a Diophantine condition
  of exponent $\tau>0$} with respect to the lattice $\Latt$, and we write
$\mathfrak q \in \hbox{DC}_\tau(\Latt)$, if
\begin{equation}
  \label{sect3:eq:2}
  \exists\, C>0\quad \text{such that} \quad  \sup_{V \in \mathfrak q \setminus \{0\}}
  \frac{| \lambda \cdot V | }{\Vert  V \Vert} \ge C
  \Vert\lambda\Vert^{-\tau} ,\quad 
  \forall \lambda \in \Latt^\perp \setminus \{0\}.
\end{equation}
We set
\[
\mu(\mathfrak q, \Latt) = \inf \{\tau\colon \mathfrak q \in
\hbox{DC}_\tau(\Latt)\} \, . 
\]

\begin{remark}
  The Diophantine condition considered here is dual to the Diophantine
  condition on subspaces of $(\R^\ell)'\approx \R^\ell$ considered by Moser
  in~\cite{MR1069487}. In fact, if we set $\mathfrak q^\perp = \{\lambda \in
  (\R^\ell)' \colon \ker\lambda \supset\mathfrak q \} $,  the condition
  \eqref{sect3:eq:2} is equivalent to
  \[
  \exists\, C>0\quad \text{such that} \quad \dist (\lambda, \mathfrak q^\perp)
  \ge C \|\lambda\|^{-\tau} ,\qquad \forall \lambda \in \Latt^\perp \setminus
  \{0\}.
  \]
  Thus, by Theorem~2.1 of~\cite{MR1069487}, the inequalities \eqref{sect3:eq:2}
  are possible only if $\tau \ge \ell/d -1$,  and the set of subspaces $\mathfrak
  q^\perp$ with $\mu(\mathfrak q, \Latt)=\ell/d -1 $ has full Lebesgue
  measure in the Grassmannian $\operatorname{Gr}(\R^d;\R^\ell)$.
\end{remark}

We say that $\mathfrak q$ is \emph{resonant (w.r. to $\Latt$)} if, 
for some $\lambda \in \Latt ^\perp \setminus \{0\}$, we have $\mathfrak q
\subset \ker \lambda$; in this case the closure of the orbits of $\mathsf Q$ on
$\R^\ell/\Latt$ are contained in lower dimensional tori, the orbits of the
rational subspace $\ker \lambda$, and we may understand this case by
considering a lower dimensional ambient space $\R^{\ell'}$ with $\ell'<\ell$.

Thus we may limit ourselves to non-resonant $\mathfrak q$; in this case, if
$\mathfrak q$ is not Diophantine, we have $\mu(\mathfrak q, \Latt)=+\infty$
and we say that $\mathfrak q$ is \emph{Liouvillean (w.r.\ to $\Latt$)}.

\begin{theorem}[Folklore]
  \label{thm:sect3:1}
  Let $\mathfrak q\in \operatorname{Gr}(\R^d;\R^\ell)$ be a non-resonant subspace 
  with respect to the lattice $\Latt < \R^\ell$. Then the action of $\mathsf
  Q=\exp \mathfrak q$ on the torus $\Torus ^\ell:=\R^\ell/\Latt$ is
  cohomologically $C^\infty$-stable if and only if $\mathfrak q\in
  \hbox{DC}_\tau(\Latt)$ for some $\tau >0$.  In this case we have
  \[
  H^*(\mathfrak q, C^\infty(\Torus ^\ell)) \approx \Latt ^*\mathfrak q \, , 
  \]
  the cohomology classes being represented by forms with constant coefficients.
  Furthermore,  the  $\mathfrak q$-module $C^\infty(\Torus ^\ell)$ is tamely
    cohomologically $C^\infty$-stable  and  has tame splitting in all degrees.
\end{theorem}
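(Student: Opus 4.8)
The plan is to diagonalise the whole problem by Fourier analysis on $\Torus^\ell$. First I would decompose $C^\infty(\Torus^\ell)$, completely reducibly, into the characters $\CC\,e_\lambda$ with $e_\lambda(x)=e^{2\pi i\lambda\cdot x}$, $\lambda$ ranging over $\Latt^\perp$; by Remark~\ref{rem:sect3:1} this is, at each Sobolev level, an orthogonal Hilbert decomposition $\Sobolev^s(\Torus^\ell)=\bigoplus_{\lambda\in\Latt^\perp}\CC\,e_\lambda$ with $\|f\,e_\lambda\|_s=(1+4\pi^2\|\lambda\|^2)^{s/2}\,|f|$. Each line $\CC\,e_\lambda$ is a $\mathfrak q$-submodule on which $V\in\mathfrak q$ acts as the scalar $-2\pi i\,\lambda\cdot V$, so the differential of $A^\ast(\mathfrak q,C^\infty(\Torus^\ell))$ is continuous, preserves each summand $\Lambda^\ast\mathfrak q'\otimes\CC\,e_\lambda$, and therefore splits as a (completed) direct sum of its restrictions; on the $\lambda$-summand it is just left exterior multiplication by the $1$-form $\theta_\lambda\in\mathfrak q'$, $\theta_\lambda(V)=-2\pi i\,\lambda\cdot V$. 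Non-resonance of $\mathfrak q$ means precisely that $\theta_\lambda=0$ only for $\lambda=0$.

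Next I would treat one summand at a time. For $\lambda=0$ the differential vanishes and this summand contributes $\Lambda^\ast\mathfrak q'$ to everything. For $\lambda\neq0$ the summand is the Koszul complex of the non-zero form $\theta_\lambda$: fixing a Euclidean structure on $\mathfrak q$, let $W_\lambda\in\mathfrak q$ be dual to $\theta_\lambda$ and put $v_\lambda=W_\lambda/\|W_\lambda\|^2$, so that $\theta_\lambda(v_\lambda)=1$ and $\|v_\lambda\|=\|\theta_\lambda\|^{-1}=\bigl(2\pi\sup_{V\in\mathfrak q\setminus\{0\}}|\lambda\cdot V|/\|V\|\bigr)^{-1}$. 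Cartan's identity $\theta_\lambda\wedge\iota_{v_\lambda}\eta+\iota_{v_\lambda}(\theta_\lambda\wedge\eta)=\eta$ exhibits $\iota_{v_\lambda}$ as a contracting homotopy, so each $\lambda\neq0$ summand is acyclic; I would also record the bounds $\|\iota_{v_\lambda}\eta\|_s\lesssim\|v_\lambda\|\,\|\eta\|_s$ and $\|\theta_\lambda\wedge\iota_{v_\lambda}\eta\|_s\lesssim\|\eta\|_s$ (using $\|\theta_\lambda\|\,\|v_\lambda\|=1$), valid for all $s$ with constants depending only on $d$ and the degree. The crucial observation is the dictionary: $\mathfrak q\in\mathrm{DC}_\tau(\Latt)$ says exactly that $\|v_\lambda\|\le C\,\|\lambda\|^{\tau}$ for all $\lambda\in\Latt^\perp\setminus\{0\}$, while the non-resonant non-Diophantine (i.e.\ Liouville) case $\mu(\mathfrak q,\Latt)=+\infty$ is the failure of any such polynomial bound.

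For the ``if'' direction I would sum the mode-wise data. Given a closed $\omega=\sum_\lambda\omega_\lambda$ with $\omega_0=0$, set $\D_{-1}\omega:=\sum_{\lambda\neq0}\iota_{v_\lambda}\omega_\lambda$; since $\|\iota_{v_\lambda}\omega_\lambda\|_s\lesssim\|\lambda\|^{\tau}\|\omega_\lambda\|_s\le\|\omega_\lambda\|_{s+\tau}$ the series converges in every $\Sobolev^s$, hence in $C^\infty$, with $\|\D_{-1}\omega\|_s\lesssim\|\omega\|_{s+\tau}$, and $\D\,\D_{-1}\omega=\sum_{\lambda\neq0}\theta_\lambda\wedge\iota_{v_\lambda}\omega_\lambda=\sum_{\lambda\neq0}\omega_\lambda=\omega$ by Cartan's identity and closedness. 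Hence $B^k(\mathfrak q,C^\infty(\Torus^\ell))=\{\omega\in Z^k:\omega_0=0\}$ is the intersection of two closed subspaces, so in every degree the action is tamely cohomologically $C^\infty$-stable, with a primitive of tame degree $\mu(\mathfrak q,\Latt)+\varepsilon$ (take $\tau=\mu(\mathfrak q,\Latt)+\varepsilon$); the projection $\omega\mapsto\omega_0$ then identifies $H^\ast(\mathfrak q,C^\infty(\Torus^\ell))=Z^\ast/B^\ast$ with $\Lambda^\ast\mathfrak q'$, the classes being represented by forms with constant coefficients. Finally, for arbitrary $\omega\in A^k$ the operator $M^k\omega:=\sum_{\lambda\neq0}\theta_\lambda\wedge\iota_{v_\lambda}\omega_\lambda$ converges in every $\Sobolev^s$ with $\|M^k\omega\|_s\lesssim\|\omega\|_s$, lands in the now-closed space $B^k$ as a $C^\infty$-limit of coboundaries, and restricts to the identity on $B^k$; this is the desired tame splitting (here of degree $0$).

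Conversely, I would show that if $\mathfrak q$ is non-resonant but lies in no $\mathrm{DC}_\tau(\Latt)$ then $B^1$ is not closed. Pick $\lambda_n\in\Latt^\perp$ with $\|\lambda_n\|\ge 2^n$ and $\|\theta_{\lambda_n}\|\le\|\lambda_n\|^{-n}$, and form the $1$-form $\omega:=\sum_n\theta_{\lambda_n}e_{\lambda_n}$; it is closed ($\theta_{\lambda_n}\wedge\theta_{\lambda_n}=0$) and lies in $C^\infty$ because $\|\omega\|_s^2\lesssim\sum_n\|\lambda_n\|^{2s-2n}<\infty$ for every $s$, yet its only formal primitive is $\sum_n e_{\lambda_n}$ (up to an additive constant), which is not even in $L^2$; so $\omega\notin B^1$, while it is the $C^\infty$-limit of its Fourier truncations $\D\bigl(\sum_{n\le N}e_{\lambda_n}\bigr)\in B^1$. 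I expect the only genuinely delicate points to be making the Fourier splitting of the Fréchet complex and the interchange of $\D$ with infinite sums rigorous, together with the identification of $\overline{B^k}$; once those and the $\|v_\lambda\|$-versus-$\|\lambda\|$ dictionary are in place, the tame estimates are routine bookkeeping.
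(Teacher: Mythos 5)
Your proof is correct and follows the same route as the paper's: Fourier decomposition of $C^\infty(\Torus^\ell)$, constant modes contributing $\Lambda^*\mathfrak q'$, and mode-by-mode inversion of $\D$; your contraction $\iota_{v_\lambda}$ is literally the operator the paper writes as $H^{-1}\D^*$, your Cartan identity being the Koszul reformulation of the paper's scalar Hodge identity $\D^*\D+\D\D^* = H\,\mathrm{Id}$. Two places where you are more explicit than the paper: you exhibit the tame splitting directly as the mode-wise projection $M^k\omega=\sum_{\lambda\neq 0}\theta_\lambda\wedge\iota_{v_\lambda}\omega_\lambda$, of tame degree $0$ since $\|\theta_\lambda\|\,\|v_\lambda\|=1$ (the paper only asserts a tame splitting without displaying the map or its degree), and you prove the ``only if'' part by building a Liouville cocycle, where the paper merely refers to Katok's $1$-dimensional argument. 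One loose end worth a line: you should justify the existence of $\lambda_n$ with $\|\lambda_n\|\ge 2^n$ and $\|\theta_{\lambda_n}\|\le\|\lambda_n\|^{-n}$ --- failure of $\mathrm{DC}_{2n}$ gives $\lambda$ with $\|\lambda\|^{2n}\|\theta_\lambda\|$ arbitrarily small, and since non-resonance bounds $\|\theta_\lambda\|$ below on any bounded subset of $\Latt^\perp\setminus\{0\}$, such $\lambda$ must have arbitrarily large norm, so one extracts a subsequence with $\|\lambda_n\|\ge 2^n$ and then $\|\theta_{\lambda_n}\|<\|\lambda_n\|^{-2n}\le\|\lambda_n\|^{-n}$.
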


\begin{proof}
  Without loss of generality we may assume $\Latt = \Z^\ell$. 
 The $s$-Sobolev norm of a function $f  \in C^{\infty}(\Torus ^\ell) $ with  
  Fourier series representation $f(x) = \sum _{n \in \Z^{\ell}} \hat f (n) \,e^{2\pi i n \cdot x }$ is given  by
  \[
  \| f \| _{s} ^2= \sum _{n \in \Z^{\ell} } \left( 1+ \| n \| ^2 \right)^{s} \, | \hat f (n) |
  ^2 \, .
  \]
  We have a direct sum decomposition $C^\infty(\Torus ^\ell) =
  \CC\lin 1 \rin \oplus C^\infty_0(\Torus ^\ell)$ ,  where $ \CC\lin 1
  \rin$ is the space of constant funtions and $C^\infty_0(\Torus ^\ell)$ 
  is the space of zero mean smooth functions on $\Torus ^\ell$. 
  An analogous \emph{orthogonal} decomposition 
  $W^s(\Torus ^\ell)= \CC\lin 1 \rin \oplus W_0^s(\Torus ^\ell) $ holds for
  Sobolev spaces. 
 Hence every
$\omega\in Z^k(\mathfrak q,C^\infty(\Torus ^\ell))$ splits (tamely) into a sum $
\omega =\omega_0+\omega_{c}$ of a form  $\omega_0\in Z^k(\mathfrak
q,C_0^\infty(\Torus ^\ell))$ and a constant coefficient form
$\omega_c\in \Lambda^k \mathfrak q$.  Consequently,  the cohomology
$H^*(\mathfrak q, C^\infty(\Torus ^\ell))$ splits into the sum of
 cohomology classes  represented by forms
with constant coefficients and $H^*(\mathfrak q, C_0^\infty(\Torus ^\ell))$.
We now show that, under the assumption  (\ref{sect3:eq:2}) on $\mathfrak
q$, we have  $H^*(\mathfrak q, C_0^\infty(\Torus ^\ell))=0$.

By Fourier analysis, $C^\infty_0(\Torus ^\ell)$ splits into a
  $L^2$-orthogonal sum of one-di\-men\-sio\-nal modules $\CC_n\approx \CC$, $n
  \in \Z^\ell\setminus\{0\}$; the space $\mathfrak q$ acts on $\CC_n$ by
  \[
  V . \, z = i \, ( n \cdot V ) \, z, \qquad \forall \, z \in \CC_n ,\ \forall  \, V \in
  \mathfrak q;
  \]
  hence,  for $\omega \in \Lambda^k \mathfrak q\otimes \CC_n$ and  $V_0, \dots,
  V_k\in \mathfrak q$ ,
  \[
  \D{}\omega(V_0, \dots, V_k)= \sum_{j=0}^k i \, ( n \cdot  V_j)  \, \omega(V_0,
  \dots,\widehat{V}_j, \dots, V_k) \, . 
  \]
  Let $X_1,X_2,\dots , X_d $ be a basis of $\mathfrak q$, and define the
  co-differential $\D{}^*$ by
  \[
  \D{}^* \eta (V_1, \dots, V_k) := -\sum_{m=1}^d i \, (  n \cdot X_m ) \,  \eta
  (X_m , V_1, \dots, V_k).
  \]
  We have $H=\D{}^*\circ\D{}+ \D{}\circ\D{}^* = \left(\sum_{m=1}^d | n \cdot 
    X_m |^2 \right)\operatorname{Id_{\Lambda^* \mathfrak q}}$. It follows
  that if $\omega \in \Lambda^k \mathfrak q\otimes \CC_n$ is closed then
  $\omega=\D{}\Omega$ with
  \[
  \Omega = H^{-1}\D{}^* \omega.
  \]
  We conclude that the map $\D_{-1}:=H^{-1}\D{}^*$ is a right 
  inverse
  of $\D$ on the space  $Z^k(\mathfrak q,\CC_n)$ of closed forms.
  From the definitions of the maps $\D^*$ and $H$ we obtain the estimate
  \[
  \Vert  \D_{-1}  \omega \Vert_0 \le \Big(\sum_{m=1}^d |n \cdot  X_m
   |^2\Big)^{-\frac 1 2}\Vert \omega \Vert_0, \quad \forall \omega \in
  Z^k(\mathfrak q,\CC_n).
  \]
  It is easily
  seen that the Diophantine condition (\ref{sect3:eq:2}) is
  equivalent to the existence of a constant $C>0$ such that $ \sum_{m=1}^d |
  n \cdot X_m |^2> C \,\Vert n\Vert^{-2\tau}$ for all $n \in \Z^\ell$. Hence,
  for some constant $C>0$ we have $ \Vert  \D_{-1}  \omega \Vert_{0} \le C^{-1} \Vert n
  \Vert ^{\tau}\Vert \omega \Vert_0 $, and therefore 
  \[
  \Vert  \D_{-1}  \omega  \Vert_{s} \le C^{-1} \Vert \omega \Vert_{s+\tau}
  \]
  for all $s\in \R$ and all $\omega \in Z^k(\mathfrak q,\CC_n)$.

  Since the Sobolev space $( W^s_0(\Torus ^\ell), \|\cdot\|_s) $ is
  equal to the Hilbert direct sum $ \bigoplus_{n\neq 0} ( \CC_n,
  \|\cdot\|_s) $, the map $\D{}_{-1}$ extends to a tame map
\[
  \D_{-1}\colon  Z^k(\mathfrak q,C_0^\infty(\Torus ^\ell)) \to A^{k-1}(\mathfrak q,C_0^\infty(\Torus ^\ell)) .
\]
satisfying a tame estimate of degree $\tau$ with base $0$ and
associating a primitive to each closed form. 

Combining these results with the previous remark on constant
coefficient forms, we conclude that under the Diophantine assumption
(\ref{sect3:eq:2}) the $\mathfrak q$-module $C^{\infty}(\Torus ^\ell)$ 
is  tamely cohomologically $C^\infty$-stable and has a tame
splitting in all degrees.

The ``only if'' part of the statement may be proved as in the case
$\dim \mathsf Q=1$ (see Katok~\cite[page 71]{MR2008435}).
\end{proof}

\subsection{Cohomology with values in $C^\infty_0(M)$}
\label{sect:3.2}

The previous section settles the study of the cohomology of the action of a
abelian subgroup $\Subgroup \subset \Heis$ with values in the
$\Heis$-sub-module $p^* (C^\infty(\Torus))$. We are left to consider the action
$\Subgroup$ with values in the $\Heis$-sub-module $C^\infty_0(\M)$.

Since the center $Z(\Heis)$ has spectrum $2\pi\Z\setminus \{0\}$ on $L_0^2(\M)$,
the space $L_0^2(\M)$ splits as a Hilbert sum of Schr{\"o}\-din\-ger
$\Heis$-modules $H_i$ equivalent to $\rho^{h}$, with $h \in 2\pi\Z\setminus
\{0\}$. The same remark applies to the the Sobolev space $\Sobolev_0^s(\M)$, 
which splits as a Hilbert sum of the (non-unitary) $\Heis$-modules
$\Sobolev_0^s(H_i) = H_i \cap \Sobolev_0^s(\M)$.

The space $C^\infty(\M)\cap H_i$ can be characterized as the space
$C^\infty(H_i)$ of $C^\infty$ vectors in the $\Heis$-module $H_i$; it is a tame
graded Fr\'echet space topologized and graded by the increasing family of
Sobolev norms.  This leads us to consider the action of $\Subgroup$ with values
in the space of smooth vectors of a Schr{\"o}dinger $\Heis$-module.


Thus let $\Subgroup$ be an \isotropic subgroup of $\Heis$ of
dimension~$d$. Fix a Legendrian subgroup $\mathsf L$ such that $\Subgroup <
\mathsf L < \Heis$.  Let $\rho_h$ be a Schr{\"o}dinger representation, with
$|h| > h_0 >0$,

Since the group of automorphisms of $\Heis$ acts transitively on Heisenberg
bases, we may assume that we have fixed a Heisenberg basis $(X_i,\Xi_j,T)$ of
$\hei$ such that $(X_1, \dots ,X_d)$ forms a basis of $\mathfrak p$ and $(X_1,
\dots ,X_g)$ is a basis of $ \operatorname{Lie}(\mathsf L)$. This yields
isomorphisms $\mathsf L\approx \R^g$ and $\Subgroup \approx \R^d$,  with the
latter group embedded in~$\R^g$ via the first $d$~coordinates.  With these
assumptions, the formulas yielding the representation $\rho_h$ on $L^2(\R^g)$
are given by the equations~(\ref{eq:sect2:5}) and the space~$\rho_h^\infty$ of
$C^\infty$ vectors for the representation $\rho_h$ is identified with $\mathcal
S(\R^g)$ on which $\hei$ acts by the formulas~(\ref{eq:sect2:6}).

\paragraph{Homogeneous Sobolev norms.} 
The infinitesimal representation extends to a representation of the enveloping
algebra $\mathfrak U(\hei)$ of $\hei$; this allows us to define the
``sub-Laplacian'' as the image via $\rho_h$ of the element
\[ \subLap_g = -( X_1^2 + \dots + X_g^2+\Xi_1^2 + \dots + \Xi_g^2)\in \mathfrak
U(\hei).
\]
Formulas~(\ref{eq:sect2:6}) yield
\begin{equation}
  \label{eq:sect3:3}
  \rho_h(\subLap_g)= | h | \, \left( |x|^2 - \sum_{k=1}^g \frac{\partial ^2 }{ \partial x_k^2}   \right)  = | h |  \, \rho_1(\subLap_g) \, . 
\end{equation}
Since $ \subLap_g$ is a positive operator with (discrete) spectrum bounded
below by $g | h | $, we define the space $W^s(\rho_h, \R^g)$ of functions of
Sobolev order $s$ as the Hilbert space of vectors $\varphi$ of finite
\emph{homogeneous} Sobolev norm
\begin{equation}
  \label{eq:sect3:3b}
  \norm \varphi \norm ^2_{s,h} :=\lin( \rho_h(\subLap_g))^s \varphi, \varphi\rin
\end{equation}

This makes explicit the fact that the space~$\rho_h^\infty$ of $C^\infty$ vectors
for the representation~$\rho_h$ coincides with $\mathcal S(\R^g)$.

The homogeneous Sobolev norms~\eqref{eq:sect3:3b} are not the standard ones
(later on we shall make a comparison with standard Sobolev norms). They have
however the advantage that the norm on $W^s(\rho_h, \R^g)$ is obtained by
rescaling by the factor $| h|^{s/2}$ the norm on $W^s(\rho_1, \R^g)$. For this
reason we can limit ourselves to study the case $h =1$; later we shall consider
the appropriate rescaling. Thus we denote $\rho=\rho_1$ and, to simplify, we
write $\subLap_g$ for $\rho(\subLap_g)$ and $W^s (\R^g$) for $W^s
(\rho_1,\R^g)$: also we set
\[
\norm \varphi \norm_s := \norm \varphi \norm_{s,1} = \| \subLap_g^{s/2} \varphi
\| _0 \, .
\]

\paragraph{The cochain complex $A^*(\mathfrak p, \rho^\infty)$.}

It will be convenient to use the identification $\R^g\approx\R^d \times \R^{g
  -d} $ and, accordingly, to write $\varphi(x,y)$, with $x\in \R^d$ and $y\in
\R^{g -d}$, for a function $\varphi$ defined on~$\R^g$.  We also write $\D{}x=
\D{}x_1\cdots\D{}x_d$.  Then, by the formula~(\ref{eq:sect2:5}), the group
element $q\in \Subgroup\approx \R^d$ acts on $\varphi\in \SB (\R^g)$ according
to
\[
\varphi ( x,y ) \mapsto \varphi (x+ q,y).
\]
Thus the complex $A^*(\mathfrak p, \rho^\infty)$ is identified with the complex
of differential forms on $\mathfrak p\approx\R^d$ with coefficients in
$\mathcal S(\R^g)$.  It will be also convenient to define the operators
$\subLap_d '= \left( |x|^2 - \sum_{k=1}^g \frac{\partial ^2 }{ \partial x_k^2}
\right)$ on $\SB (\R^d)$ and $\subLap_{g-d} ''= \left( |y|^2 - \sum_{k=1}^{g-d}
  \frac{\partial ^2 }{ \partial y_k^2} \right)$ on $\SB (\R^{g-d})$; they may
be also considered as operators on $\SB (\R^g)$, and then $\subLap_g =
\subLap_d '+ \subLap _{g-d} ''$  .

\begin{lemma}
  \label{lem:sect3:1}
  Consider $\mathcal S(\R^g)$ as a $\mathsf H^g$-module with parameter~$h=1$.
  Define the distribution $ \Int_g \in \SB '(\R^g)$ by
  \[
  \Int_g(f) := \int_{\R^g} f(x)\,\D{}x
  \]
  for $f\in \mathcal S(\R^g)$. Then, for any $s>g/2$, $\Int_g$ extends to a bounded
  linear functional on $W^s(\R^g)$, that is $\Int_g\in \Sobolev^{-s}(\R^g)$.
\end{lemma}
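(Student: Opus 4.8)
The plan is to diagonalise $\subLap_g$ and test $\Int_g$ against the resulting orthonormal basis. Write $\subLap_g=\sum_{j=1}^g(x_j^2-\partial_{x_j}^2)$; its $L^2$-normalised eigenfunctions are the tensor Hermite functions $\psi_\alpha(x)=\prod_{j=1}^g\psi_{\alpha_j}(x_j)$, $\alpha\in\N^g$, with $\subLap_g\psi_\alpha=\lambda_\alpha\psi_\alpha$, where $\lambda_\alpha:=2|\alpha|+g$ and $|\alpha|:=\alpha_1+\dots+\alpha_g$. They form an orthonormal basis of $L^2(\R^g)$, and from the definition $\norm f\norm_s=\|\subLap_g^{s/2}f\|_0$ of the homogeneous norm one has, for $f=\sum_\alpha\hat f_\alpha\psi_\alpha$ with $\hat f_\alpha:=\langle f,\psi_\alpha\rangle$, the identity $\norm f\norm_s^2=\sum_\alpha\lambda_\alpha^{\,s}|\hat f_\alpha|^2$. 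Since finite linear combinations of the $\psi_\alpha$ are dense in $W^s(\R^g)=W^s(\rho_1,\R^g)$, it is enough to prove the a priori estimate $|\Int_g(f)|\le C\,\norm f\norm_s$ for $f\in\mathcal S(\R^g)$; the functional then extends to $W^s(\R^g)$ by density. For $f\in\mathcal S(\R^g)$ the coefficients $\hat f_\alpha$ decay superpolynomially in $\lambda_\alpha$ (because $\lambda_\alpha^N|\hat f_\alpha|=|\langle\subLap_g^Nf,\psi_\alpha\rangle|\le\|\subLap_g^Nf\|_0$), and the Hermite partial sums of $f$ converge to $f$ in every $W^s(\R^g)$, hence in $L^1(\R^g)$; therefore
\[
\Int_g(f)=\sum_{\alpha\in\N^g}\hat f_\alpha\,\Int_g(\psi_\alpha),
\]
the series being absolutely convergent. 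By Cauchy--Schwarz,
\[
|\Int_g(f)|\le\norm f\norm_s\cdot\Big(\sum_{\alpha\in\N^g}\lambda_\alpha^{-s}\,|\Int_g(\psi_\alpha)|^2\Big)^{1/2},
\]
so everything comes down to showing that the last sum is finite when $s>g/2$.

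The key computation is the estimate of the ``moments'' $\Int_g(\psi_\alpha)=\prod_{j=1}^g\int_\R\psi_{\alpha_j}(t)\,\D t$. I would prove the one-dimensional bound
\[
\Big|\int_\R\psi_n(t)\,\D t\Big|\le C\,(1+n)^{-1/4},\qquad n\in\N,
\]
the integral vanishing for odd $n$. Indeed, since the $\psi_n$ are eigenfunctions of the Fourier transform, $\int_\R\psi_n(t)\,\D t$ is a fixed multiple of $\widehat{\psi_n}(0)=(-i)^n\psi_n(0)$, and $\psi_n(0)=0$ for $n$ odd while $\psi_{2m}(0)=(-1)^m\pi^{-1/4}2^{-m}\binom{2m}{m}^{1/2}$; the bound then follows from $\binom{2m}{m}\le C\,4^m(1+m)^{-1/2}$ (and, for the absolute convergence invoked above, from the cruder $\binom{2m}{m}\le 4^m$). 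All of this is classical; see e.g.\ Szeg\H{o}'s book or Thangavelu's lecture notes. Hence
\[
|\Int_g(\psi_\alpha)|\le C^g\prod_{j=1}^g(1+\alpha_j)^{-1/4}.
\]

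It remains to combine the estimates. Using $\lambda_\alpha=2|\alpha|+g\ge c\,(1+|\alpha|)$ together with the elementary inequality $(1+|\alpha|)^g\ge\prod_{j=1}^g(1+\alpha_j)$ (which holds since $1+|\alpha|\ge 1+\alpha_j$ for each $j$), equivalently $(1+|\alpha|)^{-s}\le\prod_j(1+\alpha_j)^{-s/g}$, one gets
\[
\sum_{\alpha\in\N^g}\lambda_\alpha^{-s}\,|\Int_g(\psi_\alpha)|^2\le C\sum_{\alpha\in\N^g}\prod_{j=1}^g(1+\alpha_j)^{-s/g-1/2}=C\Big(\sum_{k\ge0}(1+k)^{-s/g-1/2}\Big)^{\!g},
\]
and the right-hand side is finite precisely when $s/g+1/2>1$, i.e.\ $s>g/2$. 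Combined with the Cauchy--Schwarz bound this gives $|\Int_g(f)|\le C\,\norm f\norm_s$ on $\mathcal S(\R^g)$, whence $\Int_g\in W^{-s}(\R^g)$ for every $s>g/2$.

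I expect the only real obstacle to be the moment bound $\int_\R\psi_n=O(n^{-1/4})$: it is the sole place where an honest computation (the closed form of $\psi_n(0)$ plus Stirling) is needed, the rest being bookkeeping with an orthonormal eigenbasis. It is worth stressing that it is exactly this $-1/4$ decay per variable that yields the sharp threshold $s>g/2$; had we used only the uniform bound $|\Int_g(\psi_\alpha)|\le C^g$ we would have obtained merely $s>g$. The value $g/2$ is optimal, since it is precisely the condition for the constant function $\mathbf{1}$ to belong to $W^{-s}(\R^g)$.
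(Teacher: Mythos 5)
Your proof is correct and gives the same threshold $s>g/2$, but it follows a genuinely different route from the paper's. The paper argues directly on $\R^g$ with the weight $(g+|x|^2)^{-s}$: by Cauchy--Schwarz,
\[
|\Int_g(f)|^2 \le \int_{\R^g}(g+|x|^2)^{-s}\,\D x \cdot \int_{\R^g}(g+|x|^2)^{s}|f(x)|^2\,\D x\,,
\]
where the first factor is finite precisely when $s>g/2$ and the second is controlled by $\norm f \norm_s^2$ via the operator inequality $g+|x|^2\le 2\subLap_g$. That is a two-line argument which never touches the Hermite spectrum. You instead diagonalise $\subLap_g$ in the Hermite basis, reduce to the one-dimensional moment estimate $\bigl|\int_\R\psi_n\bigr|\asymp n^{-1/4}$ (via Fourier self-duality, the closed form of $\psi_{2m}(0)$ and Stirling), and then need the small convexity step $(2|\alpha|+g)^{-s}\lesssim\prod_j(1+\alpha_j)^{-s/g}$ to factor the sum. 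Your route is longer and computationally heavier, but it buys more: it identifies the Hermite spectral coefficients of $\Int_g$ explicitly and makes the sharpness of $g/2$ transparent (the $-1/4$ decay per variable is exactly what lowers the naive threshold $s>g$ down to $s>g/2$). The paper's weighted Cauchy--Schwarz is the more economical choice here, but both proofs are sound.
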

\begin{proof} Using Cauchy-Schwartz inequality we have
  \[
  | \Int_g (f) |^2 \le \int_{\R^g} |(g + |x|^2)|^{-s}\,\D{}x \cdot
  \int_{\R^g}(g + |x|^2)^{s} |f(x)|^2\,\D{}x
  \]
  As $g + |x|^2 \le 2 \subLap_g$, the second integral is bounded by a constant
  times $\norm f \norm _s^2$, and the result follows.
\end{proof}

For the next lemma we adopt the convention that $\R^0=\{0\}$ and $\mathcal
S(\R^{0})=W^s(\R^{0})=\mathbb C$ with the usual norm.

\begin{lemma}
  \label{lem:sect3:22}
  For $1\le d \le g$, consider the map $\Int_{d,g} :\mathcal S(\R^g) \mapsto
  \mathcal S(\R^{g-d})$ defined by
  \[
  ( \Int_{d,g} f) (x,y) := \int_{\R^d} f(x,y) \, \D{}x
  \]
  We consider $\mathcal S(\R^g)$ and $\mathcal S(\R^{g-d})$ as $\mathsf H^g$
  and $\mathsf H^{g-d}$-modules, respectively, with parameter~$h =1$. Then, for
  any $\varepsilon > 0$ and $s\ge 0$, the map $\Int_{d,g}$ extends to a bounded
  linear map from $W^{s+d/2+\varepsilon}( \R^{g})$ to $W^{s}( \R^{g-d})$, i.e.
  \[
  \norm \Int_{d,g} f \norm _s \leq C \, \norm f \norm _{s+d/2+\varepsilon}
  \]
  for some constant $C=C(s,\varepsilon, d,g)$.  In particular this proves that
  $\Int_{d,g} (\mathcal S(\R^g))\subset \mathcal S(\R^{g-d})$.
\end{lemma}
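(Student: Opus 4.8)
The plan is to apply Lemma~\ref{lem:sect3:1}, in dimension~$d$, fibrewise in the variable $y\in\R^{g-d}$. (For $d=g$ the statement is exactly Lemma~\ref{lem:sect3:1}, used with Sobolev order $s+g/2+\varepsilon$, so we may assume $d<g$, whence $g-d\ge1$.) The point that makes the fibrewise argument run cleanly is that $\subLap_d'$ (acting on the $x$--variables) and $\subLap_{g-d}''$ (acting on the $y$--variables) commute and $\subLap_g=\subLap_d'+\subLap_{g-d}''$.

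Fix $\varepsilon>0$, $s\ge0$ and set $s_1:=d/2+\varepsilon>d/2$. First I would move the $y$--Sobolev weight through the fibrewise integral: since $\subLap_{g-d}''$ involves only $y$, one has $(\subLap_{g-d}'')^{s/2}\,\Int_{d,g}f=\Int_{d,g}\bigl((\subLap_{g-d}'')^{s/2}f\bigr)$ for $f\in\mathcal S(\R^g)$ --- a routine interchange of summation and integration, via the Hermite expansion of $y\mapsto f(x,y)$ and the rapid decay of its coefficients. Putting $F:=(\subLap_{g-d}'')^{s/2}f\in\mathcal S(\R^g)$, this gives $\norm\Int_{d,g}f\norm_s^2=\int_{\R^{g-d}}\bigl|\int_{\R^d}F(x,y)\,\D x\bigr|^2\,\D y$.

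Next, for fixed $y$ the function $x\mapsto F(x,y)$ is in $\mathcal S(\R^d)$, so Lemma~\ref{lem:sect3:1} (with $g$ replaced by $d$ and Sobolev order $s_1>d/2$) bounds $\bigl|\int_{\R^d}F(x,y)\,\D x\bigr|^2$ by $C_1^2\int_{\R^d}\bigl|[(\subLap_d')^{s_1/2}F](x,y)\bigr|^2\,\D x$, with $C_1=C_1(d,\varepsilon)$; the bound there is a pointwise Cauchy--Schwarz estimate, hence uniform in~$y$. Integrating over $y$ and applying Fubini, then using that $(\subLap_d')^{s_1/2}$ and $(\subLap_{g-d}'')^{s/2}$ are self-adjoint and commute, yields $\norm\Int_{d,g}f\norm_s^2\le C_1^2\,\lin(\subLap_d')^{s_1}(\subLap_{g-d}'')^{s}f,\,f\rin$. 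It then remains to note the operator inequality $(\subLap_d')^{s_1}(\subLap_{g-d}'')^{s}\le\subLap_g^{s_1+s}$: as $\subLap_d'\ge0$ and $\subLap_{g-d}''\ge0$ commute and sum to $\subLap_g$, the spectral theorem for the commuting pair reduces it to the scalar inequality $a^{s_1}b^{s}\le(a+b)^{s_1+s}$, valid for all $a,b\ge0$. This gives $\norm\Int_{d,g}f\norm_s^2\le C_1^2\,\norm f\norm_{s+d/2+\varepsilon}^2$, i.e.\ the claim with $C=C_1$; density of $\mathcal S(\R^g)$ in $W^{s+d/2+\varepsilon}(\R^g)$ yields the bounded extension, and applying the estimate for every $s$ shows $\Int_{d,g}(\mathcal S(\R^g))\subset\bigcap_sW^s(\R^{g-d})=\mathcal S(\R^{g-d})$.

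The only step that requires a little care is justifying the commutation of the fractional power $(\subLap_{g-d}'')^{s/2}$ with the fibrewise integral $\Int_{d,g}$; everything else is soft. In particular the operator inequality $(\subLap_d')^{s_1}(\subLap_{g-d}'')^{s}\le\subLap_g^{s_1+s}$ is painless precisely because the two sub-Laplacians commute --- no L\"owner--Heinz issue about fractional powers of non-commuting positive operators intervenes.
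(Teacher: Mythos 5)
Your argument is correct and is essentially the same as the paper's: both reduce to the boundedness of $\Int_d$ on $W^{d/2+\varepsilon}(\R^d)$ (Lemma~\ref{lem:sect3:1} in dimension $d$) plus the scalar inequality $\lambda^{s_1}\mu^{s}\le(\lambda+\mu)^{s_1+s}$ for the commuting non-negative pair $\subLap_d',\subLap_{g-d}''$. The paper carries out the identical computation explicitly in the joint Hermite basis (Cauchy--Schwarz on $\sum_m d_m f_{mn}$ followed by $\mu_n^s\lambda_m^{d/2+\varepsilon}\le(\lambda_m+\mu_n)^{s+d/2+\varepsilon}$), which is exactly what your fibrewise/operator-calculus phrasing becomes once you pass to the joint spectral decomposition.
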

\begin{proof}
  
  For $d=g$ we have $\Int_{g,g} =\Int_{g}$ and the result is a restating of the
  previous lemma.

  Now suppose $d<g$. The operators $\subLap_d'$ and $\subLap_{g-d}''$,
  considered as operators on $L^2(\R^d)$ and $L^2(\R^{g-d})$, have discrete
  spectrum (they are independent $d$-dimensional and $(g-d)$-dimensional
  harmonic oscillators); thus identifying $L^2(\R^{g})\approx L^2(\R^d)\otimes
  L^2(\R^{g-d})$ their joint spectral measure on $L^2(\R^{g})$ is the product
  of the spectral measures on $L^2(\R^d)$ and $L^2(\R^{g-d})$
  respectively. Clearly $\subLap_g \ge \subLap_d'$ and $\subLap_g \ge
  \subLap_{g-d}''$.

  Let $(v_m)$ and $(w_n)$ be orthonormal bases of $L^2(\R^d)$ and
  $L^2(\R^{g-d})$ of eigenevectors of $\subLap_d'$ and $\subLap_{g-d}''$ with
  eigenvalues $(\lambda_m)$ and $(\mu_n)$, respectively. We may choose these
  bases so that $\{v_m\}\subset \mathcal S(\R^d) $ and $\{w_n\}\subset\mathcal
  S(\R^{g-d})$.

  Writing for $f \in \mathcal S(\R^{g})$ , $f= \sum f_{mn} v_m \otimes w_n$ and
  letting $d_m= \Int_d(v_m)$ we have $ \Int_{d,g} f = \sum_n (\sum_m( d_m)
  f_{mn}) w_n$. It follows that
  \[
  \norm \Int_{d,g} \, f \norm ^2_s = \sum_n \mu_n^s \Big|\sum_m d_m
  f_{mn}\Big|^2 \le \Big(\sum_m |d_m|^2 \lambda_m^{-d/2
    -\varepsilon}\Big)\Big(\sum_{m,n} \mu_n^s \lambda_m^{d/2
    +\varepsilon}|f_{mn}|^2\Big)
  \]
  The first term in this product equals $\| \Int_d \|_{-(d/2+ \varepsilon)}^2$,
  which is bounded by Lem\-ma~\ref{lem:sect3:1}; the second term is majorated
  by $\norm f \norm ^2_{s+d/2+ \varepsilon}$, since $\subLap_g \ge \subLap_d'$
  and $\subLap_g \ge \subLap_{g-d}''$,.
\end{proof}

The proof of the following corollary is immediate.
\begin{corollary}
  \label{coro:sect3:2}
  We use the notation of the previous Lemma. Suppose $d<g$. For all $t\ge 0$ 
  and all $s> t+d/2$ the map
  \[
  D\in \Sobolev ^{-t}(\R^{g-d})\mapsto D \circ \Int_{d,g} \in \Sobolev
  ^{-s}(\R^{g})
  \]
  is continuous. In particular, if $f\in \Sobolev ^{s}(\R^g)$ with $s>d/2$ then
  $\Int_{d,g}(f)=0$ if and only if $T \circ \Int_{d,g}(f) =0$ for all $T\in
  \Sobolev ^{0}(\R^{g-d})$.
\end{corollary}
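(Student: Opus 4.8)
The plan is to obtain both assertions as direct dualizations of Lemma~\ref{lem:sect3:22}. For the continuity statement, I would first note that since $s>t+d/2$ one may fix $\varepsilon>0$ with $s=t+d/2+\varepsilon$, and apply Lemma~\ref{lem:sect3:22} with its Sobolev parameter equal to $t$: this produces a bounded linear map $\Int_{d,g}\colon \Sobolev^{s}(\R^{g})\to \Sobolev^{t}(\R^{g-d})$ together with the estimate $\norm\Int_{d,g}f\norm_{t}\le C\,\norm f\norm_{s}$. If now $D\in\Sobolev^{-t}(\R^{g-d})$, i.e.\ $D$ is a bounded linear functional on $\Sobolev^{t}(\R^{g-d})$, then for every $f\in\Sobolev^{s}(\R^{g})$
\[
\bigl|(D\circ\Int_{d,g})(f)\bigr|\le \|D\|_{-t}\,\norm\Int_{d,g}f\norm_{t}\le C\,\|D\|_{-t}\,\norm f\norm_{s},
\]
so $D\circ\Int_{d,g}\in\Sobolev^{-s}(\R^{g})$ with $\|D\circ\Int_{d,g}\|_{-s}\le C\,\|D\|_{-t}$. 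Since $D\mapsto D\circ\Int_{d,g}$ is manifestly linear, this is exactly the asserted continuity.

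For the ``in particular'' part I would observe that one implication is trivial: $\Int_{d,g}f=0$ forces $(T\circ\Int_{d,g})(f)=T(0)=0$ for every $T$. For the converse the key point is that, under the hypothesis $s>d/2$, the element $\Int_{d,g}f$ actually lies in $\Sobolev^{0}(\R^{g-d})=L^2(\R^{g-d})$: choosing $\varepsilon\in(0,s-d/2]$ and applying Lemma~\ref{lem:sect3:22} with parameter $s':=s-d/2-\varepsilon\ge 0$ shows $\Int_{d,g}f\in\Sobolev^{s'}(\R^{g-d})\subset\Sobolev^{0}(\R^{g-d})$. Since $\Sobolev^{0}$ is a Hilbert space it is its own dual, and an element of $\Sobolev^{0}$ annihilated by every functional in $\Sobolev^{0}$ under the natural pairing must vanish (test against the element itself). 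Hence $(T\circ\Int_{d,g})(f)=0$ for all $T\in\Sobolev^{0}(\R^{g-d})$ forces $\Int_{d,g}f=0$.

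The only delicate points are the bookkeeping of Sobolev exponents --- that the single free parameter $\varepsilon$ of Lemma~\ref{lem:sect3:22} can always be tuned so as to realize $s=t+d/2+\varepsilon$, respectively $s=s'+d/2+\varepsilon$ with $s'\ge 0$ --- together with the elementary remark that the homogeneous Sobolev space of order zero coincides with $L^2$ and is therefore self-dual, so that order-zero functionals separate its points. I do not expect any genuine obstacle; this is precisely why the statement can be advertised as immediate.
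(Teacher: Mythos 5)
Your argument is exactly the intended one: the paper declares the corollary ``immediate'' from Lemma~\ref{lem:sect3:22}, and you supply the routine dualization (bounding $|D\circ\Int_{d,g}(f)|$ by $\|D\|_{-t}\norm\Int_{d,g}f\norm_t\le C\|D\|_{-t}\norm f\norm_s$) together with the correct choice of $\varepsilon$ in each application of the lemma and the self-duality of $\Sobolev^0=L^2$ for the converse implication. The bookkeeping of the Sobolev exponents is right, so this is a correct and complete version of the same proof.
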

Let $\varphi_{d}\in \mathcal S(\R^d)$ be the ground state of $\subLap_d$
normalized by the condition $\Int_d(\varphi_{d})=1$, namely
\[ \varphi_{d}(x) := (2\pi) ^{-d/2} e^{-|x|^2/2}, \qquad(x \in \R^d);
\]
we have $\norm \varphi_{d} \norm _{s} = \pi^{-d/4} d ^{s/2} $.

\begin{lemma}
  \label{lem:sect3:2}
  For $1\le d<g$, let $\Egauss_{d,g} :\mathcal S(\R^{g-d}) \mapsto \mathcal
  S(\R^g) $ be defined by
  \[
  (\Egauss_{d,g} \, f) (x,y) := \varphi_{d}(x)f(y)
  \]
  We consider $\mathcal S(\R^g)$ and $\mathcal S(\R^{g-d})$ as $\mathsf H^g$
  and $\mathsf H^{g-d}$-modules, respectively, with parameter $h =1$.  Then,
  for any $s \ge 0$, the map $\Egauss_{d,g}$ extends to a bounded linear map
  from $W^{s}( \R^{g-d})$ to $W^{s}(\R^{g})$, i.e.
  \[
  \norm\Egauss_{d,g} \, f \norm _s \le C \, \norm f \norm _{s}.
  \]
  for some constant $C=C(s,d)$.
\end{lemma}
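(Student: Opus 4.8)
The plan is to diagonalize everything against the harmonic oscillator $\subLap_{g-d}''$ and to exploit both the splitting $\subLap_g = \subLap_d' + \subLap_{g-d}''$ and the fact that $\varphi_{d}$ is the ground state of $\subLap_d'$, with eigenvalue $d$. This turns the statement into a one-line scalar comparison.

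First I would recall, exactly as in the proof of Lemma~\ref{lem:sect3:22}, that there is an orthonormal basis $(w_n)$ of $L^2(\R^{g-d})$ consisting of eigenvectors of $\subLap_{g-d}''$ with eigenvalues $(\mu_n)$, which can be chosen inside $\mathcal S(\R^{g-d})$, and that $\mu_n \ge g-d$ for every $n$; for $f = \sum_n f_n w_n \in \mathcal S(\R^{g-d})$ one then has $\norm f \norm_s^2 = \sum_n \mu_n^s |f_n|^2$. Since $\subLap_d' \varphi_{d} = d\,\varphi_{d}$ and $\subLap_g$ acts diagonally on the tensor products $\varphi_{d}\otimes w_n$ (the joint spectral decomposition of $\subLap_d'$ and $\subLap_{g-d}''$ being the product one, as already used in Lemma~\ref{lem:sect3:22}), each $\Egauss_{d,g}(w_n) = \varphi_{d}\otimes w_n$ is an eigenvector of $\subLap_g$ with eigenvalue $d+\mu_n$; in particular these vectors are mutually orthogonal in every $W^s(\R^g)$.

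Next I would simply compute. By orthogonality and $\norm \varphi_{d}\otimes w_n\norm_0 = \norm \varphi_{d}\norm_0 = \pi^{-d/4}$,
\[
\norm \Egauss_{d,g} f \norm_s^2 = \sum_n |f_n|^2\,(d+\mu_n)^s\, \norm \varphi_{d}\otimes w_n \norm_0^2 = \pi^{-d/2} \sum_n |f_n|^2\,(d+\mu_n)^s .
\]
The elementary inequality $\mu_n \ge g-d$ gives $d+\mu_n \le \tfrac{g}{g-d}\,\mu_n$, hence $(d+\mu_n)^s \le \big(\tfrac{g}{g-d}\big)^s \mu_n^s$ for $s\ge 0$, and therefore
\[
\norm \Egauss_{d,g} f \norm_s^2 \le \pi^{-d/2}\,\big(\tfrac{g}{g-d}\big)^s\, \norm f \norm_s^2 ,
\]
which is the asserted bound with $C = \pi^{-d/4}\big(\tfrac{g}{g-d}\big)^{s/2}$. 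Since the right-hand side is finite for every $f\in W^s(\R^{g-d})$, this also shows $\Egauss_{d,g}$ extends continuously from $\mathcal S(\R^{g-d})$ to all of $W^s(\R^{g-d})$.

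There is no genuine obstacle here: the only point deserving a word of justification is the legitimacy of the eigenfunction expansion and the product structure of the joint spectral measure of $\subLap_d'$ and $\subLap_{g-d}''$, both of which were already set up in the proof of Lemma~\ref{lem:sect3:22}; everything else reduces to the scalar estimate $d+\mu_n \le \tfrac{g}{g-d}\mu_n$. If one prefers to dispense with bases, the same computation can be recast as the operator inequality $\subLap_g \le \tfrac{g}{g-d}\,\subLap_{g-d}''$ restricted to the range of $\Egauss_{d,g}$ together with $\subLap_{g-d}''(\varphi_{d}\otimes w) = \varphi_{d}\otimes(\subLap_{g-d}'' w)$, but the spectral bookkeeping above is the most transparent route.
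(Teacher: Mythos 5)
Your proof is correct, but it takes a route genuinely different from the paper's. The paper expands $(\subLap_d'+\subLap_{g-d}'')^n$ by the binomial theorem for integer $n$, obtaining
$\norm \Egauss_{d,g} f \norm_n^2 = \sum_j \binom{n}{j}\norm\varphi_d\norm_j^2\norm f\norm_{n-j}^2$, bounds each factor monotonically using $\subLap_d'\ge 1$, $\subLap_{g-d}''\ge 1$, and then invokes interpolation to pass to non-integer $s$. You instead diagonalize $\subLap_{g-d}''$ directly, use that $\varphi_d\otimes w_n$ is an eigenvector of $\subLap_g$ with eigenvalue $d+\mu_n$, and reduce the matter to the scalar inequality $d+\mu \le \tfrac{g}{g-d}\mu$ valid on the spectrum $\mu\ge g-d$. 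The advantage of your route is that the estimate holds for every real $s\ge 0$ in one stroke, with no appeal to interpolation; the cost is that the constant $\bigl(\tfrac{g}{g-d}\bigr)^{s/2}$ depends a priori on $g$, though $\tfrac{g}{g-d}\le 1+d$ recovers a $g$-free bound if one wants the $C=C(s,d)$ asserted in the statement. Both arguments ultimately rest on the same product spectral decomposition already set up in Lemma~\ref{lem:sect3:22}, so the difference is in bookkeeping rather than substance; yours is arguably cleaner. (One inessential remark: you, like the paper, quote $\norm\varphi_d\norm_0 = \pi^{-d/4}$, whereas the normalization $\Int_d(\varphi_d)=1$ actually gives $\norm\varphi_d\norm_0 = 2^{-d/2}\pi^{-d/4}$; this only shifts the constant and affects nothing.)
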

\begin{proof}
  Consider $\subLap_d '=$ and $\subLap_{g-d} ''$ as operators on $\SB
  (\R^g)$. For all integers $n$, from the binomial identity for $(\subLap_d
  '+\subLap_{g-d}'')^n $, we obtain
  $ \norm \Egauss_{d,g} \, f \norm_n^2 = \sum_j \tbinom{n}{j} \,\norm
  \varphi_d\norm_j^2 \norm f\norm_{n-j}^2 \le 2^{n} \norm \varphi_d^2\norm_n
  \norm f\norm_{n}^2$, where for the last inequality we used $\subLap_d '\ge 1$
  and $\subLap_{g-d}''\ge 1$. This proves the lemma for integer~$s$; the
  general claim follows by interpolation.
\end{proof}

\begin{lemma}
  \label{estimate_P}
  Let $d=1 $. Let $f$ be an element of the $\mathsf H^g$-module $\mathcal
  S(\R^g)$  with parameter~$h=1$. Suppose  that $\Int_{1,g} \, f =0$. Set
  \[
  (\Prim f)(x,y) := \int_{-\infty}^{x} f(t , y) \, \D{}t.
  \]
  For all $t\ge 0$ and all $\varepsilon >0$ there exists a constant $C=C(t
  ,\varepsilon)$ such that
  \begin{equation} \label{formula_estimate_P} \norm \Prim f \norm_{t} \le C\,
    \norm f \norm_{t+1+\varepsilon} \, .
  \end{equation}
  In particular this proves that $\Prim (\mathcal S(\R^g))\subset \mathcal
  S(\R^{g-d})$.
\end{lemma}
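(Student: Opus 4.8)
The plan is to reduce the $g$-dimensional statement to a one-variable estimate, fibrewise in the remaining $y \in \R^{g-1}$ variables, by exploiting the tensor decomposition $\subLap_g = \subLap_1' + \subLap_{g-1}''$ with respect to $L^2(\R^g) \approx L^2(\R_x) \otimes L^2(\R^{g-1}_y)$. First I would observe that, since $\Int_{1,g} f = 0$, the primitive $\Prim f$ is well-defined and decays: $(\Prim f)(x,y) = \int_{-\infty}^x f(t,y)\,\D t = -\int_x^{+\infty} f(t,y)\,\D t$, so $\Prim f \in \SB(\R^g)$ by the standard estimates for the antiderivative of a Schwartz function with zero integral in one variable (integrate against powers of $t$ and use the two representations for $x\to\pm\infty$ respectively). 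The real content is the Sobolev bound \eqref{formula_estimate_P}.

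The key step is the one-dimensional case $g=1$: for $f\in\SB(\R)$ with $\int f = 0$, one has $\norm \Prim f\norm_t \le C\,\norm f\norm_{t+1+\varepsilon}$, where $\norm\cdot\norm_s$ is the homogeneous norm built from $\subLap_1 = |x|^2 - \partial_x^2$, the harmonic oscillator. I would prove this by passing to the Hermite basis: writing $f = \sum_{m\ge 0} a_m h_m$ with $\subLap_1 h_m = (2m+1) h_m$, the hypothesis $\int f = 0$ kills the component along the ground state paired with $\Int_1$, and one computes the Hermite coefficients of $\Prim f$ using the ladder relations for the $h_m$ (the operators $\partial_x \mp x$ shift the index by one), which express $\Prim f$ in terms of $f$ via a bounded-plus-smoothing operator on the orthogonal complement of the kernel of $\Int_1$; the loss of $1+\varepsilon$ derivatives comes exactly from inverting a first-order creation/annihilation-type operator whose spectrum grows like $(2m+1)^{1/2}$, combined with the summability factor $\lambda_m^{-\varepsilon}$ needed to control $\|\Int_1\|_{-(1/2+\varepsilon)}$ as in Lemma \ref{lem:sect3:1}. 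Equivalently, and perhaps more cleanly, one can argue by duality: for $t\ge 0$ and $\varphi$ a test function, $\lin \Prim f, \varphi\rin = -\lin f, \Prim^*\varphi\rin$ where $\Prim^*$ is the adjoint antiderivative, and one estimates $\norm \Prim^* \varphi\norm_{-(t+1+\varepsilon)}$ in terms of $\norm\varphi\norm_{-t}$, using that the commutator of $\Prim^*$ with $\subLap_1$ is lower order.

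Having the case $g=1$, I would upgrade to general $g$ by the same tensorization device used in the proof of Lemma \ref{lem:sect3:22}: expand $f = \sum_{m,n} f_{mn}\, v_m \otimes w_n$ in a product Hermite basis adapted to $\subLap_1'$ (eigenvalues $\lambda_m$) and $\subLap_{g-1}''$ (eigenvalues $\mu_n$), note that the hypothesis $\Int_{1,g} f = 0$ means $\sum_m d_m f_{mn} = 0$ for each fixed $n$ (with $d_m = \Int_1(v_m)$), apply the one-dimensional estimate to each slice $\sum_m f_{mn} v_m$, and then sum in $n$ against the weights $\mu_n^t$ using $\subLap_g \ge \subLap_1'$ and $\subLap_g \ge \subLap_{g-1}''$ to absorb the mixed Sobolev weight $\mu_n^t \lambda_m^{t+1+\varepsilon}$ into $\norm f\norm_{t+1+\varepsilon}^2$. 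The main obstacle is the one-dimensional estimate with the sharp loss $1+\varepsilon$ (rather than, say, $3/2$): one must use both the cancellation $\int f = 0$ and the Gaussian decay encoded in the oscillator norm, and track carefully that the operator $\Prim$, restricted to $\ker\Int_1$, inverts something of order $1/2$ in the oscillator scale while the remaining $1/2+\varepsilon$ pays for the failure of $\Int_1$ to be bounded on $L^2$; everything after that is bookkeeping. Finally, $\Prim(\SB(\R^g)) \subset \SB(\R^g)$ follows since the estimate holds for all $t \ge 0$.
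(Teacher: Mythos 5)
Your plan has the same skeleton as the paper's proof: reduce to the one-dimensional estimate and then tensorize along the splitting $\subLap_g = \subLap_1' + \subLap_{g-1}''$. For the tensorization step the paper works at the operator level, using that $\Prim$ commutes with $\subLap_{g-1}''$ and that $\Int_{1,g}$ annihilates $(\subLap_{g-1}'')^w f$ for all $w$, then expanding $\subLap_g^n$ by the binomial formula and interpolating; your product-Hermite-basis version is a coordinatized form of the same computation, and it works, though the operator version avoids carrying infinite sums and is slightly cleaner about non-integer exponents (interpolation rather than an explicit $\mu_n^t \lambda_m^{t+1+\varepsilon}$ weight). The genuine divergence is in the $g=1$ case. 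The paper disposes of it by citing Lemma~6.1 of \cite{MR2218767} and noting it follows from Cauchy--Schwarz in the style of Lemma~\ref{lem:sect3:1}: one bounds $|\Prim f(x)|$ pointwise by $\bigl(\int_{x}^{\infty}(1+t^2)^{-1-\varepsilon}\,\D t\bigr)^{1/2}\bigl(\int (1+t^2)^{1+\varepsilon}|f|^2\,\D t\bigr)^{1/2}$ using $\int f=0$ to switch to $-\int_x^\infty$ for $x>0$, then controls the $x^{2m}$-weights in $\norm\cdot\norm_t$ from this decay and the $\partial_x$-weights from $\partial_x\Prim f=f$. Your Hermite-ladder alternative is not the same route and is under-specified in a way that matters: the claim that ``$\int f=0$ kills the component along the ground state paired with $\Int_1$'' is not correct, since $\Int_1$ pairs non-trivially with \emph{every} even Hermite function, so the hypothesis is a single linear constraint on the full even-index part of $f$, not the removal of one coordinate. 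Moreover the antiderivative is not local in the Hermite basis (the derivative $\tfrac{1}{\sqrt2}(a-a^\dagger)$ is bidiagonal, so its right inverse on $\ker\Int_1$ is a full lower-triangular operator), so the ``ladder'' bookkeeping you allude to would require real work. Your duality variant is more robust but has the mirror-image difficulty: $\Prim^*\varphi=\int_x^\infty\varphi$ fails to decay at $-\infty$ without a matching zero-average hypothesis on $\varphi$. In short: the structure is right, the tensorization is right, but the one-dimensional input is best obtained by the Cauchy--Schwarz argument the paper points to rather than the Hermite route you sketch.
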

\begin{proof}
  When $g=1$ this is a variation on the statement of Lemma~6.1
  in~\cite{MR2218767}, which can be easily proved by use of the Cauchy-Schwartz
  inequality as in Lemma~\ref{lem:sect3:1}.

  Suppose now that $g>1$ and consider $\subLap _1 '$ and $\subLap_{g-1}''
  $. The condition $\Int_{1,g} \, f =0$ implies that $\Int_{1,g}
  \,(\subLap_{g-1}'')^w f =0$ for any $w\ge 0$; furthermore $\Prim
  (\subLap_{g-1}'')^w f= (\subLap_{g-1}'')^w \Prim f$.  Using the result for
  the case $g=1$ and the definition of the norm $\|\cdot \|_0$ we have for all
  $t\ge 0$ and all $\varepsilon >0$
  \[
  \| (\subLap_1)^{t/2} \, (\subLap_{g-1}'')^{w/2} \, \Prim f\|_0 \le C(t
  ,\varepsilon) \| (\subLap_1')^{(t+1+\varepsilon)/2} \, (\subLap_{g-1}
  '')^{w/2} \, f \|_0.
  \]
  For integer values of the Sobolev order, using the above inequality and the
  binomial formula, we may write, for any $\varepsilon >0$ and $n\in \N$,
  \begin{eqnarray*}
    \norm \Prim f \norm_{n}^2 
    & = & {\langle   \Prim f ,  \subLap_g ^n  \Prim f \rangle}_0=\sum_{k=0}^n {n \choose k}\| (\subLap_1')^{k/2}  \,  (\subLap_{g-1}'')^{(n-k)/2} \, \Prim f \|_0^2\\
    & \le & C(\varepsilon,n) \sum_{k=0}^n {n \choose k} \| (\subLap_1')^{(k+1+\varepsilon)/2} \, (\subLap_{g-1}'') ^{(n-k)/2} \, f \| _0^2 \\
    & \leq  & C(\varepsilon,n) \,\|  (\subLap_g)^{n/2}  \, (\subLap_1 ')^{(1+\varepsilon)/2} \, f \| _0^2\\
    & \le & C(\varepsilon,n)\, \|  \subLap_g^{(n+1+\varepsilon)/2} \, f \| _0^2= C(\varepsilon,n)\,\norm f \norm_{n+1+\varepsilon}^2. 
  \end{eqnarray*}

  The general inequality follows by interpolation of the family of norms $
  \norm \cdot \norm_{n} $.
\end{proof}

\paragraph{Sobolev cocycles and coboundaries.} 
Having fixed an Euclidean product on $\hei $, we obtain, by restriction, an
Euclidean product on $\mathfrak p \subset \hei$ and, by duality and extension
to the exterior algebra, an Euclidean product on $\Lambda ^k \mathfrak p'$.
The spaces $A^k(\mathfrak p,\rho^\infty) \approx \Lambda ^k \mathfrak p'
\otimes \SB (\R^g)$ of cochains of degree $k$ are endowed with the Hermitian
products obtained as tensor product of the Euclidean product on $\Lambda
^k\mathfrak p'$ and the Hermitian products $\| \cdot \|_s$ or $\norm \cdot
\norm_s$ on $\SB (\R^g)$. Completing with respect to these norms, we define the
Sobolev spaces $\Lambda ^k \mathfrak p' \otimes \Sobolev^s (\R^g)$ of cochains
of degree $k$, and use the same notations for the norms.

It is clear that, for $k<d$, the cohomology groups are $H^{k} ( \mathfrak p, \SB
(\R^g) )=0$.  Here we estimate the Sobolev norm of a primitive $\Omega \in
A^{k-1} ( \mathfrak p, \SB (\R^g) )$ of a coboundary $\omega = \D\Omega\in B^k
( \mathfrak p, \SB (\R^g))= Z^k ( \mathfrak p, \SB (\R^g))$ in terms of the
Sobolev norm of $\omega$.

\begin{proposition}
  \label{prop:sect3:1}
  Let $s\ge 0$ and $1\le k<d\le g$.  Consider $\SB (\R^g)$ as a $\Heis$-module
  with parameter $h=1$. For every $\varepsilon >0$ there exists a constant
  $C=C(s,\varepsilon,g,d)>0$ and a linear map
  \[
  \D{}_{-1}\colon Z^k ( \mathfrak p, \SB (\R^g)) \to A^{k-1}(\mathfrak p, \SB
  (\R^g))
  \]
  associating to every $ \omega \in Z^k ( \mathfrak p, \SB (\R^g))$ a primitive
  $\Omega = \D{}_{-1}\omega \in A^{k-1}(\mathfrak p, \SB (\R^g))$ satisfying
  the estimate
  \begin{equation}
    \label{prop_cocycles}
    \norm \Omega \norm_s \le C  \,  \norm \omega \norm_{s +(k+1)/2 + \varepsilon}\,.
  \end{equation}
\end{proposition}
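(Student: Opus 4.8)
The plan is to prove the proposition by a quantitative Poincar\'e lemma for the complex $A^\ast(\mathfrak p,\rho^\infty)$, obtained by peeling off one variable at a time, in the spirit of the fibre-integration proof of the Poincar\'e lemma for compactly supported cohomology. After the identifications made above, $A^\ast(\mathfrak p,\rho^\infty)$ is the complex of $\SB(\R^g)$-valued differential forms in the variables $x_1,\dots,x_d$, with differential the exterior derivative in $x_1,\dots,x_d$ (since $\rho(X_i)$ acts on coefficients as $\partial_{x_i}$, cf.\ \eqref{eq:sect2:6}). I would argue by induction on $k\ge 1$, at each step simultaneously decreasing $d$ and $g$ by one, so that the inductive hypothesis is the statement of the proposition in degree $k-1$ for the module $\SB(\R^{g-1})$ with $d-1$ active variables; since every operator used below is equivariant under permutations of the coordinates $x_1,\dots,x_g$, which preserve each norm $\norm \cdot \norm_s$, one may always peel off the first active variable $x_1$ and quote Lemmas~\ref{lem:sect3:22}, \ref{lem:sect3:2} and \ref{estimate_P} as stated.

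Write $\omega\in A^k(\mathfrak p,\SB(\R^g))$ as $\omega=\alpha+\D X_1\wedge\beta$, with $\alpha$ and $\beta$ free of $\D X_1$, of degrees $k$ and $k-1$, and coefficients in $\SB(\R^g)$. For the projection $\pi\colon\R^d\to\R^{d-1}$ forgetting $x_1$ I would introduce three operators: the fibre integration $\pi_\ast\omega:=\Int_{1,g}\beta$, a $(k-1)$-form on $\R^{d-1}$ with coefficients in $\SB(\R^{g-1})$; the Gaussian extension $e_\#\sigma:=\D X_1\wedge(\varphi_1(x_1)\,\sigma)$, which multiplies the coefficients of a form $\sigma$ on $\R^{d-1}$ by the normalised ground state $\varphi_1$ of the variable $x_1$ (i.e.\ it applies $\Egauss_{1,g}$ to coefficients); and the homotopy $K\omega:=\Prim(\beta-\varphi_1(x_1)\,\Int_{1,g}\beta)$, where $\Prim$ is the $x_1$-primitive of Lemma~\ref{estimate_P}. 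The key point is that $\beta-\varphi_1(x_1)\,\Int_{1,g}\beta$ has vanishing $x_1$-integral, so Lemma~\ref{estimate_P} applies to it. A direct computation --- using differentiation under the integral sign, the fundamental theorem of calculus, and Schwartz decay of the coefficients as $x_1\to\pm\infty$ --- yields, for $k\ge1$, the homotopy identity $\omega-e_\#(\pi_\ast\omega)=\pm(\D K+K\D)\omega$, as well as $\D\circ e_\#=\pm e_\#\circ\D$ (because $\D(\varphi_1(x_1)\,\D X_1)=0$) and $\D\circ\pi_\ast=\pi_\ast\circ\D$. Moreover $\pi_\ast$ satisfies a tame estimate of degree $1/2+\varepsilon$ by Lemma~\ref{lem:sect3:22} (with $d=1$), $e_\#$ is bounded of degree $0$ by Lemma~\ref{lem:sect3:2}, and, combining Lemmas~\ref{lem:sect3:22}, \ref{lem:sect3:2} and \ref{estimate_P}, $K$ satisfies a tame estimate of degree $3/2+\varepsilon$.

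Now let $\omega\in Z^k(\mathfrak p,\SB(\R^g))$ with $1\le k<d\le g$. Since $\D\omega=0$, the homotopy identity gives $\omega=e_\#(\pi_\ast\omega)\pm\D(K\omega)$, and $\pi_\ast\omega$ is a closed $(k-1)$-form on $\R^{d-1}$. If $k\ge 2$, then $1\le k-1<d-1\le g-1$, so by the inductive hypothesis there is a primitive $\sigma$ of $\pi_\ast\omega$, depending linearly on $\pi_\ast\omega$, with $\norm \sigma \norm_s\le C\,\norm \pi_\ast\omega \norm_{s+k/2+\varepsilon}$; since $\D(e_\#\sigma)=\pm e_\#(\D\sigma)=\pm e_\#(\pi_\ast\omega)$, the form $\D_{-1}\omega:=\pm e_\#\sigma\pm K\omega$ is a primitive of $\omega$, depends linearly on $\omega$, and
\[
\norm \D_{-1}\omega \norm_s \le C\,\norm \sigma \norm_s + C\,\norm K\omega \norm_s \le C\,\norm \omega \norm_{s+k/2+1/2+\varepsilon} + C\,\norm \omega \norm_{s+3/2+\varepsilon} \le C\,\norm \omega \norm_{s+(k+1)/2+\varepsilon},
\]
the last inequality because $(k+1)/2\ge 3/2$ for $k\ge 2$. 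If $k=1$, then $d\ge 2$, and writing $f_1=\beta,f_2,\dots,f_d$ for the coefficients of $\omega$, closedness gives $\partial_{x_i}\beta=\partial_{x_1}f_i$, hence $\partial_{x_i}\Int_{1,g}\beta=\Int_{1,g}\partial_{x_1}f_i=0$ for $2\le i\le d$ by Schwartz decay; so $\pi_\ast\omega=\Int_{1,g}\beta$ is independent of $x_2,\dots,x_d$ and, being Schwartz, vanishes. Therefore $\omega=\pm\D(K\omega)$ with $K\omega=\pm\Prim\beta$ and $\Int_{1,g}\beta=0$, and Lemma~\ref{estimate_P} gives directly $\norm \D_{-1}\omega \norm_s\le C\,\norm \omega \norm_{s+1+\varepsilon}=C\,\norm \omega \norm_{s+(k+1)/2+\varepsilon}$. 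The recursion has at most $k-1\le g-1$ steps, each costing an additional $O(\varepsilon)$ in Sobolev order, which a suitably small initial choice of $\varepsilon$ absorbs; this yields \eqref{prop_cocycles}.

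The crux --- and the reason the exponent is $(k+1)/2$ rather than the $(k+2)/2$ that a naive count produces --- is exactly the observation in the base case that the Gaussian correction term $\varphi_1(x_1)\,\Int_{1,g}\beta$ in $K$ vanishes (because $\pi_\ast\omega=0$ for a closed $1$-form in at least two variables), which removes one factor $1/2$ from the cost of $K$ at that step and lets the recursion close at the sharp rate. The secondary points requiring care are: fixing the (ultimately immaterial) signs in the homotopy identity; checking that $\pi_\ast$, $e_\#$, $K$ and $\Prim$ are applied only in admissible dimensions --- the hypotheses of Lemmas~\ref{lem:sect3:22}, \ref{lem:sect3:2} and \ref{estimate_P} (that the relevant Schwartz space be over $\R^m$ with $m\ge1$, resp.\ $m\ge2$ for $\Egauss$) hold along the recursion because $k<d\le g$ --- and that these operators preserve Schwartz-valuedness; and keeping the dependence of the constants on $s,\varepsilon,g,d$ under control.
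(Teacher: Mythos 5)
Your proposal is correct and follows essentially the same route as the paper: the operators you call $\pi_\ast$, $e_\#$ and $K$ are precisely the paper's $\Int$, $\Egauss$ and $\Khom$ (cf.\ \eqref{I}, \eqref{E}, \eqref{K}), the homotopy identity and tame estimates \eqref{estimate_I}--\eqref{estimatebis_K} are the same, and the induction on $k$ with the crucial base-case observation that $\Int\omega=0$ for a closed $1$-form in $d\ge 2$ variables (so that the sharper bound \eqref{estimatebis_K} applies) is exactly the paper's argument. The only cosmetic difference is that you justify $\pi_\ast\omega=0$ in the base case by showing $\Int_{1,g}\beta$ is independent of $x_2,\dots,x_d$ and hence, being Schwartz, zero, whereas the paper invokes a short homotopy argument; both are equivalent.
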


\begin{proof}
  We denote points of $\R^{g} \approx \mathfrak p\times \R^{g-d} \approx\R
  \times \R^{d-1} \times \R^{g-d}$ as triples $(t, x,y)$ with $t\in \R$, $x \in
  \R^{d-1}$ and $y \in \R^{g-d}$.  For $0\leq k \leq d \leq g$, one defines
  linear maps
  \[
  A^{k}(\R^{d},\SB (\R^{g})) \overset{\xrightarrow{\quad \Int
      \quad}}{\xleftarrow[\quad \Egauss \quad ]{}} A^{k-1}(\R^{d-1},\SB
  (\R^{g-1}))
  \]
  as follows.  For a monomial $\omega=f(t,x,y) \,dt \wedge \D{} x^a \in
  A^{k}(\R^{d},\SB (\R^{g}))$, where $a$ a multi-index in the set $\{1,2,
  \dots,d-1\}$, we define
  \begin{equation}
    \label{I}
    \Int   \omega := \left(\int_{-\infty}^{\infty} f(t,
      x,y)\, \D{}t\right)\D{} x^a  = (\Int_{1,g} f) \, \D{} x^a \, ;
  \end{equation}
  if $\D t $ does not divide $\omega$ we define instead $\Int \omega=0$.  For a
  monomial $\omega = f (x,y)\, \D{} x^a\in A^{k-1}(\R^{d-1},\SB (\R^{g-1}))$,
  we define
  \begin{equation}
    \label{E}
    \Egauss \, \omega : = \varphi (t ) f
    (x,y) \,\D{}t \wedge\D{} x^a = (\Egauss_{1,g}f) \, \D{}t \wedge\D{} x^a \, . 
  \end{equation}
  By Lemma \ref{lem:sect3:22} we obtain that for any $t\ge 0$ and
  $\varepsilon>0$ we have:
  \begin{equation}
    \label{estimate_I}
    \norm \Int \omega \norm_{t} \le C  \, \norm
    \omega\norm_{t+1/2+\varepsilon} ,\qquad C=C(t,\varepsilon,g).
  \end{equation}
  It follows from this inequality that the image of $\Int$ lies in $
  A^{k-1}(\R^{d-1}, \SB (\R^{g-1}))$.  For the map $\Egauss$ the inclusion $
  \Egauss (A^{k-1}(\R^{d-1},\SB (\R^{g-1})))\subset A^{k}(\R^{d},\SB (\R^{g}))$
  is obvious, and by Lemma~\ref{lem:sect3:2} we have, for any $s \geq 0$,
  \begin{equation}
    \label{estimate_E}
    \norm \Egauss \eta \norm_{s} \le C  \,\norm \eta\norm_{s} ,\qquad C=C(s,d).
  \end{equation}
  From \eqref{estimate_I} and \eqref{estimate_E} it follows that, for any $s
  \geq 0$,
  \begin{equation}
    \label{estimate_EI}
    \norm \Egauss \Int    \omega \norm_{s} \le  C 
    \,  \norm \omega\norm_{s+1/2+\varepsilon} \, . 
  \end{equation}
  The maps $\Int$ and $\Egauss$ commute with the differential~$\D$.  It is well
  known that $\Int$ and $\Egauss$ are homotopy inverse of each other. In fact,
  it is clear that $\Int \Egauss $ is the identity.
  
  We claim that the usual homotopy operator
  \[
  \Khom \colon A^{k}(\R^{d}, \SB (\R^{g}) ) \to A^{k-1}(\R^{d} , \SB (\R^{g}) )
  \]
  satisfying $1 - \Egauss \Int = \D \Khom - \Khom \D$ also satisfies tame
  estimates.  Indeed, for a monomial $\omega$ not divisible by $ \D{}t$,
  $\Khom$ is defined as $\Khom \omega =0$; for a monomial $\omega =f(t,x,y) \,
  \D{}t\wedge \D{} x ^a$ it is defined as $\Khom \omega =g(t, x,y) \, \D{} x
  ^a$ where
  \begin{equation} \label{K} g(t,x,y) = \int_{-\infty}^{t} \left[ f(r , x,y) -
      \varphi(r) \left(\smallint_{\R}f(u, x,y)\,\D{}u\right) \right]\,\D{}r =
    \Prim(f - \Egauss_{1,g} \, \Int_{1,g} f) \, .
  \end{equation}
  Then by Lemma~\ref{estimate_P} and~\eqref{estimate_EI} we have that for all
  $s\ge 0$:
  \begin{equation}
    \label{estimate_K}
    \norm \Khom \omega \norm _{s}
    \leq C(s, \varepsilon, g,d) \, \norm \omega \norm _{s+3/2 +\varepsilon}\,,
  \end{equation}
  unless $\Int \omega =0$, in which case we have
  \begin{equation}
    \label{estimatebis_K}
    \norm \Khom  \omega \norm _{s} \leq C(s, \varepsilon, g,d) \, \norm \omega \norm _{s+1 +\varepsilon} \, . 
  \end{equation}
  This prove the claim.
  
  Let $ \omega \in A^{1}(\R^{d},\SB (\R^{g}))$ be closed and $1 < d \le
  g$. Then $\Int \omega =0$ (by homotopying the integral in \eqref{I} with an
  integral with $ x\to \infty$) and therefore $\Omega = \Khom \omega \in
  A^{0}(\R^{d},\SB (\R^{g})) \approx \SB (\R^{g+1})$ is a primitive of
  $\omega$, i.e. $\D {} \Omega = \omega$, and by \eqref{estimatebis_K} it
  satisfies the estimate $\norm \Omega \norm _{s} \leq C(s) \cdot \norm \omega
  \norm _{s+1 +\varepsilon}$ for all $s> 1/2$. Thus the proposition is proved
  in this case.

  Assume, by recurrence, that the Proposition is true for all $g\ge 1$, all
  $d\le g$ and all $k \le \min \{n,d\}-1$.  Let $ \omega \in A^{n}(\R^{d}, \SB
  (\R^{g}))$, with $n < d$, be closed.  Then the $(n-1)$-form $\Int \omega \in
  A^{n-1}(\R^{d-1},\SB (\R^{g-1}))$ is also closed.  By recurrence, $\Int
  \omega = \D \eta$ for a primitive $\eta \in A^{n-2}(\R^{d-1}, \SB
  (\R^{g-1}))$ satisfying the estimate
  \begin{equation}\label{recurrence_eta}
    \norm \eta \norm_s \le C   \,  \norm \Int \omega  \norm_{s +n/2 + \varepsilon}\,.
  \end{equation}
  Since $\Egauss \Int \omega = \Egauss \D \eta$ and $\Egauss$ commutes with
  $\D$, we obtain that a primitive of $\omega$ is given by $\D{}_{-1}\omega
  :=\Omega := \Khom \omega + \Egauss \eta$. Therefore, from lemma
  \ref{lem:sect3:22} and the estimates \eqref{estimate_I}, \eqref{estimate_E},
  \eqref{estimate_K} and \eqref{recurrence_eta}, we have, for some constants
  $C$'s which only depend on $s \geq 0$ and $\varepsilon >0$,  
  \begin{equation}
    \label{eq:sect3:7}
    \begin{split}
      \norm \Omega \norm _{s}
      &\le \norm \Khom \omega \norm _{s} + \norm  \Egauss \eta \norm _{s}\\
      & \le C' \, \norm \omega \norm _{s+3/2 +\varepsilon}
      + C'' \, \norm \eta \norm _{s}\\
      & \le C' \, \norm \omega \norm _{s+3/2 +\varepsilon}
      + C'''  \norm \Int  \omega  \norm _{s+n/2+\varepsilon/2}\\
      & \le C' \,\norm \omega \norm _{s+3/2 +\varepsilon}
      + C''''  \norm \omega  \norm _{s+n/2+1/2+\varepsilon}\\
      & \le C \,\norm \omega \norm_{s+(n+1)/2+\varepsilon} .
    \end{split}
  \end{equation}
  Thus the estimate~\eqref{prop_cocycles} holds also for $k=n$.  This concludes
  the proof.
\end{proof}


We are left to consider the space $H^{k} ( \mathfrak p, \SB (\R^g) )$ when
$k=d:=\dim \mathfrak p$.

The map $\Int_{d,g}$ extends to a map
\begin{equation} \label{Idg} 
\Int_{d,g}\colon A^{d} ( \mathfrak p, \SB (\R^g) ) \to \SB (\R^{g-d})
\end{equation} 
by setting for a form $\omega = f(x,y) \, \D{}x_1\wedge \dots \wedge \D{}x_d$
\[
(\Int_{d,g} \, \omega ) (y) := \int_{\R^d} f(x,y) \, \D{}x \, . 
\]

\begin{proposition}
  \label{prop_cohom_eq_max_dim}
  Let $s \ge 0$ and $1 \le d \le g$.  Consider $\SB (\R^g)$ as a $\Heis$-module
  with parameter $h=1$ and let $\omega\in A^{d} ( \mathfrak p, \SB (\R^g)
  )$. The form $\omega$ is exact if and only if $\Int_{d,g}\omega =0$.
  Furthermore, for every $\varepsilon >0$ there exists a constant
  $C=C(s,\varepsilon,g,d)>0$ and a linear map
  \[
  \D{}_{-1}\colon \ker \Int_{d,g}\subset A^{d} ( \mathfrak p, \SB (\R^g) ) \to
  A^{d-1}(\mathfrak p, \SB (\R^g))
  \]
  associating to every $ \omega \in \ker \Int_{d,g} $ a primitive $\Omega$ of
  $\omega$ satisfying the estimate
  \begin{equation}
    \label{eq:sect3:4}
    \norm \Omega \norm_s \le C  \,  \norm \omega
    \norm_{s + (d+1)/2 + \varepsilon}\,.
  \end{equation}
\end{proposition}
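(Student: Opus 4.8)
The plan is to mimic the inductive structure of the proof of Proposition \ref{prop:sect3:1}, using the homotopy pair $(\Int, \Egauss)$ between $A^\ast(\R^d, \SB(\R^g))$ and $A^{\ast-1}(\R^{d-1}, \SB(\R^{g-1}))$, but now in top degree $k=d$. First I would settle the easy direction: if $\omega = \D\Omega$ is exact, then since $\Int_{d,g}$ factors as an iterated integration over all $d$ of the $x$-variables and each such one-dimensional integral kills a $t$-derivative (integration by parts, boundary terms vanishing by Schwartz decay), we get $\Int_{d,g}\omega = 0$; more structurally, $\Int_{d,g}$ commutes with $\D$ in the appropriate sense and $\Int_{d,g}$ applied to anything in degree $d$ of $A^{d-1}$ vanishes, so $\Int_{d,g}\D\Omega = 0$. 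This also motivates that $\Int_{d,g}$ is the obstruction.

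For the converse and the tame estimate, I would induct on $d$. \textbf{Base case $d=1$:} here $\omega = f(t,y)\,\D t$ with $\Int_{1,g} f = 0$, and $\Omega := \Prim f$ is a primitive by definition, with the estimate $\norm \Omega \norm_s \le C\norm\omega\norm_{s+1+\varepsilon}$ coming directly from Lemma \ref{estimate_P} — note $(d+1)/2 = 1$, so this matches \eqref{eq:sect3:4}. \textbf{Inductive step:} given a closed (automatically, since degree $d$ is top) $\omega \in A^d(\R^d,\SB(\R^g))$ with $\Int_{d,g}\omega = 0$, write $\omega = f\,\D t\wedge \D x^a$ where $x^a$ runs over the remaining $d-1$ coordinates. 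Form $\Int\omega = (\Int_{1,g}f)\,\D x^a \in A^{d-1}(\R^{d-1},\SB(\R^{g-1}))$; this is again a top-degree form, and I would check that $\Int_{(d-1),g-1}(\Int\omega) = \Int_{d,g}\omega = 0$ by Fubini. By the inductive hypothesis (for $d-1 \le g-1$) there is a primitive $\eta \in A^{d-2}(\R^{d-1},\SB(\R^{g-1}))$ with $\Int\omega = \D\eta$ and $\norm\eta\norm_s \le C\norm{\Int\omega}\norm_{s + d/2 + \varepsilon}$. Then set
\[
\Omega := \Khom\omega + \Egauss\eta,
\]
and using $1 - \Egauss\Int = \D\Khom - \Khom\D$ together with $\D\omega = 0$ (top degree) one gets $\D\Omega = \D\Khom\omega + \Egauss\D\eta = (\omega - \Egauss\Int\omega) + \Egauss\Int\omega = \omega$. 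The estimate then follows by chaining: $\norm{\Khom\omega}\norm_s \le C\norm\omega\norm_{s+3/2+\varepsilon}$ from \eqref{estimate_K} (the case $\Int\omega = 0$ does \emph{not} apply here a priori, so we use the weaker $3/2$ bound), $\norm{\Egauss\eta}\norm_s \le C\norm\eta\norm_s$ from \eqref{estimate_E}, and $\norm{\Int\omega}\norm_{s+d/2+\varepsilon} \le C\norm\omega\norm_{s+d/2+1/2+\varepsilon}$ from \eqref{estimate_I} (applied with the shifted Sobolev order); since $d \ge 2$ in the inductive step, $\max\{3/2,\ (d+1)/2\} = (d+1)/2$, exactly as in the computation \eqref{eq:sect3:7}, yielding \eqref{eq:sect3:4}.

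The main obstacle I anticipate is the ``only if'' direction being genuinely nontrivial: showing that $\Int_{d,g}\omega = 0$ is \emph{sufficient} for exactness requires that at each stage of the induction the reduced form $\Int\omega$ land in the kernel of the lower-dimensional integration functional, which is the Fubini compatibility $\Int_{(d-1),g-1}\circ\Int = \Int_{d,g}$ — routine but must be stated carefully given the index bookkeeping between $\R^g$ and $\R^{g-d}$. A secondary subtlety is keeping track of which homotopy estimate (\eqref{estimate_K} vs.\ \eqref{estimatebis_K}) is available: in top degree we cannot assume $\Int\omega = 0$ for the summand being differentiated by $\Khom$, so the $3/2$-loss is the right one to use, and it is harmless precisely because $d \ge 2$ forces $(d+1)/2 \ge 3/2$ (and the $d=1$ base case is handled separately by $\Prim$ directly). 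No new analytic input beyond Lemmas \ref{lem:sect3:1}, \ref{lem:sect3:22}, \ref{lem:sect3:2} and \ref{estimate_P} is needed.
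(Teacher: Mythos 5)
Your proposal is correct and follows essentially the same route as the paper: the same base case ($d=1$ via Lemma~\ref{estimate_P}), the same induction on $d$ using the operators $\Int$, $\Egauss$, $\Khom$, the same Fubini compatibility $\Int_{d-1,g-1}\circ\Int = \Int_{d,g}$ to propagate the vanishing hypothesis, the same primitive $\Omega = \Khom\omega + \Egauss\eta$, and the same chaining of estimates with the $3/2$-loss for $\Khom$ (since $\Int\omega$ need not vanish) absorbed into $(d+1)/2$ because $d\ge 2$ in the inductive step. The only difference is cosmetic: you spell out the ``only if'' direction and the Fubini step, which the paper dismisses as obvious.
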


\begin{proof} 
  The ``only if'' part of the statement is obvious. For $d=1$ and any $g\ge 1$,
  this is Lemma \ref{estimate_P}.  Indeed, a primitive of the $1$-form $\omega
  = f(x,y) \, \D {}x$ is the $0$-form $\Omega := (\Prim f)(x,y)$, and the
  estimate for the norms comes from \eqref{formula_estimate_P}.
  
  Assume, by recurrence, that the Proposition is true for all $g'< g$ and all
  $d \leq g'$.  Let $\omega \in A^{d}(\R^{d}, \SB (\R^{g}))$ be a $d$-form such
  that $\Int_{d,g} \omega =0$.  Consider $\Int \omega \in A^{d-1}(\R^{d-1}, \SB
  (\R^{g-1}))$, where $\Int$ is the operator defined in the previous proof (see
  \eqref{I}). It is clear from the definitions that $\Int_{d,g}( \omega )=0$
  implies $\Int_{d-1,g-1} \Int \omega=0$.  By recurrence, $\Int \omega = \D {}
  \eta $ for a primitive $\eta \in A^{k-1}(\R^{k}, \SB (\R^{g}))$ satisfying
  the estimate
  \begin{equation} \label{recurrence_eta_max} \norm \eta \norm _s \leq C \,
    \norm \Int \omega \norm _{s+d/2+\varepsilon }
  \end{equation}
  As in the previous proof, one verifies that the form $\D{}_{-1}\omega:=\Omega
  := \Khom \omega + \Egauss \eta \in A^{d-1}(\R^{d}, \SB (\R^{g}))$ is a
  primitive of $\omega$ (where the operators $\Egauss$ and $\Khom$ are defined
  in previous proof, see \eqref{E} and \eqref{K}).  Therefore, from Lemma
  \ref{lem:sect3:22} and the estimates \eqref{estimate_I}, \eqref{estimate_E},
  \eqref{estimate_K} and \eqref{recurrence_eta_max}, we have, for some
  constants $C$'s which only depend on $s \geq 0$ and $\varepsilon >0$, 
  \begin{equation}
    \label{eq:sect3:7b}
    \begin{split}
      \norm \Omega \norm _{s}
      &\le \norm \Khom \omega \norm _{s} + \norm  \Egauss \eta \norm _{s}\\
      & \le C' \, \norm \omega \norm _{s+3/2 +\varepsilon}
      + C'' \, \norm \eta \norm _{s}\\
      & \le C' \,\norm \omega \norm _{s+3/2 +\varepsilon}
      + C'''  \norm \Int  \omega  \norm _{s+d/2+\varepsilon/2}\\
      & \le C' \,\norm \omega \norm _{s+3/2 +\varepsilon}
      + C''''  \norm \omega  \norm _{s+d/2+1/2+\varepsilon}\\
      & \le C \,\norm \omega \norm_{s+(d+1)/2+\varepsilon} .
    \end{split}
  \end{equation}
  The proof is complete.
\end{proof}

\begin{proposition}
  \label{prop:tameness}
  Let $s \ge 0$ and $1 \le d \le g$.  Consider $\SB (\R^g)$ as a $\Heis$-module
  with parameter $h=1$. For any $k=0, \dots, d$, the space of coboundaries
  $B^d(\mathfrak p, \SB (\R^g))$ is a tame direct summand of $A^{k}(\mathfrak
  p, \SB (\R^g))$. In fact, there exist linear maps
  \[
  M^k: A^{k}(\mathfrak p, \SB (\R^g)) \to B^k ( \mathfrak p, \SB (\R^g))
  \]
  satisfying the following properties:
  \begin{itemize}
  \item the restriction of $M^k$ to $B^k ( \mathfrak p, \SB (\R^g))$ is the
    identity map;
  \item the map $M^k$ satisfies, for any $\varepsilon>0$, tame estimates of
    degree $(k+3)/2 + \varepsilon$  if $k<d$ and of degree $d/2 + \varepsilon$
    if $k=d$.
  \end{itemize}
\end{proposition}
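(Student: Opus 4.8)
The plan is to treat the regimes $k<d$ and $k=d$ separately, since Propositions~\ref{prop:sect3:1} and~\ref{prop_cohom_eq_max_dim} describe the coboundary spaces quite differently. For $k<d$ the cohomology vanishes, so $B^k(\mathfrak p,\SB(\R^g))=Z^k(\mathfrak p,\SB(\R^g))$ and $M^k$ must be a linear projection of $A^k$ onto the cocycles; whereas for $k=d$ one has $B^d(\mathfrak p,\SB(\R^g))=\ker\Int_{d,g}$, a closed subspace ``of codimension $\SB(\R^{g-d})$'', and $M^d$ is simply the projection complementary to a concrete section of $\Int_{d,g}$. Producing such $M^k$ suffices: the inclusion $B^k\hookrightarrow A^k$ is tame of degree $0$, and $\id-M^k$ tamely splits the inclusion of $\ker M^k$.

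For $k=d$, observe first that $s\colon\SB(\R^{g-d})\to A^d(\mathfrak p,\SB(\R^g))$, $s(h):=(\Egauss_{d,g}h)\,\D x_1\wedge\dots\wedge\D x_d$, is a right inverse of $\Int_{d,g}$ because $\Int_d(\varphi_d)=1$; by Lemma~\ref{lem:sect3:2} (and the trivial case $d=g$) it satisfies $\norm s(h)\norm_s\le C\norm h\norm_s$, i.e.\ it is tame of degree $0$. Set $M^d:=\id-s\circ\Int_{d,g}$. Then $\Int_{d,g}(M^d\omega)=\Int_{d,g}\omega-\Int_{d,g}\omega=0$, so by Proposition~\ref{prop_cohom_eq_max_dim} $M^d\omega\in\ker\Int_{d,g}=B^d(\mathfrak p,\SB(\R^g))$, and $M^d$ restricts to the identity on $\ker\Int_{d,g}$. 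The estimate $\norm M^d\omega\norm_s\le\norm\omega\norm_s+C\norm\Int_{d,g}\omega\norm_s\le C'\norm\omega\norm_{s+d/2+\varepsilon}$ is then immediate from Lemma~\ref{lem:sect3:22}, giving tame degree $d/2+\varepsilon$.

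For $k<d$ I would recycle the homotopy machinery of the proof of Proposition~\ref{prop:sect3:1}: the operators $\Int$, $\Egauss$, $\Khom$ of \eqref{I}, \eqref{E}, \eqref{K}, which satisfy $\id-\Egauss\Int=\D\Khom-\Khom\D$. Define the $M^k$ by induction on $k$, with the simultaneous descent from $(g,d)$ to $(g-1,d-1)$, setting $M^0:=0$ (since $B^0=0$) and
\[
M^k:=\Egauss\circ M^{k-1}_{g-1,d-1}\circ\Int+\D\circ\Khom .
\]
Closedness of $M^k\omega$ follows from $\D^2=0$ once one knows, inductively, that $M^{k-1}_{g-1,d-1}\Int\omega$ is a cocycle, so that $\D\Egauss(M^{k-1}\Int\omega)=\Egauss\D(M^{k-1}\Int\omega)=0$; and $M^k$ restricts to the identity on $Z^k=B^k$ by the homotopy identity together with the fact that $\Int\omega$ is a cocycle whenever $\omega$ is, so the inductive hypothesis makes $M^{k-1}$ act as the identity on it. (Equivalently, one may take $M^k:=\id-\D_{-1}^{(k+1)}\circ\D$, with $\D_{-1}^{(k+1)}$ the primitive operator of Proposition~\ref{prop:sect3:1} if $k+1<d$ and of Proposition~\ref{prop_cohom_eq_max_dim} if $k+1=d$ --- the latter being applicable because $\Int_{d,g}$ annihilates every exact top-degree form, so $\D(A^{d-1})\subseteq\ker\Int_{d,g}$; this presentation makes the projection property transparent but is wasteful for the estimates.)

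The main work, and the only delicate point, is the tame estimate for $k<d$. Composing crudely --- $\D$ costs one derivative and $\D_{-1}^{(k+1)}$ applied to a $(k+1)$-form costs $(k+2)/2+\varepsilon$ by Proposition~\ref{prop:sect3:1} --- would give only degree $(k+4)/2+\varepsilon$; to reach the stated $(k+3)/2+\varepsilon$ one must run the inductive construction so that at each step the expensive operator $\Khom$ is applied only to forms lying in $\ker\Int$, where the sharper bound \eqref{estimatebis_K} (cost $1+\varepsilon$ rather than $3/2+\varepsilon$) is available, while the reduction $\Int$ --- which costs $1/2+\varepsilon$ by \eqref{estimate_I} --- is invoked exactly once per inductive step and $\Egauss$ is free by \eqref{estimate_E}. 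Carrying out this bookkeeping, and tracking the passage between the homogeneous Sobolev norms $\norm\cdot\norm_s$ used here and the standard ones, is what requires care; the algebraic verifications that $M^k$ is a projection onto $B^k$ are routine by comparison.
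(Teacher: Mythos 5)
Your $k=d$ argument is the same as the paper's: $M^d = \id - \Egauss_{d,g}\Int_{d,g}$, tame of degree $d/2+\varepsilon$ by Lemmas~\ref{lem:sect3:22} and~\ref{lem:sect3:2}. For $k<d$ you propose the recursive $M^k = \Egauss\circ M^{k-1}\circ\Int + \D\circ\Khom$, whereas the paper takes the shorter route $M^k = \id - \D{}_{-1}\circ\D$ with $\D{}_{-1}$ from Proposition~\ref{prop:sect3:1} (and, at $k=d-1$, from Proposition~\ref{prop_cohom_eq_max_dim}, since $\D(A^{d-1})\subset\ker\Int_{d,g}$, as you correctly note). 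Both formulas define projections of $A^k$ onto $Z^k = B^k$, and your algebraic verifications are fine.

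The gap is the degree estimate for $k<d$, which you flag but do not close. You rightly observe that the crude count for $\id-\D{}_{-1}\circ\D$ gives $(k+4)/2+\varepsilon$, and you assert that applying $\Khom$ only to forms in $\ker\Int$, where the sharper bound~\eqref{estimatebis_K} costs $1+\varepsilon$ instead of $3/2+\varepsilon$, recovers the stated $(k+3)/2+\varepsilon$. But in your own $M^k = \Egauss M^{k-1}\Int + \D\Khom$ the operator $\Khom$ acts on $\omega$ directly, which need not lie in $\ker\Int$, so the sharper bound is not triggered; and even pre-composing with $\id-\Egauss\Int$ to land in $\ker\Int$ does not help, because $\D\circ\Khom$ still costs at least $5/2+\varepsilon$ (one for $\D$, $1+\varepsilon$ for $\Prim$, and $1/2+\varepsilon$ from the $\Egauss\Int$ correction, with no cancellation across the composition), which already exceeds $(k+3)/2=2$ at $k=1$. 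So the bookkeeping you invoke does not deliver the asserted degree, and you never carry it out. The paper's own proof is equally terse here, merely asserting the degree after defining $M^k=\id-\D{}_{-1}\circ\D$; you have located a genuine subtlety, but your write-up leaves the stated bound unproven.
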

\begin{proof}
  For $\omega= f \D{}x^1 \wedge \dots \wedge \D{}x^d\in A^d(\mathfrak p,
  \SB(\R^g))$ let
  \[
  M^d(\omega) = \omega - ( \Egauss_{d,g} \circ\Int_{d,g} f)\,\D{}x^1 \wedge
  \dots \wedge \D{}x^d .
  \] The Lemmata~\ref{lem:sect3:22} and \ref{lem:sect3:2} show that $M^d$ is a
  linear tame map of degree $d/2 + \varepsilon$, for every
  $\varepsilon>0$. Clearly for $\omega\in B^d(\mathfrak p, \SB(\R^g))$ we have
  $M^d(\omega) = \omega$. Since the map~$M^d$ maps $ A^d(\mathfrak p,
  \SB(\R^g))$ into $B^d(\mathfrak p, \SB(\R^g))$, we have proved that
  $B^d(\mathfrak p, \SB(\R^g))$ is a direct summand of $ A^d(\mathfrak p,
  \SB(\R^g))$.

  Now consider the case $k<d$. We have $B^k(\mathfrak p,
  \SB(\R^g))=Z^k(\mathfrak p, \SB(\R^g))$.  For $\omega \in A^k(\mathfrak p,
  \SB(\R^g))$ let
  \[
  M^k(\omega) = \omega - \D{}_{-1}\circ \D{}(\omega).
  \]
  The map $M^k$ is a linear tame map of degree $(k+3)/2 + \varepsilon$, for
  every $\varepsilon>0$.

  Clearly for $\omega\in Z^k(\mathfrak p, \SB(\R^g))$ we have $M(\omega) =
  \omega$. Furthermore $\D{}\circ M =0$.  Thus the map~$M^k$ sends $
  A^k(\mathfrak p, \SB(\R^g))$ into $Z^k(\mathfrak p, \SB(\R^g))$.  We have
  proved that $Z^d(\mathfrak p, \SB(\R^g))$ is a direct summand of $
  A^d(\mathfrak p, \SB(\R^g))$.
\end{proof}

\paragraph{$\Subgroup$-invariant currents of dimension $\dim \Subgroup$.}

  

Recall that the space of currents of dimension~$k$ is the space $A_k( \mathfrak p,
\SB (\R^g))$ of continuous linear functionals on $A^k( \mathfrak p, \SB
(\R^g))$ and that $A_k( \mathfrak p, \SB (\R^g))$ is identified with $ \Lambda
^k \mathfrak p \otimes \SB' (\R^g)$. For any $s\ge 0$, the space $\Lambda ^k
\mathfrak p \otimes\Sobolev^{-s}(\R^g)$ is identified with the  space of currents of
dimension~$k$ and Sobolev order $s$.

It is clear, from Lemma \ref{lem:sect3:1}, that $\Int_g=\Int _{g,g} \in
\Sobolev ^{-s}(\R^g)$ for any $s > g/2$, i.e. it is a closed current of
dimension $g$ and Sobolev order $g/2 + \varepsilon$, for any $\varepsilon >0$.

For $d < g$ and $t>0$, consider the currents $D \circ \Int_{d,g}$ with $D \in
\Sobolev^{-t} (\R^{g-d})$.  It follows from Lemma \ref{lem:sect3:22} that such
currents belong to $\Lambda ^d \frak p \otimes \Sobolev ^{-s}(\R^g)$ for any $s
> t+d/2$ and it is easily seen that they are closed.

In fact, we have the following proposition, whose proof follows immediately from
Lemma~\ref{lem:sect3:22} and Proposition~\ref{prop_cohom_eq_max_dim}.

\begin{proposition}\label{prop:sect:3:12}
  For any $s> \dim \Subgroup/2$,  the space of $\Subgroup$-invariant currents of
  dimension~$d:=\dim \Subgroup$ and order $s$ is a closed subspace of $
  \Lambda^k \mathfrak p \otimes \Sobolev^{-s}(\R^g)$ and it concides with the
  space of closed currents of dimension~$d$. It is
  \begin{itemize}
  \item a one dimensional space spanned by $\Int_g$, if $\dim \Subgroup=g$;
  \item an infinite-dimensional space generated by
    \[
    I_d( \mathfrak p, \SB(\R^g))=\{ D \circ \Int_{d,g}\mid D\in
    L^{2}(\R^{g-d})'\} .
    \]
    if $\dim \Subgroup <g$. We have $I_d( \mathfrak p, \SB(\R^g)) \subset
    W^{-d/2 -\varepsilon}(\R^g)$, for all $\varepsilon >0 $.
  \end{itemize}
  Let $\omega\in \Lambda ^d \mathfrak p' \otimes \Sobolev ^{s}(\R^g)$ with
  $s>(d+1)/2$. Then $\omega$ admits a primitive~$\Omega$ if and only if $T
  (\omega) =0$ for all $T\in I_d( \mathfrak p, \SB(\R^g))$; under this
  hypothesis we may have $\Omega\in \Lambda ^{d-1} \mathfrak p' \otimes
  \Sobolev ^{t}(\R^g)$ for any $t <s-(d+1)/2$.
\end{proposition}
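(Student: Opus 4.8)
The plan is to push everything through Proposition~\ref{prop_cohom_eq_max_dim}, which identifies the exact $d$--forms as exactly those killed by $\Int_{d,g}$, together with the mapping properties of $\Int_{d,g}$ and $\Egauss_{d,g}$ recorded in Lemmata~\ref{lem:sect3:1}, \ref{lem:sect3:22}, \ref{lem:sect3:2} and Corollary~\ref{coro:sect3:2}. First I would exploit that $\dim\mathfrak p=d$ makes $\Lambda^d\mathfrak p'$ one--dimensional: evaluation on $\D x_1\wedge\dots\wedge\D x_d$ identifies $A^d(\mathfrak p,\SB(\R^g))$ with $\SB(\R^g)$ and, dually, identifies a current $T$ of dimension $d$ with a tempered distribution $D_T\in\SB'(\R^g)$. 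I would then establish that for such a $T$ the three statements ``$T$ is $\Subgroup$--invariant'', ``$T$ is closed'' and ``$D_T=D\circ\Int_{d,g}$ for some $D\in\SB'(\R^{g-d})$'' are equivalent, where I adopt the convention $\R^0=\{0\}$, so that the last statement reads $D_T\in\CC\,\Int_g$ when $d=g$.

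For ``invariant $\Leftrightarrow$ factoring through $\Int_{d,g}$'': since $\mathfrak p$ is abelian, $V\in\mathfrak p$ acts on $\Lambda^d\mathfrak p\otimes\SB'(\R^g)$ only through the $\SB'(\R^g)$ factor, so $\Subgroup$--invariance of $T$ is the condition $X_j.D_T=0$ for $j=1,\dots,d$, i.e.\ $\partial D_T/\partial x_j=0$ by~\eqref{eq:sect2:6} (with $h=1$); and a tempered distribution annihilated by $\partial/\partial x_1,\dots,\partial/\partial x_d$ is precisely one of the form $\mathbf 1_{\R^d}\otimes D$ with $D\in\SB'(\R^{g-d})$, which is exactly $D_T=D\circ\Int_{d,g}$ (a constant multiple of Lebesgue measure, i.e.\ of $\Int_g$, when $d=g$). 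For ``closed $\Leftrightarrow$ factoring through $\Int_{d,g}$'': a $d$--current is closed iff it annihilates $B^d(\mathfrak p,\SB(\R^g))$, which by Proposition~\ref{prop_cohom_eq_max_dim} equals $\ker\Int_{d,g}$; currents of the form $D\circ\Int_{d,g}$ obviously kill this kernel, and conversely I would use that $\Int_{d,g}\colon\SB(\R^g)\to\SB(\R^{g-d})$ is onto with bounded right inverse $\Egauss_{d,g}$ (Lemma~\ref{lem:sect3:2} gives $\Int_{d,g}\Egauss_{d,g}=\mathrm{id}$, hence the splitting $\SB(\R^g)=\Egauss_{d,g}\SB(\R^{g-d})\oplus\ker\Int_{d,g}$): any $T$ vanishing on $\ker\Int_{d,g}$ then equals $(T\circ\Egauss_{d,g})\circ\Int_{d,g}$, and Lemma~\ref{lem:sect3:2} shows $D:=T\circ\Egauss_{d,g}$ has the same Sobolev order as $T$. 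Since annihilators are closed, this already yields that the $\Subgroup$--invariant ($=$ closed) currents of any fixed order $s$ form a closed subspace of $\Lambda^d\mathfrak p\otimes\Sobolev^{-s}(\R^g)$.

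The dimension count and the Sobolev orders then fall out directly. When $d=g$, $D$ is a scalar, the space is one--dimensional spanned by $\Int_g$, and $\Int_g\in\Sobolev^{-s}(\R^g)$ for $s>g/2$ by Lemma~\ref{lem:sect3:1}. When $d<g$, taking $D\in L^2(\R^{g-d})'$ and $t=0$ in Corollary~\ref{coro:sect3:2} gives $D\circ\Int_{d,g}\in\Sobolev^{-s}(\R^g)$ for every $s>d/2$, i.e.\ $I_d(\mathfrak p,\SB(\R^g))\subset\Sobolev^{-d/2-\varepsilon}(\R^g)$ for all $\varepsilon>0$; and since $L^2(\R^{g-d})'$ is dense in each $\Sobolev^{-t}(\R^{g-d})$ while $D\mapsto D\circ\Int_{d,g}$ is continuous (Corollary~\ref{coro:sect3:2} again), the family $I_d(\mathfrak p,\SB(\R^g))$ generates the whole, infinite--dimensional space of closed $d$--currents.

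Finally, for the primitive criterion, let $\omega\in\Lambda^d\mathfrak p'\otimes\Sobolev^s(\R^g)$ with $s>(d+1)/2$. If $\omega=\D\Omega$, then $\langle T,\omega\rangle=\langle T,\D\Omega\rangle=\langle\partial T,\Omega\rangle=0$ for every closed current $T$, in particular for every $T\in I_d(\mathfrak p,\SB(\R^g))$. Conversely, since $s>(d+1)/2>d/2$ we have $\Int_{d,g}\omega\in\SB(\R^{g-d})\subset L^2(\R^{g-d})$ by Lemma~\ref{lem:sect3:22}, so the vanishing of $\langle D\circ\Int_{d,g},\omega\rangle=\langle D,\Int_{d,g}\omega\rangle$ for all $D\in L^2(\R^{g-d})'$ forces $\Int_{d,g}\omega=0$; Proposition~\ref{prop_cohom_eq_max_dim} then supplies a primitive $\Omega=\D{}_{-1}\omega$ with $\norm \Omega \norm_t \le C\, \norm \omega \norm_{t+(d+1)/2+\varepsilon}$, so that $\Omega\in\Lambda^{d-1}\mathfrak p'\otimes\Sobolev^t(\R^g)$ for any $t<s-(d+1)/2$. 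The one point that is not pure bookkeeping is the factorization of a continuous functional vanishing on $\ker\Int_{d,g}$ through $\Int_{d,g}$ with control of the Sobolev order; and that is exactly what the bounded right inverse $\Egauss_{d,g}$ of Lemma~\ref{lem:sect3:2} provides. Everything else is a transcription of the estimates already established.
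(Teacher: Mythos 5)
Your argument is correct and amounts to making explicit what the paper leaves implicit (the paper declares Proposition~\ref{prop:sect:3:12} an immediate consequence of Lemma~\ref{lem:sect3:22} and Proposition~\ref{prop_cohom_eq_max_dim} and gives no proof). The splitting $\SB(\R^g)=\Egauss_{d,g}\SB(\R^{g-d})\oplus\ker\Int_{d,g}$ via the right inverse $\Egauss_{d,g}$ is exactly the mechanism the authors have in mind, and the rest is the bookkeeping you describe. Two harmless imprecisions worth tidying: the identity $\Int_{d,g}\circ\Egauss_{d,g}=\mathrm{id}$ follows from the normalization $\Int_d(\varphi_d)=1$, not from Lemma~\ref{lem:sect3:2} (which only records the boundedness of $\Egauss_{d,g}$); and for $\omega\in\Lambda^d\mathfrak p'\otimes\Sobolev^s(\R^g)$ with $s>(d+1)/2$, Lemma~\ref{lem:sect3:22} places $\Int_{d,g}\omega$ in $\Sobolev^{s-d/2-\varepsilon}(\R^{g-d})\subset L^2(\R^{g-d})$, not in $\SB(\R^{g-d})$, which is all you need to conclude $\Int_{d,g}\omega=0$ from the vanishing of $\langle D,\Int_{d,g}\omega\rangle$ over $D\in L^2(\R^{g-d})'$.
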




\paragraph{Bounds uniform in the parameter $h$.} Here we observe that the
estimates in Propositions \ref{prop:sect3:1} and \ref{prop_cohom_eq_max_dim}
are uniform in the Planck constant $h$, provided that this constant is bounded
away from zero.

\begin{proposition}
  \label{prop_cohom_eq_uniform_h}
  Let $s \geq 0$ and $1 \le k \leq d \le g$, and consider the  
  $\Heis$-module $\SB (\R^g)$ with parameter $h$ such that $|h| \geq h_0 >0$.
  Let $B^k= Z^{k}(\R^{d}, \SB (\R^g))$ if $k<d$ and $B^d=\ker\Int_{d,g}$ if
  $k=d $.  For every $\varepsilon >0$ there exists a constant
  $C=C(s,\varepsilon,g,d, h_0)>0$ and a linear map
  \[
  \D{}_{-1}\colon B^k \to A^{k-1}(\mathfrak p, \SB (\R^g))
  \]
  associating to every $ \omega \in B$ a primitive $\Omega = \D{}_{-1}\omega
  \in A^{k-1}(\mathfrak p, \SB (\R^g))$ satisfying the estimate
  \begin{equation}
    \label{eq:sect3:4b}
    \norm \Omega \norm_s \le C  \,  \norm \omega
    \norm_{s + (k+1)/2 + \varepsilon}\,.
  \end{equation}
  Furthermore, for any $\varepsilon>0$ there exists a constant
  $C'=C'(s,\varepsilon,g,d, h_0)>0$ such that the splitting linear maps of
  Proposition~\ref{prop:tameness}
  \[
  M^k: A^{k}(\mathfrak p, \SB (\R^g)) \to B^k ( \mathfrak p, \SB (\R^g))
  \]
  satisfy tame estimates
  \[
  \norm M^k (\omega) \norm_s \le C'\,\norm \omega \norm_{s+ w}
  \]
  where $w=(k+3)/2+ \varepsilon$, if $k<d$, and $w=d/2 + \varepsilon$ if $k=d$.
\end{proposition}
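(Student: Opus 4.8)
The plan is to deduce the proposition from the already-established case $h=1$, using only the elementary observation that changing the Planck constant rescales both the homogeneous Sobolev norms and the cochain differential by explicit powers of $|h|$. First I would record the two scaling identities. From \eqref{eq:sect3:3} and the definition \eqref{eq:sect3:3b} one has $\norm\varphi\norm_{s,h}=|h|^{s/2}\norm\varphi\norm_{s,1}$ for $\varphi\in\SB(\R^g)$; since the Euclidean structure on $\Lambda^k\mathfrak p'$ is fixed independently of $h$, the same identity $\norm\omega\norm_{s,h}=|h|^{s/2}\norm\omega\norm_{s,1}$ holds for every cochain $\omega\in\Lambda^k\mathfrak p'\otimes\SB(\R^g)$. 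From \eqref{eq:sect2:6}, and since $(X_1,\dots,X_d)$ is a basis of $\mathfrak p$, one has $\rho_h(V)=|h|^{1/2}\rho_1(V)$ for all $V\in\mathfrak p$, whence the differential $\D$ of $A^\ast(\mathfrak p,\rho_h^\infty)$ equals $|h|^{1/2}$ times the differential $\D'$ of $A^\ast(\mathfrak p,\rho_1^\infty)$. In particular the spaces $Z^k$, $B^k$ and $\ker\Int_{d,g}$ --- hence the subspace $B^k$ in the statement --- do not depend on $h$ as subspaces of $\Lambda^k\mathfrak p'\otimes\SB(\R^g)$.

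Next I would transport the operators of Propositions \ref{prop:sect3:1}, \ref{prop_cohom_eq_max_dim} and \ref{prop:tameness}, written for $h=1$, to the parameter $h$. Denote these by $\D'_{-1}$ and $M^k_{(1)}$. I set $\D_{-1}:=|h|^{-1/2}\D'_{-1}$ on $B^k$: since $\D=|h|^{1/2}\D'$, this gives $\D\,\D_{-1}\omega=\D'\D'_{-1}\omega=\omega$, so $\D_{-1}$ does assign primitives for the parameter $h$. For the splitting maps no rescaling is needed. When $k<d$, $M^k_{(1)}=\mathrm{id}-\D'_{-1}\circ\D'$ and $\D_{-1}\circ\D=(|h|^{-1/2}\D'_{-1})\circ(|h|^{1/2}\D')=\D'_{-1}\circ\D'$, so $M^k:=M^k_{(1)}$ is $h$-independent; when $k=d$ the map $M^d_{(1)}(\omega)=\omega-(\Egauss_{d,g}\Int_{d,g}f)\,\D x^1\wedge\dots\wedge\D x^d$ does not involve the differential at all. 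In either case $M^k$ restricts to the identity on $B^k$ and lands in $B^k$, by Proposition \ref{prop:tameness}.

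The last step is the bookkeeping of powers of $|h|$. For $\omega\in B^k$, combining the two identities of the first step with the $h=1$ estimates gives
\[
\norm\D_{-1}\omega\norm_{s,h}=|h|^{(s-1)/2}\norm\D'_{-1}\omega\norm_{s,1}\le C\,|h|^{(s-1)/2}\norm\omega\norm_{s+(k+1)/2+\varepsilon,1}=C\,|h|^{-(k+3+2\varepsilon)/4}\norm\omega\norm_{s+(k+1)/2+\varepsilon,h},
\]
and likewise $\norm M^k\omega\norm_{s,h}\le C\,|h|^{-w/2}\norm\omega\norm_{s+w,h}$ with $w=(k+3)/2+\varepsilon$ for $k<d$ and $w=d/2+\varepsilon$ for $k=d$. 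In each case the exponent of $|h|$ is negative, so under the hypothesis $|h|\ge h_0>0$ the corresponding factor is bounded by a fixed power of $h_0^{-1}$, which we absorb into the constant to obtain $C'=C'(s,\varepsilon,g,d,h_0)$.

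There is no real difficulty here; the one point deserving care is checking that the exponent of $|h|$ comes out \emph{nonpositive}, which is precisely why the hypothesis must be that $|h|$ is bounded \emph{below} and not merely bounded. This relies on the $|h|^{-1/2}$ normalization that the scaling $\D=|h|^{1/2}\D'$ forces on $\D_{-1}$, together with the fact that the Sobolev loss $w$ in Proposition \ref{prop:tameness} is strictly positive.
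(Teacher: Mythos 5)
Your proposal is correct and follows essentially the same route as the paper's proof: record the two scaling identities $\norm\cdot\norm_{s,h}=|h|^{s/2}\norm\cdot\norm_{s,1}$ and $\rho_h(\D)=|h|^{1/2}\rho_1(\D)$, define $\D_{-1}:=|h|^{-1/2}\D'_{-1}$, and absorb the resulting negative power of $|h|$ into the constant using $|h|\ge h_0$. Your bookkeeping of the exponent is in fact slightly more careful than the paper's (you correctly obtain $|h|^{-(k+3+2\varepsilon)/4}$ where the paper writes $|h|^{-(k+1+\varepsilon)/2}$, an apparent slip), though this makes no difference to the conclusion since both exponents are negative; and your observation that the splitting maps $M^k$ are literally $h$-independent as operators is a clean way to handle the "analogous manner" the paper only alludes to for the second statement.
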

\begin{proof}
  From \eqref{eq:sect2:6} we see that the boundary operators in the Schr\"
  odinger representation with Planck constant $h$ are $^\hbar \! \D := \rho ^h
  (\D) = |h|^{1/2} \, \D$. Therefore, if $\omega = \D \, \Omega$, then $ \omega
  = {}^\hbar \!  \D \, \Omega '$ with $\Omega '= |h|^{-1/2} \, \Omega
  $. Consequently, by \eqref{eq:sect3:3}, the estimates \eqref{prop_cocycles}
  and \eqref{eq:sect3:4} imply
  \begin{equation}
    \label{eq:sect3:7t}
    \begin{split}
      \norm \Omega ' \norm _{s,h} & = | h | ^{-1/2} \norm \Omega \norm _{s,h} =
      | h | ^{s/2-1/2}
      \norm \Omega \norm _s \\
      &\leq C \,| h | ^{s/2-1/2}  \norm \omega \norm _{s+ (k+1)/2 +\varepsilon} \\
      & =  C \, | h | ^{-(k+1+\varepsilon)/2} \, \norm \omega \norm _{s+(k+1)/2+\varepsilon,h} \\
      & \leq C' \, \norm \omega \norm _{s+t+\varepsilon,h} .
    \end{split}
  \end{equation}
  for some $C'$ depending also on $h_0$. The second statement is proved in an
  analogous manner.
\end{proof}

\paragraph{Comparison with the usual Sobolev norms.} 
Standard Sobolev norms associated with a Heisenberg basis $( X_i, \Xi_j,T)$ of
$\hei$ were defined in Remark~\ref{rem:sect3:1}. For a $\Heis$-module $\SB
(\R^g)$ with parameter $h$, the image of the Laplacian $ \ -( X_1^2 + \dots +
X_g^2+\Xi_1^2 + \dots + \Xi_g^2 + T^2)\in \mathfrak U(\hei)$ under $\rho_h$ is
$\Delta_g = \subLap_g + h^2$. Thus
\[
\| f\|_s ^2 = \langle f , (1 + \Delta_g)^s f \rangle= \langle f , (1 + h^2
+H_g)^s f \rangle
\]
Here we claim that the uniform bound as in Proposition
\ref{prop_cohom_eq_uniform_h} continues to hold with respect to the usual
Sobolev norms. This is a consequence of the following easy lemma which applies
to $\SB (\R^g)$ but also to any tensor product of $\SB (\R^g)$ with some finite
dimesional Euclidean space.
  
\begin{lemma}
  \label{lem:prop_cohom_eq_uniform_h_Sobolev}
  Let $L\colon \SB (\R^g) \to \SB (\R^g)$ be a linear map satisfying, for some
  $t\ge 0$ and every $s\geq 0$, the estimate
  \[
  \norm L(f) \norm _{s} \le C(s) \norm f \norm _{s+t}
  \]
  Then for every $s\geq 0$ we have
  \[
  \| L(f) \| _{s} \le C_1(s) \| f \| _{s+t},
  \]
  where $C_1(s) = \max_{u\in [0,s+1]} C(u)$.
\end{lemma}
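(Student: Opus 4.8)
The plan is to pass to the spectral picture, using that on the $\Heis$-module $\SB(\R^g)$ with parameter~$h$ both families of norms are built from the single positive self-adjoint operator $\subLap_g=\rho_h(\subLap_g)$: one has $\norm f\norm_s^2=\langle\subLap_g^s f,f\rangle$ and, as observed in the comparison above, $\|f\|_s^2=\langle(1+\Delta_g)^s f,f\rangle$ with $1+\Delta_g=c+\subLap_g$ and $c:=1+h^2\ge 1$. Thus $1+\Delta_g$ is an affine function of $\subLap_g$, so the two operators commute and may be compared on the spectrum of $\subLap_g$. (When $\SB(\R^g)$ is replaced by $\SB(\R^g)\otimes E$ for a finite-dimensional Euclidean space $E$, one replaces $\subLap_g$ by $\subLap_g\otimes\mathrm{Id}_E$ and nothing below changes, so it is enough to treat the scalar case.)

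First I would prove the estimate at integer order $s=n\in\N$. By the binomial theorem $(1+\Delta_g)^n=(c+\subLap_g)^n=\sum_{j=0}^n\binom{n}{j}c^{\,n-j}\subLap_g^j$, so that for every $\varphi\in\SB(\R^g)$
\[
\|\varphi\|_n^2=\sum_{j=0}^n\binom{n}{j}c^{\,n-j}\,\norm\varphi\norm_j^2.
\]
Applying this with $\varphi=L(f)$ and inserting the hypothesis $\norm L(f)\norm_j\le C(j)\,\norm f\norm_{j+t}$ term by term gives
\[
\|L(f)\|_n^2\le\Bigl(\max_{0\le j\le n}C(j)^2\Bigr)\sum_{j=0}^n\binom{n}{j}c^{\,n-j}\,\norm f\norm_{j+t}^2=\Bigl(\max_{0\le j\le n}C(j)^2\Bigr)\bigl\langle\subLap_g^t(c+\subLap_g)^n f,f\bigr\rangle.
\]
Since $0\le\subLap_g\le c+\subLap_g$ and $t\ge 0$, and both are functions of $\subLap_g$, we have $\subLap_g^t\le(c+\subLap_g)^t$, hence $\subLap_g^t(c+\subLap_g)^n\le(c+\subLap_g)^{n+t}=(1+\Delta_g)^{n+t}$ and therefore $\|L(f)\|_n^2\le\bigl(\max_{0\le j\le n}C(j)^2\bigr)\|f\|_{n+t}^2$. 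The (possibly large) factors $c^{\,n-j}$ have cancelled between the two sides, so the bound $\|L(f)\|_n\le\bigl(\max_{0\le j\le n}C(j)\bigr)\|f\|_{n+t}$ is uniform in~$h$.

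It then remains to remove the integrality restriction on~$s$ by interpolation. Given $s\ge 0$, set $n=\lfloor s\rfloor$ and $\theta=s-n\in[0,1)$. The spaces $W^r(\R^g)$, being domains of powers of the positive self-adjoint operator $1+\Delta_g\ge 1$, form a complex interpolation scale, $[W^{r_0},W^{r_1}]_\theta=W^{(1-\theta)r_0+\theta r_1}$. By the previous step $L$ is bounded $W^{n+t}\to W^n$ with norm $\le\max_{u\le n}C(u)$ and $W^{n+1+t}\to W^{n+1}$ with norm $\le\max_{u\le n+1}C(u)$; interpolating, $L$ is bounded $W^{s+t}\to W^s$ with norm at most
\[
\Bigl(\max_{u\le n}C(u)\Bigr)^{1-\theta}\Bigl(\max_{u\le n+1}C(u)\Bigr)^{\theta}\le\max_{u\le n+1}C(u)\le\max_{u\in[0,s+1]}C(u),
\]
the last step using $n+1=\lfloor s\rfloor+1\le s+1$. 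This is exactly the asserted estimate with $C_1(s)=\max_{u\in[0,s+1]}C(u)$.

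The only step requiring genuine care is this last interpolation: since $L$ is a single operator occurring on both sides of the inequality, one cannot simply multiply the two integer-order estimates (log-convexity of $r\mapsto\|f\|_r$ would point the wrong way), and one must instead invoke operator interpolation --- i.e.\ apply the Hadamard three-lines lemma to $z\mapsto(1+\Delta_g)^{s(z)/2}\,L\,(1+\Delta_g)^{-(s(z)+t)/2}$, using that $(1+\Delta_g)^{i\tau}$ is a strongly continuous group of unitaries. Everything else is a routine manipulation of the spectral measure of~$\subLap_g$, which is why the lemma deserves the adjective ``easy''.
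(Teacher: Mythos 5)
Your proof is correct and follows essentially the same route as the paper's: binomial expansion of $(1+\Delta_g)^n=(1+h^2+\subLap_g)^n$ into powers of $\subLap_g$, term-by-term application of the hypothesis, the spectral inequality $\subLap_g^t\le(1+h^2+\subLap_g)^t$ to absorb the extra factor, and interpolation to pass to non-integer~$s$. Your closing remarks, explaining why naïve log-convexity of $r\mapsto\|f\|_r$ would point the wrong way and why one therefore needs genuine operator (complex) interpolation, supply useful detail that the paper leaves implicit in its one-line ``follows by interpolation''.
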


\begin{proof} 
  For integer $s=n$, using the binomial formula, we get, with $C'(n):=
  \max_{j\in [0,n]} C(j)^2$,
  \[
  \label{eq:Laplacian_binomial}
  \begin{split}
    \| L(f)\| _{n}^2 & :=\lin   L(f), (H_g +1+h^2)^{n} \, L(f)  \rin _0 \\
    & = \sum_{j=0}^n {n \choose j}\|  (1+h^{2})^{(n-j)/2} H_g^{j/2} L(f)\|_0^2 \\
    & \leq C'(n)  \sum_{j=0}^n {n \choose k}\|  (1+h^{2})^{(n-j)/2} H_g^{(j+t)/2}  f \|_0^2 \\
    & = C'(n)\,  \| (1+\Delta_g) ^{n} H_g^{t/2}  f  \| _0^2 \\
    & \leq C'(n) \, \| f \| _{n+t}^2.
  \end{split}
  \]
  For non integer $s$ the lemma follows by interpolation.
\end{proof}


\subsection{Proofs of Theorems \ref{thm:sect1:2}  and \ref{thm:sect1:1}}
We are now in a position to integrate over Schr\" odinger
representations, and obtain our main result on the cohomology of
$\Subgroup< \Heis$ with values in Fr\'echet $\Heis$-modules.

\begin{theorem}
  \label{thm:sect3:2}
  Let $\Subgroup$ be a $d$-dimensional \isotropic subgroup of $\Heis$, and let
  $F^{\infty}$ be the Fr\'echet space of $C^{\infty}$-vectors of a
  unitary $\Heis$-module~$F$. Let $F=\int F_\alpha \,d\alpha$ be the
  direct integral decomposition of $F$ into irreducible sub-modules.
  Suppose that
  \begin{enumerate}
  \item $F$ does not contain any one-dimensional sub-modules;
  \item A generator of the center $\Center(\Heis)$ acting on $F$ has a spectral gap.
  \end{enumerate}
  Then the reduced and the ordinary cohomology of the complex $A^{*} (\mathfrak
  p, F^{\infty})$ coincide.  In fact, for all $k=1,\dots,d$, there are linear
  maps
  \[\D{}_{-1}\colon B^k(\mathfrak p, F^\infty)\to
  A^{k-1}(\mathfrak p, F^\infty)\] associating to each $\omega\in B^k(\mathfrak
  p, F^\infty)$ a primitive of $\omega$ and satisfying tame estimates of degree
  $(k+1)/2+\varepsilon$ for any $\varepsilon >0$.

  We have $H^{k} (\mathfrak p, F^{\infty})=0 $ for $k<d$; in degree $d$, we
  have that $H^{d} (\mathfrak p, F^{\infty}) $ is finite dimensional only if
  $d=g$ and the measure $d\alpha$ has finite support.

  For any $k=0, \dots, d$ and any $\varepsilon>0$, there exist a constant $C$
  and a linear map
  \[
  M^k: A^{k}(\mathfrak p, F^{\infty}) \to B^k ( \mathfrak p, F^{\infty})
  \]
  such that the restriction of $M^k$ to $B^k ( \mathfrak p,F^\infty)$ is the
  identity map and the following estimate holds:
  \[
  \| M^k \omega\|_s \le C \|\omega\|_{s+w}, \quad \forall \omega \in
  A^{k}(\mathfrak p, F^\infty)
  \]
  where $w=(k+3)/2 + \varepsilon$, if $k<d$ and $w=d/2 + \varepsilon$ if
  $k=d$. Hence the space of coboundaries $B^k(\mathfrak p, F^\infty)$ is a
  tame direct summand of $A^{k}(\mathfrak p, F^\infty)$.
\end{theorem}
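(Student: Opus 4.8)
The plan is to obtain the statement from the estimates already established for the individual Schr\"odinger modules, by a direct integral argument in which the only real point is that all constants are uniform in the central parameter.

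First I would fix a Legendrian subgroup $\mathsf L$ with $\Subgroup<\mathsf L<\Heis$ and a Heisenberg basis adapted to it, exactly as in Section~\ref{sect:3.2}. Since $F$ is unitary, the Stone--von Neumann theorem gives the direct integral decomposition $F=\int F_\alpha\,d\alpha$ into irreducibles; by hypothesis~(1) no $F_\alpha$ is one-dimensional, so each $F_\alpha$ is unitarily equivalent to a Schr\"odinger module $\rho_{h(\alpha)}$ on $L^2(\R^g)$ with $h(\alpha)\neq0$, and by hypothesis~(2) there is $h_0>0$ with $|h(\alpha)|\ge h_0$ for $d\alpha$-almost every $\alpha$. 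Passing to smooth vectors and then to $\mathfrak p$-forms with coefficients in them, the cochain complex $A^{*}(\mathfrak p,F^\infty)$ becomes the corresponding direct integral of the complexes $A^{*}(\mathfrak p,\rho_{h(\alpha)}^\infty)$, with Sobolev norm $\|\omega\|_s^2=\int\|\omega_\alpha\|_s^2\,d\alpha$, where on the right one uses the \emph{standard} Sobolev norm on $\rho_{h(\alpha)}^\infty$, i.e.\ the one built from the full Laplacian as in Remark~\ref{rem:sect3:1} (which equals $\subLap_g+h(\alpha)^2$ on that fibre).

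Next I would apply, fibrewise, Proposition~\ref{prop_cohom_eq_uniform_h}: for each $\alpha$ it furnishes a primitive map $\D_{-1}^{(\alpha)}$ on $B^k(\mathfrak p,\rho_{h(\alpha)}^\infty)$ and a splitting map $M^{k,(\alpha)}$ on $A^k(\mathfrak p,\rho_{h(\alpha)}^\infty)$, together with tame estimates (in homogeneous Sobolev norms) whose constants depend on $h(\alpha)$ only through the bound $h_0$. Lemma~\ref{lem:prop_cohom_eq_uniform_h_Sobolev} converts these into tame estimates in the standard Sobolev norms, with constant $C_1(s)=\max_{u\in[0,s+1]}C(u)$, still uniform in $\alpha$. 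Moreover $\D_{-1}^{(\alpha)}$ and $M^{k,(\alpha)}$ are given by the explicit formulas of Propositions~\ref{prop:sect3:1}, \ref{prop_cohom_eq_max_dim} and~\ref{prop:tameness}, assembled from $\Int_{d,g}$, $\Egauss_{d,g}$, the homotopy operator $\Khom$ and the primitive $\Prim$, all of which depend continuously on $h(\alpha)$; hence $\alpha\mapsto\D_{-1}^{(\alpha)}$ and $\alpha\mapsto M^{k,(\alpha)}$ are measurable fields of operators and can be integrated.

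One then sets $\D_{-1}\omega:=(\D_{-1}^{(\alpha)}\omega_\alpha)_\alpha$ and $M^k\omega:=(M^{k,(\alpha)}\omega_\alpha)_\alpha$. The uniform fibrewise bounds integrate to
\[
\|\D_{-1}\omega\|_s^2=\int\|\D_{-1}^{(\alpha)}\omega_\alpha\|_s^2\,d\alpha\le C^2\int\|\omega_\alpha\|_{s+(k+1)/2+\varepsilon}^2\,d\alpha=C^2\,\|\omega\|_{s+(k+1)/2+\varepsilon}^2 ,
\]
and analogously $\|M^k\omega\|_s\le C\,\|\omega\|_{s+w}$ with $w=(k+3)/2+\varepsilon$ for $k<d$ and $w=d/2+\varepsilon$ for $k=d$; since these hold for every $s$ and $F^\infty=\bigcap_s W^s(F)$, both maps take values in $F^\infty$. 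Fibrewise $\D\D_{-1}\omega_\alpha=\omega_\alpha$ whenever $\omega$ is a coboundary, and each $M^{k,(\alpha)}$ restricts to the identity on $B^k(\mathfrak p,\rho_{h(\alpha)}^\infty)$ and has range in that space, so $M^k$ is a continuous idempotent with range $B^k(\mathfrak p,F^\infty)$; hence $B^k(\mathfrak p,F^\infty)$ is closed---so reduced and ordinary cohomology coincide---and is a tame direct summand of $A^k(\mathfrak p,F^\infty)$. For the cohomology groups themselves: for $k<d$ one has $H^k(\mathfrak p,\rho_{h(\alpha)}^\infty)=0$ for every $\alpha$ by Proposition~\ref{prop:sect3:1}, hence $H^k(\mathfrak p,F^\infty)=0$; for $k=d$, Proposition~\ref{prop_cohom_eq_max_dim} shows a cocycle $\omega$ is exact iff $\Int_{d,g}\omega_\alpha=0$ for a.e.\ $\alpha$, so $\omega\mapsto(\Int_{d,g}\omega_\alpha)_\alpha$ identifies $H^d(\mathfrak p,F^\infty)$ with a direct integral of the fibrewise cohomologies, which by Proposition~\ref{prop:sect:3:12} are infinite-dimensional when $d<g$ and one-dimensional when $d=g$; thus $H^d$ is infinite-dimensional as soon as $d<g$, and when $d=g$ it is the space of smooth sections of a line field over the support of $d\alpha$, finite-dimensional precisely when that support is finite. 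The part needing the most care is keeping every constant independent of $h(\alpha)$ through the passage from homogeneous to standard Sobolev norms and verifying the measurability of the fibrewise primitives; with Proposition~\ref{prop_cohom_eq_uniform_h} and Lemma~\ref{lem:prop_cohom_eq_uniform_h_Sobolev} in hand this is essentially bookkeeping, and everything else is immediate from the fibrewise results.
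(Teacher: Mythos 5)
Your argument is correct and follows the paper's proof essentially step for step: decompose $F$, its Sobolev spaces, and the cochain complex into direct integrals of Schr\"odinger modules with central parameter bounded away from zero; apply Proposition~\ref{prop_cohom_eq_uniform_h} and Lemma~\ref{lem:prop_cohom_eq_uniform_h_Sobolev} fibrewise to obtain uniformly tame primitive and splitting maps; and integrate these to get $\D_{-1}$, $M^k$, and the identification of $H^d$ with the direct integral of fibrewise cohomologies. The only point you make more explicit than the paper is the measurability of $\alpha\mapsto \D_{-1}^{(\alpha)}$ and $\alpha\mapsto M^{k,(\alpha)}$, which is a harmless addition.
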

 
(The hypotheses 1 and 2 of the above theorem could be stated more briefly by
saying that $F$ satisfies the following property: any non-trivial unitary
$\Heis$-module weakly contained in $F$ is infinite dimensional).

\begin{proof}
Let  $F^{\infty}$ the Fr\'echet space of $C^{\infty}$-vectors of a
unitary $\Heis$-module~$(\rho, F)$. Let $F=\int F_\alpha \D{}\alpha$ be the
direct integral decomposition of $F$ into irreducible
sub-modules~$(\rho_\alpha, F_\alpha)$ . The hypothesis of
Theorem~\ref{thm:sect3:2} imply that there exists $h_0>0$ such that for almost
every $\alpha$ the $\Heis$-module $F_\alpha$ is unitarily equivalent to a
Schr\"odinger module with parameter $h$ satisfying $|h|\geq h_0$.

For any $s\in\R$, we also have a decomposition of the Sobolev spaces $W^s(F,
\rho)$ as direct integrals $\int W^s(F_\alpha,\rho_\alpha) \D{}\alpha$; this
is due to the fact that we defined the Sobolev norms via the operator
$1+\Delta_g$, which is an element of the enveloping algebra $\mathfrak U(\hei)$, 
and the $\mathfrak U(\hei)$-invariance of the spaces $F_\alpha$.  It follows
that any form $\omega\in A^k(\mathfrak p,F^{\infty})$ has a decomposition
$\omega=\int \omega_\alpha \,\D{}\alpha$ with $\omega\in A^k(\mathfrak
p,F_\alpha^{\infty})$ and
\begin{equation}
  \label{eq:sob_norm_decomp}
  \|\omega\|^2_{W^s(F, \rho)}= \int \|
  \omega_\alpha \|^2_{W^s(F_\alpha, \rho_\alpha)} \,d\alpha.
\end{equation}
For the same reason mentioned above, we have
\begin{equation}
  \label{eq:passing-d}
  \D{}\omega=\int  (\D{}\omega_\alpha)\, \D{}\alpha
\end{equation}
Hence $\omega$ is closed if and only if $\omega_\alpha$ is closed for almost
all $\alpha$, i.e. $Z^k(\mathfrak p, W^s(F,\rho))= \int Z^k(\mathfrak p,
W^s(F_\alpha,\rho_\alpha))\, d\alpha$.

For $k<d$ we set $B_\alpha^k= Z^k(\mathfrak p,F^{\infty}_\alpha )$. 
For $k=d$ we set $B_\alpha^d=\ker
I_{d,g,\alpha}$,  where 
 $I_{d,g,\alpha}\colon A^d(\mathfrak
p,F^{\infty}_\alpha) \to \SB(\R^{g-d})$ are the tame maps defined, 
for each $\alpha$, as   in
\eqref{Idg}. 

By Proposition~\ref{prop_cohom_eq_uniform_h} and
Lemma~\ref{lem:prop_cohom_eq_uniform_h_Sobolev}, we have a constant
$C=C(s,\varepsilon,g,d,h_0)$ and, for each $\alpha$, a linear map
\[
\D{}_{-1,\alpha} : B_\alpha^k  \to A^{k-1}(\mathfrak p, F_\alpha^\infty)
\]
associating to each $\omega\in B_\alpha^k (\mathfrak p, F_\alpha^\infty)$ a primitive 
$\Omega = \D{}_{-1} \omega$ of $\omega$ satisfying
the estimates
\begin{equation}
  \label{eq:tame_estimate_alpha}
  \|\D{}_{-1,\alpha}\omega\|_{W^s(F_\alpha,\rho_\alpha)} \le C\,
  \|\omega\|_{W^{s+(k+1)/2+\varepsilon}(F_\alpha,\rho_\alpha)}.
\end{equation}

Let $B^k$ be the graded Fr\'echet subspace of $A^k(\mathfrak p,F^{\infty})$
defined as $\int B_\alpha^k \,d\alpha$. Clearly for $k<d$ we have
$B^k=Z^k(\mathfrak p, F^\infty)$ and, in degree $d$, we have $B^d\supset
B^d(\mathfrak p, F^\infty)$. 

The above estimate shows that it is possible to define a linear map $\D{}_{-1}
:B^k \to A^{k-1}(\mathfrak p, F^\infty)$, by setting, for $\omega=\int \omega_\alpha \,\D{}\alpha\in B^k$,
\[\D{}_{-1} \omega:= \int
\D{}_{-1,\alpha}\omega_\alpha \, d\alpha.
\]
By~\eqref{eq:sob_norm_decomp} and \eqref{eq:passing-d}, 
the estimates~\ref{eq:tame_estimate_alpha} are
still true if we replace $\D{}_{-1,\alpha}$ by $\D{}_{-1}$. 

This shows that  $\D{}_{-1}$  is a tame map
of degree $(k+1)/2+\varepsilon$, for all $\varepsilon>0$ associating to each
$\omega\in B^k$ a primitive of $\omega$.

Thus $H^k(\mathfrak p,F^{\infty})=0$ if $k<d$. For $k=d$, we have
$H^d(\mathfrak p,F^{\infty})= \int H^d(\mathfrak p,F_\alpha^{\infty})\,
d\alpha$. By Proposition~\ref{prop_cohom_eq_max_dim}, we have $H^d(\mathfrak
p,F_\alpha^{\infty})\approx \SB(\R^{g-d})$, hence the top degree cohomology is
infinite dimensional if $d<g$, and one-dimensional if $d=g$. This shows that
$H^d(\mathfrak p,F^{\infty})$ is finite dimensional if and only if $d=g$ and
the measure $d\alpha$ has finite support.

Finally for each $\alpha$, we have tame maps $M_\alpha^k$ given by
Proposition~\ref{prop:tameness}. Setting $M^k = \int M_\alpha^k \,d\alpha$ we
obtain the maps $M^k$ satisfying the conclusion of the Theorem.  
\end{proof}

\paragraph{Proof of theorem \ref{thm:sect1:2}} The proof is immediate
as the space $F=L_0^2(\M)$ formed by the $L^2$  functions on $\M$ of
average zero along the fibers of the central fibration of $\M$ satisfy
the hypothesis of the theorem above. In fact $L_0^2(\M)$ is a direct
sum of irreducible representations of $\Heis$ on which the generator
$Z$ of the center $\Center (\Heis)$ acts as scalar multiplication by
$2\pi n$, with $n\in \Z\setminus\{0\}$. 

\paragraph{Proof of theorem \ref{thm:sect1:1}} The theorem follows
from the theorem above and the ``folklore'' theorem \ref{thm:sect3:1}, 
as explained at the beginning of Section~\ref{sec:cohom-with-valu-1}.

\section{Sobolev structures and best Sobolev constant}
\label{section:harmonic}

\subsection{Sobolev bundles}

\paragraph{Sobolev spaces.}
The group $\Auto< \operatorname{Aut}(\Heis)\approx
\operatorname{Aut}(\hei)$ acts (on the right) on the enveloping
algebra $\mathfrak U \left( \hei \right)$ in the following way: we
identify $\mathfrak U \left( \hei \right)$ with the algebra of right
invariant differential operator on $ \Heis$; if $V \in \mathfrak U
\left( \hei \right)$ and $\alpha\in \Auto$, the action of $\alpha$ on
$V$ yields the differential operator $V_\alpha$ defined by
\begin{equation}
  \label{eq:sect4:1}
  V_\alpha(f):=\alpha^*V\big((\alpha^{-1})^*f\big), \qquad f\in C^\infty (\Heis).
\end{equation}
Let $\Lap = -( X_1^2 + \dots + X_g^2+\Xi_1^2 + \dots + \Xi_g^2 +
T^2)\in \mathfrak U(\hei)$ denote the Laplacian on $\Heis$ defined via
the ``standard'' basis $(X_i,\Xi_j,T)$
(cf.~sect.~\ref{par:heis_not}). Then $\Lap_\alpha=-((\alpha^{-1}
X_1)^2 + \dots + (\alpha^{-1}\Xi_g)^2 + T^2)$, i.e.\ $\Lap_\alpha$ is
the Laplacian on $\Heis$ defined by the basis $(\alpha^{-1}
(X_i),\alpha^{-1} (\Xi_j),T)$.

Let $\Lattice ' $ be any lattice of $\Heis$ and $\M ' :=
\Heis/\Lattice '$ the corresponding nilmanifold. For each $\alpha\in
\Auto$, the operator $\Lap_\alpha$ is an elliptic, positive and
essentially self-adjoint ope\-ra\-tor on $L^2 (\M')$. Recall that
$L_0^2(\M')$ denotes the space of ell-two functions on $\M'$ with
zero average along the fibers of the toral projection. Its norm is
defined via the ell-two Hermitian product $\langle \cdot ,
\cdot\rangle$ with integration done with respect to the normalised
Haar measure. Setting $L_\alpha = 1 + \Lap_\alpha$ we define the
Sobolev spaces
\begin{equation}
  \label{eq:sect4:4}
\Sobolev ^{s}_\alpha (\M') := L_\alpha^{-s/2} L_0^2(\M'),
\end{equation}
which are Hilbert spaces equipped with the inner product
\[
\langle f_1 , f_2 \rangle _ {s,\alpha} := \langle L_\alpha ^{s/2} f_1
, L_\alpha ^{s/2} f_2 \rangle = \langle f_1 , L_\alpha ^{s} f_2
\rangle.
\]
For simplicity, we denote by $\Sobolev ^{s}(\M')$ the Sobolev spaces
defined via the operator $1+\Lap$.  The space $\Sobolev ^{-s}_\alpha
(\M')$ is canonically isomorphic to the dual Hilbert space of
$\Sobolev ^{s}_\alpha (\M')$.

\begin{remark}
  \label{rem:sect4:1}
  It is useful to notice that, since the Laplacian $\Delta$ is
  invariant under the above action of the maximal compact subgroup
  $\Uni_g$ of $\Auto$, the Sobolev space $\Sobolev ^{-s}_\alpha (\M')$
  depends only on the class $\mathsf \Uni_g \alpha \in \Siegel_g$ in
  the Siegel  upper half-space.
\end{remark}

Let $\Lattice $ be the standard lattice of $\Heis$ and $\M := \Heis/
\Lattice$.  For $\alpha \in \Auto$, let $\Lattice _\alpha :=
\alpha(\Lattice)$ and $\M_\alpha := \Heis/ \Lattice _\alpha$ the
corresponding nilmanifold. The automorphim~$\alpha$ induces a
diffeomorphism (denoted with the same symbol) according to the formula
\[
\alpha : \M \to \M_\alpha, \qquad h\Lattice \mapsto \alpha ( h )
\Lattice_\alpha , \quad\forall h \in \Heis \,.
\]
It is immediate that the pull-back map $\alpha^*: C^\infty
(\M_\alpha)\to C^\infty (\M)$ satisfies
\[
\alpha^*( \Lap f ) = \Lap_\alpha (\alpha^* f ), \quad f \in C^\infty
(\M_\alpha );
\]
since $\alpha^*$ preserves the volume, we obtain an isometry
\[
\alpha^*: \Sobolev ^{s} (\M_\alpha) \to \Sobolev ^{s}_\alpha (\M).
\]

Observe that, as topological vector spaces, the spaces $\Sobolev
^s_\alpha (\M)$, ($\alpha \in \Auto$), are all isomorphic to $\Sobolev
^s (\M)$.  Only their Hilbert structure varies as $\alpha$ ranges in
$\Auto$. In fact we have the following lemma, whose proof is omitted.
\begin{lemma}
  \label{lem:sect4:1}
  For every $R>0$ there exists a constant $C(s)>0$ such that for all
  $\alpha , \beta \in \Auto$ with $\dist (\alpha , \beta)<R$ we have
  \[
  \| \varphi \| _{s, \alpha} \leq C(s) \,(1+ \dist (\alpha , \beta) ^2
  )^{|s|/2} \cdot \| \varphi \| _{s, \beta}.\] Here, $\dist (\cdot ,
  \cdot)$ is some left-invariant distance on $\Auto $.
\end{lemma}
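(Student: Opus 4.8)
The plan is to reduce everything to a comparison of the two weighted Laplacians $L_\alpha=1+\Delta_\alpha$ and $L_\beta=1+\Delta_\beta$ on $L^2(\M)$. First, it is enough to treat $s\ge 0$. Indeed $s=0$ is trivial, since $\|\cdot\|_{0,\alpha}=\|\cdot\|_{0,\beta}$ is the $L^2$-norm; and the case $s<0$ follows by duality, $\Sobolev^{-s}_\alpha(\M)$ being the dual of $\Sobolev^{s}_\alpha(\M)$ for the $L^2$ pairing: if the estimate is known for the exponent $|s|>0$ (with the roles of $\alpha$ and $\beta$ exchanged, applied to the test functions), then $\|\varphi\|_{-|s|,\alpha}=\sup_{\|\psi\|_{|s|,\alpha}\le1}|\langle\varphi,\psi\rangle|\le\sup_{\|\psi\|_{|s|,\beta}\le C(1+\dist(\alpha,\beta)^2)^{|s|/2}}|\langle\varphi,\psi\rangle|=C(1+\dist(\alpha,\beta)^2)^{|s|/2}\|\varphi\|_{-|s|,\beta}$.

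\emph{Key estimate ($s=1$).} Let $(V_1,\dots,V_{2g+1})=(X_1,\dots,X_g,\Xi_1,\dots,\Xi_g,T)$ be the standard Heisenberg basis and set $V_i^\alpha:=\alpha^{-1}(V_i)$, $V_i^\beta:=\beta^{-1}(V_i)$, so that $\Delta_\alpha=-\sum_i(V_i^\alpha)^2$ and similarly for $\beta$, with $V_{2g+1}^\alpha=V_{2g+1}^\beta=T$. Writing the $\alpha$-frame in terms of the $\beta$-frame, $V_i^\alpha=\sum_j\gamma_{ij}V_j^\beta$, where $(\gamma_{ij})$ is the matrix in the standard basis of the symplectic automorphism $\gamma:=\beta\alpha^{-1}$ of $\hei$; since $\dist(\alpha,\beta)<R$ and the distance on $\Auto$ is translation-invariant, $\gamma$ is confined to a region on which $\|\gamma\|^2\le C(R)\,(1+\dist(\alpha,\beta)^2)$. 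As each $V_i^\beta$ is essentially skew-adjoint on $L^2(\M)$, one has $\langle\varphi,\Delta_\beta\varphi\rangle=\sum_j\|V_j^\beta\varphi\|_0^2$, and by Cauchy--Schwarz $\|V_i^\alpha\varphi\|_0^2\le\big(\sum_j\gamma_{ij}^2\big)\sum_j\|V_j^\beta\varphi\|_0^2$; summing over $i$ gives $\langle\varphi,\Delta_\alpha\varphi\rangle\le\|\gamma\|^2\langle\varphi,\Delta_\beta\varphi\rangle$, hence $L_\alpha\le C(R)(1+\dist(\alpha,\beta)^2)\,L_\beta$ as quadratic forms on $L^2(\M)$, and symmetrically with $\alpha$ and $\beta$ interchanged. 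This is the claim for $s=1$.

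\emph{Passage to general $s\ge0$.} For an integer $s=n$ one re-expresses $L_\alpha^{n}$ as a differential operator of order $2n$ in the $\beta$-frame, by substituting $V_i^\alpha=\sum_j\gamma_{ij}V_j^\beta$ and commuting the $V_j^\beta$ into a fixed order; every resulting monomial of degree $\le 2n$ appears with a coefficient $O\big((1+\|\gamma\|)^{2n}\big)$, because the correction terms produced upon commuting are, by the canonical commutation relations, multiples of the central element $T$ and so carry only bounded numerical factors. Pairing $L_\alpha^{n}$ against $\varphi$, applying Cauchy--Schwarz after splitting each monomial into two halves, and using the standard elliptic estimate $\|V_{j_1}^\beta\cdots V_{j_k}^\beta\varphi\|_0\le C\,\|\varphi\|_{k,\beta}$ (legitimate since $\Delta_\beta$ is elliptic, positive and essentially self-adjoint on the compact manifold $\M$), one gets $\langle\varphi,L_\alpha^{n}\varphi\rangle\le C(R)\,(1+\dist(\alpha,\beta)^2)^{n}\langle\varphi,L_\beta^{n}\varphi\rangle$, i.e. the lemma for $s=n$. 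The remaining real exponents follow by complex interpolation between consecutive integers, $(\Sobolev^{s}_\alpha(\M))_{s\ge0}$ and $(\Sobolev^{s}_\beta(\M))_{s\ge0}$ being interpolation scales; alternatively, for $0\le s\le1$ one may invoke directly the L\"owner--Heinz operator monotonicity of $t\mapsto t^{s}$, which turns $0\le L_\alpha\le C L_\beta$ into $L_\alpha^{s}\le C^{s}L_\beta^{s}$.

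The step that needs the most care is the control of the change-of-frame norm: one must verify that $\dist(\alpha,\beta)<R$ really forces $\|\beta\alpha^{-1}\|\le C(R)(1+\dist(\alpha,\beta)^2)^{1/2}$, this being the one point where the specific invariant distance on $\Auto$ enters. The rest is routine; in particular, the claim that the lower-order (commutator) terms in the higher-integer step do not spoil the exponent $|s|/2$ is automatic, since those terms are central and hence their coefficients stay bounded rather than growing with $\|\gamma\|$. A variant that avoids the combinatorics of that step is to transport the problem, via the isometries $\alpha^{*}\colon\Sobolev^{s}(\M_\alpha)\to\Sobolev^{s}_\alpha(\M)$, to the diffeomorphic nilmanifolds $\M_\alpha$ and $\M_\beta$ and to quote the standard estimate for the distortion of Sobolev norms under the diffeomorphism induced by the linear automorphism $\beta\alpha^{-1}$ of $\hei$; I would use whichever formulation reads best in context.
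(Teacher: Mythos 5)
Your plan — comparing the quadratic forms of $\Delta_\alpha$ and $\Delta_\beta$ through the change-of-frame matrix, bootstrapping to integer exponents by normal-ordering monomials in the $\beta$-frame (the commutator corrections are central and hence harmless), interpolating, and dualizing for $s<0$ — is sound, and since the authors omit the proof there is no argument to compare it with. You are also right that the displayed factor $(1+\dist^2)^{|s|/2}$ is largely cosmetic once $\dist(\alpha,\beta)<R$; what the argument really yields is a constant $C(s,R)$.

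There is, however, one point you flagged but did not actually settle, and it is not routine: the control $\|\gamma\|\le C(R)$ with $\gamma$ the matrix such that $V_i^\alpha=\sum_j\gamma_{ij}V_j^\beta$, i.e.\ $\gamma^\top=\beta\alpha^{-1}$, holds only if $\dist$ is \emph{right}-invariant, not left-invariant as the statement says. With a right-invariant distance, $\dist(\alpha,\beta)=\dist(\alpha\beta^{-1},e)<R$ confines $\alpha\beta^{-1}$, and hence its inverse $\beta\alpha^{-1}$, to a fixed compact neighbourhood of the identity, and your argument goes through. With a genuinely left-invariant distance one only controls $\alpha^{-1}\beta$, whereas $\beta\alpha^{-1}=\alpha(\alpha^{-1}\beta)\alpha^{-1}$ is a conjugate whose norm blows up with the condition number of $\alpha$, and in that case the lemma is actually \emph{false}. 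For $g=1$, take $\alpha=\left(\begin{smallmatrix}e^N&0\\0&e^{-N}\end{smallmatrix}\right)$ and $\beta=\alpha\left(\begin{smallmatrix}1&\epsilon\\0&1\end{smallmatrix}\right)$, so $\alpha^{-1}\beta$ is fixed and small; in the Schr\"odinger model with $h=1$ one has $\langle\varphi,\Delta_\alpha\varphi\rangle= e^{-2N}\|\partial_x\varphi\|^2+e^{2N}\|x\varphi\|^2+\|\varphi\|^2$ and $\langle\varphi,\Delta_\beta\varphi\rangle=e^{-2N}\|\partial_x\varphi\|^2+e^{2N}\|(\epsilon\partial_x+ix)\varphi\|^2+\|\varphi\|^2$, and taking $\varphi(x)=e^{-ix^2/2\epsilon}\phi(x-R)$ with $\phi$ a fixed bump and, say, $R\asymp e^{2N}$ makes the ratio grow like $e^{4N}$. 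So ``left-invariant'' in the statement must be read as ``right-invariant'' — equivalently, as the pseudo-distance pulled back from the Siegel metric via $\alpha\mapsto\class{\alpha}$, which by the covariance $\mu(\alpha g)=g^{-1}\mu(\alpha)$ (with $\mu(\alpha)=\alpha^{-1}(i)$) and M\"obius-invariance is right-$\Auto$-invariant; this is also the distance actually used where the lemma is invoked (Proposition~\ref{best_Sobolev_constant2} and Lemma~\ref{small_projection}). You should state this correction explicitly rather than leaving it as a point ``to verify''.
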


\begin{lemma}
  \label{lem:sect4:2}
  Let $s\ge 0$. For $\gamma\in \GMod$ and $\alpha \in \Auto$, the
  pull-back map $\gamma^*$ is an isometry of $\Sobolev
  _{\alpha}^s(\M)$ onto $\Sobolev _{\alpha\gamma}^s (\M) $. Hence
  $\gamma_* : \Sobolev _{\alpha\gamma}^{-s} (\M)\to \Sobolev
  _{\alpha}^{-s}(\M)$ is an isometry.
\end{lemma}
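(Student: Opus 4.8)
The plan is to realize $\gamma^*$ as the composition of an $L^2_0(\M)$-isometry with an intertwiner of the two operators defining the Sobolev structures, and then to deduce the statement about $\gamma_*$ by transposition. Note first that $\gamma^*\colon C^\infty(\M)\to C^\infty(\M)$ is well defined, since $\gamma\in\GMod$ stabilizes the standard lattice $\Lattice$ and hence descends to a diffeomorphism of $\M=\M_\gamma$.

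The first step is to record two elementary facts. (i) $\gamma^*$ is an $L^2$-isometry: $\gamma$ acts on $\Heis$ as an automorphism that is trivial on $\Center(\Heis)$ and linear symplectic — hence unimodular — on $\Heis/\Center(\Heis)\approx\R^{2g}$, so it preserves the Haar measure of $\Heis$ and, descending to $\M$, the normalized volume. (ii) $\gamma^*$ preserves $L^2_0(\M)$: because $\gamma$ fixes $\Center(\Heis)$ pointwise, it carries the central fiber through $h\Lattice$ onto the central fiber through $\gamma(h)\Lattice$ by the identity in the circle coordinate, so fiber averages are transported to fiber averages. Thus $\gamma^*$ is a unitary operator of $L^2_0(\M)$. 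The second step is the intertwining relation. By \eqref{eq:sect4:1} one has $\Lap_\beta=\beta^*\circ\Lap\circ(\beta^*)^{-1}$ on $C^\infty(\Heis)$ for every $\beta\in\Auto$; combining this with the contravariance identity $(\alpha\gamma)^*=\gamma^*\circ\alpha^*$ gives $\Lap_{\alpha\gamma}=\gamma^*\circ\Lap_\alpha\circ(\gamma^*)^{-1}$, and this passes to $C^\infty(\M)$ since $\gamma^*$ and the right-invariant operators $\Lap_\alpha,\Lap_{\alpha\gamma}$ all preserve $\Lattice$-invariant functions. Writing $L_\beta=1+\Lap_\beta$ we obtain $\gamma^*L_\alpha=L_{\alpha\gamma}\gamma^*$; since $\gamma^*$ is unitary on $L^2_0(\M)$ and $L_\alpha,L_{\alpha\gamma}$ are positive and essentially self-adjoint, this conjugation passes to the spectral calculus, so $\gamma^*L_\alpha^{s/2}=L_{\alpha\gamma}^{s/2}\gamma^*$ for all $s\ge0$. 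Hence for smooth $f$
\[
\|\gamma^*f\|_{s,\alpha\gamma}=\|L_{\alpha\gamma}^{s/2}\gamma^*f\|_0=\|\gamma^*L_\alpha^{s/2}f\|_0=\|L_\alpha^{s/2}f\|_0=\|f\|_{s,\alpha},
\]
and polarization gives the same identity for the inner products $\langle\cdot,\cdot\rangle_{s,\alpha}$. By density $\gamma^*$ extends to an isometry $\Sobolev^s_\alpha(\M)\to\Sobolev^s_{\alpha\gamma}(\M)$, which is onto because $\gamma^{-1}\in\GMod$ furnishes the inverse isometry. For the last assertion, $\gamma_*$ is by definition the transpose of $\gamma^*$, and since $\Sobolev^{-s}_\beta(\M)$ is the dual Hilbert space of $\Sobolev^{s}_\beta(\M)$, the transpose of the isometric isomorphism just produced is an isometric isomorphism $\Sobolev^{-s}_{\alpha\gamma}(\M)\to\Sobolev^{-s}_\alpha(\M)$.

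I expect the only delicate point to be the bookkeeping in the intertwining step — keeping straight the contravariance $(\alpha\gamma)^*=\gamma^*\alpha^*$ together with the conjugation formula for $\Lap_\beta$, and checking that the identity descends from $\Heis$ to $\M$ — since, given that, everything else (measure preservation, preservation of the zero-average subspace, the passage to the spectral calculus, and the extension to the Sobolev completions) is routine.
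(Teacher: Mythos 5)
Your proof is correct, but it routes around the paper's argument rather than reproducing it. The paper disposes of the lemma in two lines by composing two isometries that were already established just above it: since $\Lattice_{\alpha\gamma}=\Lattice_\alpha$, the nilmanifolds $\M_{\alpha\gamma}$ and $\M_\alpha$ coincide, so the previously noted isometries $\alpha^*\colon\Sobolev^s(\M_\alpha)\to\Sobolev^s_\alpha(\M)$ and $(\alpha\gamma)^*\colon\Sobolev^s(\M_{\alpha\gamma})\to\Sobolev^s_{\alpha\gamma}(\M)$ share their source, and $\gamma^*=(\alpha\gamma)^*(\alpha^*)^{-1}$ is automatically an isometry. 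You instead work intrinsically on $\M$: you re-derive the conjugation $\Lap_{\alpha\gamma}=\gamma^*\Lap_\alpha(\gamma^*)^{-1}$ from \eqref{eq:sect4:1} and the contravariance $(\alpha\gamma)^*=\gamma^*\alpha^*$, pass it through the functional calculus to get $\gamma^*L_\alpha^{s/2}=L_{\alpha\gamma}^{s/2}\gamma^*$, and then compute the Sobolev norm directly. The ingredients are the same as what underlies the paper's prior remark (measure preservation and the covariance of the Laplacian), but you make the mechanism explicit and avoid the detour through the auxiliary manifold $\M_\alpha$. What the paper's route buys is brevity and no need to invoke the spectral theorem again; what yours buys is a self-contained argument that also records, explicitly, that $\gamma^*$ preserves the subspace $L^2_0(\M)$ on which the Sobolev spaces of \eqref{eq:sect4:4} live — a point the paper's short proof inherits silently from the earlier discussion.
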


\begin{proof}
  By the above, we have isometries $(\alpha\gamma)^*
  :\Sobolev^s(\M_{\alpha\gamma}) \to \Sobolev _{\alpha\gamma}^s (\M) $
  and $\alpha^* :\Sobolev ^s (\M _{\alpha}) \to \Sobolev _{\alpha}^s
  (\M) $. However,  $\M_{\alpha\gamma}= \M_{\alpha}$, since
  $\Lattice_{\alpha\gamma}= \Lattice_{\alpha}$. It follows that
  $\gamma^*=(\alpha\gamma)^* (\alpha^*)^{-1}$ is an isometry of
  $\Sobolev _{\alpha}^s(\M)$ onto $\Sobolev _{\alpha\gamma}^s (\M) $.
\end{proof}

\paragraph{The Sobolev bundle over the moduli space and its dual.}

For $s\ge 0$, let us consider $ \Sobolev ^{s} (\M)$ as a topological
vector space. The group $\GMod$ acts on the right on the trivial
bundles $\Auto \times \Sobolev ^{s} (\M)\to \Auto$ according to
\[
(\alpha , \varphi ) \mapsto (\alpha , \varphi )\gamma:=(\alpha \gamma
, \gamma^*\varphi) \qquad\gamma\in \GMod, \quad (\alpha , \varphi )\in
\Auto \times \Sobolev ^{s} (\M)\] By Lemma~\ref{lem:sect4:2}, the
norms
\[
\| (\alpha , \varphi ) \| _s := \| \varphi \| _{s,\alpha }
\]
are $\GMod$-invariant. In fact, by that lemma we have $ \|
\gamma^*\varphi \|_{s, \alpha \gamma} = \| \varphi \|_{s,\alpha} $.
Consequently, we obtain a quotient flat bundle of Sobolev spaces over
the moduli space:
\[
(\Auto \times \Sobolev ^{s} (\M))/\GMod \to \Mod=\Auto/\GMod\,;
\]
the fiber over $[\alpha] \in \Mod$ may be locally identified with the
space $\Sobolev ^s _\alpha (\M) $ normed by $\| \cdot
\|_{s,\alpha}$. We denote this bundle by $\Bundle^{s}$ and the class
of $(\alpha , \varphi )$ by $[\alpha , \varphi]$.
 
By the duality paring, we also have a flat bundle of distributions
$\Bundle^{-s}$ whose fiber over $[\alpha] \in \Mod$ may be locally
identified with the space $\Sobolev ^{-s} _\alpha (\M) $ normed by $\|
\cdot \|_{-s,\alpha}$. Observe that for this bundle $(\alpha, \mathcal
D) \equiv (\alpha\gamma^{-1}, \gamma_*\mathcal D)$ for all $\gamma\in
\GMod$ and $(\alpha ,\mathcal D)\in \Auto \times \Sobolev ^{-s} (\M)$.
We denote the class of $(\alpha , \mathcal D )$ by $[\alpha , \mathcal
D]$.

\subsection{Best Sobolev constant}

\paragraph{The best Sobolev constant.}  The Sobolev embedding theorem
implies that for any $\alpha \in \Auto$ and any $s > g+1/2$ there
exists a constant $B_s(\alpha ) >0$ such that any $f\in \Sobolev
_\alpha ^s (\M)$ has a continuous representative such that
\begin{equation} \label{Sobolev_embedding} \| f \| _\infty \leq B_{s}
  (\alpha ) \cdot \| f\| _{s,\alpha} \, .
\end{equation}
For any Sobolev order $s>g+1/2$, the \emph{best Sobolev constant} is
defined as the function on the group of automorphisms $\Auto$  given
by
\begin{equation} \label{best_Sobolev_constant} B_{s} (\alpha ) :=
  \sup_{f \in \Sobolev _\alpha ^s (\M) \backslash \{0\} } \frac {\|f
    \| _\infty}{\|f \| _{s,\alpha} }
\end{equation}

\begin{lemma}
  \label{lem:best-sobol-const}
  The best Sobolev constant $B_s$ is a $\GMod$-modular function on
  $\Siegel_g$, i.e. $B_s(\alpha) = B_s(\kappa \alpha\gamma)$ for all
  $\alpha \in \Auto$, all $\gamma \in \GMod$ and all $\kappa \in
  \Uni_g$.
\end{lemma}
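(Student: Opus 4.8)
The plan is to check that $B_s$, regarded as a function on $\Auto$, is left-invariant under $\Uni_g$ and right-invariant under $\GMod$; these two invariances together say exactly that $B_s$ descends to $\Sigma_g=\Uni_g\backslash\Auto/\GMod$, since then $B_s(\kappa\alpha\gamma)=B_s(\alpha\gamma)=B_s(\alpha)$ for all $\kappa\in\Uni_g$ and $\gamma\in\GMod$.

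For the left $\Uni_g$-invariance I would argue that $(\Sobolev^s_{\kappa\alpha}(\M),\|\cdot\|_{s,\kappa\alpha})$ and $(\Sobolev^s_\alpha(\M),\|\cdot\|_{s,\alpha})$ are not merely isometric but literally equal, because $\Lap_{\kappa\alpha}=\Lap_\alpha$ as operators — the content of Remark~\ref{rem:sect4:1}. To see this, write $\kappa^{-1}$ as an orthogonal $2g\times 2g$ matrix $K$ in the basis $(X_i,\Xi_j)$ of $\R^{2g}\subset\hei$; then the basis $((\kappa\alpha)^{-1}X_i,(\kappa\alpha)^{-1}\Xi_j)$ is obtained from $(\alpha^{-1}X_i,\alpha^{-1}\Xi_j)$ by the same matrix $K$, and one uses the elementary identity that, for a basis $(e_k)$ of a subspace of $\hei$ and a change of basis $f_k=\sum_l A_{kl}e_l$, one has $\sum_k f_k^2=\sum_{l,m}(A^\top A)_{lm}\,e_l e_m$ in $\mathfrak U(\hei)$, which reduces to $\sum_k e_k^2$ exactly when $A$ is orthogonal. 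Hence $L_{\kappa\alpha}=L_\alpha$, so $\|\cdot\|_{s,\kappa\alpha}=\|\cdot\|_{s,\alpha}$; and since neither the underlying function space nor the norm $\|\cdot\|_\infty$ depends on $\alpha$, the supremum in~\eqref{best_Sobolev_constant} defining $B_s(\kappa\alpha)$ coincides term by term with the one defining $B_s(\alpha)$.

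For the right $\GMod$-invariance I would use Lemma~\ref{lem:sect4:2}: for $\gamma\in\GMod$ the pull-back $\gamma^*$ by the induced diffeomorphism of $\M$ is an isometry of $\Sobolev^s_\alpha(\M)$ onto $\Sobolev^s_{\alpha\gamma}(\M)$, and, being induced by a diffeomorphism of $\M$, it also preserves the sup norm, $\|\gamma^*f\|_\infty=\|f\|_\infty$. Taking $\gamma^*f$ as the running variable in the supremum~\eqref{best_Sobolev_constant} for $B_s(\alpha\gamma)$, and using that $\gamma^*\colon\Sobolev^s_\alpha(\M)\to\Sobolev^s_{\alpha\gamma}(\M)$ is a bijection, yields
\[
B_s(\alpha\gamma)=\sup_{f\in\Sobolev^s_\alpha(\M)\setminus\{0\}}\frac{\|\gamma^*f\|_\infty}{\|\gamma^*f\|_{s,\alpha\gamma}}=\sup_{f\in\Sobolev^s_\alpha(\M)\setminus\{0\}}\frac{\|f\|_\infty}{\|f\|_{s,\alpha}}=B_s(\alpha).
\]
I do not expect a real obstacle here; the one subtlety worth stating carefully is the $\Uni_g$-case, namely that one needs $\Lap_{\kappa\alpha}=\Lap_\alpha$ as operators (equivalently as elements of $\mathfrak U(\hei)$), not just up to isometry — and this is precisely Remark~\ref{rem:sect4:1}.
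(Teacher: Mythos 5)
Your proof is correct and follows essentially the same route as the paper: $\Uni_g$-invariance via the fact that $\Lap$ is insensitive to the orthogonal change of Heisenberg basis (the content of Remark~\ref{rem:sect4:1}, which the paper simply cites), and $\GMod$-invariance via Lemma~\ref{lem:sect4:2} together with the observation that $\gamma^*$ preserves the sup-norm. The only difference is that you supply the explicit change-of-basis computation behind Remark~\ref{rem:sect4:1} where the paper calls it "immediate"; the computation is right (the identity $\sum_k f_k^2=\sum_{l,m}(A^\top A)_{lm}e_l e_m$ holds in $\mathfrak U(\hei)$ regardless of noncommutativity, and orthogonality of $A$ kills the off-diagonal coefficients).
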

\begin{proof}
  The $\Uni_g$ invariance is an immediate consequence of
  Remark~\ref{rem:sect4:1}.  By Lemma~\ref{lem:sect4:2}, the the
  pull-back map $\gamma^*$ is an isometry of $\Sobolev
  _{\alpha}^s(\M)$ onto $\Sobolev _{\alpha\gamma}^s (\M) $. As the map
  $\gamma^*$ is also an isometry for the sup-norm, the lemma follows.
\end{proof}

\begin{sloppypar}
Thus, we may regard $B_s$ as a function on the Siegel modular variety
$\Sigma _g= \Uni_g\backslash\Auto/\GMod$ or as a $\GMod$-invariant
function on the Siegel upper half-space $\Siegel_g$.  Recalling that
$\class{\alpha}$ denotes the class of $\alpha\in\Auto $ in $\Sigma
_g$, we shall write $ B_s(\class{\alpha})$ or  $ B_s([\alpha])$ for $B_s(\alpha)$. 
\end{sloppypar}

Let $\Cartan \subset \Symp_{2g}(\R) $ denote the Cartan subgroup of
diagonal symplectic matrices, $\Cartan^+ \subset \Cartan $ the
subgroup of positive matrices and let $\cartan \subset \symp_{2g}$ be
the Lie algebra of $\Cartan$.

For $ \alpha = \left( \begin{smallmatrix} \delta & 0 \\ 0 &
    \delta^{-1} \end{smallmatrix} \right) \in \Cartan^+$, where
$\delta = \mathrm{diag}(\delta _1,\dots , \delta _g)$ we define
\[
\height (\alpha) := \prod_{i=1}^g (\delta_i + \delta_i^{-1})
\]
\begin{proposition}
  \label{estimate_Sobolev_constant}
  For any order $s > g+1/2$ and any $\alpha \in \Cartan^+$ there
  exists a constant $C=C(s)>0$ such that
  \[
  B_{s} ( \class{\alpha }) \leq C \, \height (\alpha)^{1/2} \, .
  \]
\end{proposition}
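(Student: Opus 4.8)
The plan is to identify the best Sobolev constant with the diagonal value of a Green kernel and to bound that kernel, via subordination, by the heat kernel of $\M$ after absorbing $\alpha$ into the lattice.

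First I would use the isometry $\alpha^{*}\colon\Sobolev^{s}(\M_\alpha)\to\Sobolev^{s}_\alpha(\M)$ recorded before Lemma~\ref{lem:sect4:1}, where $\M_\alpha=\Heis/\Lattice_\alpha$ and $\Lattice_\alpha=\alpha(\Lattice)$. Since $\alpha^{*}$ also preserves the supremum norm and, fixing the centre $\Center(\Heis)$, carries $L^{2}_{0}(\M_\alpha)$ onto $L^{2}_{0}(\M)$, it identifies $B_{s}(\class{\alpha})$ with the best constant for the embedding $\Sobolev^{s}(\M_\alpha)\hookrightarrow C(\M_\alpha)$ relative to the \emph{fixed} left-invariant Laplacian $1+\Lap$ of $\M_\alpha$. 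For $s>g+1/2$ a standard reproducing-kernel argument gives $B_{s}(\class{\alpha})^{2}=G_{2s}(p,p)$, the diagonal of the Schwartz kernel of $(1+\Lap)^{-s}$ restricted to $L^{2}_{0}(\M_\alpha)$; this is independent of $p$ because $\Heis$ acts by isometries preserving $p^{*}L^{2}(\T^{2g})\oplus L^{2}_{0}$. By subordination, $G_{2s}(p,p)=\Gamma(s)^{-1}\int_{0}^{\infty}t^{s-1}e^{-t}\,\mathsf{p}^{0}_{t}(p,p)\,\D t$, where $\mathsf{p}_{t}$ is the heat kernel of $\Lap$ on $\M_\alpha$ and $\mathsf{p}^{0}_{t}$ its $L^{2}_{0}$-part; and $\mathsf{p}^{0}_{t}(p,p)\le\mathsf{p}_{t}(p,p)$, since passing to $L^{2}_{0}$ only deletes nonnegative spectral terms.

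Next I would estimate the full heat kernel by the method of images, $\mathsf{p}_{t}(p,p)=\sum_{\gamma\in\Lattice_\alpha}\mathsf{p}^{\Heis}_{t}(e,\gamma)$, using the Gaussian bound $\mathsf{p}^{\Heis}_{t}(e,x)\le C\,t^{-(2g+1)/2}e^{-c\,d(e,x)^{2}/t}$ for $0<t\le1$ (valid on $\Heis$ with its left-invariant metric) together with $\mathsf{p}^{\Heis}_{t}(e,e)\asymp t^{-(2g+1)/2}$. The geometric input is the lower bound, for $\gamma=\alpha\cdot(p,q,\tfrac12 k)=(\delta p,\delta^{-1}q,\tfrac12 k)\neq e$,
\[
d(e,\gamma)^{2}\ \ge\ c_{0}\bigl(|\delta p|^{2}+|\delta^{-1}q|^{2}+|k|\bigr),
\]
obtained by noting that a length-$L$ curve from $e$ to such a point has horizontal displacement at most $L$ and changes the central coordinate by at most $O(L^{2})$ (an area estimate in the left-invariant frame $X_{i}=\partial_{x_{i}}-\tfrac12\xi_{i}\partial_{t}$, $\Xi_{i}=\partial_{\xi_{i}}+\tfrac12 x_{i}\partial_{t}$, $T=\partial_{t}$), so that $L\gtrsim|\delta p|+|\delta^{-1}q|$ and $L\gtrsim\sqrt{|k|}$. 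Feeding this in and factorising the sum over $\Lattice_\alpha$, for $0<t\le1$,
\[
\sum_{\gamma\neq e}\mathsf{p}^{\Heis}_{t}(e,\gamma)\ \le\ C\Bigl(\sum_{k\in\Z}e^{-c'|k|/t}\Bigr)\prod_{i=1}^{g}\Bigl(\sum_{m\in\Z}e^{-c'\delta_{i}^{2}m^{2}/t}\Bigr)\Bigl(\sum_{m\in\Z}e^{-c'\delta_{i}^{-2}m^{2}/t}\Bigr)\ \le\ C\prod_{i=1}^{g}\max(\delta_{i},\delta_{i}^{-1})\ \le\ C\,\height(\alpha),
\]
because the $k$-sum stays bounded while the two $m$-sums contribute $\max(1,\delta_{i}^{-1})$, resp.\ $\max(1,\delta_{i})$; hence $\mathsf{p}_{t}(p,p)\le C\bigl(t^{-(2g+1)/2}+\height(\alpha)\bigr)$ for $t\le1$. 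For $t\ge1$ the map $t\mapsto\mathsf{p}_{t}(p,p)$ is nonincreasing, so $\mathsf{p}_{t}(p,p)\le\mathsf{p}_{1}(p,p)\le C\,\height(\alpha)$.

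Finally, inserting these bounds into the subordination integral,
\[
B_{s}(\class{\alpha})^{2}=\frac{1}{\Gamma(s)}\int_{0}^{\infty}t^{s-1}e^{-t}\,\mathsf{p}^{0}_{t}(p,p)\,\D t\ \le\ C\int_{0}^{1}t^{\,s-1-(2g+1)/2}\,\D t\ +\ C\,\height(\alpha)\int_{0}^{\infty}t^{s-1}e^{-t}\,\D t,
\]
the first integral being finite precisely when $s>g+1/2$; since $\height(\alpha)\ge1$ this gives $B_{s}(\class{\alpha})^{2}\le C(s)\,\height(\alpha)$, which is the claim. I expect the one delicate point to be the geometric lemma: one must make the area estimate — hence the constant $c_{0}$ — uniform as $\class{\alpha}$ runs to the cusp, which does work, because a coordinate direction that becomes short for the $X_{i}$ becomes long for the $\Xi_{i}$, so the two theta-sums over $m$ pair up to produce exactly one factor $(\delta_{i}+\delta_{i}^{-1})$ per coordinate. (A purely representation-theoretic route — decomposing $L^{2}_{0}(\M)$ into Schr\"odinger modules and using Weil--Brezin intertwiners together with the $\alpha$-twisted Hermite bases — is also available, but reaching the sharp threshold $s>g+1/2$ that way requires more careful bookkeeping of the summation over the central parameter.)
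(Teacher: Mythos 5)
Your proposal is correct in outline but takes a genuinely different route than the paper. You identify $B_s(\class{\alpha})^2$ with the diagonal of the Green kernel of $(1+\Lap)^{-s}$ on $\M_\alpha$ restricted to $L^2_0$, convert it by subordination into an integral against the heat kernel, expand the heat kernel by the method of images, and control the resulting lattice (theta-type) sum via the geometric lower bound $d(e,\gamma)^2\gtrsim|\delta p|^2+|\delta^{-1}q|^2+|k|$, which is correct and uniform over the Cartan. The paper's proof is more elementary and avoids spectral analysis entirely: after the same reduction to the fixed Laplacian on $\M_\alpha$, it applies the flat Sobolev embedding on a fixed fundamental domain $F\subset\Heis$ (translated to any $Fh$), and observes that the projection $p_\alpha\colon Fh^{o}\to\M_\alpha$ covers each point at most $2^g\,\height(\alpha)$ times; this turns $\int_{Fh}|(1+\Lap)^{n/2}\tilde f|^2$ into $2^g\,\height(\alpha)\,\|(1+\Lap)^{n/2}f\|^2_{L^2(\M_\alpha)}$, and interpolation finishes. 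Both routes hit the sharp threshold $s>g+1/2$; the paper's covering count is the combinatorial shadow of exactly the theta-sum estimate you carry out analytically.

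There is, however, one concrete slip. In your displayed chain the prefactor $t^{-(2g+1)/2}$ from the Gaussian upper bound is dropped: what the factorisation actually yields is
$\sum_{\gamma\neq e}\mathrm{p}^{\Heis}_t(e,\gamma)\le C\,t^{-(2g+1)/2}\bigl(\theta(t)-1\bigr)$ with $\theta(t)=\sum_{\gamma}e^{-c\,d(e,\gamma)^2/t}$, and it is $\theta(t)$, not the heat-kernel sum itself, that is bounded by $C\,\height(\alpha)$ for $0<t\le1$. As a consequence the interim claim $\mathrm{p}_t(p,p)\le C\bigl(t^{-(2g+1)/2}+\height(\alpha)\bigr)$ is false: when some $\delta_i$ is small and $t\sim\delta_i^2$, a single image point already contributes $\sim t^{-(2g+1)/2}$, which exceeds $\height(\alpha)\sim\delta_i^{-1}$. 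The repair is immediate and does not change the outcome: keep $\mathrm{p}_t(p,p)\le C\,t^{-(2g+1)/2}\,\theta(t)\le C\,t^{-(2g+1)/2}\,\height(\alpha)$ for $t\le1$ (and $\mathrm{p}_t(p,p)\le\mathrm{p}_1(p,p)\le C\,\height(\alpha)$ for $t\ge1$ by monotonicity), so that
\[
B_s(\class{\alpha})^2\ \le\ \frac{C\,\height(\alpha)}{\Gamma(s)}\int_0^1 t^{\,s-1-(2g+1)/2}\,\D t\ +\ C\,\height(\alpha)\ ,
\]
which is finite precisely for $s>g+1/2$ and gives $B_s(\class{\alpha})\le C\,\height(\alpha)^{1/2}$ as claimed. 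With this correction your argument is complete.
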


\begin{proof} 
  Let $ \alpha = \left( \begin{smallmatrix} \delta & 0 \\ 0 &
      \delta^{-1} \end{smallmatrix} \right) \in \Cartan^+$, where
  $\delta = \mathrm{diag}(\delta _1,\dots , \delta _g)$. Since the map
  $\alpha^*:\Sobolev ^s (\M _{\alpha}) \to \Sobolev _{\alpha}^s (\M) $
  is an isometry, the best $s$-Sobolev constant $B_{s} ( [\alpha])$
  for the operator $1+\Delta_\alpha$ on the Heisenberg manifold $\M$
  is equal to the best $s$-Sobolev constant for the operator $1+\Delta$ on
  the Heisenberg manifold $\M_{\alpha}$, namely
  \begin{equation}
    \label{best_Sob_constant_alternative} 
    B_{s} ([\alpha] ) = \sup_{f \in \Sobolev ^s (\M_{\alpha}) \backslash
      \{0\} } \frac {\| f \| _\infty}{ \| (1+\Delta) ^{s/2}f \|
      _{L^2 (\M_{\alpha})}} \,.
  \end{equation}

  We fix the fundamental domain $F = [0,1]^g \times [0,1]^g \times
  [0,1/2]$ for the action of the lattice $\Lattice$ on $\Heis$. By the
  standard Sobolev embedding theorem, for any $s>g +1/2$ there exists
  a constant $C(s)$ such that for any $f \in W^s_{\text{loc}}(\Heis)$
  we have
  \[
  |f(I) |^2 \le C(s)\int_F |(1+\Delta)^{s/2}f(x)|^2\,\D{}x
  \]
  where $I=(0,0,0)$ is the identity of $\Heis$ and $\D{}x $ is the
  Haar measure assigning volume~$1$ to~$F$. Since left and right
  translation commute and since $(1+\Delta)$ operates on the left, 
  for every $f \in W^s_{\text{loc}}(\Heis)$ and every
  $h\in\Heis$ we have
  \begin{equation}
    \label{eq:sect4:234}
    |f(h) |^2 \le C(s)\int_{Fh} |(1+\Delta)^{s/2}f(x)|^2\,\D{}x\,.
  \end{equation}
  It easy to see that, for any $h\in \Heis$, the set $Fh$ is also a
  fundamental domain for~$\Lattice$. Furthermore, if we let $p_\alpha
  : h\in \Heis \mapsto h \Lattice _{\alpha}\in \M_{\alpha} $ denote
  the natural projection, the projection $p_{\alpha} ((Fh)^o)$ of the
  interior of $Fh$ covers each point of~$ \M_{\alpha^{-1}}$ at most
  \begin{equation}
    \label{covering_estimate} 
    2^g\prod_{i=1}^g \max \{\delta_i, {\delta_i}^{-1} \} \le 2^g \height (\alpha)
  \end{equation}
  times.

  Given  any $f \in \Sobolev ^s (\M_\alpha)$, let $\tilde f = f \circ
  p_\alpha$. Then,  for any $h\in \Heis$ and any integer $n\ge 0$
  \begin{align*}
    \int_{Fh} \left| (1+\Delta) ^{n/2} \tilde f(x) \right| ^2 \,\D{}x
    &\leq 2^g\, \height (\alpha) \int _{\M_\alpha}
    \left| (1+\Delta)^{n/2} f(x) \right| ^2  \,\D{}x  &  \qquad \text{(by  \eqref{covering_estimate})}\\
    & = 2^g \height (\alpha)\, \| (1+\Delta) ^{n/2} f\| ^2 _{L^2
      (\M_\alpha)}
  \end{align*}
  We deduce, by interpolation and by \eqref{eq:sect4:234}, that for
  any $s \geq g+1/2 $ there exists a constant $C$ such that
  \begin{equation} \label{Sob_interpolation} \sup_{h\in \M_{\alpha}} |
    f(h) | \le C \left( \height (\alpha) \right)^{1/2}\| f\|_{W^s
      (\M_{\alpha})} \, .
  \end{equation}
  This concludes the proof.
\end{proof}
 
\subsection{Best Sobolev constant and height function}
\label{sec:best-sobol-const-height function.}
The height of a point $Z \in \Siegel_g$ is the positive number
\begin{equation}
  \label{detY}
  \detY (Z) : = \det \Im(Z) \, . 
\end{equation}

Let $F_g \subset \Siegel _g$ denotes the Siegel fundamental domain for
the action of $\Symp_{2g}(\Z)$ on $\Siegel _g$ (see \cite{MR1046630}).
We define the \emph{height function} $ \Height \colon \Sigma_g \to
\R^+$ to be the maximal height of a $\Symp_{2g}(\Z)$-orbit (which is
attained by Proposition 1 of \cite{cartan10ouverts}), or,
equivalently, the height of the unique representative of an orbit
inside $F_g$. Thus, if $[Z] \in \Sigma _g$ denotes the class of $Z \in
\Siegel_g$ in the Siegel modular variety,
\begin{equation}
  \label{height_function}
  \Height ( [Z] ) := \max _{\gamma \in \Symp_{2g} (\Z) } \detY (\gamma (Z)) 
  =  \max _{\gamma \in \Symp_{2g} (\Z) } \det \Im  (\gamma (Z))
\end{equation}

Any point in $\Siegel_g$ may be uniquely written as $Z = X+i W^\top
DW$, where $X=(x_{ij})$ is a symmetric real matrix, $W= (w_{ij})$ is a
upper triangular real matrix with ones on the diagonal, and $D=
\mathrm{diag} (\delta _1 , \dots , \delta _g)$ is a diagonal positive
matrix. The coordinates $(x_{ij})_{1\leq i \leq j \leq g}$ ,
$(w_{ij})_{1\leq i<j \leq g}$ and $(\delta_i)_{1\leq i \leq g}$ thus
defined are called Iwasawa coordinates on the Siegel upper half-space.
For $t>0$, define $S _g(t) \subset \Siegel _g$ as the set of those
$Z=X+iW^\top DW \in \Siegel _g$ such that
\begin{equation} \label{S1} |x_{ij} |< t \qquad (1 \leq i,j \leq g )
\end{equation}
\begin{equation} \label{S2} |w_{ij} | < t \qquad ( i < j )
\end{equation}
\begin{equation} \label{S3} 1 < t\delta_1 \qquad \text{ and }\qquad 0
  < \delta _{k} < t \delta _{g+1} \qquad (1 \leq k \leq g-1 )
\end{equation}
For all $t$ sufficiently large, $S_g(t)$ is a ``fundamental open set''
for the action of $\Symp_{2g}(\Z)$ on $\Siegel _g$ (see
\cite{cartan10ouverts} or \cite{MR1046630}).  We will need the
following Lemma, which is an easy consequence of the expression
\begin{equation} \label{metric_Iwasawa_coordinates} ds^2 = \tr \left(
    dX Y^{-1}dX Y^{-1} + dD D^{-1} dD D^{-1} + 2 (W^\top)^{-1} dW^\top
    D dW W^{-1} D^{-1} \right)
\end{equation}
for the Siegel metric in Iwasawa coordinates.

\begin{lemma} \label{bounded_distance_diagonal} Any point $Z=X+iW^tDW
  $ inside a Siegel fundamental domain $F_g$ (actually inside the
  Siegel fundamental open set $S_g(t)$ for any fixed $t$ sufficiently
  large) is at a bounded distance from the point $iD$.
\end{lemma}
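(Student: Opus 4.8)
The plan is to join $Z=X+iW^\top D W$ to $iD$ by an explicit piecewise-smooth path inside $\Siegel_g$ and to bound its Riemannian length directly from the expression \eqref{metric_Iwasawa_coordinates} for the Siegel metric in Iwasawa coordinates. Since a Siegel fundamental domain $F_g$ is contained in a fundamental open set $S_g(t)$ as soon as $t$ is large enough, it suffices to prove the estimate for $Z\in S_g(t)$, the resulting constant then depending only on $t$ and $g$.

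First I would record the elementary consequences of \eqref{S1}--\eqref{S3}. From \eqref{S3} one gets $\delta_1>t^{-1}$ and $\delta_{k+1}>t^{-1}\delta_k$, hence $\delta_k>t^{-k}\ge t^{-g}$ for every $k$ (so $\|D^{-1}\|\le t^{g}$), and $\delta_j/\delta_i< t^{\,i-j}\le t^{g}$ whenever $j<i$. From \eqref{S1}, $\|X\|\le g t$. From \eqref{S2}, the matrix $N:=W-\mathbf 1_g$ is strictly upper triangular, hence nilpotent with $N^{g}=0$, with entries of modulus $<t$; therefore $W^{-1}=\sum_{j=0}^{g-1}(-N)^{j}$ and $L:=\log W=\sum_{j=1}^{g-1}\tfrac{(-1)^{j+1}}{j}N^{j}$ have norms bounded by a constant depending only on $t$ and $g$.

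Then I would take the concatenation of two segments. On the first, $u\mapsto X+i\,W(u)^\top D\,W(u)$ with $W(u):=\exp((1-u)L)$, so that $W(0)=W$, $W(1)=\mathbf 1_g$, with endpoint $X+iD$; here $dX=dD=0$, and since $L$ commutes with $W(u)$ one computes $(W(u)^\top)^{-1}dW(u)^\top=-L^\top\,du$ and $dW(u)\,W(u)^{-1}=-L\,du$, so \eqref{metric_Iwasawa_coordinates} reduces to $ds^2=2\,\tr(L^\top D L D^{-1})\,du^2=2\big(\sum_{j<i}L_{ji}^2\,\delta_j/\delta_i\big)\,du^2\le 2t^{g}\|L\|^2\,du^2$, a bounded constant; hence this segment has bounded length. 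On the second segment, $u\mapsto(1-u)X+iD$, one has $dD=dW=0$ and $Y=D$ constant, so $ds^2=\tr(XD^{-1}XD^{-1})\,du^2\le \|D^{-1}\|^2\|X\|^2\,du^2\le t^{2g}(gt)^2\,du^2$, again bounded, and this segment joins $X+iD$ to $iD$. Adding the two estimates gives $\dist(Z,iD)\le C(t,g)$, which is the assertion.

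The one point that requires care is the estimate of the $W$-part of the metric along the first segment: the $\delta_i$ are in general unbounded and of wildly different sizes, so the cross terms $L_{ji}^2\,\delta_j/\delta_i$ could a priori diverge. What rescues the argument is that $L$ is strictly upper triangular, so that only the ratios $\delta_j/\delta_i$ with $j<i$ occur, and these are controlled precisely by the ordering condition \eqref{S3} built into the definition of the fundamental open set. Everything else is a routine bound on norms of polynomial expressions in the entries of $X$, $W$ and $D^{-1}$, all of which are bounded on $S_g(t)$.
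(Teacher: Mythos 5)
Your proposal is correct and follows essentially the same strategy as the paper's proof: bound the Riemannian length of an explicit two-step path from $Z$ to $iD$ using the Iwasawa-coordinate metric \eqref{metric_Iwasawa_coordinates}, controlling the $W$-part via the strictly upper-triangular structure and the ordering inequalities \eqref{S3}, and then the $X$-part via \eqref{S1} together with the lower bound $\delta_k>t^{-g}$. The only difference is cosmetic: you parametrize the $W$-leg by the one-parameter subgroup $W(u)=\exp((1-u)\log W)$, which makes $dW\,W^{-1}=-L\,du$ constant and the trace computation cleaner, whereas the paper uses the linear path in the $w_{ij}$ coordinates and argues by inspection of the resulting polynomial expressions; both rest on the same observation that only ratios $\delta_j/\delta_i$ with $j<i$ appear, each bounded by $t^{i-j}$.
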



{\begin{proof}This is clear from the expression
    \eqref{metric_Iwasawa_coordinates} for the Siegel metric in
    Iwasawa coordinates. Indeed, let $Z = X+iW^\top DY$, with $W$ and
    $D$ as explained above, be a point in $S_g(t)$. We first observe
    that \eqref{S2} says that (the entries of) $W$ and $W^t$ are
    bounded, and, since the inverse of a bounded unipotent matrix is
    bounded as well, the same is true for $W^{-1}$ and
    $(W^\top)^{-1}$. Then, we observe that the non-zero entries of
    $(W^\top)^{-1} dW^\top DdW W^{-1}D^{-1}$ are all proportional to
    terms like $\delta _i / \delta _j$ with $j > i$ times something
    bounded, and $\delta _i / \delta _j < t^{j-i}$ by
    \eqref{S3}. Thus, all terms are bounded by $C \cdot t^{n-1}$ for
    some constant $C$ and all $t >1$ sufficiently large. Consequently,
    the integral
    \[
    \int_0^1 \sqrt{ 2(W^\top )^{-1}dW^\top DdW W^{-1}D^{-1}}
    \]
    along the path $[0,1] \ni t \mapsto t \cdot W $ is bounded,
    i.e. there exists a constant $C>0$ such that $d(X+iW^\top DW,
    X+iD) < C$ for any $Z=X+iW^ \top DW \in F_n$. Finally, it is clear that 
    we may set to zero each of the coordinates $x_{ij}$ of $X$ still
    staying a bounded distance away. Indeed, a path sending the
    $x_{ij}$ coordinate linearly to zero while keeping constant the
    other coordinates has length bounded by
    \[
    \int_{0}^1 \frac{dt}{\sqrt{\delta_j\delta_i}}
    \]
    which is bounded by $t$ because of  \eqref{S1}. Thus, a point $X+iD$ lies
    within a bounded distance from $iD$.
  \end{proof}

  The Siegel volume form $ dX dY / (\det Y)^{g+1}$ in Iwasawa
  coordinates is
  \begin{equation} \label{volume_Iwasawa_coordinates} d \Vol _g =
    \prod _{i \leq j} dx_{ij} \cdot \prod _{i < j} dw_{ij} \cdot \prod
    _{k} \delta_k^{-(k+1)} d \delta_k \, .
  \end{equation}

A computation, using again the fundamental open set $S_g(t)$, gives
the following.

\begin{lemma} \label{DL_function} The logarithm of the height function
  on the Siegel modular variety is distance-like with exponent $k_g=
  \tfrac{g+1}{2}$. More precisely, for any $\tau \gg 0$
  \[
  \mathrm{Vol}_g \left\{ [Z] \in \Sigma _g \, \, \mathrm{ s.t. }\,
    \Height ([Z]) \geq \tau \right\} \asymp e^{- \tfrac{g+1}{2} \tau}
  \, .
  \]
\end{lemma}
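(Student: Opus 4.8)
The plan is to pull the computation back to the Iwasawa coordinates $(x_{ij},w_{ij},\delta_k)$ on $\Siegel_g$ and reduce it to a one-dimensional integral in the ``cusp direction''. First I would fix $t\gg 1$ so that $S_g(t)$ is a fundamental open set for the $\Symp_{2g}(\Z)$-action on $\Siegel_g$; since $\{\gamma\in\Symp_{2g}(\Z):\gamma S_g(t)\cap S_g(t)\neq\emptyset\}$ is finite, the projection $\pi\colon S_g(t)\to\Sigma_g$ has bounded multiplicity, so $\Vol_g(E)\asymp\Vol_g(\pi^{-1}(E)\cap S_g(t))$ for every measurable $E\subset\Sigma_g$. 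By standard reduction theory, on $S_g(t)$ the height is comparable, up to a factor depending only on $g$ and $t$, to $\det\Im(Z)$, and for $Z=X+iW^{\top}DW$ one has $\det\Im(Z)=\det(W^{\top}DW)=\det D=\prod_{k=1}^{g}\delta_k$ since $W$ is unipotent; a multiplicative constant in the height only shifts the threshold by a bounded amount and is thus absorbed by $\asymp$. Using the volume form \eqref{volume_Iwasawa_coordinates} and the fact that \eqref{S1}--\eqref{S2} confine $(x_{ij})$ and $(w_{ij})$ to fixed boxes independent of the height constraint, integrating out $X$ and $W$ leaves (reading the statement, in accordance with its first sentence, as concerning the superlevel sets of $\log\Height$)
\[
\Vol_g\{\log\Height\geq\tau\}\ \asymp\ \int_{\mathcal R(\tau)}\prod_{k=1}^{g}\delta_k^{-(k+1)}\,d\delta_k,\qquad
\mathcal R(\tau):=\Big\{\delta_1>\tfrac1t,\ \delta_k<t\delta_{k+1}\ (k<g),\ \prod_k\delta_k\geq e^{\tau}\Big\},
\]
the constraints defining $\mathcal R(\tau)$ coming from \eqref{S3} and the height condition.

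Next I would substitute $\delta_k=e^{u_k}$, so that $\delta_k^{-(k+1)}\,d\delta_k=e^{-k u_k}\,du_k$ and the integral becomes $\int_{\mathcal R'(\tau)}e^{-\sum_{k=1}^{g}k u_k}\,du$ over $\mathcal R'(\tau)=\{u_1>-\log t,\ u_k<u_{k+1}+\log t\ (k<g),\ \sum_k u_k\geq\tau\}$. The crux is the elementary identity
\[
\sum_{k=1}^{g}k\,u_k\ =\ \frac{g+1}{2}\sum_{k=1}^{g}u_k\ +\ \sum_{k=1}^{g-1}\frac{k(g-k)}{2}\,(u_{k+1}-u_k),
\]
obtained by Abel summation, the partial sums of $(\,k-\tfrac{g+1}{2}\,)_{k=1}^{g}$ being $\tfrac12 k(k-g)$. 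I would then use the linear change of variables $(u_1,\dots,u_g)\mapsto(s,v_1,\dots,v_{g-1})$, with $s=\sum_k u_k$ and $v_k=u_{k+1}-u_k$, whose Jacobian is a nonzero constant depending only on $g$: in these coordinates the weight factors as $e^{-\frac{g+1}{2}s}\prod_{k=1}^{g-1}e^{-\frac{k(g-k)}{2}v_k}$, the height condition becomes $s\geq\tau$, the slack conditions become $v_k>-\log t$, and $u_1>-\log t$ becomes $\sum_k(g-k)v_k<s+g\log t$.

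Finally I would integrate, separating the cusp variable $s$ from the $v_k$. For each fixed $s$, the inner integral of $\prod_k e^{-\frac{k(g-k)}{2}v_k}$ over $\{v_k>-\log t\}\cap\{\sum_k(g-k)v_k<s+g\log t\}$ is bounded above for all $s$ by the convergent product $\prod_{k=1}^{g-1}\int_{-\log t}^{\infty}e^{-\frac{k(g-k)}{2}v_k}\,dv_k<\infty$ (each exponent $\tfrac12 k(g-k)$ is strictly positive for $1\le k\le g-1$), and bounded below by a fixed positive constant once $s\geq\tau$ is large, by restricting the $v_k$ to a fixed box contained in the domain. Hence the whole integral is $\asymp\int_{\tau}^{\infty}e^{-\frac{g+1}{2}s}\,ds=\tfrac{2}{g+1}e^{-\frac{g+1}{2}\tau}$, which is the assertion; Lemma~\ref{bounded_distance_diagonal} then justifies the terminology ``distance-like'', the superlevel sets of $\log\Height$ being, up to bounded distance, the cusp neighbourhoods associated with the diagonal directions. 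I expect the main obstacle to be the reduction-theory bookkeeping needed to pass from $\Sigma_g$ to these explicit coordinates (bounded multiplicity of $S_g(t)\to\Sigma_g$ and the comparison $\Height\asymp\det\Im$ on $S_g(t)$), together with the non-compactness of $\mathcal R'(\tau)$ — which is precisely why one must change variables to isolate $s$ before integrating, the integrand alone not being integrable over the region; once this is set up, the remaining estimates are routine.
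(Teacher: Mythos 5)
Your proof is correct, and it takes a genuinely different (and substantially more explicit) route than the paper. The paper's proof is a one-line reference to a change of variable in Klingen, reducing the count to the one-dimensional integral $\int_{e^\tau}^\infty h^{-(g+3)/2}\,dh$; essentially it appeals to the known fact that the Siegel volume of $\{\det\Im(Z) > h\}$ in a cusp neighbourhood decays like $h^{-(g+1)/2}$. You instead carry out the reduction-theory computation from scratch, and the key step is your Abel-summation identity
\[
\sum_{k=1}^g k\,u_k \;=\; \tfrac{g+1}{2}\sum_{k=1}^g u_k \;+\; \sum_{k=1}^{g-1}\tfrac{k(g-k)}{2}\,(u_{k+1}-u_k),
\]
which, after the linear change of variables $(u_k)\mapsto(s,v_1,\dots,v_{g-1})$ with $s=\sum u_k$ and $v_k=u_{k+1}-u_k$, isolates the cusp variable $s$ with exact coefficient $\tfrac{g+1}{2}$ and leaves the $v_k$ with strictly positive exponents $\tfrac{k(g-k)}{2}>0$ making the inner integral a uniformly bounded above-and-below constant. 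This is cleaner than the substitution $s_k=\sum_{i\le k}\log(t\delta_i)$ one might try, which leads to the exponent via a recursion rather than directly; your identity makes the $\tfrac{g+1}{2}$ appear in one step. The reduction bookkeeping at the start (bounded multiplicity of $S_g(t)\to\Sigma_g$, comparability $\Height\asymp\det\Im$ on $S_g(t)$ via the finiteness of $\{\gamma:\gamma S_g(t)\cap S_g(t)\neq\emptyset\}$, integrating out the bounded $X$ and $W$ coordinates, and absorbing multiplicative shifts into $\asymp$) is standard and correct, and you are right to read the threshold in the displayed formula as $\log\Height([Z])\ge\tau$ (equivalently $\Height\ge e^\tau$), consistent with the lemma's first sentence. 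The only quibble is your parenthetical claiming the integrand is not integrable over $\mathcal R'(\tau)$: it is (the Siegel variety has finite volume, so the integral over all of $S_g(t)$ converges); the real point, which you do make, is that the region is unbounded so the estimate must track the decay of the integrand rather than the measure of the region, and the change of variables is how you organize that.
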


\begin{proof}
  A change of variable as in page 67 of \cite{MR1046630} shows that
  this volume is within a bounded ratio of
  \[\int_{e^\tau}^\infty t^{-(g+3)/2} dt \, . \]
\end{proof}

\begin{proposition} \label{best_Sobolev_constant2} For any $s > g+1/2$
  there exists a constant $C(s)>0$ such that the best Sobolev constant
  satisfies the estimate
  \[
  B_{s}(\class{\alpha}) \leq C(s) \cdot \left( \Height
    (\class{\alpha}) \right) ^{1/4} \, .
  \]
\end{proposition}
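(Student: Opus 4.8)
The plan is to deduce this from the special case $\alpha\in\Cartan^+$ already established in Proposition~\ref{estimate_Sobolev_constant}, by using that $B_s$ is a function on $\Sigma_g$ (Lemma~\ref{lem:best-sobol-const}) and that an arbitrary point of $\Siegel_g$ is, after reduction, at bounded distance from a diagonal point $iD$. Concretely, given $\class{\alpha}\in\Sigma_g$ I would first pick its representative $Z$ inside the Siegel fundamental domain $F_g$ and write it in Iwasawa coordinates $Z=X+iW^\top DW$, with $D=\mathrm{diag}(\delta_1,\dots,\delta_g)$. Since the maximum defining $\Height$ is attained at the $F_g$-representative and $W$ is unipotent, $\Height(\class{\alpha})=\det\Im Z=\det D=\prod_{i=1}^g\delta_i$. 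Moreover $F_g$ is contained in a fundamental open set $S_g(t_0)$ for a fixed $t_0$ (see \cite{MR1046630}), so the inequalities \eqref{S3} yield a uniform lower bound $\delta_i\ge c_0>0$, with $c_0$ depending only on $g$.

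Next I would introduce the diagonal symplectic matrix $\alpha_0\in\Cartan^+$ determined by $\alpha_0^{-1}(i)=iD$, namely $\alpha_0=\left(\begin{smallmatrix} D^{-1/2} & 0\\ 0 & D^{1/2}\end{smallmatrix}\right)$, so that $\height(\alpha_0)=\prod_{i=1}^g(\delta_i^{1/2}+\delta_i^{-1/2})$. By Lemma~\ref{bounded_distance_diagonal} the points $Z$ and $iD$ lie within a Siegel distance $R=R(g)$ of each other in $\Siegel_g$. Combining Lemma~\ref{lem:sect4:1} with the $\Uni_g$-invariance of $B_s$, the function $B_s$ has bounded oscillation over balls of radius $R$ in $\Sigma_g$: there is a constant $C_1=C_1(s,R)$ such that $B_s(p)\le C_1\,B_s(q)$ whenever $\dist(p,q)\le R$. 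Applying this with $p=\class{\alpha}$, $q=\class{\alpha_0}$, and then Proposition~\ref{estimate_Sobolev_constant}, gives
\[
B_s(\class{\alpha}) \;\le\; C_1\,B_s(\class{\alpha_0}) \;\le\; C_2\,\height(\alpha_0)^{1/2} \;=\; C_2\Bigl(\prod_{i=1}^g(\delta_i^{1/2}+\delta_i^{-1/2})\Bigr)^{1/2}.
\]

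Finally, using $\delta_i^{1/2}+\delta_i^{-1/2}=\delta_i^{1/2}(1+\delta_i^{-1})\le(1+c_0^{-1})\,\delta_i^{1/2}$, valid since $\delta_i\ge c_0$, I would bound $\prod_i(\delta_i^{1/2}+\delta_i^{-1/2})\le(1+c_0^{-1})^g\bigl(\prod_i\delta_i\bigr)^{1/2}=(1+c_0^{-1})^g\,\Height(\class{\alpha})^{1/2}$, and taking square roots conclude $B_s(\class{\alpha})\le C(s)\,\Height(\class{\alpha})^{1/4}$. The only genuinely delicate point is the passage in the second paragraph: Lemma~\ref{lem:sect4:1} controls how the Sobolev norms vary along a path in $\Auto$, whereas Lemma~\ref{bounded_distance_diagonal} provides boundedness in the Siegel metric on $\Siegel_g$; one must lift a bounded path in $\Siegel_g$ to a bounded path in $\Auto$ (using that $\Auto\to\Uni_g\backslash\Auto\cong\Siegel_g$ is, up to rescaling, a Riemannian submersion with compact fibers) and absorb the residual ambiguity in the fiber into the $\Uni_g$-invariance of $B_s$. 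Everything else is elementary bookkeeping with the Iwasawa coordinates.
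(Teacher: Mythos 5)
Your proposal is correct and follows essentially the same route as the paper's own proof: take the fundamental-domain representative $Z=X+iW^\top DW$, use Lemma~\ref{bounded_distance_diagonal} together with Lemma~\ref{lem:sect4:1} to compare $B_s(Z)$ with $B_s(iD)$, and then apply Proposition~\ref{estimate_Sobolev_constant} to the diagonal point $\beta=\bigl(\begin{smallmatrix}D^{-1/2}&0\\0&D^{1/2}\end{smallmatrix}\bigr)$. You are in fact slightly more careful than the paper on two points it leaves implicit --- the uniform lower bound $\delta_i\ge c_0$ coming from the Siegel reduction inequalities (needed to absorb the $\delta_i^{-1}$ terms), and the lifting of a bounded geodesic in $\Siegel_g$ to a bounded path in $\Auto$ before invoking Lemma~\ref{lem:sect4:1} --- so no gap.
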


\begin{proof} Let $Z =X+iW^\top DW \in F_g$ be the representative of
  $\class{\alpha} \in \Sigma_g$ inside the Siegel fundamental domain,
  so that $B_s(Z)= B_{s}(\class{\alpha}) $. According to Lemma
  \ref{bounded_distance_diagonal}, $Z $ is within a uniformly bounded
  distance from the point $ i D$.  Thus, by Lemma \ref{lem:sect4:1},
  there exists a constant $C=C(s) >0 $ such that
  \[
  B_s(Z) \le C \, B_s(i D).
  \]
  Since $i D = \beta ^{-1} ( i) $, with $\beta =\left(
    \begin{smallmatrix}
      D^{-1/2}&0\\0&D^{1/2}
    \end{smallmatrix}\right)
  $ , we have $B_s(i D)= B_s(\beta)$ and, by
  Proposition~\ref{estimate_Sobolev_constant}, $B_s(\beta) \le C
  \height (\beta)^{1/2} \le C' \det(D)^{1/4} = C'\detY(\class{\alpha}
  )^{1/4}$.  
\end{proof}

\subsection{Diophantine conditions and logarithm law}
\label{subsection:Diophantine}

We will need, in the final renormalization argument, some control on
the best Sobolev constant $B_{s}( \class{\rho \alpha })$, hence, by
Proposition \ref{best_Sobolev_constant2}, on $\Height ( \class{ \rho
  \alpha } )$, when $\rho$ are certain automorphisms in the Cartan
subgroup $\Cartan \subset \Symp_{2g}(\R)$ of diagonal symplectic
matrices.  This control is the higher-dimensional analogue of the
escape rate of geodesics into the cusp of the modular surface.

\paragraph{Diophantine conditions.} 
Let $\cartan ^+ \subset \symp_{2g}$ be the cone of those 
$ \dcart = \left( \begin{smallmatrix} \delta & 0 \\ 0 &
    -\delta \end{smallmatrix} \right) \in \symp_{2g}$  where $\delta = \mathrm{diag}(\delta _1,\dots ,
\delta _g)$ is a non-negative diagonal matrix. 
We consider the
corresponding one-parameter subgroup of diagonal symplectic matrices
$e^{t \dcart} \in \Cartan \subset \Symp_{2g}(\R) $, and also denote by
$e^{-t\dcart}$ the corresponding automorphisms $(x,\xi,z) \mapsto (e^{-t
  \delta }x, e^{t \delta}\xi, t)$ of the Heisenberg group.

We recall the under the
left action of  the symplectic matrix $\beta = \left(\begin{smallmatrix} A & B \\
    C & D \\ \end{smallmatrix}\right) \in \Symp_{2g}(\R)$, the height
on $\Siegel _g$ transforms according to
\begin{equation}
  \label{eq:sect4:3}
  \detY (  \beta (Z) ) = \left| \det (C Z+D) \right| ^{-2} \detY( Z )  
\end{equation}

\begin{lemma} \label{trivial_Diophantine_bound} Let
  $\delta =\mathrm{diag}(\delta_1,\delta_2,\dots, \delta_g)$  a
  non-negative diagonal matrix and let
  $\dcart = \left( \begin{smallmatrix} \delta & 0 \\ 0 &
      -\delta \end{smallmatrix} \right) \in \frak a$ generating the
  automorphism $e^{t\dcart} \in \Symp_{2g}(\R)$.  For any $[\alpha] \in
  \Mod$ and any $t \geq 0$ we have the trivial bound
  \[ \Height ( \class {e^{-t\dcart} \alpha } ) \leq ( \det e^{t \delta} )^2
  \, \Height (\class{\alpha}) \, .
  \]
\end{lemma}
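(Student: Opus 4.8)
The plan is to reduce the statement, via the transformation law \eqref{eq:sect4:3}, to one elementary inequality for symplectic matrices. Recall (the Notation in Section~\ref{sec:heis-group-sieg}) that $\class{\beta}\in\Sigma_g$ corresponds to the point $\Symp_{2g}(\Z)\,\beta^{-1}(i)\in\Symp_{2g}(\Z)\backslash\Siegel_g$, and that by \eqref{height_function} $\Height(\class{\beta})=\max_{\gamma\in\Symp_{2g}(\Z)}\detY\big(\gamma(\beta^{-1}(i))\big)$, where $i=i\mathbf{1}_g\in\Siegel_g$. It will suffice to prove the pointwise estimate
\[
\big|\det(iC\,e^{2t\delta}+D)\big|\ \ge\ \big|\det(iC+D)\big|\qquad\text{for every }\left(\begin{smallmatrix}A&B\\C&D\end{smallmatrix}\right)\in\Symp_{2g}(\R),\ \ t\ge 0 .
\]

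To set this up, note that $(e^{-t\dcart})^{-1}=e^{t\dcart}=\left(\begin{smallmatrix}e^{t\delta}&0\\0&e^{-t\delta}\end{smallmatrix}\right)$ acts on $\Siegel_g$ by $Z\mapsto e^{t\delta}Z e^{t\delta}$, hence sends $i$ to $ie^{2t\delta}$; since the $\Symp_{2g}(\R)$-action on $\Siegel_g$ is a left action, $\gamma\big((e^{-t\dcart}\alpha)^{-1}(i)\big)=(\gamma\alpha^{-1})(ie^{2t\delta})$ for $\gamma\in\Symp_{2g}(\Z)$. Writing $\mu:=\gamma\alpha^{-1}=\left(\begin{smallmatrix}A_\mu&B_\mu\\ C_\mu&D_\mu\end{smallmatrix}\right)\in\Symp_{2g}(\R)$ and using \eqref{eq:sect4:3} at $Z=ie^{2t\delta}$ and at $Z=i$ (with $\detY(ie^{2t\delta})=\det(e^{2t\delta})$, $\detY(i)=1$) one gets
\[
\detY\big(\mu(ie^{2t\delta})\big)=\big|\det(iC_\mu e^{2t\delta}+D_\mu)\big|^{-2}\det(e^{2t\delta}),\qquad \detY\big(\mu(i)\big)=\big|\det(iC_\mu+D_\mu)\big|^{-2}.
\]
Applying the pointwise estimate to $\mu$ gives $\detY\big(\mu(ie^{2t\delta})\big)\le(\det e^{t\delta})^2\,\big|\det(iC_\mu+D_\mu)\big|^{-2}=(\det e^{t\delta})^2\,\detY\big(\gamma(\alpha^{-1}(i))\big)$, and taking $\max$ over $\gamma\in\Symp_{2g}(\Z)$ then yields $\Height(\class{e^{-t\dcart}\alpha})\le(\det e^{t\delta})^2\,\Height(\class{\alpha})$.

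The real content is the determinant inequality. First I would rewrite both sides as determinants of positive definite matrices: from $|\det N|^2=\det(NN^*)$ and the symplectic relation $CD^\top=DC^\top$ one has, for the lower blocks of any symplectic matrix, the identity $|\det(i\widetilde C+\widetilde D)|^2=\det(\widetilde C\widetilde C^\top+\widetilde D\widetilde D^\top)$. Applied to $\left(\begin{smallmatrix}A&B\\C&D\end{smallmatrix}\right)$ this gives $|\det(iC+D)|^2=\det(CC^\top+DD^\top)$, and applied to the still symplectic matrix $\left(\begin{smallmatrix}A&B\\C&D\end{smallmatrix}\right)\!\left(\begin{smallmatrix}e^{t\delta}&0\\0&e^{-t\delta}\end{smallmatrix}\right)$, whose lower blocks are $Ce^{t\delta}$ and $De^{-t\delta}$, it gives (after pulling out the scalar $\det e^{t\delta}$) $|\det(iC\,e^{2t\delta}+D)|^2=\det(e^{2t\delta})\,\det\!\big(Ce^{2t\delta}C^\top+De^{-2t\delta}D^\top\big)$. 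So the claim becomes
\[
\det(e^{2t\delta})\cdot\det\!\big(Ce^{2t\delta}C^\top+De^{-2t\delta}D^\top\big)\ \ge\ \det\!\big(CC^\top+DD^\top\big).
\]

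Finally, both sides have the form $\det(E\,G\,E^\top)$ with $E$ the $g\times 2g$ matrix $(C\ \ D)$ and $G$ a positive diagonal $2g\times2g$ matrix — namely $G=\mathbf{1}_{2g}$ on the right and $G=\mathrm{diag}(e^{2t\delta},e^{-2t\delta})$ on the left. By Cauchy–Binet, $\det(EGE^\top)=\sum_{|S|=g}\big(\prod_{k\in S}G_{kk}\big)(\det E_S)^2$, the sum over $g$-element column subsets $S$, with $E_S$ the corresponding square submatrix. Multiplying the left-hand weight $\prod_{k\in S}G_{kk}$ by $\det(e^{2t\delta})=\prod_{j=1}^g e^{2t\delta_j}$, the exponent of $e^{2t\delta_j}$ in the product equals $1+\mathbf{1}_{\{j\in S\}}-\mathbf{1}_{\{j+g\in S\}}\in\{0,1,2\}$, hence is non-negative; since $t\ge 0$ and $\delta$ is non-negative, $e^{2t\delta_j}\ge 1$, so each such weight is $\ge 1$ and the inequality follows by comparing the two Cauchy–Binet sums term by term. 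Everything else is bookkeeping with \eqref{eq:sect4:3} and with the fact that $\Height$ is a maximum over a lattice orbit; the Cauchy–Binet step — and noticing that the $\det(e^{2t\delta})$ factor is exactly what makes the twisted Gram determinant dominate the untwisted one termwise — is the only genuine point.
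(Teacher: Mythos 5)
Your proof is correct, and it takes a genuinely different route from the paper's. You reduce the lemma, via the cocycle identity \eqref{eq:sect4:3} and the observation that $\det(e^{2t\delta})=(\det e^{t\delta})^2$ cancels, to the single estimate $|\det(iC\,e^{2t\delta}+D)|\ge|\det(iC+D)|$ \emph{for every} symplectic block matrix, and then establish that estimate by rewriting both sides as Gram determinants $\det\!\big((C\ D)\,G\,(C\ D)^\top\big)$ (using $CD^\top=DC^\top$) and comparing Cauchy--Binet expansions termwise. The paper instead picks the lattice representative that realizes the maximal height, factors it through the Iwasawa decomposition $\beta^{-1}=\nu\eta\kappa$, and observes that both $\nu$ and $\eta$ drop out of the desired ratio; this reduces the claim to the special case $\kappa\in\Uni_g$, where it becomes $|\det(A-iBe^{2t\delta})|\ge1$ and is settled by the eigenvalue inequality $A^\top A+B^\top e^{4t\delta}B\ge A^\top A+B^\top B=\mathbf 1_g$. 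Your argument is thus a strict generalization: the paper's final inequality is exactly the specialization of yours to $(C,D)=(-B,A)$ with $A-iB$ unitary. The trade-off is that the paper's reduction to $\Uni_g$ makes the last step a one-line positivity statement, whereas you pay for skipping Iwasawa by needing the combinatorial Cauchy--Binet bookkeeping; conversely, your version makes the mechanism (each $g$-subset weight $\prod_j e^{2t\delta_j(1+\mathbf 1_{j\in S}-\mathbf 1_{j+g\in S})}\ge1$) completely explicit and dispenses with the choice of a distinguished representative. Both are sound.
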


\begin{proof}
  We recall that $\Height$ is the maximal $\detY$ of a
  $\Symp_{2g}(\Z)$ orbit. Therefore, we may take the representative
  $\beta = \alpha \gamma$, with $\gamma \in \Symp_{2g}(\Z)$, such that
  $(e^{-t\dcart}\beta) ^{-1} (i) \in \Siegel_g$ realizes the maximal
  height, i.e.  $\Height ( \class {e^{-t\dcart} \alpha } ) = \detY
  ((e^{-t\dcart} \beta)^{-1} (i) )$, and prove the inequality for the
  function $\detY$, namely
  \[\detY( (e^{-t\dcart} \beta)^{-1} (i) ) \leq ( \det e^{t \delta} )^2 \,
  \detY(\beta^{-1}(i)) \, , \] since then $ \detY(\beta^{-1}(i)) \leq
  \Height (\class{\alpha})$.  By the Iwasawa decomposition, any
  symplectic matrix $\beta \in \Symp_{2g}(\R)$ sending the base point
  $i := i \mathbf 1_g$ into the point $\beta ^{-1} (i) = X+iW^\top DW$
  may be written as $\beta^{-1} =
  \nu \eta \kappa $ with $ \nu = \left( \begin{smallmatrix} W^\top & XW^{-1} \\
      0 & -W^{-1} \end{smallmatrix} \right) $, $ \eta =
  \left( \begin{smallmatrix} \sqrt{D} & 0 \\ 0 &
      \sqrt{D}^{-1} \end{smallmatrix} \right)$ and $\kappa \in
  \Uni_g$. By the formula~\eqref{eq:sect4:3},
  \[ \detY (\nu \eta \kappa (Z)) = \detY (\eta \kappa (Z)) = (\det D)
  \, \detY (\kappa (Z)) \] (because $\det W =1$) for all $Z \in
  \Siegel_g$.  Therefore, since $\detY (\kappa (i) ) =1$, we only need
  to prove
  \[ \detY (\kappa e^{t\dcart} (i)) \leq \det e^{2t\delta} \, . \] Let
  $\kappa = \left( \begin{smallmatrix} A & B \\ -B &
      A \end{smallmatrix} \right) \in \Uni_g$, i.e.  with $A^\top
  A+B^\top B = \mathbf 1_g$ and $A^\top B$ symmetric. Since $e^{t\dcart}
  (i) = ie^{2t\delta }$, using formula~\eqref{eq:sect4:3}, the above
  inequality is equivalent to
  \[ | \det (-iBe^{2t\delta}+A) |^{-2} \cdot \det e^{2t\delta} \leq
  \det e^{2t\delta}\] i.e. to
  \[ | \det (A-iBe^{2t\delta}) |^{2} \geq 1 \, , \] and therefore to
  \[ | \det (AA^\top + Be^{4t\delta}B^\top) | \geq 1 \, .  \] 
  But, by
  our hypothesis on $\delta$ and $t$, the norm of $e^{2t\delta}$ is $\|
  e^{2t \delta } \| \geq 1$, and therefore
  \[ \lin x , (A^\top A + B^\top e^{4t\delta}B) x \rin \geq \lin x ,
  (A^\top A+ B^\top B) x \rin = \| x \| ^2 \] for any vector $x \in
  \R^g$. Hence, all the eigenvalues of the symmetric matrix $A^\top A
  + B^\top e^{4t\delta}B$ are $\geq 1$, and the same occurs for the
  determinant.
\end{proof}

\begin{definition}
  \label{def:Diophantine_properties}
  Let $\delta = \mathrm{diag} (\delta_1,\dots, \delta_g)$ be a
  non-negative diagonal matrix, and $\dcart = \left(\begin{smallmatrix}
      \delta & 0 \\ 0 & -\delta \\ \end{smallmatrix}\right) \in
  \cartan ^+\subset \symp_{2g}$.  We say that an automorphism $\alpha
  \in \Auto$, or, equivalently, a point $[\alpha] \in \Mod$ in the
  moduli space,
  \begin{itemize}
  \item is $\dcart$-\emph{Diophantine} of \emph{type} $\sigma$ if there
    exists a $\sigma >0$ and a constant $C>0 $ such that
    \begin{equation} \label{Diophantine} \Height ( \class{ e^{-t\dcart}
        \alpha }) \leq C \, \Height ( \class{e^{-t\dcart} })^{(1-\sigma)}
      \, \Height (\class{\alpha}) \qquad \qquad \forall \, t \gg 0 \,
      ,
    \end{equation}

  \item satisfies a $\dcart$-{\em Roth condition} if for any $\varepsilon
    >0$ there exists a constant $C >0$ such that
    \begin{equation} \label{Roth}  \Height ( \class{e^{-t\dcart} \alpha
      }) \leq C \Height ( \class{ e^{-t\dcart} })^\varepsilon \, \Height
      (\class{\alpha}) \qquad \qquad \forall \, t \gg 0 \, ,
    \end{equation}
    i.e. if it is Diophantine of every type $0 < \sigma < 1$.
  \item is of {\em bounded type}  if
    there exists a constant $C>0$ such that
    \begin{equation}
      \label{bounded}
      \Height ( \class{e^{-t\dcart} \alpha })
      \leq C  \end{equation} 
    for all $\dcart \in \cartan ^+$ and all $t \geq 0$. 
  \end{itemize}
\end{definition}

\begin{remark}
  \label{rem:Diophantine}
  In the final section, dealing with theta sums, we will be interested
  in Diophantine properties in the direction of the particular $
  \dcart = \left(\begin{smallmatrix} I & 0 \\ 0 & -I
      \\ \end{smallmatrix}\right) \in \cartan $.  For such $\dcart$,
  the Diophantine properties of an automorphism $\alpha \in \Symp_{2g}
  (\R)$ only depend on the right $\mathrm{T}$ class of $\alpha^{-1}$,
  where $\mathrm{T} \subset \Symp_{2g}(\R)$ is the subgroup of
  block-triangular symplectic matrices of the form $
  \left( \begin{smallmatrix} A & B \\ 0 &
      (A^\top)^{-1} \end{smallmatrix} \right)$.  In particular, those
  $\alpha$ in the full measure set of those automorphisms such that
  $\alpha ^{-1} =\left( \begin{smallmatrix} A & B \\ C &
      D \end{smallmatrix} \right) $ with $A \in \GL_g(\R)$ are in the
  same Diophantine class of $\beta=\left( \begin{smallmatrix} I & 0 \\
      -X & I \end{smallmatrix} \right) $, where $X$ is the
  symmetric matrix $X = CA^{-1}$.  For such lower-triangular block
  matrices $\beta$, the Height in the Diophantine conditions above is
  (see \eqref{eq:sect4:3})
  \begin{equation} \label{Diophantine_condition} \begin{split} 
      \Height ( \class{e^{-t\dcart} \beta})
      & = \max \left| \det (QQ^\top e^{-2t}+ (QX+P) (QX+P)^\top
        e^{2t} )\right| ^{-1}
    \end{split} \end{equation} the maximum being over all $
  \left(\begin{smallmatrix} N & M \\ P & Q \\ \end{smallmatrix}\right)
  \in \Symp_{2g}(\Z)$.  When $g=1$, we recover the classical relation
  between Diophantine properties of a real number $X$ and geodesic
  excursion into the cusp of the modular orbifold $\Sigma _1$, 
  or the behaviour of a certain flow
  in the space $\mathfrak{M} _1= \SL_2(\R) / \SL_2(\Z)$ of unimodular
  lattices in the plane. Indeed, our \eqref{Diophantine_condition}
  coincides with the function $\delta (\Lambda_t) = \max _{v \in
    \Lambda_t \backslash \{ 0 \} } \, \| v \|_2^{-2} $, where $\Lambda
  _t$ is the unimodular lattice made of $\left(\begin{smallmatrix} e^t
      & 0 \\ 0 & e^{-t} \\ \end{smallmatrix}\right)
  \left(\begin{smallmatrix} 1 & X \\ 0 & 1
      \\ \end{smallmatrix}\right) \left(\begin{smallmatrix} P \\
      Q \end{smallmatrix}\right)$, with $P,Q \in \Z$.  
  The maximizers, for increasing time $t$,
  define a sequence of relatively prime integers $P_n$ and $Q_n$ which
  give best approximants $P_n/Q_n$ to $X$ in the sense of
  continued fractions.  In particular, our definitions of Diophantine,
  Roth and bounded type coincide with the classical notions. 
  
  This same function $\delta (\Lambda_t)$, extended  to the space $\SL_n(\R) / \SL_n(\Z)$ 
  of unimodular lattices in $\R^n$,  has been used by Lagarias
  \cite{MR662052}, or, more recently,  by Chevallier \cite{MR2172275}, to understand 
  simultaneous Diophantine approximations. 
   A similar function,   $\Delta (\Lambda_t) = \max _{v \in
    \Lambda_t \backslash \{ 0 \} } \, \log (1/ \| v \|_\infty )$,  has been considered by Dani \cite{MR794799} 
    in his correspondance between Diophantine properties of systems of linear forms 
    and certain flows in $\SL_n(\R) / \SL_n(\Z)$,  or more recently by Kleinbock and Margulis \cite{MR1719827} to prove a ``higher-dimensional multiplicative Khinchin theorem''. 
 
\end{remark}


\paragraph{Khinchin-Sullivan-Kleinbock-Margulis logarithm law.}
A stronger control on the best Sobolev constant comes from the
following generalization of the Kinchin-Sullivan logarithm law for
geodesic excursion ~\cite{MR688349}, due to Kleinbock and Margulis
~\cite{MR1719827}.
  
Let $\mathsf X=\G/ \mathsf \Lambda $ be a homogeneous space, equipped
with the probability Haar measure $\mu$. A function $\phi : \mathsf X
\to \R$ is said $k$-DL (for ``distance-like'') for some exponent $k>0$
if it is uniformly continuous and if there exist constants $c_\pm>0$
such that
\[
c_-e^{-kt} \leq \mu \left( \{ x \in \mathsf X \, \text{ s.t. } \,
  \phi(x) \geq t \}\right) \leq c_+ e^{-kt}
\]
Theorem 1.7 of \cite{MR1719827} says the following. 
  
\begin{proposition}[Kleinbock-Margulis] \label{Khinchin-Kleinbock-Margulis_law}
   Let $\G$ be a
  connected semisimple Lie group without compact factors, $\mu$ its
  normalized Haar measure, $\mathsf \Lambda \subset \G$ an irreducible
  lattice, $\mathfrak a$ a Cartan subalgebra of the Lie algebra of
  $\G$, $\mathbf z$ a non-zero element of $\mathfrak a$. If $\phi :
  \G/ \mathsf \Lambda \to \R$ is a $k$-DL function for some $k>0$,
  then for $\mu$-almost all $x \in \G/ \mathsf \Lambda$ one has
  \[
  \limsup _{t\rightarrow \infty} \frac{\phi(e^{t\mathbf z} x )}{\log
    t} = 1/k \, .
  \]
\end{proposition}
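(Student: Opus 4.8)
This statement is Theorem~1.7 of Kleinbock and Margulis \cite{MR1719827}, so I only outline the argument one would reproduce. The plan is to deduce the logarithm law from two Borel--Cantelli arguments applied along a discretised sequence of times. Since $\phi$ is uniformly continuous and the orbit map $t\mapsto e^{t\mathbf z}x$ displaces points by a bounded amount over any unit time interval, a routine interpolation shows that for every $x$ one has $\limsup_{t\to\infty}\phi(e^{t\mathbf z}x)/\log t=\limsup_{n\to\infty}\phi(e^{n\mathbf z}x)/\log n$, so it suffices to control the integer sequence. The two halves of the claimed identity then correspond to the convergence and the divergence case of Borel--Cantelli, respectively.

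For the upper bound $\limsup\le 1/k$, fix $\varepsilon>0$ and set $A_n:=\{x\in\G/\mathsf\Lambda:\phi(e^{n\mathbf z}x)\ge(1/k+\varepsilon)\log n\}$. Because $\mu$ is $e^{t\mathbf z}$-invariant and $\phi$ is $k$-DL, $\mu(A_n)=\mu(\{\phi\ge(1/k+\varepsilon)\log n\})\le c_+\,n^{-1-k\varepsilon}$, which is summable. The convergence Borel--Cantelli lemma gives that $\mu$-a.e.\ $x$ lies in only finitely many $A_n$, hence $\limsup_n\phi(e^{n\mathbf z}x)/\log n\le 1/k+\varepsilon$; intersecting over a countable sequence $\varepsilon\downarrow 0$ yields the bound almost everywhere.

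For the lower bound $\limsup\ge 1/k$, fix $\varepsilon>0$ and set $B_n:=\{x:\phi(e^{n\mathbf z}x)\ge(1/k-\varepsilon)\log n\}$. Now $\sum_n\mu(B_n)\ge\sum_n c_-\,n^{-1+k\varepsilon}=\infty$, but the $B_n$ are not independent, so one needs a quantitative quasi-independence estimate $\mu(B_m\cap B_n)\le(1+o(1))\,\mu(B_m)\mu(B_n)$ as $|n-m|\to\infty$, valid up to harmless errors coming from smoothing. This is where mixing enters: replacing the characteristic function of a super-level set $\{\phi\ge s\}$ by a smooth approximant $\psi_s$ with controlled Sobolev norm, the correlation $\langle e^{(n-m)\mathbf z}.\psi_{s_m},\psi_{s_n}\rangle-\mu(\psi_{s_m})\mu(\psi_{s_n})$ decays in $|n-m|$ by the Howe--Moore theorem together with an \emph{effective} rate for the matrix coefficients of $\G$ on $L^2_0(\G/\mathsf\Lambda)$ (automatic in higher rank by property~(T), and available in rank one because the relevant piece of the unitary dual of the lattice $\mathsf\Lambda$ is bounded away from the trivial representation). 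Feeding this into the Kochen--Stone lemma, i.e.\ bounding $\sum_{m,n\le N}\mu(B_m\cap B_n)\le(1+o(1))(\sum_{n\le N}\mu(B_n))^2$, one obtains $\mu(\limsup_n B_n)\ge\limsup_N(\sum_{n\le N}\mu(B_n))^2/\sum_{m,n\le N}\mu(B_m\cap B_n)=1$. Thus $\limsup_n\phi(e^{n\mathbf z}x)/\log n\ge 1/k-\varepsilon$ for $\mu$-a.e.\ $x$, and letting $\varepsilon\downarrow 0$ finishes.

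The main obstacle is the lower bound, and specifically the two intertwined technical points it rests on: an effective decay estimate for the matrix coefficients $t\mapsto\langle e^{t\mathbf z}.f,g\rangle$ on $L^2_0(\G/\mathsf\Lambda)$ that is uniform enough to beat the growth of the Sobolev norms of the approximants $\psi_s$; and a careful construction of those smooth approximants in the region where $\phi$ is large (the ``cusp''), so that $\|\psi_s\|$ in a fixed Sobolev norm grows at most polynomially in $e^{s}$ while $\mu(\psi_s)$ stays comparable to $\mu(\{\phi\ge s\})$. Balancing these two rates is precisely the heart of the Kleinbock--Margulis argument; once it is in place, the rest is soft measure theory together with the standard semisimple representation-theoretic input (Howe--Moore and a spectral gap).
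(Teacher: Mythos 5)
The paper does not prove this proposition: it is stated as a direct citation of Theorem~1.7 of Kleinbock and Margulis \cite{MR1719827}, with no argument supplied. Your outline is a faithful and accurate summary of the Kleinbock--Margulis proof itself: the easy direction $\limsup\le 1/k$ via the convergence Borel--Cantelli lemma applied to the super-level sets along integer times (using DL and invariance of $\mu$), the interpolation from integer to real times via uniform continuity of $\phi$, and the hard direction $\limsup\ge 1/k$ via quasi-independence of the events $B_n$, obtained from effective decay of matrix coefficients (property~(T) in higher rank, spectral gap in rank one) fed into a Kochen--Stone / second-moment argument, with the technical balancing of Sobolev norms of smooth cusp-approximants against the mixing rate. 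So you are not taking a different route from the paper; you are supplying the proof the paper delegates to the reference.

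One point worth flagging about how the proposition is actually \emph{used} here: the paper explicitly invokes only ``the easy part'' of the proposition, namely the inequality $\limsup_{t\to\infty}\phi(e^{t\mathbf z}x)/\log t\le 1/k$ for a.e.\ $x$, in Proposition~\ref{Sobolev_constant_estimate} and again in the proof of Theorem~\ref{equidistribution_intro}. That half is exactly your first Borel--Cantelli paragraph and requires none of the mixing or Kochen--Stone machinery. So for the purposes of this paper the self-contained part of your argument already suffices; the lower bound (the ``genuinely hard'' half you correctly identify) is not needed for the equidistribution estimates, only for the exact equality in the logarithm law, which the paper states but does not exploit.
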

  
We have seen in Proposition \ref{DL_function} that the logarithm of
the height function $\Height$ is a DL-function with exponent
$\frac{g+1}{2}$ on the Siegel variety $\Sigma_g$, hence (induces a
DL-function) on the homogeneous space $ \Mod = \Symp_{2g}(\Z)
\backslash \Symp_{2g}(\R)$.  Thus, the following proposition is a consequence  of the easy part of
Proposition \ref{Khinchin-Kleinbock-Margulis_law} and of Proposition
\ref{best_Sobolev_constant2}. 
  
\begin{proposition} \label{Sobolev_constant_estimate} Let $s > g +
  1/2$. For any non-zero vector $\dcart \in \frak a $ in the Cartan
  subalgebra of diagonal symplectic matrices there exists a full
  measure set $\Omega_g(\dcart) \subset\Mod$ such that for all $[\alpha]\in
  \Omega_g( \dcart )$ we have
  \[
  \limsup _{t \to \infty} \frac{\log \Height ( \class{e^{-t\dcart} \alpha})
  }{\log t} \leq \frac{2}{g+1} \, .
  \]
  In particular, any such $[\alpha] $ satisfies a $\dcart$-Roth condition.
\end{proposition}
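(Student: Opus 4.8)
The statement is essentially a corollary of the two ingredients assembled just before it: Proposition~\ref{best_Sobolev_constant2}, which bounds the best Sobolev constant $B_s(\class{\alpha})$ by a fixed power $(\Height(\class{\alpha}))^{1/4}$, and Proposition~\ref{Khinchin-Kleinbock-Margulis_law} (Kleinbock--Margulis), which controls the rate of excursion of a Cartan-direction orbit against a DL-function. So the plan is: first, promote $\log\Height$ to a genuine DL-function on the homogeneous space $\Mod=\Symp_{2g}(\Z)\backslash\Symp_{2g}(\R)$, using Proposition~\ref{DL_function}; then apply the (easy ``$\limsup\le$'') half of Proposition~\ref{Khinchin-Kleinbock-Margulis_law} with $\G=\Symp_{2g}(\R)$, $\mathsf\Lambda=\Symp_{2g}(\Z)$, and $\mathbf z=\dcart$; and finally translate the resulting logarithm law into the assertion that $[\alpha]$ satisfies a $\dcart$-Roth condition in the sense of Definition~\ref{def:Diophantine_properties}.

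First I would verify the hypotheses of Proposition~\ref{Khinchin-Kleinbock-Margulis_law}: $\Symp_{2g}(\R)$ is a connected semisimple Lie group without compact factors, $\Symp_{2g}(\Z)$ is an irreducible lattice, $\cartan$ is a Cartan subalgebra of $\symp_{2g}$, and $\dcart\neq 0$ lies in it. By Proposition~\ref{DL_function} the function $\log\Height$ on $\Sigma_g=\Uni_g\backslash\Symp_{2g}(\R)/\Symp_{2g}(\Z)$ satisfies $\mathrm{Vol}_g\{\Height\ge\tau\}\asymp e^{-\frac{g+1}{2}\tau}$; pulling back along the compact-fiber projection $\Mod\to\Sigma_g$ (which, having compact fibers $\Uni_g$, preserves the measure class and the volume asymptotics up to a bounded factor) shows $\log\Height$ induces a $k$-DL function on $\Mod$ with $k=\frac{g+1}{2}$; uniform continuity follows from Lemma~\ref{lem:sect4:1} together with the local Lipschitz behaviour of $\detY$ in Iwasawa coordinates. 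Then the easy half of Proposition~\ref{Khinchin-Kleinbock-Margulis_law} yields a full-measure set $\Omega_g(\dcart)\subset\Mod$ such that for all $[\alpha]\in\Omega_g(\dcart)$,
\[
\limsup_{t\to\infty}\frac{\log\Height(\class{e^{-t\dcart}\alpha})}{\log t}\le \frac{2}{g+1}\,.
\]
(The sign of $\dcart$ versus $-\dcart$ is immaterial, since the set of full measure for the flow in one direction also works for the other.)

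For the final sentence I would argue as follows. Fix $[\alpha]\in\Omega_g(\dcart)$ and $\varepsilon>0$. From the $\limsup$ bound, there is $t_0$ such that for all $t\ge t_0$ one has $\log\Height(\class{e^{-t\dcart}\alpha})\le \bigl(\tfrac{2}{g+1}+\varepsilon'\bigr)\log t$ for a suitable $\varepsilon'>0$, hence $\Height(\class{e^{-t\dcart}\alpha})$ grows slower than any positive power of $t$, and in particular slower than any positive power of $\Height(\class{e^{-t\dcart}})$. Here I would use that, for $\dcart=\left(\begin{smallmatrix}\delta&0\\0&-\delta\end{smallmatrix}\right)$ with $\delta=\mathrm{diag}(\delta_1,\dots,\delta_g)$ nonzero and nonnegative, the ``diagonal'' point $e^{-t\dcart}$ has $\Height(\class{e^{-t\dcart}})\asymp e^{2t\,\mathrm{tr}\,\delta}$ (evaluate $\detY$ at $e^{t\dcart}(i)=i\,e^{2t\delta}$ and note it already lies in the Siegel fundamental set for large $t$), so $\log\Height(\class{e^{-t\dcart}})\asymp t$. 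Combining, for every $\varepsilon>0$ there is $C>0$ with
\[
\Height(\class{e^{-t\dcart}\alpha})\le C\,\Height(\class{e^{-t\dcart}})^{\varepsilon}\,\Height(\class{\alpha})\qquad\forall\,t\gg 0,
\]
which is precisely \eqref{Roth}; thus $[\alpha]$ is $\dcart$-Diophantine of every type $0<\sigma<1$, i.e.\ satisfies a $\dcart$-Roth condition.

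The only genuinely delicate point is the first step: checking carefully that $\log\Height$ descends to a bona fide DL-function on $\Mod$ (uniform continuity, and the two-sided volume estimate surviving the passage from the locally symmetric space $\Sigma_g$ to the homogeneous space $\Mod$ and from the open fundamental set $S_g(t)$ to all of $\Mod$). Everything after that is a formal manipulation of the definitions, with Proposition~\ref{best_Sobolev_constant2} not even needed for this statement about $\Height$ itself — it is only invoked when one later wants the conclusion phrased in terms of the best Sobolev constant, via $B_s(\class{e^{-t\dcart}\alpha})\le C(s)\,\Height(\class{e^{-t\dcart}\alpha})^{1/4}$.
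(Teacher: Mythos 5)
Your proof is correct and follows the same route as the paper: invoke Proposition~\ref{DL_function} to establish that $\log\Height$ is a DL-function of exponent $(g+1)/2$, apply the easy ($\limsup\le$) half of Proposition~\ref{Khinchin-Kleinbock-Margulis_law}, and unwind the definitions to derive the $\dcart$-Roth condition. The paper in fact treats this as an immediate consequence and gives no detailed argument, so your write-up fills in exactly the steps the authors leave implicit; in particular your computation that $\log\Height(\class{e^{-t\dcart}})\gtrsim t$ (from $\detY(e^{t\dcart}(i))=e^{2t\,\mathrm{tr}\,\delta}$) is the right way to see that polynomial growth in $t$ gives the $\varepsilon$-power bound required by \eqref{Roth}. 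Your parenthetical observation that Proposition~\ref{best_Sobolev_constant2} is not logically needed for this particular statement (the proposition is phrased purely in terms of $\Height$, and the Sobolev-constant bound only enters in the later applications) is also accurate, and is a small tightening of what the paper's preamble suggests. One cosmetic remark: for the concluding Roth sentence to be meaningful one should tacitly restrict to $\dcart\in\cartan^+$ (so that $\mathrm{tr}\,\delta>0$), since Definition~\ref{def:Diophantine_properties} is stated in that cone; you implicitly do this by taking $\delta$ nonnegative and nonzero.
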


\subsection{Flat TV-bundles over moduli spaces}

We fix the standard lattice $\Lattice$ of $\Heis$ and restrict the
space of lattices to the orbit of $\Gamma$ under the automorphism
group $\Auto$ of $\Hei$. Then by a \emph{marked lattice of $\Heis$},
we simply mean an automorphism $\alpha \in \Auto$, and the lattice
$\Lattice _\alpha = \alpha^{-1} \mathsf\Gamma$ will be identified with
the point $\GMod\alpha\in \Mod:=\GMod\backslash \Auto$, where $ \GMod
= \operatorname{Stab}_\Auto(\Gamma)$.


\paragraph{Flat $TV$-bundles.}  
Let $V$ be a topological vector space endowed with a representation
$\rho\colon (\gamma, v)\mapsto \gamma.v$ of $\GMod$~on~$V$. We let
$V(\Mod) \to \Mod$ to be flat bundle of topological vector spaces with
\emph{(typical) fibre} $V$ and \emph{monodromy} $\rho$.

We recall, for the reader's convenience, the definition of $V(\Mod)$:
we let $\GMod$ act on the left on $\Auto\times V$ by $\gamma.(\alpha,
v) =(\gamma\alpha , \gamma.v)$; then the map $ (\alpha, v) \mapsto
\alpha$ is $\GMod$-equivariant and it defines a quotient bundle map
\[
V(\Mod):=\GMod\backslash(\Auto\times V) \to \Mod:=\GMod\backslash
\Auto \, .
\]
The class of $(\alpha,v)$ in $V(\Mod)$ will be indifferently denoted
$[\alpha,v]$ or $\GMod (\alpha,v)$.

If $V(\Mod)$ is a flat TV-bundle over $\Mod$ with fibre $V$ and
monodromy $\rho$, we define the \emph{dual bundle} $V'(\Mod)$ of
$V(\Mod)$ as the bundle obtained by letting~$\GMod$ act on the
topological dual $V'$ of $V$ by the contragredient
representation~$\rho^t$.  Thus, if $\lin T , v \rin$ denotes the value
of $T \in V'$ on the vector $v \in V$, the group $\GMod $ operates on
$V'$ according to $\colon (\gamma, T)\mapsto \gamma.T$, where $\lin
\gamma . T , v \rin = \lin T , \gamma . v \rin$.
   
If, moreover, $V$ is a Banach space and $\GMod$ acts by isometries on
$V$, then we obtain a Banach space (flat) bundle over $ \Mod$.
Indeed, let $\| \cdot \|_V$ denotes the norm of $V$, and assume that
$\Vert v\Vert_V= \Vert \gamma .v\Vert_V$ for any $\gamma \in \GMod$
and any $v \in V$.  If we define $\Vert (\alpha , v) \Vert
_{V(\Mod)}:=\Vert v\Vert_V$ for $(\alpha , v)\in \Auto\times V$, we
obtain a $\GMod$-invariant norm on the fibers of the map $(\alpha ,
v)\mapsto \alpha$; hence these norms pass to the quotient bundle
$V(\Mod)$.

More generally, let $\{ \Vert \cdot\Vert_{V_\alpha} \}_{\alpha \in
  \Auto}$ be a $\GMod$-equivariant family of norms on $V$, that is a
family of equivalent norms satisfying $\Vert v\Vert_{V_\alpha}= \Vert
\gamma .v\Vert_{ V_{\gamma\alpha}}$ for any $\gamma \in \GMod$ and any
$v \in V$.  Then the function $(\alpha , v ) \to \Vert
v\Vert_{V_\alpha}$ is $\GMod$-invariant function, and we obtain a
Banach bundle defining the norm of $[\alpha , v ] \in V (\Mod)$ as
\[
\|\, [\alpha , v] \,\| _{V (\Mod)} :=\Vert v\Vert_{V_\alpha} \, .
\]

Analogous considerations apply when $V$ is Fr\'echet spaces
topologized by a family of $\GMod$-equivariant seminorms $\Vert
\cdot\Vert_{V_{\alpha,i}}$, for $i \in \mathcal I$, or when $V$ is a
Hilbert space.

\paragraph{Examples.}
\begin{itemize}
\item Take the standard nilmanifold $M= \Gamma \backslash \Heis$. Let
  $V= C^0(M)$ or $V=L^p(M,\D x)$. Then $\GMod$ acts isometrically on
  $V$. Recall that every automorphism $\gamma$ of $\Heis$ that
  stabilizes $\Gamma$ defines a diffeomorphism $\gamma :M \to M$
  according to $\gamma (\Gamma h) = \Gamma (\gamma h)$.  Therefore
  $\GMod$ acts (on the left) on $V$ by pullback: $(\gamma , \varphi)
  \mapsto {}^{\gamma} \! \varphi$, where ${}^{\gamma} \!  \varphi (m):
  = \varphi (\gamma^{-1}m)$, for all $(\gamma, \varphi) \in
  \GMod\times V$ and $m=\Gamma h \in M$. This action is clearly
  isometric w.r.t. to the norms $\| \cdot \|_\infty$ or $\| \cdot \|
  _{L^p(M)}$.
 
\item Let $L$ be a left invariant operator on
  $C^\infty(\Heis)$\footnote{Left invariance means that
    $L({}^g\!\varphi)= {}^g\!(L\varphi)$, where ${}^g\!\varphi(h) =
    \varphi(gh)$}.  The deformation space $\Auto$ acts on
  $C^\infty(\Heis)$ by the standard right and left actions: $ \varphi
  ^\alpha := \varphi \circ\alpha$ and ${}^\alpha \!\varphi := \varphi
  \circ\alpha^{-1}$, respectively.  Therefore it operates on
  left-invariant operators on $C^\infty(\Heis)$ by push-forward,
  namely $(\alpha , L) \mapsto {} _\alpha \! L$, defined by $ _\alpha
  L \, \varphi = {}^{\alpha} \! ( L \, \varphi^\alpha) $ for any
  $\varphi \in C^\infty (\Heis)$, i.e.
  \[
  (_\alpha L \, \varphi ) (h) = (L \, \varphi ^\alpha ) ( \alpha ^{-1}
  h) \, .
  \]
  Fix $L$, and consider the family $\{ _\alpha L \} _{\alpha \in
    \Auto}$ of left invariant operators on $C^\infty(\Heis)$, hence a
  family of operators on $C^\infty(M)$, which we still denote by
  $(\alpha.L)_{\alpha\in \Auto}$. Defining
  \[
  \Vert \varphi \Vert_{V_\alpha} = \Vert {}_\alpha \! L \, \varphi
  \Vert _{V}
  \]
  where on the right hand side $\Vert \cdot \Vert_V$ denotes the $
  C(M)$ or the $L^p(M,\D x)$ norm, we obtain a family of
  $\GMod$-equivariant seminorms on $C^\infty(M)$. In fact, for $\gamma
  \in \GMod$,
  \[
  \Vert {}^\gamma \! \varphi \Vert_{V_{\gamma\alpha}} = \Vert
  {}_{\gamma\alpha} \! L \, {}^{\gamma} \! \varphi \Vert _V = \Vert
  {}^{\gamma \alpha} \! \left( L \, \varphi ^{\alpha} \right) \Vert _V
  = \Vert {}^{\gamma} \! \left( {}_\alpha \! L \, \varphi \right)
  \Vert _V = \Vert {}_\alpha \! L \, \varphi \Vert _V = \Vert \varphi
  \Vert_{V_{\alpha}}
  \]
  where in the last step we used the fact that the action of $\GMod$
  on $ C(M)$ or the $L^p(M,\D x)$ is isometric.
\end{itemize}

\paragraph{Sobolev bundles.}  The most notable instance of the
previous example is obtained considering an elliptic operator which is
self-adjoint on $L^2(M,\D x)$, e.g.\ $(1+ \Delta)$. In this case, for
$L=(1+ \Delta)^{s/2}$, $s\ge 0$, we obtain, completing $C^\infty(M)$
for the norm $\| {}_ \alpha \!L \, \varphi \|_{L^2(M)}$, $\alpha\in
\Auto$, a bundle of Hilbert spaces over~$ \Mod=\GMod\backslash \Auto$,
denoted
\[
\Sobolev^s(M) \hookrightarrow \Sobolev^s (\Mod) \to \Mod \, ,
\]
endowed with the product
\[
\lin \,[\alpha, \varphi ], [\alpha, \psi ]\,\rin _{\Sobolev^s(\Mod)}
:= \lin \varphi , \psi \rin _{\Sobolev^s_\alpha(M)} = \lin (1+
{}_\alpha \! \Delta)^{s} \varphi , \psi \rin _{L^2(M)} \,
\]
and with the corresponding norm $\Vert \cdot
\Vert_{\Sobolev^s(\Mod)}$. The fibre above~$\GMod\alpha$ is the
standard Hilbert space $\Sobolev^s (M)$ of $s$-Sobolev
functions~$\varphi$ on~$M$

The dual bundles of the bundle $\Sobolev^s (\Mod)$ is the bundle
$\Sobolev^{-s} (\Mod)$ . TIt is a space of distributions, equipped
with the norm
\[
\| [ \alpha , D ] \| _{\Sobolev^{-s}(\Mod)} = \sup_{\| \varphi \|
  _{\Sobolev _\alpha ^s(M)} =1} | \lin D , \varphi \rin _{\Sobolev
  _\alpha ^s (M)} | \, .
\]

Finally we remark that the Frechet space $V=C^\infty(M)$ give rise to
the bundle $\mathcal C^{\infty}(\Mod)$, topologized by the norms $\|
\cdot \| _{\alpha, {n}}$, ($n\in \N$). (Attention to the difference
between the bundle $\mathcal C^{\infty}(\Mod)$ an the Frechet space
$C^{\infty}(\Mod)$).

\paragraph{Observation.}  If $W\hookrightarrow V$ is a continuons
embedding of Banach spaces then for any action of $\GMod$ on $V$ we
obtain a continuous embedding of bundles $W(\Mod)\hookrightarrow
V(\Mod)$. In particular if the monodromy of $\GMod$ on $V$ is
isometric, we see that
\[
\sup_{v\in W\setminus \{0\}}\frac {\| [ \alpha, v ]\|_{V_\alpha}}{\|
  [\alpha, v] \|_{W_\alpha}} = \sup_{v\in W\setminus \{0\}}\frac {\|
  v\|_{V}}{\| [ \alpha, v ] \|_{W_\alpha}}
\]
is a $\GMod$-equivariant function.  The most significative instances
of this is obtained considering the compact embedding $
\Sobolev^s(M)\hookrightarrow C(M)$, for $s> g+1/2$, provided by the
Sobolev embedding theorem.  We are therefore led to the following
definition.


\subsection{Harmonic analysis on nilmanifolds}
\label{subsection:harmonic}

\paragraph{Derived representation and Laplacians.} Consider a (the
standard?)  nilmanifold $M = \Gamma \backslash \Hei$. The derivative
of the regular representation is the representation of the Lie algebra
$\hei $ on $C^\infty (M)$, defined as
\[
(V \, \varphi )(m) = \left.  \frac{d}{d\varepsilon } \varphi \left( m
    e^{\varepsilon V} \right) \right| _{\varepsilon =0}
\]
for $V \in \hei$ and $\varphi \in C^\infty (M)$.  It extends to a
representation of the universal enveloping algebra $\mathcal U (\hei
)$, acting as (left-invariant) differential operators on $C^{\infty}
(M)$.

A \emph{Laplacian} (or {\it Laplace-Beltrami operator}, or
\emph{Casimir operator}) on the Lie group $\Heis$ is an operator
$\Delta = -\sum_{k=1}^g V_k^2$, where $V_1,\ldots ,V_{2g+1}$ is any
basis of $\hei$. We fix the canonical ``symplectic basis'' $X_1,\dots
, X_g, \Xi _1, \dots , \Xi _g, T$ as in \eqref{symplectic_base_hei}.
For $k=1,2,\dots, g$, we define the \emph{partial Hermite operators}
$H_k := - X_k^2 - \Xi _k ^2$ and the \emph{partial Laplacians} $\Delta
_k := H_k - T^2 = - X_k ^2 - \Xi _k ^2 - T^2$.  An important
observation is that the partial Laplacians commute, i.e.\ $[\Delta _k
, \Delta _j ] = 0$ for all $k,j $. Define the \emph{Hermite operator}
$H := H_1+H_2+\dots+H_g$ and the \emph{Laplacian }
\[
\Delta := H - g T^2 = \Delta _1 + \Delta_2+ \cdots + \Delta _g
\]
We also define the operator
\[
K := (1+ \Delta _{1}) (1+ \Delta _{2})\ldots (1+ \Delta_{g}) \, .
\]
(so that $K^{1/g}$ is the geometric mean of the partials
$(1+\Delta_k)$).

\paragraph{The action of automorphisms on the universal enveloping algebra.}
The automorphism group $\Auto$ operates (on the right) on
$C^\infty(\Hei)$ by pullback, namely $(\alpha , \varphi) \mapsto
\varphi ^\alpha $, defined as $\varphi ^\alpha (h) = \varphi ( \alpha
h)$ \cyan{(the ``differential geometry'' notation for $\varphi
  ^\alpha$ would be $\alpha ^\ast \varphi$)}.  We get a right action
if we set ${}^\alpha \!  \varphi = \varphi^{\alpha^{-1}}$.  It
operates on the universal enveloping algebra $\mathcal U (\hei )$, or
on left-invariant (differential) operators, by push-forward, namely
$(\alpha , L) \mapsto {} _\alpha \! L$, defined by $( _\alpha L \,
\varphi ) = {}^{\alpha} \! ( L \, \varphi^\alpha) $ for any $\varphi
\in C^\infty (\Hei)$, i.e.
\[
(_\alpha L \, \varphi ) (h) = (L \, \varphi ^\alpha ) ( \alpha ^{-1}
h) \, .
\]
In particular, an automorphism $\alpha = (S, (y, \eta)) \in \Auto$
operates on the Lie algebra $\hei$ according to
\[
(q,p,t) \mapsto \, _\alpha (q,p,t) = (S (q,p) , t+ p \cdot y +\eta
\cdot q)\, .
\]

Let $\Gamma $ be a lattice in $\Hei$, and $M = \Gamma \backslash
\Hei$.  For $\alpha \in \Auto$, define the lattice $\Gamma _\alpha =
\alpha ^{-1} \Gamma$, and the nilmanifold $M_\alpha = \Gamma _\alpha
\backslash \Hei$. Then $\alpha $ induces a diffeomorphism (which we
will denote with the same symbol!)  $\alpha : M_\alpha \to M$
according to
\[
\Gamma _\alpha \cdot h \mapsto \alpha ( \Gamma _\alpha \cdot h ) =
\Gamma \cdot ( \alpha h ) \, ,
\]
where $h \in \Hei$.  If $\varphi \in C^\infty(M)$, then $\varphi
^{\alpha}$, defined by $\varphi ^{\alpha } (\Gamma _\alpha h) =
\varphi (\Gamma (\alpha h))$, belongs to $C ^\infty (M_\alpha)$. Let
$L \in \mathcal U (\hei)$ be a left-invariant (differential) operator
on $\Hei$, seen as a (differential) operator on $M_\alpha$. Then
$_\alpha L$, defined as
\[
( _\alpha L \, \varphi ) (\Gamma \cdot h) = (L \cdot \varphi^{\alpha})
(\Gamma _\alpha \cdot (\alpha^{-1} h))
\]
for $\varphi \in C^\infty (M)$, is a differential operator on $M$.

\paragraph{The action of automorphisms on the Laplacian.}  An
automorphism $\alpha \in \Auto$ sends the fixed basis of $\hei (\R^g)$
into the new basis $ _\alpha X_{1}, \, _\alpha X_{2}, \, \dots , \,
_\alpha X_{g}, \, _\alpha \Xi _{1}, \, _\alpha \Xi _{2}, \, \dots , \,
_\alpha \Xi _{n}, T$ which is still symplectic (i.e. the commutators
are still $[ \, _\alpha X_{k}, \, _\alpha \Xi _{j}]= [X_k, \Xi _j] =
\delta _{k,j}$). Correspondingly, we define the new partial Laplacians
\[
_\alpha \! \Delta _{k} = - \, _\alpha X_{k}^2 - \, _\alpha \Xi _{k} ^2
- T^2 \qquad \qquad k=1,2,\ldots ,g
\]
the Laplacian
\[
_\alpha \! \Delta = \sum _{k=1}^g \, _\alpha \Delta _{k}
\]
and
\[
_\alpha \! K = (1+ \, _\alpha \Delta _{1}) (1+ \, _\alpha \Delta
_{2})\ldots (1+ \, _\alpha \Delta_{g}) \, .
\]
Observe that the $ _\alpha \Delta _{k}$ still commute.  Since
automorphisms in $\Auto$ acts trivially on the center of $\Hei$, they
preserve the orthogonal decomposition
\eqref{orthogonal_decomposition}.

\paragraph{Laplacian in the Schr\" odinger representation.} The
infinitesimal Schr\" odinger representation $\sch ^\hbar$ (see
\eqref{Schrodinger_symplectic_base_hei}) sends the partial Hermite and
Laplace operators into the operators
\[
^\hbar \! H_k := \sch ^\hbar (H_k)= (2\pi )^2 \left( \hbar ^2 P_k ^2 +
  Q_k^2 \right) = -\hbar ^2 \frac{\partial ^2}{\partial x_k^2} + (2
\pi )^2 x_k^2 \qquad \qquad k=1,2,\ldots ,g
\]
and
\[
^\hbar \! \Delta_k := \sch ^\hbar (\Delta) = -\hbar ^2 \frac{\partial
  ^2}{\partial x_k^2} + (2 \pi )^2 x_k^2 + (2\pi)^2 \hbar^2 \qquad
\qquad k=1,2,\ldots ,n
\]
on $\SB (\R^g)$, respectively. Consequently, the Laplacian $\Delta$ is
sent into the operator
\begin{equation} \label{hbar_Delta} ^\hbar \! \Delta := \sch ^\hbar
  (\Delta ) = n (2\pi)^2 \hbar ^2 + \sum_{k=1}^g \, ^\hbar H_k \, .
\end{equation}
Finally,
\begin{equation} \label{hbar_K} ^\hbar \! K := \sch ^\hbar ( K) = (1+
  \, ^\hbar \Delta _{1}) (1+ \, ^\hbar \Delta _{2})\ldots (1+ \,
  ^\hbar \Delta_{g})
\end{equation}
The Hermite operators, hence the (partial) Laplacians and the operator
$K_g$, are diagonalized by the Hermite basis $\{ \phi_a \}_{a\in \N^g}
$ of $L^2 (\R^g)$, indexed by $a = ( a _1 , a _2, \dots , a _g ) \in
\N ^g $ (see for example \cite{MR657581}). We have
\[
^\hbar \! H_k \, \phi_a = 2\pi \hbar (2 a _k + 1) \phi_a \, , \qquad
^\hbar \Delta_k \, \phi_ a = 2\pi \hbar (2 a _k +1+2\pi \hbar ) \phi_a
\]
and, summing over $k=1,\dots,g$,
\[
^\hbar \! H \, \phi_a = 2\pi \hbar (2 | a | + g) \phi_a \, , \qquad
^\hbar \Delta \, \phi_ a = 2\pi \hbar (2|a| +g +2\pi \hbar g) \phi_a
\]
(above, $|a|=a_1+\dots + a_g$).  Finally, we observe that
\[
^\hbar \! K\, \phi _a = \left( \prod_{k=1}^g \left( 1+ 2\pi \hbar +
    (2\pi)^2 \hbar ^2 + 4 \pi \hbar a _k \right) \right) \phi _a \, .
\]

\paragraph{Sobolev spaces.} For any $\alpha \in \Auto$, the Laplacian
$_\alpha \Delta $ is an elliptic, positive and essentially
self-adjoint operator on $\SB (M)$, hence we may use the spectral
theorem to define $(1+ {} _\alpha \!  \Delta)^s$ for any $s \in
\R$. We define as usual, for all $s \in \R$, the Sobolev spaces
$\Sobolev _{\alpha}^s (M)= (1+{} _\alpha \!  \Delta)^{-s/2} L^2(M )$,
completion of $C^\infty (M)$ with respect to the norm
\[
\| \varphi \| _{\Sobolev_\alpha ^ s(M)} = \| (1+{} _\alpha \!\Delta)
^{s/2} \varphi \| _{L^2(M)} \, .
\]
They are Hilbert spaces if equipped with the inner product
\[
\langle \varphi , \phi \rangle _ {\Sobolev ^s_\alpha (M)} = \langle
(1+\,\!  _\alpha \Delta) ^{s/2} \varphi , (1+{} _\alpha \! \Delta)
^{s/2} \phi \rangle _{L^2(M)}
\]
Thus, $\Sobolev ^{-s}_\alpha (M)$ is the dual Hilbert space of
$\Sobolev ^{s}_\alpha (M)$.

For technical reasons (useful when solving the cohomological equation
in a Schr\" odinger representation) we also consider the operator $K =
(1+ \Delta _{1}) (1+ \Delta_{2})\ldots (1+ \Delta_{g})$ \cyan{(should
  we use, instead, $K_g ^{1/g}$, the geometric mean of the partial
  Laplacians?)}  and, for any $\alpha \in \Auto$, the operators
$_\alpha \! K $, which are positive and essentially self-adjoint,
although not elliptic.  For any $s \in \R$, we then define the
``modified Sobolev spaces'' $\Sob _{\alpha } ^s (M)$, completion of
$C^\infty (M)$ with respect to the norm
\[
\| \varphi \| _{\Sob_\alpha ^s(M)} = \| {}_\alpha \! K ^{s/2} \varphi
\| _{L^2(M)}
\]
They are Hilbert spaces if equipped with the inner product
\[
\langle \varphi , \phi \rangle _ {\Sob ^s_\alpha (M)} = \langle
{}_\alpha \! K ^{s/2} \varphi , \, _\alpha \! K ^{s/2} \phi \rangle
_{L^2(M)}
\]
Thus, $\Sob ^{-s}_\alpha (M)$ is the dual Hilbert space of $\Sob
^{s}_\alpha (M)$.  The obvious inequalities \footnote{Using Jensen
  inequality (or, less prosaically, the inequality $(x_1x_2\ldots
  x_g)^{1/g} \leq (x_1+x_2+\cdots +x_g)/g$ between geometric and the
  aritmetic means of nonnegative numbers $x_k$), one sees that
  \[
  (1+\Delta _1) (1+\Delta _{2}) \ldots (1+\Delta _g) \leq \left(
    1+\frac{1}{g}\Delta \right) ^g
  \]
  If we redefine the Laplacian as $\Delta = \frac{1}{g} \left(
    \Delta_1+\dots+\Delta_g \right)$, and then $K = \left(
    (1+\Delta_1) \dots (1+\Delta_g) \right) ^{1/g}$, the inequalities
  read $(1+\Delta)^{1/g} \leq K \leq (1+\Delta) $ } $1+ \Delta \leq K
\leq (1+\Delta ) ^g$ imply the inequalities

\begin{equation} \label{Sob_inclusions} \| \varphi \| _{\Sobolev
    _\alpha ^s(M)} \leq \| \varphi \| _{\Sob _\alpha ^s (M) } \leq \|
  \varphi \| _{\Sobolev _\alpha^{sg}(M)} \, ,
\end{equation}
so that we have the inclusions $\Sobolev _{\alpha }^{sg} (M) \subset
\Sob _{\alpha} ^s (M) \subset \Sobolev _{\alpha }^s(M) $ for $s \geq
0$. Observe that as topological vector spaces all the $\Sob ^s_\alpha
(M)$, for different $\alpha \in \Auto$, are the same, in particular
equal to the space $\Sob ^s(M)$ corresponding to the identity
automorphism $\alpha = 1$. What changes is just the norm!

\paragraph{Observation.} The operator $K$ is a sort of geometric mean
between the partial Laplacians $1+\Delta_k$. Inequality $K \leq
(1+\Delta)^g$ is a trivial instance of the inequality between the
geometric and the arithmetic mean (or of Jensen inequality).  The
other inequality $1+\Delta \leq K$ is trivial, but cannot be improved
to something like $(1+\Delta)^g \leq C \cdot K$ , which would yeld $\|
\varphi \| _{\Sob^s_\alpha(M)} \asymp \| \varphi \|
_{\Sobolev^{sg}_\alpha(M)}$, hence no exponential loss of derivatives
in the inductive argument used to solve the cohomological equation in
the Schr\"odinger representation!  This is clear taking a sequence of
Hermite eigenfunctions $\phi_a$ with $a_1=\dots = a_{g-1}=0$ and $a_g
=n \to \infty$. Then $(1+\Delta)^g \phi _a \sim n^g \phi_a$ while
$K\phi_a \sim n \phi_a$.



\section{Equidistribution}

In this section we consider only functional spaces ``built up'' from
the space of functions with zero  average along the fibers of the central
fibration of  the standard nilmanifold $\M$.  Thus, all smooth forms
have coefficients in $C_0^\infty(\M)$, all Sobolev forms and currents 
have coefficients in some $W^s_\alpha(\M)$, $s\in \R$ (see
definition~\ref{eq:sect4:4}).

\subsection{Birkhoff sums and renormalization} \label{sect:5.1}

Let $ (X_1^0,\dots , X_g^0,\Xi_1^0,\dots, \Xi_g^0,T)$ be the
``standard'' Heisenberg basis defined in section~\ref{par:heis_not}.

For $1\leq d \leq g$, we define the sub-algebra ${\mathfrak p}^{d,0}
\subset \hei$ generated by the first $d$ base elements $ 
X_1^0, \dots,  X_d^0$, and then the Abelian subgroup $
\Subgroup^{d,0} := \exp {\mathfrak p}^{d,0} $.

According to \eqref{eq:sect4:1}, we let $\Auto$ acts on the right on
subgroups,  and,  for $\alpha\in \Auto$, and we set
$(X^\alpha_i,\Xi_j^\alpha,T):=
(\alpha^{-1}(X_i^0),\alpha^{-1}(\Xi_j^0),T)$. 
Then  ${\mathfrak p}^{d,\alpha} := \alpha^{-1}({\mathfrak
  p}^{d,0})$ and $ \Subgroup^{d,\alpha} = \alpha^{-1}(\Subgroup^{d,0})
$ are respectively the algebra and the subgroup generated by
$(X^\alpha_i,\Xi_j^\alpha,T)$.  Every \isotropic subgroup of $\Heis$
is obtained in this way, i.e. given by some $\Subgroup^{d,\alpha} $
defined as above.

It is immediate that for every $\alpha,\beta\in \Auto$ we have
\[
\alpha^{-1}(\Subgroup^{d,\beta})= \Subgroup^{d,\beta\alpha};
\]
in particular, if $\beta$ belongs to the diagonal Cartan subgroup
$\Cartan$, then $ \Subgroup^{d,\beta\alpha}=\Subgroup^{d,\alpha}$.

We define a parametrization of $\Subgroup^{d,\alpha}$, hence a
$\R^d$-action on $\M$ subordinate to $\alpha$, by setting
\begin{equation}
  \label{eq:sect5:1}
  \Subgroup_x^{d,\alpha} := \exp (x_1 X_1^\alpha+\dots+x_d X_d^\alpha) \quad \text{ with }
  x=(x_1,\dots,x_d) \in \R^d.
\end{equation}


\paragraph{Birkhoff averages. } 
We define the bundle $A^j(\mathfrak p^d,\Bundle^{ s})\to \Mod$ of
$\mathfrak p$-forms of degree~$j$ and Sobolev order $s$ as the set of
pairs
\[
(\alpha , \omega), \quad \alpha \in \Auto, \quad \omega\in
A^j(\mathfrak p^{d,\alpha}, W^{s}_\alpha(\M)),
\]
modulo the equivalence relation $ (\alpha , \omega) \equiv
(\alpha\gamma ,\gamma^*\omega)$ for all $ \gamma \in \GMod $. The
class of $(\alpha , \omega)$ is denoted $[\alpha , \omega]$.  We also
define the dual bundle $A_j(\mathfrak p^d,\Bundle^{ -s})\to \Mod$ of
$\mathfrak p$-current of dimension~$j$ and Sobolev order $s$ as the
set of pairs
\[
(\alpha , \mathcal D), \quad \alpha \in \Auto, \quad \mathcal D\in
A_j(\mathfrak p^{d,\alpha}, W^{-s}_\alpha(\M)),
\]
modulo the equivalence relation $ (\alpha , \mathcal D) \equiv
(\alpha\gamma ,(\gamma_*)^{-1} \mathcal D)$ for all $ \gamma \in \GMod
$. The
class of $(\alpha , \mathcal D)$ is denoted $[\alpha , \mathcal D]$.

The bundles $A^j(\mathfrak p,\Bundle^{s})$ and $A_j(\mathfrak
p,\Bundle^{-s})$ are Hilbert bundles for the dual norms
\[
\| \, [\alpha , \omega ]\,\| _s := \| \omega \|_{s,\alpha},\qquad\| \,
[\alpha , \mathcal D]\,\| _{-s} := \| \mathcal D \|_{-s,\alpha}.
\]
In the following, it will be convenient to set $\omega^{d,\alpha}=
dX_1^\alpha \wedge \dots \wedge dX_d^\alpha$ and to identify
top-dimensional currents $\mathcal D$ with distributions by setting
$\lin \mathcal D, f\rin := \lin \mathcal D, f\omega^{d,\alpha}\rin$.

Given a Jordan region $U \subset \R^d$ and a point $m \in \M$, we
define a top-dimensional $\mathfrak p$-current
$\Birkhoff_U^{d,\alpha}m$ as the Birkhoff sums given by integration
along the chain $\Subgroup_U^{d,\alpha}m=\{\Subgroup^{d,\alpha}_x
m\mid x\in U\}$. Explicitely,  if $\omega = f \D X_1^\alpha \wedge \dots
\wedge \D X_d^\alpha$ is a top-dimensional $\mathfrak p$-form, then 
\begin{equation} \label{Birkhoff} \lin \Birkhoff^{d,\alpha}_{U}m,
  \omega \rin := \int _{\Subgroup^{d,\alpha}_Um} \omega= \int _{U}f (
  \Subgroup^{d,\alpha}_x m ) \, \D x_1\dots\D x_d.
\end{equation} 
Our goal is to understand the asymptotic of these distributions as $U
\nearrow \R^d$ in a F{\o}lner sense.  A particular case is obtained
when $U= Q(\radius) = [0,T]^d$.

We remark that the Birkhoff sums satisfy the following covariance property: 
\[
\gamma_*^{-1}\left( \Birkhoff ^{d,\alpha}_U {m} \right) = \Birkhoff
^{d,\alpha\gamma}_U (\gamma^{-1} m) ,\qquad \forall m\in \M,
\forall\gamma \in \GMod.
\]

\paragraph{Renormalization flows.} 
For each $1\leq i \leq g$, we denote by $\dcart_i := \left( \begin{smallmatrix} \delta_i & 0 \\ 0 & -
    \delta_i \end{smallmatrix} \right) \in \mathfrak a $ the
element of the Cartan subalgebra of diagonal symplectic defined by the diagonal matrix   $\delta_i =
\mathrm{diag}(d_1,\dots,d_g)$  with $d_i=1$
and $d_k=0$  if $k\neq i$. Any such $\dcart _i$ generates  a one
parameter group of automorphisms $r_i ^t := e^{t\dcart_i} \in
\Cartan$, with $t \in \R$.

Left multiplication by the one parameter group $(r_i ^t)$ yields a
flow on $\Auto$ that projects to moduli space $\Mod$ according to  $[
\alpha] \mapsto r_i ^t[\alpha]= [r_i ^t\alpha] $.

Above this flow, we consider its horizontal lift to the bundles
$A^j(\mathfrak p^d,\Bundle^{s})$ and $A_j(\mathfrak p^d,\Bundle^{-s})$
($s\in \R$), defined by
\[
r_i ^t [ \alpha , \omega ] := [r_i ^t \alpha , \omega ] \qquad r_i ^t
[ \alpha , \mathcal D] := [r_i ^t \alpha , \mathcal D ]
\]
for $\alpha\in \Auto$ and $ \omega \in A^j(\mathfrak
p^{d,\alpha},\Bundle^{ s})$ or $\mathcal D \in A_j(\mathfrak
p^{d,\alpha},\Bundle^{ -s})$. This is well defined since, as we
remarked before, $\mathfrak p^{d,\alpha}=\mathfrak p^{d,r_i
  ^t\alpha}$.

\begin{definition}
  \label{def:sect5:1}
  For $s>0$, let $\closedbundle{d}{d}{-s}$
  be the sub-bundle of the bundle $\currentbundle{d}{d}{-s}$ 
  consisting of elements $[\alpha, \mathcal D]$ with $ \mathcal D \in
  Z_d(\mathfrak p^{d,\alpha}, W^{-s}_\alpha(\M))$, i.e.  with $\mathcal D$ a closed
  $\mathfrak p^{d,\alpha}$-current of dimension~$d$ and Sobolev order
  $s$.
\end{definition} 

We remark that the definition is well posed. In fact, if $ \mathcal D$
is a closed $\mathfrak p^{d,\alpha}$-current of dimension~$d$ then,
from the identities $\lin \mathcal D, X^\alpha_i(f)\rin =0 $ for all
test functions $f$ and $i\in [1,d]$, we obtain $0 = \lin
\gamma_*\mathcal D, \gamma_* X^\alpha_i(f)\rin =\lin \gamma_*\mathcal
D, X^{\alpha\gamma^{-1}}_i(f)\rin$, which shows that $\gamma_*\mathcal
D$ is a closed $\mathfrak p^{d,\alpha\gamma^{-1}}$-current of
dimension~$d$.

Observe that, although the subgroup $\Subgroup^{d, (r_i^t\alpha)}$ and
$\Subgroup^{d, \alpha}$ coincide, the actions of $\R^d$ defined by
their parametrizations~\eqref{eq:sect5:1} differ by a constant
rescaling; in fact
\begin{equation}
  \label{eq:sect5:3}
  \Subgroup _{(x_1,\dots ,x_d)}^{d, (r_1^{t_1} \dots r_g^{t_g}\alpha)}=
  \Subgroup_{(e^{-t_1} x_1, \dots,e^{-t_d}x_d) }^{d, \alpha}.
\end{equation}
Consequently, denoting by $(e^{-t_1}, \dots, e^{-t_d})U$ the obvious
diagonal automorphism of $\R^d$ applied to the region $U$, the
Birkhoff sums satify the identities
\begin{equation}
  \label{eq:sect5:2}
  \Birkhoff _U^{d, (r_1^{t_1} \dots r_g^{t_g}\alpha)} m =
  e^{t_1+\dots +t_d} \,   \Birkhoff _{(e^{-t_1}, \dots, e^{-t_d})U}^{d, \alpha} m.
\end{equation}

\begin{proposition}
  \label{Lyap_renorm_inv_dist_bis}
  Let $s > d/2$.  The sub-bundle $\closedbundle{d}{d}{-s}$ is
  invariant under the renormalization flows $r_i^t$ with $1\leq i \leq d$. Furthermore, for
  every $(t_1,\dots,t_d)\in \R^d$ and any $ [\alpha, \mathcal D]\in
  \closedbundle{d}{d}{-s}$ and any $s> d/2$, we have
  \[
  \big\| \, r_1^{t_1} \dots r_d^{t_d} [\alpha, \mathcal D]\,
  \big\|_{-s} = e^{-(t_1+\dots +t_d)/2} \, \big\| \,[\alpha, \mathcal
  D] \,\big\|_{-s} .
  \]
\end{proposition}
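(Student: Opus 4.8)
The plan is to reduce the statement to a computation on a single Schrödinger irreducible module, where the homogeneous Sobolev norm scales explicitly in the Planck constant $h$, and then integrate over the direct integral decomposition. First I would fix $[\alpha,\mathcal D]\in\closedbundle{d}{d}{-s}$ and recall that the dual-norm definition gives $\|[\alpha,\mathcal D]\|_{-s}=\|\mathcal D\|_{-s,\alpha}$, which by $\GMod$-invariance of the norms (Lemma~\ref{lem:sect4:2}) is well defined on the quotient bundle; the same invariance, together with the remark just before the proposition showing that $\gamma_*$ carries closed $\mathfrak p^{d,\alpha}$-currents to closed $\mathfrak p^{d,\alpha\gamma^{-1}}$-currents, guarantees that $\closedbundle{d}{d}{-s}$ is a well-defined sub-bundle and that it suffices to prove the norm identity for a representative $\alpha$.

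Next I would establish the renormalization-invariance of $\closedbundle{d}{d}{-s}$. Since $\mathfrak p^{d,\alpha}=\mathfrak p^{d,r_i^t\alpha}$ for $1\le i\le d$ (indeed for any $r_i$ with $i$ in any range, as $r_i$ lies in the Cartan subgroup $\Cartan$ and acts trivially on the relevant generators modulo rescaling), the horizontal lift $r_i^t[\alpha,\mathcal D]=[r_i^t\alpha,\mathcal D]$ makes sense; I must check that $\mathcal D$ remains a \emph{closed} $\mathfrak p^{d,r_i^t\alpha}$-current. But $\mathfrak p^{d,r_i^t\alpha}=\mathfrak p^{d,\alpha}$ as a Lie algebra, so the closedness condition $\langle\mathcal D, X_j^\alpha f\rangle=0$ is literally unchanged; only the Sobolev norm measuring $\mathcal D$ changes, because $r_i^t\alpha$ changes the Laplacian $\Delta_{r_i^t\alpha}$. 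Thus invariance of the sub-bundle is automatic; the content is the norm scaling.

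For the norm computation I would use the Schrödinger picture. Decompose $W^{-s}_\alpha(\M)=\int W^{-s}(\rho_h)\,d\mu$ into irreducibles with parameters $h$ bounded away from zero (as in the proof of Theorem~\ref{thm:sect3:2}); by Proposition~\ref{prop:sect:3:12}, the space of closed $d$-currents in each fiber is spanned by $\Int_{d,g}$-type functionals $D\circ\Int_{d,g}$ with $D\in L^2(\R^{g-d})'$ (or just $\Int_g$ when $d=g$). The key point is how the homogeneous norm of such a current rescales when $\alpha\mapsto r_i^t\alpha$ for $i\le d$: the automorphism $r_i^t$ multiplies $X_i^\alpha$ by $e^{-t}$ in the relevant direction, hence in the Schrödinger model (formulas~\eqref{eq:sect2:6}) the coordinate $x_i$ rescales, and the integration $\int_{\R^d}f\,\D x$ against the current picks up a Jacobian factor. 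Tracking this through, the current norm $\|D\circ\Int_{d,g}\|$ scales by $e^{-t_i/2}$ for each $i=1,\dots,d$: the exponent $1/2$ arises because the homogeneous Sobolev norm~\eqref{eq:sect3:3b} is built from $\subLap_g$ which is quadratic in $x$ and in $\partial_x$, so conjugating by the dilation $x_i\mapsto e^{-t_i}x_i$ shifts things symmetrically, and the distributional pairing contributes half the full Jacobian $e^{-t_i}$. Combining over $i=1,\dots,d$ and integrating over the direct integral (the scaling is uniform in $h$, cf.\ Proposition~\ref{prop_cohom_eq_uniform_h}) yields the factor $e^{-(t_1+\dots+t_d)/2}$.

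The main obstacle I anticipate is getting the exponent exactly right, i.e.\ verifying that the combination of (i) the change in the Laplacian $\Delta_{r_i^t\alpha}$ defining the Sobolev norm and (ii) the rescaling of the parametrization~\eqref{eq:sect5:3} of $\Subgroup^{d,\alpha}$ conspire to give $e^{-t_i/2}$ per coordinate rather than $e^{-t_i}$ or $e^{-t_i d/2}$ — this is precisely the ``square-root cancellation'' phenomenon that underlies the $T^{d/2}$ bound in the main theorem, so it deserves a careful direct calculation on $\Int_g$ in the Schrödinger model with parameter $h=1$, after which the general $h$ case and the direct-integral assembly are routine. A secondary subtlety is confirming that the dual (inverse) transformation law $(\gamma_*)^{-1}$ used for currents in the bundle $\currentbundle{d}{d}{-s}$ is compatible with the horizontal lift $r_i^t[\alpha,\mathcal D]:=[r_i^t\alpha,\mathcal D]$ being independent of the chosen representative, but this follows from the same $\GMod$-equivariance arguments already in place.
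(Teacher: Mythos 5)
Your proposal takes essentially the same route as the paper: reduce to a single Schr\"odinger irreducible with $h$ bounded away from zero, invoke Proposition~\ref{prop:sect:3:12} to write every closed $d$-current as $D\circ\Int_{d,g}$ (or $\Int_g$), and track the rescaling of the dual Sobolev norm under the dilation in the coordinates $x_1,\dots,x_d$. The ``careful direct calculation'' you rightly flag is exactly what the paper organizes with the unitary intertwiner $U_t$ of formula~\eqref{intertwiner}: unitarity of the dilation on $L^2(\R^g)$ carries the unit ball of $\Sobolev^s_{\alpha}$ isometrically onto that of $\Sobolev^s_{r\alpha}$, and the prefactor $e^{\pm(t_1+\dots+t_d)/2}$ (note the paper's displayed sign must be $+$ for unitarity) combines with the Jacobian $e^{-(t_1+\dots+t_d)}$ from the change of variables in $\Int_{d,g}$ to give precisely $e^{-(t_1+\dots+t_d)/2}$; this is the precise form of your heuristic that ``the pairing contributes half the Jacobian.''
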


\begin{proof} The invariance of the sub-bundle
  $\closedbundle{d}{d}{-s}$ is clear from~\eqref{eq:sect5:3}.

  Set, for simplicity, $r:=r_1^{t_1} \dots r_d^{t_d}$.  By definition
  $ \big\|r [\alpha, \mathcal D]\, \big\|_{-s}= \big\| \,[r\alpha,
  \mathcal D]\, \big\|_{-s}= \| \mathcal D \|_{-s,r \alpha}$ for any $
  [\alpha, \mathcal D]\in \currentbundle{d}{d}{-s}$.
    
  \begin{sloppypar}
      Without loss of generality we may assume that $\mathcal D$ belongs
  to the space $ A_d(\mathfrak p^{d, \alpha}, W^{-s}(\rho_h))$, where
  $\rho_h$ is an irreducible Schr\" odinger reprentation in which the
  basis $(X_i^\alpha, \Xi^\alpha, T)$ acts according to
  \eqref{eq:sect2:6}.  Let $\tilde L_\alpha = (\rho_h)_* L_\alpha$ and
  ${\widetilde L}_{r_d^t\alpha}= (\rho_h)_* {\widetilde
    L}_{r_d^t\alpha}$ the push-forward to $L^2(\R^g)$ of the operators
  defining the norms $\| \cdot\|_{s,\alpha}$ and $\|
  \cdot\|_{s,r_d^t\alpha}$.
  \end{sloppypar}

  By Proposition \ref{prop:sect:3:12}, the space of closed currents of
  dimension $d$ is spanned by $\Int_g$, if $d=g$, and by  the dense
  set of currents $\mathcal D = D_y \circ \Int_{d,g} $ with $D_y \in
  L^2(\R^{g-d}, dy)$, if $d < g$. Any such current is given, for any
  test function $f\in \mathcal S(\R^g)$, by $\lin \mathcal D , f \rin
  = \lin D_y , \int _{\R^d} f (x,y) \, dx \rin$. The unitary operator
  $U_t: L^2(\R^g) \to L^2(\R^g)$ defined, for $t = (t_1,\dots,t_d)$,
  by\footnote{This is a particular case of the {\em metaplectic
      representation}.  (See \cite{MR0165033, MR983366}).}
  \begin{equation} \label{intertwiner} U_t f(x,y) := e^{-(t_1+\dots
      +t_d)/2}\,f\big( (e^{t_1}, \dots, e^{t_d}) x,y\big)
  \end{equation} 
  ($x\in \R^d$, $y\in \R^{g-d}$), intertwines the differential
  operator ${\widetilde L}_\alpha$ with the operator ${\widetilde
    L}_{r \alpha}$, i.e.\ $U_t({\widetilde L}_\alpha f)= {\widetilde
    L}_{r\alpha}U_t f$ for any smooth $f$. Thus
  \begin{eqnarray*}
    \|\mathcal D   \| _{-s,  r \alpha}   
    & = & 
    \sup _{\| f  \| _{s, r \alpha } =1} 
    \left|  \lin  \mathcal D  , f \rin \right| =
    \sup _{\|  {\widetilde L}_{r \alpha}^{s/2} f \|  =1}  \left|  \lin
      \mathcal D  , f \rin \right|\\
    & = &
    \sup _{\|  {\widetilde L}_{\alpha}^{s/2} U_t^{-1}f \|  =1}  \left|  \lin
      \mathcal D  , f \rin \right| = \sup _{\|  {\widetilde L}_{\alpha}^{s/2} f \|  =1}  \left|  \lin
      \mathcal D  , U_t f \rin \right|
    \\
    & = & 
    \sup _{\|  (L_{\alpha} ) ^{s/2} f \|  =1} 
    \left|  \lin D_y , \int _{\R^g}e^{-(t_1+\dots +t_d)/2}\,f\big( (e^{t_1}, \dots, e^{t_d}) x,y\big)  \, dx \rin   \right|  \\
    & = & 
    \sup _{\|   (L_{\alpha}) ^{s/2} f \|  =1} e^{-(t_1+\dots +t_d)/2} 
    \left| \lin D_y ,  \int _{\R^g} f (x,y)   \, dx \rin  \right|  \\
    & = &  e^{-(t_1+\dots +t_d)/2}  \, \| \mathcal D  \| _{-s, \alpha } 
  \end{eqnarray*}
\end{proof}

\subsection{The renormalization argument}
\label{subSection:renormalization_argument}

\paragraph{Orthogonal splittings. }

For any exponent $s>d/2$, the sub-bundle $\closedbundle{d}{d}{-s}$ is
a closed subspace of the Hilbert bundle $\currentbundle{d}{d}{-s}$ 
and therefore induces an orthogonal decomposition
\begin{equation}
  \label{orthogonal_decomposition} 
  \currentbundle{d}{d}{-s}  = \closedbundle{d}{d}{-s} \oplus  \restbundle{d}{d}{-s} \,.
\end{equation}
where $\restbundle{d}{d}{-s} := \closedbundle{d}{d}{-s}^\perp$.  We
denote by $\mathcal Z^{-s}$ and $\mathcal R^{-s}$ the corresponding
orthogonal projections, and, given $\alpha \in \Symp_{2g}(\R)$, by
$\mathcal Z^{-s}_{\alpha}$ and $\mathcal R^{-s}_\alpha$ the
restrictions of these projections to the fiber over $[\alpha] \in
\Mod$.
In particular, we obtain a decomposition of the Birkhoff averages
$\mathcal D = \Birkhoff ^{d,\alpha}_{U} m$ as
\begin{equation}
  \label{orthogonal_decomposition_Birkhoff_bundle} 
  \begin{split} 
    [\alpha , \mathcal D] & = \mathcal Z^{-s} [\alpha , \mathcal D] + \mathcal R ^{-s} [\alpha , \mathcal D ] \\
    & = [\alpha , \mathcal Z ^{-s}_\alpha (\mathcal D)] + [\alpha ,
    \mathcal R ^{-s}_\alpha (\mathcal D) ]
  \end{split} \end{equation} with ``boundary term'' $ \mathcal
Z^{-s}_\alpha (\mathcal D) \in Z_d(\mathfrak p^{d,\alpha},
W^{-s}_\alpha(\M))$ and ``remainder term'' $ \mathcal R^{-s}_\alpha
(\mathcal D) \in R_d(\mathfrak p^{d,\alpha}, W^{-s}_\alpha(\M)) $.

We will also need an estimate for the distortion of the Sobolev norms
along the renormalization flow.  Below, $|t| $ denotes the sup norm of
a vector $t \in \R^d$.

\begin{lemma} \label{small_projection} Let $s > d/2+2$.  For
  $t=(t_1,\dots, t_d) \in \R^d$ and $\tau \in \R$, let $r^\tau =
  r^{-\tau t_1}_1\dots r^{-\tau t_d}_d $.  There exists a constant
  $C=C(s)$ such that if $|\tau t|$ is sufficiently small then the
  orthogonal projection
  \[
  \mathcal Z^{-s} _{r^{\tau}\alpha} : R_d(\mathfrak p^{d,\alpha},
  W^{-(s-2)}_{\alpha}(\M)) \to Z_d(\mathfrak p^{d,\alpha},
  W^{-s}_{r^\tau \alpha}(\M))
  \]
  has norm bounded by $C \,|\tau t|$.
\end{lemma}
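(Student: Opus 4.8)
The plan is to exploit the fact, established in Proposition~\ref{Lyap_renorm_inv_dist_bis}, that the sub-bundle $\closedbundle{d}{d}{-s}$ is invariant under the renormalization flow, together with the continuity of the family of Sobolev norms under small motions in $\Auto$ (Lemma~\ref{lem:sect4:1}). The key observation is that if $\mathcal D\in R_d(\mathfrak p^{d,\alpha}, W^{-(s-2)}_\alpha(\M))$, then by definition $\mathcal D$ is orthogonal, with respect to the Hilbert structure $\|\cdot\|_{-s,\alpha}$ of the fibre over $[\alpha]$, to \emph{every} closed current in $Z_d(\mathfrak p^{d,\alpha}, W^{-s}_\alpha(\M))$. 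The projection $\mathcal Z^{-s}_{r^\tau\alpha}(\mathcal D)$ is instead computed with respect to the nearby Hilbert structure $\|\cdot\|_{-s,r^\tau\alpha}$, and the subgroup — hence the space of closed currents — is unchanged because $\mathfrak p^{d,\alpha}=\mathfrak p^{d,r^\tau\alpha}$. Thus the nonzero size of $\mathcal Z^{-s}_{r^\tau\alpha}(\mathcal D)$ measures exactly the discrepancy between the two inner products, which vanishes at $\tau=0$.

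First I would write, for $\mathcal D \in R_d(\mathfrak p^{d,\alpha}, W^{-(s-2)}_\alpha(\M))$ of unit $\|\cdot\|_{-(s-2),\alpha}$-norm, and for an arbitrary unit-norm closed current $\mathcal E \in Z_d(\mathfrak p^{d,\alpha}, W^{-s}_{r^\tau\alpha}(\M))$,
\[
\langle \mathcal Z^{-s}_{r^\tau\alpha}\mathcal D,\ \mathcal E\rangle_{-s, r^\tau\alpha}
=\langle \mathcal D,\ \mathcal E\rangle_{-s, r^\tau\alpha}
=\langle \mathcal D,\ \mathcal E\rangle_{-s, r^\tau\alpha}-\langle \mathcal D,\ \mathcal E\rangle_{-s,\alpha},
\]
the last equality because $\mathcal D\perp\mathcal E$ in the $\alpha$-structure (every closed $(s)$-current is also an $(s-2)$-closed current, and $\mathcal D\in R_d$ at order $s-2$). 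Next I would estimate this difference of pairings. Writing $L_\alpha=1+\Delta_\alpha$ and $L_{r^\tau\alpha}=1+\Delta_{r^\tau\alpha}$, one has $\langle\cdot,\cdot\rangle_{-s,\beta}$ dual to $\langle L_\beta^{s/2}\,\cdot,\ L_\beta^{s/2}\,\cdot\rangle$; the difference of the two inner products is controlled by $\|L_{r^\tau\alpha}^{s/2}L_\alpha^{-s/2}-\mathrm{Id}\|$ acting between the relevant Sobolev spaces. Since $r^\tau=r_1^{-\tau t_1}\cdots r_d^{-\tau t_d}$ depends smoothly on $\tau$ and equals the identity automorphism at $\tau=0$, the operator $\Delta_{r^\tau\alpha}-\Delta_\alpha$ is a differential operator of order $2$ with coefficients of size $O(|\tau t|)$; hence $L_{r^\tau\alpha}^{s/2}L_\alpha^{-s/2}-\mathrm{Id}$ has operator norm $O(|\tau t|)$ as a map $W^{-(s-2)}_\alpha(\M)\to W^{-s}_\alpha(\M)$, the loss of two derivatives being exactly what is built into the hypotheses ($s>d/2+2$, and $\mathcal D$ taken at order $s-2$). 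Taking the supremum over $\mathcal E$ and invoking Lemma~\ref{lem:sect4:1} to pass freely between the $\|\cdot\|_{-s,\alpha}$ and $\|\cdot\|_{-s,r^\tau\alpha}$ norms (with a constant uniformly bounded for $|\tau t|$ small), one obtains $\|\mathcal Z^{-s}_{r^\tau\alpha}\mathcal D\|_{-s,r^\tau\alpha}\le C|\tau t|$, which is the claim.

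The main obstacle, and the step I expect to require the most care, is making rigorous the estimate $\|L_{r^\tau\alpha}^{s/2}L_\alpha^{-s/2}-\mathrm{Id}\|=O(|\tau t|)$ for \emph{non-integer} $s$ and uniformly in the irreducible Schr\"odinger component (i.e.\ uniformly in the Planck parameter $h$, $|h|\ge h_0$). For integer $s$ one expands $L_{r^\tau\alpha}^{s/2}-L_\alpha^{s/2}$ by the binomial/telescoping identity and bounds each term, using that $\Delta_{r^\tau\alpha}-\Delta_\alpha$ and $L_\alpha$ have controlled commutators; the general case then follows by complex interpolation of the family $\tau\mapsto L_{r^\tau\alpha}^{s/2}L_\alpha^{-s/2}$, exactly as in Lemma~\ref{lem:prop_cohom_eq_uniform_h_Sobolev}. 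The loss of two derivatives in the statement is precisely the room needed to absorb the second-order operator $\Delta_{r^\tau\alpha}-\Delta_\alpha$, so the bookkeeping of Sobolev orders must be done carefully but presents no essential difficulty.
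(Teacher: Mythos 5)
Your proposal takes a genuinely different route from the paper's. The paper restricts to an irreducible Schr\"odinger component, realizes the renormalization of the Sobolev structure via the metaplectic unitary group $U'_\tau$ (so that $H_\tau = U'_{-\tau}HU'_\tau$), and then writes $U'_\tau\mathcal R=\mathcal R + i\int_0^\tau U'_u A\,\mathcal R\,\D u$ where $A$ is the first-order generator $(\rho_h)_\ast\bigl(\sum t_k(1/2-X_k\Xi_k)\bigr)$. The fact that $U'_\tau$ preserves the subspace $Z$ of closed currents (Proposition~\ref{Lyap_renorm_inv_dist_bis}) makes the leading term vanish, and the remaining integral is controlled by the relative bound $\|A\psi\|\le C|t|\,\|H\psi\|$ (proved via the Hermite basis). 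Your proposal instead compares the two inner products directly and attributes the discrepancy to the operator $L_{r^\tau\alpha}^{s/2}L_\alpha^{-s/2}-\mathrm{Id}$. The payoff of the paper's formulation is that it never differentiates a \emph{fractional} power of the sub-Laplacian in $\tau$: only the genuine first-order operator $A$ appears, and only its relative $H$-bound is needed, uniformly in the Planck parameter. Your route has to control $L_{r^\tau\alpha}^{s/2}L_\alpha^{-s/2}-\mathrm{Id}$ for non-integer $s$, and you say this follows by ``complex interpolation of the family $\tau\mapsto L_{r^\tau\alpha}^{s/2}L_\alpha^{-s/2}$'' --- but $\tau$ is not the variable one can interpolate in; one would need to interpolate in the Sobolev exponent $s$, and interpolating a \emph{difference} of operator families that are not themselves uniformly bounded (the $O(|\tau t|)$-smallness is a statement about the difference, the individual families grow) is delicate. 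This is exactly the difficulty the Duhamel--metaplectic trick sidesteps, so you should either adopt it or spell out the interpolation argument carefully.

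There is also a genuine error in your justification of the orthogonality $\mathcal D\perp\mathcal E$ in the $(-s,\alpha)$-structure. Your parenthetical (``every closed $(s)$-current is also an $(s-2)$-closed current, and $\mathcal D\in R_d$ at order $s-2$'') has the inclusion backwards, since $W^{-(s-2)}_\alpha\subset W^{-s}_\alpha$ so a closed $s$-current need not have order $s-2$; and even setting this aside, orthogonality to $Z$ in the $\langle\cdot,\cdot\rangle_{-(s-2),\alpha}$ inner product does not imply orthogonality in the $\langle\cdot,\cdot\rangle_{-s,\alpha}$ inner product, because these are distinct Hilbert structures. What the argument actually requires --- and what the paper's proof tacitly assumes --- is that $\mathcal D$ is orthogonal to $Z$ in the $(-s,\alpha)$-structure \emph{and} enjoys the extra $W^{-(s-2)}_\alpha$ regularity; those are two separate hypotheses and neither implies the other. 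State that explicitly rather than deriving one from the other.
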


\begin{proof} As in the proof of Proposition
  \ref{Lyap_renorm_inv_dist_bis}, we may restrict to a fixed Schr\"
  odinger representation $\rho _h$ in which the basis $(X_i^\alpha,
  \Xi_i^\alpha, T)$ acts according to \eqref{eq:sect2:6}.
  It is also clear from Lemma
  \ref{lem:prop_cohom_eq_uniform_h_Sobolev} that we may use the
  homogeneous Sobolev norm defined in \eqref{eq:sect3:3b}.  If
  $H_g=(\rho_h)_\ast {L}_\alpha$ denotes the sub-Laplacian inducing
  the Sobolev structure of $W^{-s}_\alpha (\R^g)$, then the Sobolev
  structure of $W^{-s} _{r^\tau \alpha} (\R^g)$ is induced by
  \[
  H_\tau = U'_{-\tau} H U'_\tau
  \]
  where $U'_\tau = U_{\tau t} $ is the one-parameter group of unitary
  operators of $L^2(\R^g)$ defined according to
  \eqref{intertwiner}. We denote by $\lin \phi, \psi \rin_{-s,\tau} =
  \lin \phi, H_\tau^{-s} \psi \rin$ the inner product in $W^{-s}
  _{r^\tau \alpha} (\R^g)$.  A computation shows that the
  infinitesimal generator of $U'_\tau$ is $i$ times the self-adjoint
  operator $A = (\rho_h)_\ast \left( \sum_{k=1}^d t_k (1/2 - X_k\Xi
    _k) \right)$.  Moreover, using the Hermite basis, one can show
  that there exists a constant $C$ such that $\| A \psi \| \leq C |t|
  \, \| H \psi \|$ for $\psi$ in the domain of $A$.

  Now, let $\mathcal R \in W^{-s+2}_\alpha (\R^g)$ be a distribution
  (we identify top-dimensional currents with distributions as
  explained in \ref{sect:5.1}) which is orthogonal to the subspace $Z$
  of closed distributions when $\tau=0$, i.e. such that
  \[
  \lin \mathcal R , \mathcal D \rin _{-s,0} = \lin \mathcal R , H^{-s}
  \mathcal D \rin = 0
  \]
  for all $\mathcal D \in Z$.  In order to bound the norm of its
  projection to $Z$ w.r.t. the Sobolev structure at $\tau$ we must
  bound the absolute values of the scalar products $ \lin \mathcal R ,
  \mathcal D \rin _{-s,\tau}$ for all $\mathcal D$ in $Z$. Now,
  \[
  \begin{split}
    \lin \mathcal R , \mathcal D \rin _{-s,\tau} & = 
    \lin \mathcal R, U'_{-\tau} H^{-s} U'_{\tau} \mathcal D \rin \\
    & = \lin U'_{\tau} \mathcal R, H^{-s} U'_{\tau} \mathcal D \rin
  \end{split}
  \]
  If $\mathcal R$ is in the domain of $A$, we may write
  \[
  U'_\tau \mathcal R = \mathcal R+ i \int_0^\tau U'_{u} A \mathcal R
  \, du
  \]
  According to Proposition \ref{Lyap_renorm_inv_dist_bis}, the group
  $U'_\tau$ preserves $Z$. Therefore, since $\mathcal R$ is orthogonal
  to $U'_{\tau}\mathcal D$ for all $\tau$, we may write
  \[
  \begin{split}
    \lin \mathcal R , \mathcal D\rin _{-s,\tau}
    &=i \int_0^\tau  \lin U'_{u} A \mathcal R, H^{-s} U'_{\tau} \mathcal D \rin \, du \\
    & = i\int_0^\tau  \lin  A \mathcal R, U'_{-u} H^{-s} U'_{\tau} \mathcal D \rin \, du \\
    &=i \int_0^\tau  \lin  A \mathcal R,  U'_{\tau-u} \mathcal D \rin _{-s,u} \, du \\
  \end{split}
  \]
  By Cauchy-Schwartz and Lemma \ref{lem:sect4:1}, if $|\tau t|$ is
  sufficiently small we have
  \[
  \begin{split}
    \left| \lin \mathcal R , \mathcal D \rin _{-s,\tau} \right| & \leq
    \left| \int_0^\tau \| A\mathcal R \|_{-s,u} \,
      \|   U'_{\tau-u} \mathcal D \| _{-s,u} \, du \right| \\
    & \leq C' \, \| A\mathcal R \|_{-s,0} \,
    \left| \int_0^\tau  \|   U'_{\tau-u} \mathcal D \| _{-s,u} \, du \right| \\
    & \leq C'' \,|t| \| \mathcal R \|_{-s+2,0} \, \left| \int_0^\tau
      \| U'_{\tau-u} \mathcal D \| _{-s,u} \, du \right|
  \end{split}
  \]
  But $\| U'_{\tau - u} \mathcal D \|_{-s,u} = \| \mathcal D
  \|_{-s,\tau}$.  There follows
  \[
  \left| \lin \mathcal R , \mathcal D \rin _{-s,t\tau} \right| \leq
  |\tau t| \, C'' \, \| \mathcal R \|_{-s+2,0} \, \| \mathcal D \|
  _{-s,\tau} \,
  \]
  This says that the orthogonal projection $Z_\tau (\mathcal R)$ of
  $\mathcal R$ onto $Z$ w.r. to the Sobolev structure at $\tau$ has
  norm
  \[
  \| Z_\tau (\mathcal R) \| _{-s,\tau} \leq | \tau t| \, C'' \, \|
  \mathcal R \| _{-s+2, 0}.
  \]
\end{proof}

\begin{notation} In order to shorten our formulas, in the proofs
of the following statements we drop the ``initial point'' $m\in M$ or
the automorphism $\alpha$ in the symbol $\Birkhoff ^{d,\alpha}_{U} m $
whenever the estimates are uniform in $m$, in $\alpha$ or both.
\end{notation}

From the Sobolev embedding theorem and the definition
\eqref{best_Sobolev_constant} of the  Best Sobolev Constant
 $B_s$  we have the following trivial bound.

\begin{lemma} \label{trivial_bound_Sobolev} For any Jordan region
  $U\subset \R^d$ with Lebesgue measure $|U|$,  for any $s> g+1/2$
  and all $m\in \M$ we have
  \[
  \left \| [\alpha, \Birkhoff ^{d,\alpha}_U m] \right\|_{-s} \leq
  B_{s} ([[\alpha]]) \, |U|.
  \]
\end{lemma}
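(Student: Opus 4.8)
\emph{Proof proposal.} The plan is to ignore all cancellation in the Birkhoff integral, bound it crudely by the sup-norm of the integrand, and then convert that sup-norm into a Sobolev norm via the Sobolev embedding, paying the price of the best Sobolev constant. Concretely, first I would use the identification of top-dimensional currents with distributions from Section~\ref{sect:5.1} and the defining formula~\eqref{Birkhoff}, which expresses the action of $\Birkhoff^{d,\alpha}_U m$ on a function $f\in C_0^\infty(\M)$ as the integral $\int_U f(\Subgroup^{d,\alpha}_x m)\,\D x_1\cdots\D x_d$. The triangle inequality for integrals then gives at once
\[
\big| \lin \Birkhoff^{d,\alpha}_{U}m, f \rin \big| \le \int_U \big| f(\Subgroup^{d,\alpha}_x m) \big|\, \D x_1\cdots\D x_d \le |U|\cdot \| f \|_\infty,
\]
uniformly in the base point $m\in\M$.

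Next I would invoke the Sobolev embedding~\eqref{Sobolev_embedding}: because $s>g+1/2$ and $f$ lies in $C_0^\infty(\M)\subset\Sobolev^s_\alpha(\M)$, we have $\| f \|_\infty \le B_s(\alpha)\,\| f \|_{s,\alpha}$ with $B_s(\alpha)$ the best Sobolev constant~\eqref{best_Sobolev_constant}. By Lemma~\ref{lem:best-sobol-const}, $B_s$ is a $\GMod$-modular function on $\Siegel_g$, so it descends to the Siegel modular variety and $B_s(\alpha)=B_s(\class{\alpha})$. Substituting this into the previous display and then recalling, via the identifications of Section~\ref{sect:5.1}, that for a top-degree form $\omega = f\,\omega^{d,\alpha}$ one has $\|\omega\|_{s,\alpha}=\| f \|_{s,\alpha}$, I would take the supremum over all $\omega$ with $\|\omega\|_{s,\alpha}=1$ to obtain
\[
\big\| [\alpha, \Birkhoff ^{d,\alpha}_U m] \big\|_{-s} = \sup_{\|\omega\|_{s,\alpha}=1} \big| \lin \Birkhoff^{d,\alpha}_{U}m, \omega \rin \big| \le B_{s} (\class{\alpha}) \, |U|,
\]
which is the claim; note in passing that this also records that $\Birkhoff^{d,\alpha}_U m$ genuinely defines a current of Sobolev order $s$.

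There is essentially no obstacle here — this is a deliberately crude bound that throws away all the oscillation of $f$ along the orbit, and hence will be far from sharp (it gives $|U|$ where the renormalization argument later produces the expected $|U|^{1/2}$-type behaviour). The only matters requiring a moment's attention are purely organisational: checking that $C_0^\infty(\M)$ embeds into the Sobolev space $\Sobolev^s_\alpha(\M)$ on which~\eqref{Sobolev_embedding} is stated, that $B_s$ is genuinely a function on the Siegel modular variety so that $B_s(\class{\alpha})$ makes sense, and that the norm of the current bundle $\currentbundle{d}{d}{-s}$ is by definition the dual of the form norm $\|\cdot\|_{s,\alpha}$. This lemma will serve only as the starting input for the inductive renormalization scheme.
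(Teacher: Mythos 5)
Your proposal is correct and is precisely the argument the paper has in mind: the paper states this as a ``trivial bound'' following immediately ``from the Sobolev embedding theorem and the definition of the Best Sobolev Constant,'' and gives no explicit proof. Your expansion — crude $L^1$--$L^\infty$ bound on the Birkhoff integral, Sobolev embedding via $B_s(\alpha)$, invariance of $B_s$ on $\Sigma_g$ from Lemma~\ref{lem:best-sobol-const}, then duality — is exactly what is intended.
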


For the remainder term we have the following estimate. Below, we
denote by $\partial \mathcal D$ the boundary of the current $\mathcal
D $, defined by $\lin \partial \mathcal D , \eta \rin = \lin \mathcal
D , \D \eta \rin$.

\begin{lemma}
  \label{rest_estimate}
  Let $s>g+d/2+1$. For any non-negative $s' < s-(d+1)/2$, there exists
  a constant $C=C(g,d,s',s) >0$ such that, for all $m\in \M$ and
  $\alpha \in \Auto$, we have
  \[
  \| \, \mathcal R^{-s} \, [\alpha, \Birkhoff_{U}^{d,\alpha} m ] \,\|
  _{-s} \leq {C} \, \| \, [\alpha,\partial ( \Birkhoff^{d,\alpha}_U m)
  ] \|_{-s'} \, .
  \]
\end{lemma}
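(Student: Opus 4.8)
The plan is to decompose the remainder current $\mathcal R^{-s}_\alpha(\Birkhoff_U^{d,\alpha} m)$ using the tame splitting machinery developed in Section~\ref{sec:cohom-with-valu-1}, and to observe that the orthogonal complement of the closed currents is precisely where the coboundary operator is controlled. By Proposition~\ref{prop:sect:3:12} (and its bundle version via Proposition~\ref{prop_cohom_eq_max_dim}), a $d$-form $\omega\in\Lambda^d\mathfrak p'\otimes W^s_\alpha(\M)$ with $s>(d+1)/2$ is exact precisely when it is annihilated by all closed currents of dimension $d$; moreover one has a primitive $\Omega$ with $\norm\Omega\norm_t\le C\norm\omega\norm_{t+(d+1)/2+\varepsilon}$ for $t<s-(d+1)/2$. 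Dually, a current $\mathcal D\in A_d$ pairs to zero against all coboundaries if and only if it is closed; hence the subspace $R_d(\mathfrak p^{d,\alpha},W^{-s}_\alpha(\M))=Z_d(\mathfrak p^{d,\alpha},W^{-s}_\alpha(\M))^\perp$ is exactly the set of currents that are ``carried by coboundaries'', and on it the pairing $\lin\mathcal D,\omega\rin$ depends only on the coboundary component of $\omega$.

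The key steps, in order, are as follows. First, for a test form $\omega\in\Lambda^d\mathfrak p'\otimes W^{s'}_\alpha(\M)$ (with $s'<s-(d+1)/2$ and $s'\ge 0$), write $\omega = M^d\omega + (\omega - M^d\omega)$ using the splitting map $M^d$ of Proposition~\ref{prop:tameness}/Theorem~\ref{thm:sect3:2}; here $M^d\omega\in B^d(\mathfrak p,W^{s'-d/2-\varepsilon}_\alpha(\M))$ is a coboundary and, since a remainder current $\mathcal R:=\mathcal R^{-s}_\alpha(\Birkhoff^{d,\alpha}_U m)$ is orthogonal to closed currents, $\lin\mathcal R,\omega\rin=\lin\mathcal R, M^d\omega\rin$. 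Second, write $M^d\omega=\D\Omega$ with $\Omega=\D_{-1}(M^d\omega)\in A^{d-1}(\mathfrak p,W^{s'-(d+1)/2-\varepsilon'}_\alpha(\M))$, using Proposition~\ref{prop_cohom_eq_uniform_h} composed with $M^d$; note the total loss of derivatives is $(d+1)/2+\varepsilon$ plus a bit more, so we need $s' < s-(d+1)/2$ and pick $\varepsilon$ small enough that $s'+(d+1)/2+\varepsilon< s$. Third, pair: $\lin\mathcal R,\omega\rin=\lin\mathcal R,\D\Omega\rin=\lin\partial\mathcal R,\Omega\rin$. Now $\mathcal R=\Birkhoff^{d,\alpha}_U m - \mathcal Z^{-s}_\alpha(\Birkhoff^{d,\alpha}_U m)$ and $\mathcal Z^{-s}_\alpha(\Birkhoff^{d,\alpha}_U m)$ is a \emph{closed} current, so $\partial\mathcal R=\partial(\Birkhoff^{d,\alpha}_U m)$. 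Therefore $|\lin\mathcal R,\omega\rin| = |\lin\partial(\Birkhoff^{d,\alpha}_U m),\Omega\rin|\le \|[\alpha,\partial(\Birkhoff^{d,\alpha}_U m)]\|_{-s'}\cdot\norm\Omega\norm_{s'}$, and by the tame estimates $\norm\Omega\norm_{s'}\le C\norm M^d\omega\norm_{s'+(d+1)/2+\varepsilon}\le C'\norm\omega\norm_{s'+(d+1)/2+d/2+\varepsilon}\le C'\norm\omega\norm_s$ once $s>g+d/2+1 \ge s'+(d+1)/2+d/2+\varepsilon$ (using $s'\le g+1/2$ — this is where the hypothesis $s>g+d/2+1$ and the constraint on $s'$ combine). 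Taking the supremum over $\omega$ with $\norm\omega\norm_s\le 1$ and recalling $\|\mathcal R^{-s}_\alpha(\Birkhoff^{d,\alpha}_U m)\|_{-s}=\sup_{\norm\omega\norm_{s,\alpha}=1}|\lin\mathcal R,\omega\rin|$ yields the claim. Everything should be made uniform in $\alpha$ by invoking the $h$-uniform estimates of Proposition~\ref{prop_cohom_eq_uniform_h} and Lemma~\ref{lem:prop_cohom_eq_uniform_h_Sobolev}, together with the $\GMod$-equivariance of the bundle norms (Lemma~\ref{lem:sect4:2}).

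The main obstacle I expect is bookkeeping of the Sobolev exponents so that the composition $\D_{-1}\circ M^d$ applied to $\omega$, then paired against $\partial\Birkhoff^{d,\alpha}_U m$ measured in $\|\cdot\|_{-s'}$, closes up with the allowed loss. One must check that for every admissible $s'<s-(d+1)/2$ the derivative loss incurred by $M^d$ (namely $d/2+\varepsilon$) plus that of $\D_{-1}$ (namely $(d+1)/2+\varepsilon$) lands $\Omega$ in an order $\le s'$ space while keeping $\norm\Omega\norm_{s'}\lesssim\norm\omega\norm_{s}$; the numerology $s>g+d/2+1$ is exactly calibrated for $s'$ up to roughly $g+1/2$, i.e.\ the Sobolev-embedding threshold. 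A secondary but routine point is verifying that $\partial$ commutes with the orthogonal projection $\mathcal Z^{-s}$ in the sense that $\partial(\mathrm{id}-\mathcal Z^{-s})\mathcal D=\partial\mathcal D$, which is immediate since $\mathrm{Im}\,\mathcal Z^{-s}\subset Z_d$ consists of closed (i.e.\ $\partial$-killed) currents.
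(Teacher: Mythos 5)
Your scheme follows the paper's argument in outline, and your observation that $\partial\,\mathcal R^{-s}_\alpha(\Birkhoff^{d,\alpha}_U m)=\partial(\Birkhoff^{d,\alpha}_U m)$ (because the range of $\mathcal Z^{-s}$ consists of closed currents) is a correct reformulation of the transfer step that the paper performs via the dual identity $\lin\mathcal Z^{-s}_\alpha(\Birkhoff^{d,\alpha}_U m),\omega^s_R\rin=0$. The problem is upstream, in the very first equality $\lin\mathcal R,\omega\rin=\lin\mathcal R,M^d\omega\rin$. To justify it you need $\lin\mathcal R,\omega-M^d\omega\rin=0$; since $\mathcal R$ lies in $\restbundle{d}{d}{-s}$, the $W^{-s}_\alpha$-orthogonal complement of $\closedbundle{d}{d}{-s}$, this requires $\omega-M^d\omega$ to lie in the annihilator of $\restbundle{d}{d}{-s}$, which under the Riesz isomorphism is precisely the $W^s_\alpha$-orthogonal complement of the coboundary subspace $B^d(\mathfrak p,W^s_\alpha(\M))$. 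But the splitting map $M^d$ of Proposition~\ref{prop:tameness} is the explicit operator $M^d\omega=\omega-(\Egauss_{d,g}\Int_{d,g}f)\,\D{}x^1\wedge\dots\wedge\D{}x^d$, and the obstruction $\Egauss_{d,g}\Int_{d,g}f$ carries the Gaussian profile $\varphi_d$ in the $x$-variables; it is not $W^s$-orthogonal to $B^d=\ker\Int_{d,g}$, so in general $\lin\mathcal R,\omega-M^d\omega\rin\neq 0$.

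The paper circumvents this by replacing $M^d$ with the genuine $W^s_\alpha$-orthogonal projection onto coboundaries, namely the form decomposition $\omega=\omega^s_Z+\omega^s_R$ dual to the current splitting $\currentbundle{d}{d}{-s}=\closedbundle{d}{d}{-s}\oplus\restbundle{d}{d}{-s}$. This fixes both the algebra and the numerology: an orthogonal projection is norm-nonincreasing, so the only loss is the $(d+1)/2+\varepsilon$ of $\D{}_{-1}$ from Theorem~\ref{thm:sect3:2}, giving exactly $s'<s-(d+1)/2$. With $M^d$ you pay an extra $d/2+\varepsilon$, so $\D{}_{-1}\circ M^d$ has degree $(2d+1)/2+\varepsilon$ and your argument only reaches $s'<s-(2d+1)/2$. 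The chain ``$s>g+d/2+1\ge s'+(d+1)/2+d/2+\varepsilon$'' you invoke to close the gap quietly assumes $s'\lesssim g+1/2$, which is not given: the lemma allows any non-negative $s'<s-(d+1)/2$, and for large $s$ that range far exceeds $g+1/2$.
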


\begin{proof}

  Let $\omega:[\alpha]\to \omega([\alpha])$ be a section of
  $\formbundle{d}{d}{s}$. Writing $\omega = \omega^s_Z + \omega^s_R$
  for its decomposition with $\omega^s_R$ in the annihilator of $
  \closedbundle{d}{d}{-s} $ and $\omega^s_Z$ in the annihilator of $
  \restbundle{d}{d}{-s}$, we have
  \[
  \lin \mathcal R ^{-s}_\alpha (\Birkhoff _{U}^{d,\alpha} ) , \omega
  \rin = \lin \mathcal R ^{-s}_\alpha (\Birkhoff _{U}^{d,\alpha} ) ,
  \omega^s_R \rin = \lin \Birkhoff_U^{d,\alpha} , \omega^s_R \rin \, .
  \]
  Since $s>(d+1)/2$ and since, by definition, $\lin T , \omega^s_R
  \rin = 0$ for any $T\in \closedbundle{d}{d}{-s} $, by Theorem
  \ref{thm:sect3:2}
  there exists a constant $C:= C(g,d,s',s)$ and a section of
  $(d-1)$-forms $\eta$ with $\D{}\eta = \omega^s_R$ and satisfying, for
  all $s' < s -(d+1)/2$, the estimate $ \|\eta([\alpha])\|_{s',\alpha}
  \le C \|\,\omega^s_R([\alpha]) \|_{s,\alpha}$ for all $\alpha$. It
  follows that
  \[
  \lin \Birkhoff_U^d , \omega^s_R \rin = \lin \partial \Birkhoff^d_U ,
  \eta \rin \, .
  \]
  Hence, for $s' < s -(d+1)/2$, for all $m\in \M$ and $\alpha \in
  \Auto$, we have
  \[
  | \lin \Birkhoff_U^d , \omega^s_R \rin | \le C \|\partial
  \Birkhoff^d_U\|_{-s'} \times \|\,\omega^s_R \|_{s}\le C \|\partial
  \Birkhoff^d_U\|_{-s'} \times \|\,\omega \|_{s}.
  \]
\end{proof}


To estimate the boundary term, we need the following recursive
estimate.

\begin{lemma} \label{recursive_inequality} Let $s > d/2+2$. There
  exists a constant $C_1=C_1(s)>0$ such that for all $t_1\ge 0$, \dots,
  $t_d\ge 0$ and all $[\alpha , \mathcal D] \in
  \currentbundle{d}{d}{-(s-2)}$ we have
  \[
  \begin{split}
    \| \, &\mathcal Z^{-s} [\alpha, \mathcal D] \, \| _{-s} \leq \, \,
    \, e^{-(t_1+\dots+t_d)/{2} } \,\| \, \mathcal Z^{-s}
    [r^{-t_1}_1\dots r^{-t_d}_d\alpha,  \mathcal D ] \,   \| _{-s} \\
    & + C_1\, |t_1+\dots+t_d| \int_{0}^{1}
    e^{-{u}(t_1+\dots+t_d)/{2} } \| \, \mathcal R^{-s} [
    r^{-ut_1}_1\dots r^{-ut_d}_d\alpha, \mathcal D ] \, \| _{-(s-2)}\,
    \D{u}.
  \end{split}
  \]
\end{lemma}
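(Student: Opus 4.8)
The plan is to prove the inequality by a telescoping (discrete Gronwall) argument along the renormalization path joining $\alpha$ to $r_1^{-t_1}\cdots r_d^{-t_d}\alpha$. Write $T:=t_1+\dots+t_d$ and, for $\tau\in[0,1]$, set $r^\tau:=r_1^{-\tau t_1}\cdots r_d^{-\tau t_d}$ and $\beta_\tau:=r^\tau\alpha$, so that $\beta_0=\alpha$ and $\beta_1=r_1^{-t_1}\cdots r_d^{-t_d}\alpha$; since all the $r_i$ lie in the Cartan subgroup $\Cartan$ we have $\mathfrak p^{d,\beta_\tau}=\mathfrak p^{d,\alpha}$ for every $\tau$, so a fixed current $\mathcal D$ makes sense in the fibre over each $[\beta_\tau]$. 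For each $\tau$ I would split $\mathcal D$ using the orthogonal decomposition of $\currentbundle{d}{d}{-s}$ over $[\beta_\tau]$, writing $\mathcal D=z_\tau+\rho_\tau$ with $z_\tau:=\mathcal Z^{-s}_{\beta_\tau}(\mathcal D)$ closed and $\rho_\tau:=\mathcal R^{-s}_{\beta_\tau}(\mathcal D)$ in the $W^{-s}_{\beta_\tau}$-orthogonal complement of the closed currents, and I would set $\phi(\tau):=\|\mathcal Z^{-s}[\beta_\tau,\mathcal D]\|_{-s}$ and $g(\tau):=\|\mathcal R^{-s}[\beta_\tau,\mathcal D]\|_{-(s-2)}$. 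A preliminary check, using Proposition~\ref{prop:sect:3:12}, is that $z_\tau$ and $\rho_\tau$ still have Sobolev order $s-2$ (so that $g$ is finite), and that $\phi$ and $g$ depend continuously on $\tau$ (the norms $\|\cdot\|_{\bullet,\beta_\tau}$ vary continuously by Lemma~\ref{lem:sect4:1}, and so do the orthogonal projections).

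The core of the proof is a one-step estimate. Fix $N$, put $\sigma:=1/N$ and $\tau_k:=k/N$. Since $z_{\tau_k}$ is a closed $\mathfrak p^{d,\alpha}$-current it is fixed by the projection $\mathcal Z^{-s}_{\beta_{\tau_{k+1}}}$, so from $\mathcal D=z_{\tau_k}+\rho_{\tau_k}$ one gets $z_{\tau_{k+1}}=z_{\tau_k}+e_k$ with $e_k:=\mathcal Z^{-s}_{\beta_{\tau_{k+1}}}(\rho_{\tau_k})$ again closed. Now $\beta_{\tau_{k+1}}=r^\sigma\beta_{\tau_k}$ with $r^\sigma=r_1^{-\sigma t_1}\cdots r_d^{-\sigma t_d}$, so Proposition~\ref{Lyap_renorm_inv_dist_bis} applied to the closed current $z_{\tau_k}$ gives $\|z_{\tau_k}\|_{-s,\beta_{\tau_{k+1}}}=e^{\sigma T/2}\phi(\tau_k)$, while Lemma~\ref{small_projection}, applied with base point $\beta_{\tau_k}$, direction $t$ and time $\sigma$, gives (for $N$ large, so that $\sigma|t|$ is small) $\|e_k\|_{-s,\beta_{\tau_{k+1}}}\le C\sigma|t|\,g(\tau_k)$, where $C=C(s)$ is the constant of that lemma. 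The triangle inequality then yields $\phi(\tau_{k+1})\ge e^{\sigma T/2}\phi(\tau_k)-C\sigma|t|\,g(\tau_k)$, hence the recursion
\[
\phi(\tau_k)\le e^{-\sigma T/2}\,\phi(\tau_{k+1})+C\sigma|t|\,e^{-\sigma T/2}\,g(\tau_k).
\]
Iterating from $k=0$ to $k=N-1$ and using $\tau_N=1$ gives $\phi(0)\le e^{-T/2}\phi(1)+C|t|\,e^{-\sigma T/2}\sum_{k=0}^{N-1}\sigma\,e^{-\tau_kT/2}g(\tau_k)$, and letting $N\to\infty$ the Riemann sum converges to $\int_0^1 e^{-uT/2}g(u)\,\D u$ by continuity of $g$. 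Since $t_1,\dots,t_d\ge0$ we have $|t|=\max_i t_i\le T$, so we obtain $\phi(0)\le e^{-T/2}\phi(1)+C\,T\int_0^1 e^{-uT/2}g(u)\,\D u$, which is the assertion with $C_1:=C$ once $\phi$ and $g$ are written back in terms of $\mathcal Z^{-s}$ and $\mathcal R^{-s}$.

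The step I expect to require the most care is making Lemma~\ref{small_projection} usable simultaneously at every point $\beta_{\tau_k}$ of the path: one needs the constant $C(s)$ and the ``sufficiently small'' threshold there to be uniform in the base point, which should come from the reduction of that lemma to a single Schr\"odinger model and operator bounds independent of $\alpha$. The other technical points — that $\mathcal R^{-s}_{\beta_\tau}(\mathcal D)$ keeps Sobolev order $s-2$ (so that Lemma~\ref{small_projection} applies to it and the right-hand side is finite), and the continuity in $\tau$ needed for the Riemann-sum limit — are routine consequences of Proposition~\ref{prop:sect:3:12} and Lemma~\ref{lem:sect4:1}. Everything else is just the interplay, at infinitesimal scale, between the exact contraction of closed-current norms along the renormalization flow and the first-order rate at which the remainder part leaks into the closed part.
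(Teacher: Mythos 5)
Your proof is correct and follows essentially the same route as the paper's: decompose $\mathcal D$ orthogonally at each intermediate point $r_1^{-\tau t_1}\cdots r_d^{-\tau t_d}\alpha$ of the renormalization path, use Proposition~\ref{Lyap_renorm_inv_dist_bis} for the exact contraction of closed-current norms, Lemma~\ref{small_projection} for the leakage from the remainder into the closed part, iterate the resulting one-step inequality over a partition of $[0,1]$, and pass to the integral in the limit. The paper's proof is structured identically (with a sign typo in its definition of $r^u$ that is corrected implicitly later); your formulation via $z_{\tau_{k+1}}=z_{\tau_k}+e_k$ and the reverse triangle inequality is just a minor rephrasing of the same computation.
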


\begin{proof}
  Set for simplicity $r^u = r^{-ut_1}_1\dots r^{-ut_d}_d $ and
  $t=t_1+\dots+t_d$. Consider the orthogonal decomposition
  \[
  \mathcal D = \mathcal Z^{-s}_{r^{-u}\alpha} (\mathcal D) + \mathcal
  R^{-s}_{r^{-u}\alpha} (\mathcal D), \quad u\in [0,1].
  \]
  If we apply the projection $\mathcal Z^{-s}_{r^{\tau-u}\alpha}$,
  since, by Proposition~\ref{Lyap_renorm_inv_dist_bis}, $\mathcal
  Z^{-s}_{r^{\tau-u}\alpha} \mathcal Z^{-s}_{r^{-u}\alpha} (\mathcal
  D)= \mathcal Z^{-s}_{r^{-u}\alpha} (\mathcal D)$, we get
  \[
  \mathcal Z^{-s}_{r^{\tau-u}\alpha} (\mathcal D) = \mathcal
  Z^{-s}_{r^{-u}\alpha} (\mathcal D) + \mathcal
  Z^{-s}_{r^{\tau-u}\alpha} ( \mathcal R^{-s}_{r^{-u}\alpha} (\mathcal
  D) )
  \]
  and therefore we may write
  \[
  \begin{split}
    [r^{\tau-u}\alpha, \mathcal Z^{-s}_{r^{\tau-u}\alpha} (\mathcal D)
    ] & = [r^{\tau-u}\alpha, \mathcal Z^{-s}_{r^{-u}\alpha} (\mathcal
    D) ]+ [r^{\tau-u}\alpha, \mathcal Z^{-s}_{r^{\tau-u}\alpha}
    ( \mathcal R^{-s}_{r^{-t}\alpha} (\mathcal D) ) ] \\
    & = r^{\tau} \, \mathcal Z^{-s} [r^{-u}\alpha, \mathcal D ]+
    \mathcal Z^{-s} [r^{\tau-u}\alpha, \mathcal R^{-s}_{r^{-u}\alpha}
    (\mathcal D) ]
  \end{split}
  \]
  Now, we compute the norm with exponent $-s$.  By
  Proposition~\ref{Lyap_renorm_inv_dist_bis}, the first term on the
  right has norm
  \[
  \| r^{\tau} \, \mathcal Z^{-s} [r^{-u}\alpha, \mathcal D ] \|_{-s} =
  e^{-\tfrac{t}{2} \tau } \| \mathcal Z^{-s} [r^{-u}\alpha, \mathcal D
  ] \|_{-s} \, .
  \]
  To estimate the norm of the second term on the right, we observe
  that $\mathcal Z^{-s}_{r^{\tau -u}}$ is an orthogonal projection,
  and that by Lemma~\ref{small_projection} the projection
  \[ R_d(\mathfrak p^{d,\alpha}, W^{-(s-2)}_{r^{\tau-u}\alpha}(\M))
  \to Z_d(\mathfrak p^{d,\alpha}, W^{-s}_{r^{-u}\alpha}(\M))
  \]
  has norm bounded by $C(s) \,t\,\tau$. Therefore
  \[
  \begin{split}
    \| \mathcal Z^{-s}[ {r^{\tau-u}\alpha} , \mathcal D] \|_{-s} &\leq
    e^{-\tfrac{t}{2}\tau} \, \| \mathcal Z^{-s} [ {r^{-u}\alpha} ,
    \mathcal D] \|
    _{-s} \\
    &\qquad+ C(s)\,t\,\tau \, \| \mathcal R^{-s} [{r^{-u}\alpha} ,
    \mathcal D) \| _{-(s-2)}\,.
  \end{split}
  \]
  Let $n\in \N^+$, and set $\tau =1/n$, $u=k \tau$, with $k\in \N\cap
  [0,n]$.  By finite induction on $k$ we obtain
  \[
  \begin{split}
    \| \, \mathcal Z^{-s} [\alpha, \mathcal D] \, \| _{-s} \leq
    & \, \, \, e^{-\frac{t}{2} }  \| \, \mathcal Z^{-s} [r^{-1}\alpha,  \mathcal D ] \,   \| _{-s} \\
    & + C(s)\,\frac t n \sum_{k=1}^{n} e^{-\frac{ tk}{2n} } \| \,
    \mathcal R^{-s} [r^{- k/n}\alpha, \mathcal D ] \, \| _{-(s-2)}\,.
  \end{split}
  \]
  The Lemma follows by taking the limit as $n\to\infty$.
\end{proof}

Next, we consider the case $d =1$.

\begin{theorem}
  \label{result_one_dim}
  Let $\alpha \in \Symp_{2g}(\R)$ and $s>g+7/2$. Let
  $\Subgroup^{1,\alpha}$ be the $1$-dimensional abelian subgroup of
  $\Heis$ generated by the base vector field $X_1^\alpha\in \hei$. 
  Let $U_T=[0,T]$ and $\Birkhoff_{U_T}^{1,\alpha}
  m$ the Birkhoff sum associated to some $m\in \M$ for the action of 
  $\Subgroup^{1,\alpha}_{x}$ $(x\in \R)$.  There exist a constant
  $C_2=C_2(s)>0$ such that for all $\radius \ge 1$ and all $m\in \M$ we
  have
  \[
  \begin{split}
    \left\| \, [\alpha, \Birkhoff _{U_T}^{1,\alpha} m ] \, \right\|
    _{-s} \leq
    & \quad C_2 \, T^{1/2}  \, \Height \left( \class{r^{-\log T}_1 \alpha} \right) ^{1/4} \\
    & \quad + C_2 \, \int_0^{\log T} e^{ u /2} \, \Height \left(
      \class{r^{-u}_1 \alpha} \right) ^{1/4} \, \D{ u}.
  \end{split}
  \]
\end{theorem}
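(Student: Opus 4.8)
The plan is to split the top–dimensional Birkhoff current $\mathcal D:=\Birkhoff^{1,\alpha}_{U_T}m$ by means of the orthogonal decomposition $\currentbundle{1}{1}{-s}=\closedbundle{1}{1}{-s}\oplus\restbundle{1}{1}{-s}$ of Section~\ref{subSection:renormalization_argument},
\[
[\alpha,\mathcal D]=\mathcal Z^{-s}[\alpha,\mathcal D]+\mathcal R^{-s}[\alpha,\mathcal D],
\]
and to estimate the two summands by different mechanisms: the remainder summand directly, through the boundary of the orbit chain, and the closed summand by iterating the recursive estimate of Lemma~\ref{recursive_inequality} along the renormalization flow $r^t_1=e^{t\dcart_1}$ for the single renormalization time $t_1=\log T$.

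First I would record the boundary estimate that feeds all the remainder terms. For the one–parameter orbit chain $x\mapsto\Subgroup^{1,\beta}_xm$ over an interval $[0,a]$, the boundary $\partial\Birkhoff^{1,\beta}_{[0,a]}m$ acts on a function $f$ by $f(\Subgroup^{1,\beta}_a m)-f(m)$, so it is the difference of two Dirac masses (on $C^\infty_0(\M)$). By the Sobolev embedding theorem and the definition \eqref{best_Sobolev_constant} of the best Sobolev constant, each such Dirac mass has $-s'$ norm bounded by $B_{s'}(\class{\beta})$ for any $s'>g+1/2$, hence by $C\,\Height(\class{\beta})^{1/4}$ by Proposition~\ref{best_Sobolev_constant2}. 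Combining this with Lemma~\ref{rest_estimate} — whose proof uses the tame primitive $\D{}_{-1}$ from Theorem~\ref{thm:sect3:2} and the explicit description of $\closedbundle{1}{1}{-s}$ from Propositions~\ref{prop:sect:3:12} and \ref{prop_cohom_eq_max_dim} — applied with a margin of two Sobolev derivatives so as to absorb the loss in Lemma~\ref{small_projection}, one obtains for every $\beta\in\Auto$, $a\ge 0$, $m\in\M$
\[
\big\|\mathcal R^{-s}[\beta,\Birkhoff^{1,\beta}_{[0,a]}m]\big\|_{-(s-2)}\le C\,\Height(\class{\beta})^{1/4}.
\]
Here one needs $g+1/2<s'<s-3$, which is exactly where the hypothesis $s>g+7/2$ is used; the same argument with the weaker $-s$ norm on the left (and no derivative margin) gives $\|\mathcal R^{-s}[\alpha,\mathcal D]\|_{-s}\le C\,\Height(\class{\alpha})^{1/4}$.

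Next I would run the recursion. Applying Lemma~\ref{recursive_inequality} with $d=1$ and $t_1=\log T\ge 0$,
\[
\big\|\mathcal Z^{-s}[\alpha,\mathcal D]\big\|_{-s}\le e^{-\tfrac12\log T}\big\|\mathcal Z^{-s}[r^{-\log T}_1\alpha,\mathcal D]\big\|_{-s}+C_1\log T\int_0^1 e^{-\tfrac u2\log T}\big\|\mathcal R^{-s}[r^{-u\log T}_1\alpha,\mathcal D]\big\|_{-(s-2)}\,\D u.
\]
The essential ingredient is the scaling identity \eqref{eq:sect5:2}, which in dimension one reads $[r^{-v}_1\alpha,\mathcal D]=e^{v}\,[r^{-v}_1\alpha,\Birkhoff^{1,r^{-v}_1\alpha}_{[0,e^{-v}T]}m]$. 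For the first term, taking $v=\log T$ turns the interval into $[0,1]$; bounding $\mathcal Z^{-s}$ by the full norm, then using the trivial bound Lemma~\ref{trivial_bound_Sobolev} and Proposition~\ref{best_Sobolev_constant2}, one gets at most $e^{-\tfrac12\log T}\cdot T\cdot C\,\Height(\class{r^{-\log T}_1\alpha})^{1/4}=C\,T^{1/2}\Height(\class{r^{-\log T}_1\alpha})^{1/4}$. For the integral term, the boundary estimate above gives $\|\mathcal R^{-s}[r^{-v}_1\alpha,\mathcal D]\|_{-(s-2)}=e^{v}\|\mathcal R^{-s}[r^{-v}_1\alpha,\Birkhoff^{1,r^{-v}_1\alpha}_{[0,e^{-v}T]}m]\|_{-(s-2)}\le C\,e^{v}\Height(\class{r^{-v}_1\alpha})^{1/4}$, so the integral is bounded by $C_1C\log T\int_0^1 e^{\tfrac u2\log T}\Height(\class{r^{-u\log T}_1\alpha})^{1/4}\,\D u$, and the substitution $v=u\log T$ converts this into $C_1C\int_0^{\log T}e^{v/2}\Height(\class{r^{-v}_1\alpha})^{1/4}\,\D v$. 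Adding $\|\mathcal R^{-s}[\alpha,\mathcal D]\|_{-s}\lesssim\Height(\class{\alpha})^{1/4}$, which is dominated by the right–hand side (use Lemma~\ref{trivial_bound_Sobolev} for $T$ in a bounded range, and the quasi–invariance of the height along $r^t_1$ — Lemma~\ref{trivial_Diophantine_bound} together with the bounded speed of the base point in $\Siegel_g$ — for larger $T$), gives the claimed bound with $C_2=C_2(s)$.

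The step I expect to be the main obstacle is the Sobolev–index bookkeeping: passing between the $-s$ and the $-(s-2)$ norm through the orthogonal projections $\mathcal Z^{-s}$, $\mathcal R^{-s}$ (the two–derivative loss being forced by Lemma~\ref{small_projection}), and in particular obtaining the boundary estimate in the $-(s-2)$ norm rather than in the $-s$ norm in which Lemma~\ref{rest_estimate} is phrased; this is where one must invoke the cohomological tame estimates of Theorem~\ref{thm:sect3:2} at Sobolev order $s-2$ and the regularity of closed $\mathfrak p$–currents from Proposition~\ref{prop:sect:3:12}, and it is precisely this margin that pins down $s>g+7/2$. The remaining verifications — that $\partial\Birkhoff^{1,\alpha}_{U_T}m$ is a difference of two Dirac masses, the scaling identity \eqref{eq:sect5:2}, and the change of variables in the integral — are routine.
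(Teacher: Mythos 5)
Your proof is correct and follows essentially the same route as the paper's: decompose $[\alpha,\Birkhoff^{1,\alpha}_{U_T}m]$ via $\mathcal Z^{-s}+\mathcal R^{-s}$, feed the boundary estimate (Sobolev embedding plus Lemma~\ref{rest_estimate} at order $s-2$, forcing $g+1/2<s'<s-3$) into Lemma~\ref{recursive_inequality} with $d=1$ and renormalization time $\log T$, control the first term by the trivial bound of Lemma~\ref{trivial_bound_Sobolev} and the scaling~\eqref{eq:sect5:2}, and handle the leftover remainder $\|\mathcal R^{-s}[\alpha,\mathcal D]\|_{-s}$ via Lemma~\ref{trivial_Diophantine_bound}. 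The only cosmetic difference is that the paper first derives the bound for $\Birkhoff^{1,\alpha}_{U_{e^tT}}$ and then sets $T=1$ and renames $e^t=T$, whereas you apply the recursion directly to $\Birkhoff^{1,\alpha}_{U_T}$ with $t_1=\log T$; the resulting estimate is the same.
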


\begin{proof}
  For simplicity we set $r^t=r_1^t$. To start, we observe that,
  according to \eqref{eq:sect5:2} and Lemma \ref{rest_estimate}, we
  have
  \[
  \begin{split} \| \, \mathcal R^{-s} [r^{-t} \alpha ,
    \Birkhoff^{1,\alpha}_{U_{e^tT}} ] \, \| _{-(s-2)}
    & = e^t \, \| \, \mathcal R^{-s} [r^{-t} \alpha ,\Birkhoff ^{1, r^{-t}\alpha}_{U_{T}}  ] \, \| _{-(s-2)} \\
    & \le e^t \, \big\| \, [r^{-t}\alpha, \partial ( \Birkhoff ^{1,
      r^{-t}\alpha}_{U_{T}} ) ] \,\big\|_{-s'} \end{split}
  \]
  provided $g+1/2<s'<s-3$.  The boundary $\partial (\Birkhoff ^{1,
    r^{-t}\alpha}_{U_{T}})$ is a 0-dimensional current given by
  \[
  \lin \partial (\Birkhoff ^{1, r^{-t}\alpha}_{U_{T}} , f \rin =
  f(\Subgroup^{r^{-t}\alpha}_{T}(m)) - f(m) \, ,
  \]
  hence, by the Sobolev embedding theorem and the definition
  \eqref{best_Sobolev_constant} of the Best Sobolev Constant, we have
  \[
  \big\| \, [r^{-t} \alpha, \partial (\Birkhoff ^{1,
    r^{-t}\alpha}_{U_{T}})] \,\big\|_{-s'} \le 2 \, B_{s'}( \class{
    r^{-t}\alpha }) \, .
  \]
  There follows from Proposition \ref{best_Sobolev_constant2} that
  \[
  \big\| \, \mathcal R^{-s} [r^{-t}\alpha,
  \Birkhoff^{1,\alpha}_{U_{e^tT}} ] \,\big\|_{-(s-2)} \le 2 \,e^t \,
  B_{s'}( \class{r^{-t} \alpha}) \le C(s') \, e^t \, \Height
  (\class{r^{-t} \alpha })^{1/4} \, .
  \]
  Using Lemma \ref{recursive_inequality} with $\mathcal D =
  \Birkhoff^{1,\alpha}_{U_{e^tT}}m$ and $t = n \step$, we may estimate
  the boundary term in the decomposition
  \eqref{orthogonal_decomposition_Birkhoff_bundle} as
  \[
  \begin{split}
    \left\|\, \mathcal Z^{-s} [ \alpha,
      \Birkhoff_{U_{e^tT}}^{1,\alpha} ]\, \right\| _{-s} \leq & \,
    e^{-t/2 } \left\|\, \mathcal Z^{-s} [r^{-t} \alpha,
      \Birkhoff_{U_{e^tT}}^{1,\alpha} ]\, \right\| _{-s} \\ & \, +
    C(s,s') \, \int_{0}^{t} e^{ u /2} \Height (\class{r^{-u} \alpha})
    ^{1/4}\,\D{u} \, .
  \end{split}
  \]
  By the covariance \eqref{eq:sect5:2}, Proposition
  \ref{best_Sobolev_constant2} and Lemma \ref{trivial_bound_Sobolev},
  we have
  \[
  \begin{split}
    \left\|\, \mathcal Z^{-s} [ r^{-t} \alpha, \Birkhoff
      _{U_{e^tT}}^{1,\alpha} ]\, \right\| _{-s} & =
    e^{t} \, \left\|\, \mathcal Z^{-s}  [ r^{-t} \alpha, \Birkhoff_{U_{T}}^{1,r^{-t} \alpha}  ]\, \right\|  _{-s} \\
    & \leq e^{t} \, C(s) \, T \, \Height( \class{r^{-t} \alpha} )
    ^{1/4} \, .
  \end{split}
  \]
  There follows that
  \[
  \begin{split}
    \left\|\, \mathcal Z^{-s} [ \alpha,
      \Birkhoff_{U_{e^tT}}^{1,\alpha} ]\, \right\| _{-s} & \leq
    e^{t/2} \, C(s) \, T \, \Height( \class{r^{-t}_1 \alpha} ) ^{1/4} \\
    & \quad + C(s,s') \, \int_{0}^{t} e^{ u /2} \Height (\class{r^{-u}
      \alpha}) ^{1/4}\,\D{u}\,.
  \end{split}
  \]
  If we take first $\radius=1$, then rename $e^{t} := \radius \ge 1$,
  we finally get
  \[
  \begin{split} \left\| \, \mathcal Z^{-s} [\alpha, \Birkhoff
      _{U_T}^{1,\alpha} m ] \, \right\| _{-s} \leq
    & \quad C (s)\, T^{1/2}  \, \Height ( \class{r^{-\log T} \alpha} ) ^{1/4} \\
    & \quad + C(s,s') \, \int_0^{\log T} e^{ t /2} \, \Height (
    \class{r^{-t} \alpha} ) ^{1/4} \, \D{t} \, .
  \end{split}
  \]
  The reminder term in the decomposition
  \eqref{orthogonal_decomposition_Birkhoff_bundle} is estimated as at
  the beginning of the proof, using Lemma \ref{rest_estimate},
  Proposition \ref{best_Sobolev_constant2} and Lemma
  \ref{trivial_Diophantine_bound}, and is bounded by
  \[
  \begin{split} \left\| \, \mathcal R^{-s} [\alpha, \Birkhoff
      _{U_T}^{1,\alpha} ] \, \right\| _{-s} & \leq C(s) \,
    \Height([\alpha]) ^{1/4} \\ & = C(s) \, \Height( \class{ r^{ \log
        \radius} r^{ -\log\radius} \alpha} ) ^{1/4} \\ & \leq C(s) \,
    \radius ^{1/2} \, \Height(\class{r^{ -\log\radius} \alpha}) ^{1/4}
    \, .
  \end{split}
  \]
  The Theorem follows.
\end{proof}

The next result follows immediatly from the above Theorem
\ref{result_one_dim} and the Kleinbock-Margulis logarithm law, i.e
from Proposition \ref{Sobolev_constant_estimate}.

\begin{proposition} \label{result_one_dim_cor} Let the notation as in
  Theorem \ref{result_one_dim}.  There exists a full measure set
  $\Omega_g (\dcart_1)\subset \Mod$ such that if $[\alpha] \in \Omega
  _g(\dcart_1)$ and $\varepsilon >0$ there exists a constant $C =C
  (s,\varepsilon)>0$ such that for all $\radius \gg 1$ and all $m\in
  \M$ we have
  \[
  \left\| \, [\alpha, \Birkhoff _{U_T}^{1,\alpha} m] \, \right\| _{-s}
  \leq C \, {\radius^{1/2}} \, (\log \radius )^{1/(2g+2) +
    \varepsilon} .
  \]
\end{proposition}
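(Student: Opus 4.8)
The plan is to insert the bound of Theorem~\ref{result_one_dim} into the Kleinbock--Margulis logarithm law, in the guise of Proposition~\ref{Sobolev_constant_estimate}. Recall that Theorem~\ref{result_one_dim} gives, for $s>g+7/2$ and uniformly in $m\in\M$,
\[
\left\| [\alpha, \Birkhoff_{U_T}^{1,\alpha}m]\right\|_{-s}
\le C_2\, T^{1/2}\,\Height\!\left(\class{r_1^{-\log T}\alpha}\right)^{1/4}
+ C_2\int_0^{\log T} e^{u/2}\,\Height\!\left(\class{r_1^{-u}\alpha}\right)^{1/4}\,\D u,
\]
where $r_1^t=e^{t\dcart_1}$ and $\dcart_1$ is a non-zero vector of the Cartan algebra $\cartan$. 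So the only work is to estimate these two terms once the height along the one-parameter orbit $u\mapsto r_1^{-u}\alpha$ has been controlled.

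First I would apply Proposition~\ref{Sobolev_constant_estimate} with $\dcart=\dcart_1$, obtaining a full measure set $\Omega_g(\dcart_1)\subset\Mod$ such that for $[\alpha]\in\Omega_g(\dcart_1)$ one has $\limsup_{u\to\infty}\log\Height(\class{r_1^{-u}\alpha})/\log u\le 2/(g+1)$. Fixing such an $\alpha$ and $\varepsilon>0$, the definition of $\limsup$ yields a threshold $u_0$ with $\Height(\class{r_1^{-u}\alpha})\le u^{2/(g+1)+4\varepsilon}$ for $u\ge u_0$, while for $0\le u\le u_0$ the trivial bound of Lemma~\ref{trivial_Diophantine_bound} gives $\Height(\class{r_1^{-u}\alpha})\le e^{2u_0}\Height(\class{\alpha})$; hence there is a constant $C=C(\alpha,\varepsilon)$ with
\[
\Height\!\left(\class{r_1^{-u}\alpha}\right)^{1/4}\le C\,(1+u)^{1/(2g+2)+\varepsilon},\qquad \forall\,u\ge 0.
\]
Plugging $u=\log T$ into this bound handles the first (boundary) term of the displayed inequality at once. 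For the integral term I would use the elementary Laplace-type estimate: for $a\ge 0$ and $A\ge 0$, integration by parts gives $\int_0^{A} e^{u/2}(1+u)^a\,\D u\le 2\,e^{A/2}(1+A)^a$, the exponential weight concentrating the mass of the integral near the endpoint $u=A$. Applying it with $a=1/(2g+2)+\varepsilon$ and $A=\log T$ shows the integral is also $\le C\,T^{1/2}(\log T)^{1/(2g+2)+\varepsilon}$ for $T\gg 1$. Summing the two contributions and relabelling constants gives the claim.

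There is no genuinely hard step here: this is the final, purely arithmetic step of the renormalisation scheme, and everything substantive is already packaged in Theorem~\ref{result_one_dim} and Proposition~\ref{Sobolev_constant_estimate}. The only points needing a little care are: applying the logarithm law along the whole orbit $u\mapsto r_1^{-u}\alpha$ (not just at $u=\log T$), so that a single pointwise height bound controls both terms; the Laplace-type integral estimate above; and the (standard) fact that the constant produced this way depends on the chosen point $[\alpha]\in\Omega_g(\dcart_1)$, through the implicit threshold in the $\limsup$, so the statement is to be read with an $\alpha$-dependent constant.
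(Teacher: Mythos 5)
Your proof is correct and matches exactly the route the paper intends (the paper simply states that the result ``follows immediately'' from Theorem~\ref{result_one_dim} and Proposition~\ref{Sobolev_constant_estimate}, without spelling out the details). You supply precisely those details: pointwise height bound $\Height(\class{r_1^{-u}\alpha})^{1/4}\le C(1+u)^{1/(2g+2)+\varepsilon}$ from the logarithm law (patched on $[0,u_0]$ by the trivial bound of Lemma~\ref{trivial_Diophantine_bound}), the elementary estimate $\int_0^A e^{u/2}(1+u)^a\,\D u\le 2e^{A/2}(1+A)^a$ for $a\ge0$, and the observation that the constant is $\alpha$-dependent through the implicit threshold in the $\limsup$.
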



Now we may use induction on the dimension of the isotropic group
$\Subgroup^d \subset \Heis$.  Let $(s_d)_{d\in \N}$ be the solution of
the recusive equation $s_{d+1} = s_d+3+d/2$ with initial condition
$s_1 = g+7/2$, that is, $s_d = d(d+11)/4+g+1/2 $.

\begin{theorem}
  \label{result_d_dim}
  Let $s>s_d$.  There exists a constant $C_3=C_3(s,d) >0$ such the 
  following holds true. Let $\alpha \in \Symp_{2g}(\R)$ and let $
  \Subgroup ^{d,\alpha} \subset \Heis$ be the $d$-dimensional Abelian
  subgroup of $\Heis$ generated by the base vector fields $X_1^\alpha,
  \dots , X_d^\alpha \in \hei$. Let $U_d(t):=[0,e^{t}]^d$. Let
  $\Birkhoff_{U_d(t)}^{d, \alpha}=\Birkhoff_{U_d(t)}^{d, \alpha} m$ be
  the Birkhoff sum associated to some $m\in M$ for the action of $
  \Subgroup ^{d,\alpha}_x$ , $(x\in \R^d)$.  Then, for all $t>0$ and
  all $m\in \M$, we have
  \begin{equation} \label{inductive_hypothesis}
    \begin{split}
      &\left\| \, [\alpha, \Birkhoff_{U_d(t)}^{d,\alpha} m] \, \right\| _{-s} \\
      & \leq C_3 \,  \sum_{k=0}^d\, \sum_{1\le i_1< \dots <i_k \le
        d}\,\,\, \int_0^{t}\dots\int_0^{t}
      \exp\Big(\tfrac {d} 2 t - {\tfrac 1 2\sum_{\ell=1}^k u_\ell }\Big)\\
      &\qquad\qquad\qquad\qquad\times \Height \Big( \class{ \prod_{{
            1\le j\le d}} r_j^{-t} \prod_{\ell=1}^k
        r_{i_\ell}^{u_\ell} \alpha}\Big)^{1/4}\,\D{u_1}\dots\D{u_k}.
    \end{split}
  \end{equation} 
\end{theorem}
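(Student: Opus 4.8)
The plan is to argue by induction on $d$, the base case $d=1$ being Theorem~\ref{result_one_dim}: writing $T=e^{t}$ there, its first term $C_2T^{1/2}\Height(\class{r_1^{-t}\alpha})^{1/4}$ is the $k=0$ summand of \eqref{inductive_hypothesis}, and after the substitution $u\mapsto t-u$ the integral term is the $k=1$ summand. For the inductive step I assume \eqref{inductive_hypothesis} in dimension $d-1$ for every $s>s_{d-1}$, fix $s>s_{d}=s_{d-1}+(d+5)/2$, and abbreviate $\rho^{u}:=r_1^{-u}\cdots r_d^{-u}$ for the diagonal renormalization flow, so that $\rho^{t}=\prod_{1\le j\le d}r_j^{-t}$. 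From \eqref{eq:sect5:2} I get the scaling identity $\Birkhoff^{d,\alpha}_{U_d(t)}m=e^{du}\,\Birkhoff^{d,\rho^{u}\alpha}_{U_d(t-u)}m$ for $0\le u\le t$; as these are equal currents, the relation persists after applying the fibrewise orthogonal projections $\mathcal Z^{-s}$ and $\mathcal R^{-s}$.

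The first step is to control the remainder $\mathcal R^{-s}[\rho^{u}\alpha,\Birkhoff^{d,\alpha}_{U_d(t)}m]$ at intermediate times $0\le u\le t$. The boundary $\partial\,\Birkhoff^{d,\beta}_{U_d(T')}m$ of a $d$-cube Birkhoff current is the alternating sum of the $2d$ Birkhoff currents over the faces $[0,e^{T'}]^{d-1}$, the two faces transverse to the $j$-th direction being Birkhoff currents for the $(d-1)$-dimensional isotropic subgroup generated by $\{X^{\beta}_i:i\ne j\}$, based at $m$ or at a translate of it. Choosing a basis-permutation matrix $P_j\in\Uni_g$ with $\pi_j(\{1,\dots,d-1\})=\{1,\dots,d\}\setminus\{j\}$, that subgroup is $\Subgroup^{d-1,P_j^{-1}\beta}$, and since $\Lap_{P_j^{-1}\beta}=\Lap_{\beta}$, $\class{P_j^{-1}\beta}=\class{\beta}$ and conjugation by $P_j$ carries $r_\ell$ to $r_{\pi_j(\ell)}$, I may feed each face into the inductive hypothesis. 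Together with Lemma~\ref{rest_estimate} in Sobolev order $s-2$ (the range $g+1/2<s'<s-2-(d+1)/2$ leaves room for $s'>s_{d-1}$), the scaling identity, and the Cartan identity $\prod_{i\ne j}r_i^{-(t-u)}\cdot\rho^{u}=\rho^{t}r_j^{\,t-u}$, this yields for $0\le u\le t$ an estimate of the form
\[
\bigl\|\mathcal R^{-s}[\rho^{u}\alpha,\Birkhoff^{d,\alpha}_{U_d(t)}m]\bigr\|_{-(s-2)}\le C\,e^{du}\sum_{j=1}^{d}\,\sum_{k'=0}^{d-1}\,\sum_{1\le\ell_1<\dots<\ell_{k'}\le d-1}\int_{[0,t-u]^{k'}}e^{\frac{d-1}{2}(t-u)-\frac12\sum_m w_m}\,\Height\bigl(\class{\rho^{t}r_j^{\,t-u}\prod_m r_{\pi_j(\ell_m)}^{w_m}\alpha}\bigr)^{1/4}\,\D w_1\cdots\D w_{k'}.
\]

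Next I would estimate the boundary term $\mathcal Z^{-s}[\alpha,\Birkhoff^{d,\alpha}_{U_d(t)}m]$ by Lemma~\ref{recursive_inequality} with $\mathcal D=\Birkhoff^{d,\alpha}_{U_d(t)}m$ and $t_1=\dots=t_d=t$. The leading term $e^{-dt/2}\|\mathcal Z^{-s}[\rho^{t}\alpha,\mathcal D]\|_{-s}$ equals, by the scaling identity with $u=t$, $e^{dt/2}\|\mathcal Z^{-s}[\rho^{t}\alpha,\Birkhoff^{d,\rho^{t}\alpha}_{[0,1]^d}m]\|_{-s}$, which by Lemma~\ref{trivial_bound_Sobolev}, the fact that $\mathcal Z^{-s}$ has norm one, and Proposition~\ref{best_Sobolev_constant2} is at most $C\,e^{dt/2}\Height(\class{\rho^{t}\alpha})^{1/4}$ --- the $k=0$ summand. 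Into the remaining term $C_1\,dt\int_0^1 e^{-udt/2}\|\mathcal R^{-s}[\rho^{ut}\alpha,\mathcal D]\|_{-(s-2)}\,\D u$ I substitute the bound of the previous step with $u$ replaced by $ut$ and set $v=ut$; the three exponential factors fuse, $e^{-dv/2}\cdot e^{dv}\cdot e^{\frac{d-1}{2}(t-v)}=e^{\frac d2 t-\frac v2}$, and after renaming $u_j:=t-v$ and $u_{\pi_j(\ell_m)}:=w_m$ each summand becomes the integral of $\exp(\frac d2 t-\frac12\sum_{i\in S}u_i)\,\Height(\class{\rho^{t}\prod_{i\in S}r_i^{u_i}\alpha})^{1/4}$ over $\{u_j\in[0,t],\ u_i\in[0,u_j]\ (i\in S\setminus\{j\})\}$, with $S:=\{j\}\cup\{\pi_j(\ell_1),\dots,\pi_j(\ell_{k'})\}$. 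Letting $j$ and $(k',\ell_\bullet)$ vary, $S$ runs over all nonempty subsets of $\{1,\dots,d\}$, each at most $d$ times, and enlarging every integration region to $[0,t]^{S}$ bounds this contribution by $C$ times $\sum_{1\le|S|\le d}$ of the $S$-term of \eqref{inductive_hypothesis}.

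It then remains to assemble the decomposition $[\alpha,\Birkhoff^{d,\alpha}_{U_d(t)}m]=\mathcal Z^{-s}[\alpha,\cdot]+\mathcal R^{-s}[\alpha,\cdot]$: the first piece is controlled above, while the second is handled by the first-step estimate with $u=0$ (now using Lemma~\ref{rest_estimate} in order $s$, with $s_{d-1}<s'<s-(d+1)/2$), which produces the factor $r_j^{\,t}$ with fixed exponent $t$; absorbing it via $\Height(\class{r_j^{\,t}\gamma})\le e^{2t}\Height(\class{\gamma})$ --- a consequence of Lemma~\ref{trivial_Diophantine_bound} after conjugating with the block-swap $J\in\Uni_g$, which sends $\dcart_j$ to $-\dcart_j$ --- at the cost of $e^{t/2}$ reduces this piece to the $S$-terms of \eqref{inductive_hypothesis} with $|S|\le d-1$. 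Adding the three contributions gives \eqref{inductive_hypothesis} in dimension $d$ with a constant $C_3(s,d)$ built from $C_3(s',d-1)$, $C_1(s)$ and the number of faces and permutations. I expect the genuine difficulty to lie not in any single inequality but in this bookkeeping: one must check that the weights produced by the renormalization flow (Lemma~\ref{recursive_inequality}), by the scaling of the Birkhoff current, and by the inductive weight combine into the single factor $\exp(\frac d2 t-\frac12\sum u_i)$, and that every auxiliary flow exponent that appears lies in $[0,t]$, so that the integration domains can legitimately be enlarged to cubes.
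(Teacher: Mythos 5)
Your argument follows the paper's induction scheme closely: base case $d=1$ from Theorem~\ref{result_one_dim}, then the orthogonal decomposition \eqref{orthogonal_decomposition_Birkhoff_bundle}, Lemma~\ref{recursive_inequality} for the closed part, Lemma~\ref{rest_estimate} to pass to the boundary, the inductive hypothesis on the $(d-1)$-dimensional faces, and the final merging of exponents and integration domains. The permutation $P_j\in\Uni_g$ to align the face subgroup with the inductive normalization and the block-swap $J\in\Uni_g$ to absorb $r_j^{t}$ are both legitimate, since $\Height$ and the reference Laplacian are $\Uni_g$-left-invariant and $J\dcart_j J^{-1}=-\dcart_j$.

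The one place where you genuinely part ways with the paper is the estimate of the terminal remainder $\mathcal R^{-s}[\alpha,\Birkhoff^{d,\alpha}_{U_d(t)}]$. The paper's \eqref{eq:sect5:10} asserts $\|\mathcal R^{-s}[\alpha,\cdot]\|_{-s}\le C(s)\Height([\alpha])^{1/4}$ with no $t$-dependence, citing Lemma~\ref{rest_estimate}, Proposition~\ref{best_Sobolev_constant2} and Lemma~\ref{trivial_Diophantine_bound}. This is exactly the $d=1$ bound (where the boundary is a $0$-dimensional current of uniformly bounded mass), but for $d\ge 2$ the boundary of $\Birkhoff^{d,\alpha}_{U_d(t)}m$ consists of $2d$ Birkhoff currents over $(d-1)$-cubes of side $e^{t}$, so the trivial bound (Lemma~\ref{trivial_bound_Sobolev}) picks up an extra factor $e^{(d-1)t}$, which cannot be absorbed into the $e^{dt/2}$-weight of the $k=0$ summand. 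Your approach — feeding each boundary face into the inductive hypothesis, exactly as the paper already does inside the integral term $II$, and absorbing the residual $r_j^{t}$ at cost $e^{t/2}$ — is the correct way to close the induction, and it is what the paper's argument actually requires. In short: same route, but you supply a step that the paper as written does not correctly justify for $d\ge 2$.
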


\begin{proof} We argue by induction. The case $d=1$ is Theorem
  \ref{result_one_dim}. We assume the result holds for $d-1 \geq 1$.

  Set for simplicity $r^u = r^{u}_1\dots r^{u}_d $.

  Decomposing the current $ \Birkhoff^{d,\alpha}_{U_d(t)}m$ as
  in~\eqref{orthogonal_decomposition_Birkhoff_bundle} as a sum of a
  current $\mathcal Z^{-s} [ \alpha,\Birkhoff_{U_d(t)}^{d,\alpha} ]$
  and a current $\mathcal R^{-s} [
  \alpha,\Birkhoff_{U_d(t)}^{d,\alpha} ]$, we first estimate the
  boundary term $\big\|\, \mathcal Z^{-s} [
  \alpha,\Birkhoff_{U_d(t)}^{d,\alpha} ]\,\big\| _{-s}$. Using Lemma
  \ref{recursive_inequality} we have:
  \begin{equation}
    \label{eq:sect5:5}
    \begin{split}
      \left\|\, \mathcal Z^{-s} [ \alpha,\Birkhoff_{U_d(t)}^{d,\alpha}
        ]\, \right\| _{-s} \leq & \, e^{-dt/2 } \left\|\, \mathcal
        Z^{-s} [r^{-1} \alpha, \Birkhoff_{U_d(t)}^{d,\alpha} ]\,
      \right\| _{-s}
      \\
      & + C_1(s)\, \int_{0}^{t} e^{-{u} d/{2} } \| \, \mathcal R^{-s}
      [ r^{-u}\alpha, \Birkhoff_{U_d(t)}^{d,\alpha}] \, \|
      _{-(s-2)}\, \D{u}\\
      &= I +II.
    \end{split}
  \end{equation}

  By the covariance \eqref{eq:sect5:2}, Lemma
  \ref{trivial_bound_Sobolev} and Proposition
  \ref{best_Sobolev_constant2}, we have
  \[
  \begin{split}
    \left\|\, \mathcal Z ^{-s} [ r^{-1} \alpha,
      \Birkhoff_{U_d(t)}^{d,\alpha} ]\, \right\| _{-s} & = e^{dt} \,
    \left\|\, \mathcal Z^{-s} [ r^{-1} \alpha,
      \Birkhoff_{U_d(0)}^{d,r^{-t}
        \alpha}  ]\, \right\|  _{-s}  \\
    & \leq C \,e^{dt} \, \Height( \class{r^{-t} \alpha} )^{1/4}
  \end{split}
  \]
  Hence
  \begin{equation}
    \label{eq:sect5:8}
    I\le  C \,e^{dt/2} \, \Height( \class{r^{-t} \alpha} )^{1/4}
  \end{equation}
  corresponding to the term with $k=0$ in the statement of the
  theorem.

  To estimate the term $II$, we start observing that, using
  \eqref{eq:sect5:2} and Lemma~\ref{rest_estimate}, provided $s'<s
  -2-(d+1)/2$, we have
  \begin{equation}
    \label{eq:sect5:6}
    \begin{split}
      \big\| \mathcal R^{-s} [r^{-u} \alpha ,
      \Birkhoff^{d,\alpha}_{U_d(t)} ] \,\big\|_{-(s-2)}& = \big\|
      e^{ud} \, \mathcal R^{-s} [r^{-u} \alpha ,\Birkhoff ^{d,
        r^{-u}\alpha}_{U_d(t-u)}  ] \,\big\|_{-(s-2)} \\
      & \le C(s',s) \,e^{ud} \, \big\| \, [r^{-u}\alpha, \partial (
      \Birkhoff ^{d, r^{-u}\alpha}_{U_d(t-u)} ) ] \,\big\|_{-s'}.
    \end{split}
  \end{equation}

  The boundary $ \partial ( \Birkhoff ^{d, r^{-u}\alpha}_{U_d(t-u)} )
  $ is the sum of $2d$ currents of dimension $d-1$. These currents are
  Birkhoff sums of $d$ ``face'' subgroups $\Subgroup^{d-1,
    r^{-u}\alpha}_j$, $(j=1,\dots,d)$, obtained from $\Subgroup ^{d,
    r^{-u}\alpha}$ by omitting one of the base vector fields
  $X_1^\alpha,\dots,X_d^\alpha$. For each $j=1,\dots,d$ there are two
  Birkhoff sums of $\Subgroup^{d-1, r^{-u}\alpha}_j$ for points
  $m_{\pm j}$ along the $(d-1)$-dimensional cubes $U_{d-1,j}(t-u)$
  obtained from $U_{d}(t-u)$ by omitting the $j$-th factor interval
  $[0,e^{t-u}]$.

  If $s'>s_{d-1}$ (and therefore $s > s_{d-1}+(d+1)/2 +2=s_d$),
  denoting by $ \Birkhoff ^{d-1, r^{-u}\alpha}_{U_{d-1}(t-u)}$  the
  generic summand of $ \partial ( \Birkhoff ^{d,
    r^{-u}\alpha}_{U_d(t-u)} ) $, we may estimate the norm of each such
  boundary term using the inductive
  hypothesis~\eqref{inductive_hypothesis}. For the $j$-face we obtain
  \[
  \begin{split}
    &\left\| \, [r^{-u} \alpha , \mathcal P_{U_{d-1}(t-u)}^{d-1,r^{-u}
        \alpha}] \, \right\| _{-s'}
    \le  C_3(s',d-1)\, \sum_{k=0}^{d-1} \quad \sum_{\substack{1\le i_1< \dots <i_{k} \le d\\i_\ell\neq j}}\\
    & \quad\times \int_0^{t-u}
    \D{u_{i_1}}\cdots\int_0^{t-u}\!\!\!\D{u_{i_k}}
    \exp\Big({\tfrac {d-1} 2 (t-u)-\tfrac 1 2\sum_{\ell=1}^{k} u_{i_\ell} }\Big)\\
    &\qquad\qquad\qquad\qquad\times \Height \Big( \class{
      \prod_{\substack{{1\le \ell\le d}\\{\ell\neq j}}}
      r_\ell^{-(t-u)} \prod_{\ell=1}^{k} r_{i_\ell}^{u_{i_\ell}}
      \,r^{-u}\alpha}\Big)^{1/4}.
  \end{split} \] From~\eqref{eq:sect5:5} and~\eqref{eq:sect5:6} we
  obtain the following estiamate for the term $II$:
  \begin{equation}
    \label{eq:sect5:7}
    \begin{split} 
      &II \le C_4(s,d)\,\sum_{j=1}^{d}\,
      \sum_{k=0}^{d-1}\quad\sum_{\substack{1\le i_1< \dots <i_{k} \le
          d\\i_\ell\neq
          j}} \\
      & \quad\quad \times \int_{0}^t \D{u}
      \int_0^{t-u}\D{u_{i_1}}\cdots\int_0^{t-u}\!\!\!\D{u_{i_k}}
      \exp\Big(\tfrac {d-1} 2 t +\tfrac {1} 2 u -\tfrac 1 2\sum_{\ell=1}^{k} u_{i_\ell} \Big)\\
      &\qquad\qquad\qquad\times \Height \Big( \class{ \prod_{ 1\le
          \ell\le d} r_\ell^{-t} \prod_{\ell=1}^{k}
        r_{i_\ell}^{u_{i_\ell}} \,r_j^{-u+t}\alpha}\Big).
    \end{split}
  \end{equation}
  Applying the change of variable $u_j=t-u$, majorizing the integrals
  $\int_0^{t-u}$ with integrals $\int_0^{t}$ and observing that there
  are at most $k+1$ integer intervals $]i_t,i_{t+1}[$ in which the
  integer $j$ in the above sum may land, we obtain
  \begin{equation}
    \label{eq:sect5:9}
    \begin{split} 
      & II \le C_4(s,d)\,\sum_{j=1}^{d}\,
      \sum_{k=0}^{d-1}\quad\sum_{\substack{1\le i_1< \dots <i_{k} \le
          d\\i_\ell\neq j}}
      \\
      & \quad\quad \times \int_{0}^t \D{u_j}
      \int_0^{t-u}\D{u_{i_1}}\cdots\int_0^{t-u}\!\!\!\D{u_{_{i_k}}}
      \exp\Big(\tfrac {d} 2 t -\tfrac {1} 2 u_j -\tfrac 1
      2\sum_{\ell=1}^{k} u_{i_\ell} \Big)
      \\
      &\qquad\qquad\qquad\times \Height \Big( \class{ \prod_{ 1\le
          \ell\le d} r_\ell^{-t} \prod_{\ell=1}^{k}
        r_{i_\ell}^{u_{i_\ell}} \,r_j^{-u_j}\alpha}\Big).
      \\
      & \quad\le C_5(s,d)\sum_{k=1}^{d}\quad\sum_{1\le i_1< \dots
        <i_{k} \le d} \quad
      \int_0^{t}\D{u_{i_1}}\cdots\int_0^{t}\D{u_{_{i_k}}}
      \\
      &\quad\quad\quad\quad\quad\quad\quad\quad\times \exp\Big(\tfrac
      {d} 2 t -\tfrac 1 2\sum_{\ell=1}^{k} u_{i_\ell} \Big)\Height
      \Big( \class{ \prod_{ 1\le \ell\le d} r_\ell^{-t}
        \prod_{\ell=1}^{k} r_{i_\ell}^{u_{i_\ell}} \alpha}\Big).
    \end{split}
  \end{equation}   
  The reminder term $\mathcal R^{-s} [
  \alpha,\Birkhoff_{U_d(t)}^{d,\alpha} ]$ in the decomposition
  \eqref{orthogonal_decomposition_Birkhoff_bundle} is estimated using
  Lemma \ref{rest_estimate}, Proposition \ref{best_Sobolev_constant2}
  and Lemma \ref{trivial_Diophantine_bound}. We have:
  \begin{equation}
    \label{eq:sect5:10}
    \begin{split} \left\| \,\mathcal R^{-s} [ \alpha,\Birkhoff_{U_d(t)}^{d,\alpha} ] \, \right\| _{-s} & \leq C(s) \, \Height([\alpha]) ^{1/4} \\ & = C(s) \,
      \Height( \class{ r^{t} r^{ -t} \alpha} ) ^{1/4} \\ &
      \leq C(s) \,e ^{td/2} \, \Height(\class{r^{ -t} \alpha})
      ^{1/4} \, , 
    \end{split}
  \end{equation}
  producing one more term like \eqref{eq:sect5:8}.  The theorem
  follows from the estimates \eqref{eq:sect5:8} and
  \eqref{eq:sect5:9}, for the terms I and II, and \eqref{eq:sect5:10}
  for the remainder.
\end{proof}

Different possible asympthotics are then consequences of the
Diophantine conditions \eqref{Diophantine}, \eqref{Roth} and
\eqref{bounded}, or the Kleinbock-Margulis logarithm law (Proposition
\ref{Sobolev_constant_estimate}).

\paragraph{Proof of  Theorem \ref{equidistribution_intro}.} 
Let the notations as in Theorem \ref{result_d_dim}, and consider the
integrals in \eqref{inductive_hypothesis}.  It follows from Lemma
\ref{trivial_Diophantine_bound} that, for any $0 \leq k \leq d$,
\[
\Height \Big( \class{ \prod_{{ 1\le j\le d}} r_j^{-t} \prod_{\ell=1}^k
  r_{i_\ell}^{u_\ell} \alpha}\Big)^{1/4} \leq e^{\frac{1}{2} \sum
  _{\ell =1}^k u_k } \Height \Big( \class{ \prod_{{ 1\le j\le d}}
  r_j^{-t} \alpha}\Big)^{1/4} \, . 
\]
There follows from \eqref{inductive_hypothesis} that
\begin{equation} \label{final} \left\| \, [\alpha,
    \Birkhoff_{U_d(t)}^{d,\alpha} ] \, \right\| _{-s} \leq C \, t^d \,
  e^{\frac{d}{2}t} \Height \Big( \class{ \prod_{{ 1\le j\le d}}
    r_j^{-t} \alpha}\Big)^{1/4}
\end{equation} 
for some constant $C=C(s,d)$. Therefore the norms of our currents
depend on the Diophantine properties of $\alpha$ in the direction of
$\dcart(d) := \dcart_1+\dots +\dcart_d \in \cartan$ (recall that $r_i^t
= e^{t\dcart_i}$), defined in \ref{def:Diophantine_properties}. For
example, if $\alpha$ satisfies a $\dcart(d)$-Diophantine condition
\eqref{Diophantine} of exponent $\sigma>0$, we get
\[
\left\| \, [\alpha, \Birkhoff_{U_d(t)}^{d,\alpha} ] \, \right\| _{-s}
\leq C \, t^d \, e^{d(1-\sigma /2)t} \leq C' e^{d(1-\sigma'/2)t}
\]
for all $\sigma ' <\sigma$.  If $\alpha$ satisfies a $\dcart(d)$-Roth
condition \eqref{Roth}, we get
\[
\left\| \, [\alpha, \Birkhoff_{U_d(t)}^{d,\alpha} ] \, \right\| _{-s}
\leq C \, e^{(d/2+\varepsilon)t}
\]
for all $\varepsilon>0$.
If $\alpha$ is of bounded type, i.e. satisfies \eqref{bounded}, then
all the ``Height'' terms inside the integrals of
\eqref{inductive_hypothesis} are bounded, and we get
\[
\left\| \, [\alpha, \Birkhoff_{U_d(t)}^{d,\alpha} ] \, \right\| _{-s}
\leq C \, e^{(d/2)t}.
\]
On the other side, according to the easy part of Kleinbock and
Margulis theorem \ref{Khinchin-Kleinbock-Margulis_law}, there exists a
full measure set $\Omega_g (\dcart(d)) \subset \Sigma _g$ such that if
$\class{\alpha} \in \Omega _g(\dcart(d))$ and $\varepsilon >0$ then
\[
\Height \Big( \class{\prod_{{ 1\le j\le d}} r_j^{-t}
  \alpha}\Big)^{1/4} \leq C t^{1/(2g+2) + \varepsilon} \, .
\]
There follows from \eqref{final} that for such $\alpha$'s \[ \left\|
  \, [\alpha, \Birkhoff_{U_d(t)}^{d,\alpha} ] \, \right\| _{-s} \leq C
\, t^{d+1/(2g+2) + \varepsilon } \, e^{(d/2)t} \, . \]

\subsection{Birkhoff averages and Theta sums} \label{sec_theta_sums}

\paragraph{First return map.} 
Here it is convenient to work with the ``polarized'' Heisenberg group,
the set $\Heis_{\mathrm{pol}} \approx \R^g \times \R^g \times \R$
equipped with the group law $(x,\xi,t) \cdot (x',\xi ', t') =
(x+x',\xi+\xi ', t+t'+\xi x')$.  The homomorphism $\Heis \to \Heis
_{\mathrm{pol}}$, as well as the exponential map $\exp : \hei \to
\Heis _{\mathrm{pol}}$, is $(x,\xi, t) \mapsto (x, \xi, t +
\frac{1}{2}\xi x )$.  Define the ``reduced standard Heisenberg group''
$\Heis_{\mathrm{red}} := \Heis_{\mathrm{pol}} / (\{ 0 \} \times \{ 0
\} \times \tfrac{1}{2} \Z) \approx \R^g \times \R^g \times (\R /
\tfrac{1}{2}\Z)$, and then the ``reduced standard lattice''
$\Lattice_{\mathrm{red}} := \Z^g \times \Z^g \times \{ 0 \} \subset
\Heis_{\mathrm{red}}$. It is clear that the quotient $
\Heis_{\mathrm{red}} / \Lattice_{\mathrm{red}} \approx \Heis /
\Lattice$ is the standard nilmanifold.  The subgroup $\mathsf N =
\{(0, \xi , t ) \, \text{with } \, \xi \in \R^g \, , \, t \in \R /
\tfrac{1}{2}\Z \}$ is a normal subgroup of $\Heis_{\mathrm{red}}$. The
quotient $\Heis_{\mathrm{red}} / \mathsf N$ is isomorphic to the
Lagrangian subgroup $ \Subgroup = \{ (x, 0, 0) \, \text{with }\, x \in
\R^g \}$, and we have an exact sequence $ 0 \rightarrow \mathsf N
\rightarrow \Heis_{\mathrm{red}} \rightarrow \Subgroup \rightarrow 0$.
Therefore $\Heis_{\mathrm{red}} \approx \Subgroup \ltimes \mathsf N$,
and in particular any $(x, \xi , t) \in \Heis_{\mathrm{red}}$ may be
uniquely written as the product
\[
(x, \xi , t ) = \exp (x_1 X_1+\dots +x_gX_g) \cdot (0,\xi,t) = (x, 0,
0) \cdot (0, \xi , t ) \, .
\]
Given a symmetric $g \times g$ real matrix $\fq$, we consider the
symplectic matrix $\alpha= \left( \begin{smallmatrix} I & 0 \\ \fq &
    I \end{smallmatrix} \right) \in \Symp_{2g} (\R)$.  Then $ \exp
(x_1 X^\alpha_1+\dots +x_gX_g^\alpha) = (x, -\fq x , - x^\top \fq x
)$, and any element of $\Heis_{\mathrm{red}}$ can be written uniquely
as a product
\begin{eqnarray*}
  \exp (x_1 X^\alpha_1+\dots +x_gX_g^\alpha) \cdot  (0,\xi, t)  = (x, \xi - \fq x, t - \tfrac{1}{2} x ^\top  \fq x )
\end{eqnarray*}
for some $x \in \R^g$, $\xi \in \R^g$ an $t \in \R / \tfrac{1}{2} \Z$.
Given $n \in \Z^g$, $m \in \Z^g$, hence $(n,m,0 ) \in
\Lattice_{\mathrm{red}}$, then
\begin{equation} \label{first_return_map} \exp (x_1 X^\alpha_1+\dots
  +x_gX^\alpha_g) \cdot (0,\xi, t ) \cdot (n,m,0) = \exp (x'_1
  X^\alpha_1+\dots +x'_gX^\alpha_g) \cdot (0,\xi', t') \end{equation}
if and only if $x' = x+ n $, $ \xi '= \xi + m +\fq n $ and $ t' = t +
\xi^\top n +\tfrac{1}{2} n^\top \fq n +\tfrac{1}{2} \Z$.

\paragraph{Birkhoff averages of certain functions on the circle.} Let
$\varphi \in \SB \left( \R / \tfrac{1}{2}\Z \right)$, and let $\psi
\in \mathcal E (\R^g)$ be a smooth function with compact support.
Define a function $\phi : \Heis_{\mathrm{red}}\approx \alpha^{-1}
(\Subgroup) \ltimes \mathsf N \to \CC$ as the product
\[
\phi ( \exp(x_1 X^\alpha_1+\dots +x_gX^\alpha_g) \cdot (0, \xi , t ))
:= \psi (x) \cdot \varphi (t)
\]
and then a function $\tilde{\phi} :\M \to \CC$ on the quotient
standard nilmanifold summing over the lattice
$\Lattice_{\mathrm{red}}$. Namely, if $m = \exp(x_1 X^\alpha_1+\dots
+x_gX^\alpha_g) \cdot (0,\xi,t) \cdot \Lattice_{\mathrm{red}} \in \M$,
we set
\[
\begin{split} \tilde{\phi} ( m ) & : =
  \sum _{(n,m,0) \in \Lattice_{\mathrm{red}}}  \phi  ( \exp(x_1 X^\alpha_1+\dots +x_gX^\alpha_g) \cdot (0, \xi , t ) \cdot (n,m,0) ) \\
  & = \sum _{n \in \Z^g} \psi \left( x+ n \right) \cdot \varphi \left(
    t + \xi^\top n +\tfrac{1}{2} n ^\top \fq n \right)
\end{split}
\]
where we used \eqref{first_return_map}.  Since $\psi$ has compact
support, this sum is finite, so that $\tilde{\phi}$ is indeed a smooth
function.  The Birkhoff average of $\omega = \tilde{\phi} \,
dX_1^\alpha \wedge \dots \wedge dX_g^\alpha$ along the current
$\Subgroup ^{g,\alpha}_Um$ with $m \in \M$ as above is, according to
\eqref{Birkhoff},
\begin{eqnarray*}
  \lin \mathcal P^{g,\alpha}_U m , \omega \rin 
  = \sum _{n \in \Z^g } 
  \left( \varphi \left( t  + \xi^\top n  +\tfrac{1}{2} n^\top \fq n \right)   \cdot \int _U \psi  (y+x+ n ) \, dy \right) \, . 
\end{eqnarray*}
Let $0<\delta <1/2$, and choose a test function $\psi \in \mathcal E
(\R^g)$ with support in a small ball $B_\varepsilon (0) = \{x \in \R^g
\, \, \text{s.t. }\, |x|_\infty \leq \varepsilon \}$ of radius
$0<\varepsilon <\delta $, and unit mass $\int _{\R^g} \psi (x) \, dx =
1$. For $N$ a positive integer, $U = [-\delta , N+\delta ]^g$ and
$x=0$, we have
\begin{equation} \label{Birkhoff_sums_theta} \lin \mathcal
  P^{g,\alpha}_U m , \omega \rin = \sum_{n \in \Z^g \cap [0, N]^g}
  \varphi \left( t + \xi^\top n +\tfrac{1}{2} n^\top \fq n \right)
\end{equation}
There follows from Theorem \ref{equidistribution_intro} in the
Introduction and the above discussion (i.e. formula
\ref{Birkhoff_sums_theta}) that

\begin{theorem}\label{pretheta_sums} 
  Let $\fq [x] =  x^\top \fq x$ be the quadratic forms defined by the  symmetric $g \times g$ real matrix $\fq$, $\alpha =
  \left( \begin{smallmatrix} I & 0 \\ \fq & I \end{smallmatrix} \right)
  \in \Symp_{2g}(\R)$,  $\fl (x) = \fl ^\top x$ be the linear form defined by $\fl \in \R^g$, and $t \in \R$.  Then,

  \begin{itemize}
  \item there exists a full measure set $\Omega_g \subset \Mod$ such
    that if $[\alpha] \in \Omega _g$ and $\varepsilon >0$ then
    \[
    \sum _{n \in \Z^g \cap [0, N]^g} \varphi \left( t + \fl (n)
      +\tfrac{1}{2}  \fq [n] \right)= \mathcal O \left( (\log N
      )^{g+1/(2g+2) + \varepsilon} \, N^{g/2} \right) \]
  \item if $[\alpha] \in \Mod$ satisfies a $\dcart(g)$-Roth condition, then
    for any $\varepsilon >0$
    \[
    \sum _{n \in \Z^g \cap [0, N]^g} \varphi \left( t +  \fl (n)
      +\tfrac{1}{2} \fq [n] \right)= \mathcal O \left( N^{g/2
        +\varepsilon} \right)
    \]
  \item if $[\alpha] \in \Mod$ is of bounded type, then
    \[
    \sum _{n \in \Z^g \cap [0, N]^g} \varphi \left( t +  \fl (n)
      + \tfrac{1}{2} \fq [n] \right)= \mathcal O \left( N^{g/2}
    \right)
    \]
  \end{itemize}
  as $N \to \infty$, for any test function $\varphi \in \Sobolev ^s(\R
  / \tfrac{1}{2}\Z)$ with Sobolev order $s>s_g $ and zero
  average $ \int_0^{1/2}\varphi (t ) \, dt=0$.
\end{theorem}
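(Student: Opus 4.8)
The plan is to read the three sums in the statement as Birkhoff averages, over an orbit of the Legendrian subgroup $\Subgroup^{g,\alpha}$ attached to $\alpha=\left(\begin{smallmatrix}I&0\\\fq&I\end{smallmatrix}\right)$, of a fixed function on $\M$, and then to invoke Theorem~\ref{equidistribution_intro} in the top-dimensional case $d=g$. First I would fix reals $0<\varepsilon_0<\delta<1/2$ and a test function $\psi\in\mathcal E(\R^g)$ supported in $\{\,|x|_\infty\le\varepsilon_0\,\}$ with $\int_{\R^g}\psi=1$, and build from $\psi$ and the given $\varphi$ the function $\tilde\phi$ on $\M$ exactly as in Section~\ref{sec_theta_sums}. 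Choosing $\xi=\fl$ in the first-return computation preceding~\eqref{Birkhoff_sums_theta} --- that is, letting $m$ be the point of $\M$ represented by $(0,\fl,t)$ in the reduced model $\Heis_{\mathrm{red}}/\Lattice_{\mathrm{red}}\approx\M$ --- and taking the region $U=[-\delta,N+\delta]^g$, formula~\eqref{Birkhoff_sums_theta} reads
\[
\lin \Birkhoff^{g,\alpha}_U m,\ \omega \rin=\sum_{n\in\Z^g\cap[0,N]^g}\varphi\!\left(t+\fl(n)+\tfrac12\fq[n]\right),\qquad \omega:=\tilde\phi\ \D X_1^\alpha\wedge\cdots\wedge\D X_g^\alpha .
\]
Since $s>s_g>1/2$, point values of $\varphi$ make sense and smooth $\varphi$ are dense in $\Sobolev^s(\R/\tfrac12\Z)$, so this identity --- derived for smooth $\varphi$ in Section~\ref{sec_theta_sums} --- extends by continuity to the $\varphi$ of the statement.

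The remaining points are routine. (i) \emph{Zero fibre average.} Integrating $\tilde\phi$ along a central fibre replaces each summand $\psi(x+n)\,\varphi(t+\fl(n)+\tfrac12\fq[n])$ by $\psi(x+n)\int_0^{1/2}\varphi$, which vanishes by hypothesis; hence $\tilde\phi\in C^\infty_0(\M)$ and $\omega\in\Lambda^g\mathfrak p^{g,\alpha}\otimes\Sobolev^s_0(\M)$, as required by Theorem~\ref{equidistribution_intro}. (ii) \emph{Sobolev control.} Since $\psi$ has compact support, only $O(1)$ values of $n$ contribute to $\tilde\phi$ near any point of $\M$, and for those $n$ one has $|n|\lesssim1$ on a fixed fundamental domain; differentiating term by term and comparing the $\alpha$-basis with the standard basis (a fixed linear change) gives $\|\omega\|_s\le C(\fq,\psi,g,s)\,\|\varphi\|_{\Sobolev^s(\R/\tfrac12\Z)}$, with no loss of Sobolev order. (iii) \emph{Normalising the cube.} Setting $m':=\Subgroup^{g,\alpha}_{(-\delta,\dots,-\delta)}m$ one has $\Subgroup^{g,\alpha}_x m=\Subgroup^{g,\alpha}_{x+(\delta,\dots,\delta)}m'$, so $\lin \Birkhoff^{g,\alpha}_U m,\omega\rin=\lin \Birkhoff^{g,\alpha}_{Q(N+2\delta)}m',\omega\rin$ with $Q(\radius)=[0,\radius]^g$; the estimates of Theorem~\ref{equidistribution_intro} being uniform in the base point and $\delta$ being fixed, one may use $\radius=N+2\delta\asymp N$, for which $\log\radius\asymp\log N$.

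Finally I would apply Theorem~\ref{equidistribution_intro} with $d=g$. Its hypothesis $s>\tfrac{1}{4}d(d+11)+g+1/2$ becomes, for $d=g$, exactly $s>s_g$, the standing assumption on $\varphi$. Its three regimes --- $[\alpha]$ in the full-measure set $\Omega_g$, $[\alpha]$ of $\dcart(g)$-Roth type, $[\alpha]$ of bounded type --- produce respectively the bounds $C(\log\radius)^{g+1/(2g+2)+\varepsilon}\radius^{g/2}\|\omega\|_s$, $C\,\radius^{g/2+\varepsilon}\|\omega\|_s$ and $C\,\radius^{g/2}\|\omega\|_s$; combining with (ii) and $\radius\asymp N$ yields the three assertions. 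The argument has no real obstacle beyond the reduction itself; the one point needing care is that the auxiliary data $(\psi,\delta,\varepsilon_0)$ enter \emph{only} through the implied constants --- which holds because the support condition on $\psi$ forces $\int_U\psi(\,\cdot+n\,)$ to equal the indicator of $\Z^g\cap[0,N]^g$ exactly, and because the comparison in (ii) costs no derivatives.
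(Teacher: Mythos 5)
Your proposal is correct and follows the same route the paper takes: read the exponential sum as the Birkhoff average in formula \eqref{Birkhoff_sums_theta}, with $\xi=\fl$, $x=0$, $U=[-\delta,N+\delta]^g$, and feed it into Theorem~\ref{equidistribution_intro} at $d=g$. The supplementary checks you spell out (zero fibre average of $\tilde\phi$, the tame bound $\|\omega\|_s\lesssim\|\varphi\|_{W^s}$, the translation of the cube, and the density extension to $\varphi\in W^s$) are exactly the routine points the paper leaves implicit, and they are handled correctly.
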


Corollary \ref{theta_sums} in the Introduction follows if we take
$\varphi (t) =e^{4\pi it }$.

\bibliography{heisenberg} \bibliographystyle{amsalpha}

\providecommand{\bysame}{\leavevmode\hbox to3em{\hrulefill}\thinspace}
\providecommand{\MR}{\relax\ifhmode\unskip\space\fi MR }
\providecommand{\MRhref}[2]{%
  \href{http://www.ams.org/mathscinet-getitem?mr=#1}{#2}
}
\providecommand{\href}[2]{#2}
\begin{thebibliography}{Mum07b}

\bibitem[AT75]{MR0414785}
Louis Auslander and Richard Tolimieri, \emph{Abelian harmonic analysis, theta
  functions and function algebras on a nilmanifold}, Lecture Notes in
  Mathematics, Vol. 436, Springer-Verlag, Berlin, 1975. \MR{0414785 (54
  \#2877)}

\bibitem[Aus77]{MR0466409}
Louis Auslander, \emph{Lecture notes on nil-theta functions}, American
  Mathematical Society, Providence, R.I., 1977, Regional Conference Series in
  Mathematics, No. 34. \MR{0466409 (57 \#6289)}

\bibitem[BG88]{MR928946}
M.~V. Berry and J.~Goldberg, \emph{Renormalisation of curlicues}, Nonlinearity
  \textbf{1} (1988), no.~1, 1--26. \MR{928946 (89b:58105)}

\bibitem[Car58]{cartan10ouverts}
Henri Cartan, \emph{Ouverts fondamentaux pour le groupe modulaire},
  S{\'e}minaire Henri Cartan, tome \textbf{10} (1957--1958), no.~1, 1--12.

\bibitem[Che05]{MR2172275}
Nicolas Chevallier, \emph{Meilleures approximations diophantiennes
  simultan\'ees et th\'eor\`eme de {L}\'evy}, Ann. Inst. Fourier (Grenoble)
  \textbf{55} (2005), no.~5, 1635--1657. \MR{2172275 (2006g:11145)}

\bibitem[{Che}13]{zbMATH06315600}
Nicolas {Chevallier}, \emph{{Best simultaneous Diophantine approximations and
  multidimensional continued fraction expansions.}}, {Mosc. J. Comb. Number
  Theory} \textbf{3} (2013), no.~1, 3--56 (English).

\bibitem[CK98]{MR1443869}
E.~A. Coutsias and N.~D. Kazarinoff, \emph{The approximate functional formula
  for the theta function and {D}iophantine {G}auss sums}, Trans. Amer. Math.
  Soc. \textbf{350} (1998), no.~2, 615--641. \MR{1443869 (99b:11091)}

\bibitem[CM15]{2015arXiv150107661C}
F.~{Cellarosi} and J.~{Marklof}, \emph{{Quadratic Weyl Sums, Automorphic
  Functions, and Invariance Principles}}, ArXiv e-prints (2015).

\bibitem[Dan85]{MR794799}
S.~G. Dani, \emph{Divergent trajectories of flows on homogeneous spaces and
  {D}iophantine approximation}, J. Reine Angew. Math. \textbf{359} (1985),
  55--89. \MR{794799 (87g:58110a)}

\bibitem[DK10]{MR2726100}
Danijela Damjanovi{\'c} and Anatole Katok, \emph{Local rigidity of partially
  hyperbolic actions {I}. {KAM} method and {$\Bbb Z^k$} actions on the torus},
  Ann. of Math. (2) \textbf{172} (2010), no.~3, 1805--1858. \MR{2726100
  (2011j:37046)}

\bibitem[DK11]{katok-damianovich-parabolic}
\bysame, \emph{Local rigidity of homogeneous parabolic actions: I. a model
  case}, J. Mod. Dyn. \textbf{5} (2011), no.~5, 203--235.

\bibitem[FF03]{MR2003124}
Livio Flaminio and Giovanni Forni, \emph{Invariant distributions and time
  averages for horocycle flows}, Duke Math. J. \textbf{119} (2003), no.~3,
  465--526. \MR{2003124 (2004g:37039)}

\bibitem[FF06]{MR2218767}
\bysame, \emph{Equidistribution of nilflows and applications to theta sums},
  Ergodic Theory Dynam. Systems \textbf{26} (2006), no.~2, 409--433.
  \MR{2218767 (2007c:37003)}

\bibitem[FF07]{MR2261071}
\bysame, \emph{On the cohomological equation for nilflows}, J. Mod. Dyn.
  \textbf{1} (2007), no.~1, 37--60. \MR{2261071 (2008h:37003)}

\bibitem[FJK77]{MR0563894}
H.~Fiedler, W.~Jurkat, and O.~K{\"o}rner, \emph{Asymptotic expansions of finite
  theta series}, Acta Arith. \textbf{32} (1977), no.~2, 129--146. \MR{0563894
  (58 \#27832)}

\bibitem[FK09]{2009arXiv0909.3079F}
A.~{Fedotov} and F.~{Klopp}, \emph{{An exact renormalization formula for
  Gaussian exponential sums and applications}}, ArXiv e-prints (2009), 32.

\bibitem[Fol89]{MR983366}
Gerald~B. Folland, \emph{Harmonic analysis in phase space}, Annals of
  Mathematics Studies, vol. 122, Princeton University Press, Princeton, NJ,
  1989. \MR{983366 (92k:22017)}

\bibitem[For02]{MR1888794}
Giovanni Forni, \emph{Deviation of ergodic averages for area-preserving flows
  on surfaces of higher genus}, Ann. of Math. (2) \textbf{155} (2002), no.~1,
  1--103. \MR{1888794 (2003g:37009)}

\bibitem[GG03]{MR1979719}
F.~G{\"o}tze and M.~Gordin, \emph{Limiting distributions of theta series on
  {S}iegel half-spaces}, Algebra i Analiz \textbf{15} (2003), no.~1, 118--147.
  \MR{1979719 (2004j:11042)}

\bibitem[GM10]{2010arXiv1004.5123G}
F.~{G{\"o}tze} and G.~{Margulis}, \emph{{Distribution of Values of Quadratic
  Forms at Integral Points}}, ArXiv e-prints (2010).

\bibitem[GM14]{zbMATH06376860}
Jory {Griffin} and Jens {Marklof}, \emph{{Limit theorems for skew
  translations.}}, {J. Mod. Dyn.} \textbf{8} (2014), no.~2, 177--189 (English).

\bibitem[Ham82]{MR656198}
Richard~S. Hamilton, \emph{The inverse function theorem of {N}ash and {M}oser},
  Bull. Amer. Math. Soc. (N.S.) \textbf{7} (1982), no.~1, 65--222. \MR{656198
  (83j:58014)}

\bibitem[HL14]{MR1555099}
G.~H. Hardy and J.~E. Littlewood, \emph{Some problems of diophantine
  approximation}, Acta Math. \textbf{37} (1914), no.~1, 193--239. \MR{1555099}

\bibitem[HL26]{MR1555214}
\bysame, \emph{Some problems of diophantine approximation: {A}n additional note
  on the trigonometrical series associated with the elliptic theta-functions},
  Acta Math. \textbf{47} (1926), no.~1-2, 189--198. \MR{1555214}

\bibitem[Kat01]{MR1858535}
Anatole Katok, \emph{Cocycles, cohomology and combinatorial constructions in
  ergodic theory}, Smooth ergodic theory and its applications ({S}eattle, {WA},
  1999), Proc. Sympos. Pure Math., vol.~69, Amer. Math. Soc., Providence, RI,
  2001, In collaboration with E. A. Robinson, Jr., pp.~107--173. \MR{1858535
  (2003a:37010)}

\bibitem[Kat03]{MR2008435}
\bysame, \emph{Combinatorial constructions in ergodic theory and dynamics},
  University Lecture Series, vol.~30, American Mathematical Society,
  Providence, RI, 2003. \MR{2008435 (2004m:37005)}

\bibitem[KK95]{MR1336707}
Anatole Katok and Svetlana Katok, \emph{Higher cohomology for abelian groups of
  toral automorphisms}, Ergodic Theory Dynam. Systems \textbf{15} (1995),
  no.~3, 569--592. \MR{1336707 (96k:58167)}

\bibitem[KK05]{MR2183300}
\bysame, \emph{Higher cohomology for abelian groups of toral automorphisms.
  {II}. {T}he partially hyperbolic case, and corrigendum}, Ergodic Theory
  Dynam. Systems \textbf{25} (2005), no.~6, 1909--1917. \MR{2183300
  (2006g:37047)}

\bibitem[Kli90]{MR1046630}
Helmut Klingen, \emph{Introductory lectures on {S}iegel modular forms},
  Cambridge Studies in Advanced Mathematics, vol.~20, Cambridge University
  Press, Cambridge, 1990. \MR{1046630 (91a:11021)}

\bibitem[KM99]{MR1719827}
D.~Y. Kleinbock and G.~A. Margulis, \emph{Logarithm laws for flows on
  homogeneous spaces}, Invent. Math. \textbf{138} (1999), no.~3, 451--494.
  \MR{1719827 (2001i:37046)}

\bibitem[KN11]{MR2798364}
Anatole Katok and Viorel Ni{\c{t}}ic{\u{a}}, \emph{Rigidity in higher rank
  abelian group actions. {V}olume {I}}, Cambridge Tracts in Mathematics, vol.
  185, Cambridge University Press, Cambridge, 2011, Introduction and cocycle
  problem. \MR{2798364}

\bibitem[KRH10]{MR2600682}
Anatole Katok and Federico Rodriguez~Hertz, \emph{Rigidity of real-analytic
  actions of {${\rm SL}(n,\Bbb Z)$} on {$\Bbb T^n$}: a case of realization of
  {Z}immer program}, Discrete Contin. Dyn. Syst. \textbf{27} (2010), no.~2,
  609--615. \MR{2600682 (2011g:37063)}

\bibitem[KS97]{MR1632177}
A.~Katok and R.~J. Spatzier, \emph{Differential rigidity of {A}nosov actions of
  higher rank abelian groups and algebraic lattice actions}, Tr. Mat. Inst.
  Steklova \textbf{216} (1997), no.~Din. Sist. i Smezhnye Vopr., 292--319.
  \MR{1632177 (99i:58118)}

\bibitem[Lag82]{MR662052}
J.~C. Lagarias, \emph{Best simultaneous {D}iophantine approximations. {I}.
  {G}rowth rates of best approximation denominators}, Trans. Amer. Math. Soc.
  \textbf{272} (1982), no.~2, 545--554. \MR{662052 (84d:10039a)}

\bibitem[Mac49]{MR0030532}
George~W. Mackey, \emph{A theorem of {S}tone and von {N}eumann}, Duke Math. J.
  \textbf{16} (1949), 313--326. \MR{0030532 (11,10b)}

\bibitem[Mar99a]{MR1682276}
Jens Marklof, \emph{Limit theorems for theta sums}, Duke Math. J. \textbf{97}
  (1999), no.~1, 127--153. \MR{1682276 (2000h:11088)}

\bibitem[Mar99b]{MR1691543}
\bysame, \emph{Theta sums, {E}isenstein series, and the semiclassical dynamics
  of a precessing spin}, Emerging applications of number theory ({M}inneapolis,
  {MN}, 1996), IMA Vol. Math. Appl., vol. 109, Springer, New York, 1999,
  pp.~405--450. \MR{1691543 (2000k:81094)}

\bibitem[Mar03]{zbMATH02052017}
Jens Marklof, \emph{{Pair correlation densities of inhomogeneous quadratic
  forms.}}, {Ann. Math. (2)} \textbf{158} (2003), no.~2, 419--471 (English).

\bibitem[Mos90]{MR1069487}
J{\"u}rgen Moser, \emph{On commuting circle mappings and simultaneous
  {D}iophantine approximations}, Math. Z. \textbf{205} (1990), no.~1, 105--121.
  \MR{1069487 (92e:58120)}

\bibitem[Mum07a]{MR2352717}
David Mumford, \emph{Tata lectures on theta. {I}}, Modern Birkh\"auser
  Classics, Birkh\"auser Boston Inc., Boston, MA, 2007, With the collaboration
  of C. Musili, M. Nori, E. Previato and M. Stillman, Reprint of the 1983
  edition. \MR{2352717 (2008h:14042)}

\bibitem[Mum07b]{MR2307769}
\bysame, \emph{Tata lectures on theta. {III}}, Modern Birkh\"auser Classics,
  Birkh\"auser Boston Inc., Boston, MA, 2007, With collaboration of Madhav Nori
  and Peter Norman, Reprint of the 1991 original. \MR{2307769 (2007k:14088)}

\bibitem[Sch66]{MR0209834}
Laurent Schwartz, \emph{Th\'eorie des distributions}, Publications de
  l'Institut de Math\'ematique de l'Universit\'e de Strasbourg, No. IX-X.
  Nouvelle \'edition, enti\'erement corrig\'ee, refondue et augment\'ee,
  Hermann, Paris, 1966. \MR{0209834 (35 \#730)}

\bibitem[Sha09]{shah2009}
Nimish~A. Shah, \emph{Asymptotic evolution of smooth curves under geodesic flow
  on hyperbolic manifolds}, Duke Math. J. \textbf{148} (2009), no.~2, 281--304.

\bibitem[Sie64]{MR0164063}
Carl~Ludwig Siegel, \emph{Symplectic geometry}, Academic Press, New York, 1964.
  \MR{0164063 (29 \#1362)}

\bibitem[Sul82]{MR688349}
Dennis Sullivan, \emph{Disjoint spheres, approximation by imaginary quadratic
  numbers, and the logarithm law for geodesics}, Acta Math. \textbf{149}
  (1982), no.~3-4, 215--237. \MR{688349 (84j:58097)}

\bibitem[Tol78]{MR487050}
R.~Tolimieri, \emph{Heisenberg manifolds and theta functions}, Trans. Amer.
  Math. Soc. \textbf{239} (1978), 293--319. \MR{487050 (81d:22012)}

\bibitem[Wei64]{MR0165033}
Andr{\'e} Weil, \emph{Sur certains groupes d'op\'erateurs unitaires}, Acta
  Math. \textbf{111} (1964), 143--211. \MR{0165033 (29 \#2324)}

\bibitem[Zim84]{MR776417}
Robert~J. Zimmer, \emph{Ergodic theory and semisimple groups}, Monographs in
  Mathematics, vol.~81, Birkh\"auser Verlag, Basel, 1984. \MR{776417
  (86j:22014)}

\end{thebibliography}

\end{document}